\begin{document}

\title[On the Gross-Pitaevskii evolution linearized around the degree-one vortex]{On the Gross-Pitaevskii evolution \\ linearized around the degree-one vortex}

\begin{abstract}
    We study the evolution of the Gross-Pitaevskii equation linearized around the Ginzburg-Landau vortex of degree one under equivariant symmetry. 
    Among the main results of this work, we determine the spectrum of the linearized operator, uncover a remarkable $L^2$-norm growth phenomenon related to a zero-energy resonance, and provide a complete construction of the distorted Fourier transform at small energies. The latter hinges upon a meticulous analysis of the behavior of the resolvent in the upper and lower half-planes in a small disk around zero-energy.
\end{abstract}

\author[J. L\"uhrmann]{Jonas L\"uhrmann}
\address{Department of Mathematics \\ Texas A\&M University \\ College Station, TX 77843, USA}
\email{luhrmann@tamu.edu}

\author[W. Schlag]{Wilhelm Schlag}
\address{Department of Mathematics \\ Yale University \\ New Haven, CT 06511, USA}
\email{wilhelm.schlag@yale.edu}

\author[S. Shahshahani]{Sohrab Shahshahani}
\address{Department of Mathematics \\ University of Massachusetts \\ Amherst, MA 01003, USA}
\email{sshahshahani@umass.edu}

\thanks{The first author was partially supported by NSF CAREER grant DMS-2235233. The second author was partially supported by NSF grant DMS-2350356. The third author was partially supported by the Simons Foundation grant 639284. The second author thanks Ovidiu Costin for discussions on the Stokes phenomenon.}

\maketitle

\tableofcontents

\section{Introduction} \label{sec:intro}

The complex Ginzburg-Landau equation in the plane
\begin{equation} \label{equ:complex_GL}
 \Delta \Psi + (1 - |\Psi|^2) \Psi = 0, \quad x \in \bbR^2,
\end{equation} 
is the Euler-Lagrange equation associated with the energy functional
\begin{align*}
    \int_{\bbR^2} \biggl( \frac{1}{2} |\nabla\Psi|^2 + \frac{1}{4} \bigl(1-|\Psi|^2\bigr)^2 \biggr) \ud x.
\end{align*}
It is known that \eqref{equ:complex_GL} admits a family of smooth solutions, called Ginzburg-Landau vortices, of the form $V_n(x)=\rho_n(r)e^{in\theta}$, $n \in \bbZ \backslash\{0\}$, where $x = (r \cos(\theta), r \sin(\theta))$. 
The integer $n$ is referred to as the degree or the winding number of $V_n(x)$.
The profile $\rho_n(r)$ is the unique solution to the ordinary differential equation
\begin{equation}\label{eq:rhoeq1}
 \left\{ \begin{aligned}
  &\partial_r^2 \rho_n + \frac{1}{r} \partial_r \rho_n - \frac{n^2}{r^2} \rho_n + \bigl(1-\rho_n^2\bigr) \rho_n = 0, \quad 0 < r < \infty, \\
  &\rho_n(0) = 0, \quad \rho_n(\infty) = 1, \quad \rho_n'(r) > 0.
 \end{aligned} \right.
\end{equation}
It satisfies the asymptotics
\begin{equation}\label{eq:rhobounds1}
 \begin{aligned}
  \rho_n(r) \sim a r^n \Bigl( 1 - \frac{r^2}{4n+4} \Bigr) \quad \text{as} \quad r \to 0, \qquad 
  \rho_n(r) \sim 1 - \frac{n^2}{2r^2} \quad \text{as} \quad r \to \infty,
 \end{aligned}
\end{equation}
where $a > 0$ is some positive constant, cf. \cite{CEQ94,HerHer94}.
A peculiar feature of the vortices $V_n(x)$ is that they do not have finite energy, because their angular derivatives logarithmically fail to be $L^2$-integrable. 

The natural Schr\"odinger evolution equation associated with \eqref{equ:complex_GL} is the Gross-Pitaevskii equation 
\begin{equation} \label{eq:GP1}
    i\partial_t\Psi+\Delta\Psi+(1-|\Psi|^2)\Psi=0, \quad (t,x) \in \bbR \times \bbR^2.
\end{equation}
The vortices $V_n(x)$ are time-independent solutions to \eqref{eq:GP1} and form well-known examples of topological solitons. 
A fundamental question in the dynamics of the Gross-Pitaevskii equation concerns the stability of its vortex solutions. However, since these solutions do not possess finite energy, their stability analysis is particularly delicate.
By invariance under complex conjugation it suffices to consider positive degrees.
It appears that the degree-one vortex has some stability properties, while the vortices $V_n$, $n \geq 2$, are unstable, at least outside symmetry.
By considering a suitable renormalized energy, \cite{GPS21} recently obtained an orbital stability result for $V_1$.
As a first step towards the long-standing open asymptotic stability problem for the degree-one vortex, we study the Gross-Pitaevskii evolution linearized around $V_1$. 

\subsection{The Gross-Pitaevskii equation linearized around the degree-one vortex}

Decomposing a solution to the Gross-Pitaevskii equation \eqref{eq:GP1} as 
\begin{align} \label{equ:decomposition}
    \Psi(t,x) = \bigl( \rho_1(r) + \phi(t,r,\theta) \bigr) e^{i\theta},
\end{align}
we arrive at the linearized evolution equation 
\begin{equation} \label{equ:decomp_inserted2}
 \begin{aligned}
    i\pt \phi + \Delta \phi+\frac{2i}{r^2}\partial_\theta\phi - \frac{1}{r^2} \phi + \phi - 2 \rho_1^2 \phi - \rho_1^2 \bar{\phi} = 0.   
 \end{aligned}
\end{equation}
Expressing the perturbation $\phi(t,r,\theta)$ in terms of its real and imaginary parts
\begin{equation*}
 \begin{aligned}
  \phi(t,r,\theta) = \alpha(t,r,\theta) + i \beta(t,r,\theta),
 \end{aligned}
\end{equation*}
we can equivalently write \eqref{equ:decomp_inserted2} as
\begin{equation} \label{eq:bfLintro1}
    \bigl( \pt - \bfL \bigr) \begin{bmatrix} \alpha \\ \beta \end{bmatrix} = 0,
\end{equation}
where 
\begin{align*}
    \bfL := \begin{bmatrix} \: -\frac{2}{r^2} \partial_\theta & -\Delta + \frac{1}{r^2} + \rho_1^2 - 1 \\  - \bigl( - \Delta + \frac{1}{r^2} + 3 \rho_1^2 - 1 \bigr) & \: -\frac{2}{r^2} \partial_\theta \end{bmatrix}. 
\end{align*}
The symmetries of the Gross-Pitaevskii equation give rise to zero energy solutions $\bfL \boldsymbol{\varphi} = 0$ for the linearized operator.
Specifically, the invariance under spatial translations yields the zero energy solutions
\begin{equation*}
  \boldsymbol{\varphi}_1 := \begin{bmatrix} \cos(\theta) \rho_1'(r) \\ - \sin(\theta) \frac{1}{r} \rho_1(r) \end{bmatrix}, \quad \boldsymbol{\varphi}_2 := \begin{bmatrix} \sin(\theta) \rho_1'(r) \\ \cos(\theta) \frac{1}{r} \rho_1(r) \end{bmatrix},
\end{equation*}
while the invariance under phase shifts gives the zero energy solution
\begin{equation*}
  \boldsymbol{\varphi}_3 := \begin{bmatrix} 0 \\ \rho_1(r) \end{bmatrix}.
\end{equation*}
A quick computation shows that $\boldsymbol{\varphi}_j$, $j = 1, 2$, belong to $L^p(\bbR^2)$ for $2 < p \leq \infty$ and are thus ``p-wave resonances'', 
while $\boldsymbol{\varphi}_3$ belongs only to $L^\infty(\bbR^2)$, and is therefore an ``s-wave resonance'' in the jargon for Schr\"odinger operators in two space dimensions.

In this work we restrict to equivariant perturbations
$\phi(t,r) = \alpha(t,r) + i \beta(t,r)$
that do not depend on the angular variable. 
Conjugating the corresponding linearized operator to the half-line by the weight $r^{\frac12}$ gives rise to the operator
\begin{equation} \label{equ:intro_decomposition_calL}
 \calL := r^{\frac12} \cdot \begin{bmatrix} 0 & -\Delta_{\mathrm{rad}} + \frac{1}{r^2} + \rho_1^2 -1 \\ -\bigl(-\Delta_{\mathrm{rad}} + \frac{1}{r^2} + 3\rho_1^2 - 1 \bigr) & 0 \end{bmatrix} \cdot r^{-\frac12} = \begin{bmatrix} 0 & L_1\\ -L_2 & 0 \end{bmatrix},
\end{equation}
where
\begin{equation*}
\begin{aligned}
  L_1 := -\partial_r^2 + \frac{3}{4 r^2} + \rho_1^2 - 1, \quad L_2 := -\partial_r^2 + \frac{3}{4 r^2} + 3\rho_1^2 - 1.
 \end{aligned}
\end{equation*}
Note that the scalar Schr\"odinger operators $L_1$ and $L_2$ can be written as
\begin{align}\label{eq:L012def1}
    L_1 = L_0+\rho_1^2-1, \quad L_2 = L_0+3\rho_1^2-1, \quad \text{with} \quad L_0 := -\partial_r^2+\frac{3}{4r^2}.
\end{align}
It is standard that $L_0$ is limit point at both $r=0$ and $r=\infty$, and that it is essentially self-adjoint. 
We denote its domain by 
\begin{align}\label{eq:domaindef1}
    \calD := \bigl\{ f \in L^2_r(\bbR_+) \, \big| \, L_0 f\in L^2_r(\bbR_+) \bigr\} = r^{\frac12} \cdot H^2_{\mathrm{rad}}(\R^2).
\end{align}
Then $L_1$ and $L_2$ are also self-adjoint on~$\calD$ by the Kato-Rellich theorem.  
Observe that $\calL$ is not self-adjoint, but it is skew-adjoint relative to the standard symplectic form 
\begin{align} \label{eq:omegasymp1}
    \omega(\phi, \psi) = \langle \bfJ \phi,\psi\rangle, \quad \bfJ := \pmat{0&1\\-1&0}, 
\end{align}
where $\langle\cdot,\cdot\rangle$ denotes the real inner product on $L^2_r(\bbR_+) \times L^2_r(\bbR_+)$. 
In this work we analyze the evolution $e^{t\calL}$ on the Hilbert space $L^2_r(\bbR_+) \times L^2_r(\bbR_+)$.

\subsection{Main results} \label{subsec:mainresult}

Our main results include the determination of the spectrum of the linearized operator $\calL$, the uncovering of a remarkable $L^2$ norm growth phenomenon related to a zero-energy resonance, and the complete construction of the distorted Fourier transform associated with $\calL$ at small energies.
The following theorem summarizes our findings.
 
\begin{thm}\label{thm:summary}
The operator $\calL$ on $L^2_r(\bbR_+) \times L^2_r(\bbR_+)$ with domain $\calD \times \calD$ satisfies:
\begin{enumerate}[(i)]
    \item $\spec(i\calL) = \R$. 
    
    \item $\calL$ is closed on $\calD\times\calD$, and the operator semi-group $\{ e^{t\calL} \}_{t\in\bbR}$ is defined via the Hille-Yosida theorem as a bounded map on $L^2_r(\bbR_+) \times L^2_r(\bbR_+)$. It satisfies the operator norm bound $\bigl\| e^{t\calL} \bigr\| \leq~e^{|t|}$ for all $t\in\R$. 
    
    \item For some constant $c > 0$ one has $\bigl\|e^{t\calL}\bigr\|\ge c \jap{t}$ for all $t \ge 0$.
    
    \item There exist absolute constants $C \geq 1$ and $0 < \delta_0 \ll 1$ such that for all intervals $I \subseteq [-\delta_0,\delta_0]$, the frequency-localized evolution $e^{t\calL} P_I$ introduced in Definition~\eqref{def:PIevol} satisfies  
    \begin{equation} \label{equ:operator_norm_bound_thm}
        \bigl\| e^{t\calL} P_I \bigr\| \leq C \jap{t}, \quad t \geq 0.
    \end{equation}    
    Moreover, if $J \subset [-\delta_0,\delta_0]$ is compact with $0 \notin J$, then the following stability bound holds
    \begin{equation} \label{equ:operator_norm_bound_thm_away_zero}
        \sup_{t \geq 0} \, \bigl\| e^{t\calL} P_J \bigr\| \leq C(J)
    \end{equation}
    for some constant $C(J) > 0$ whose size depends on $J$.
\end{enumerate}
\end{thm}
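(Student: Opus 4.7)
\medskip

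\noindent\textbf{Proof plan.} I would address the four items in order, reserving the hard analytic work for (iii) and (iv).

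For (i), the inclusion $\spec(i\calL) \subseteq \bbR$ follows from the symplectic skew-adjointness: a direct computation (carried out in (ii) below) shows that $\calL + \calL^\ast$ is bounded, so $i\calL$ differs from a self-adjoint operator by a bounded symmetric piece and its resolvent exists off $\bbR$ at distances exceeding $\|\tfrac12(\calL + \calL^\ast)\|$. For the reverse inclusion I would construct Weyl sequences at infinity: since $\rho_1(r) \to 1$, in the region $r \gg 1$ the operator $\calL$ asymptotes to a constant-coefficient operator whose Fourier symbol is $m(k) = \begin{pmatrix} 0 & k^2 \\ -(k^2 + 2) & 0 \end{pmatrix}$, and the eigenvalues $\pm \sqrt{k^2(k^2+2)}$ of $i m(k)$ exhaust $\bbR$ as $k \in [0,\infty)$. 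Translating appropriately modulated bumps of frequency $k$ out to infinity produces the required approximate eigenfunctions.

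For (ii), closedness is immediate from the self-adjointness of $L_1, L_2$ on $\calD$ together with the block structure of $\calL$. The self-adjointness yields
\[
    \calL + \calL^\ast = \begin{pmatrix} 0 & L_1 - L_2 \\ L_1 - L_2 & 0 \end{pmatrix} = \begin{pmatrix} 0 & -2\rho_1^2 \\ -2\rho_1^2 & 0 \end{pmatrix},
\]
which has operator norm at most $2$ since $0 \leq \rho_1 \leq 1$. A Gronwall argument applied to $\tfrac{d}{dt} \|e^{t\calL} u\|^2 = \langle (\calL + \calL^\ast) e^{t\calL} u, e^{t\calL} u \rangle$ then gives $\|e^{t\calL}\| \leq e^{|t|}$, with the analogous bound for negative times obtained from $-\calL$. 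The Hille-Yosida range condition reduces, via the $2 \times 2$ block inverse formula, to solving $(L_1 L_2 + \lambda^2) u = g$ on $\calD$ for large real $\lambda$, a standard self-adjoint solvability question.

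For (iii), the growth is driven by the $s$-wave zero-energy resonance. The $r^{1/2}$-conjugated, equivariant counterpart of $\boldsymbol{\varphi}_3$ is $\boldsymbol{\psi}_3(r) = (0, r^{1/2} \rho_1(r))^\top$, which by the vortex ODE satisfies $\calL \boldsymbol{\psi}_3 = 0$ pointwise and fails to lie in $L^2_r(\bbR_+) \times L^2_r(\bbR_+)$ only through its constant-at-infinity tail. I would pick a cutoff $\chi_R$ supported in $[0,R]$ with $\chi_R \equiv 1$ on $[0,R/2]$, set $\boldsymbol{f}_R := \chi_R \boldsymbol{\psi}_3 / \|\chi_R \boldsymbol{\psi}_3\|_{L^2}$, and construct $\boldsymbol{g}_R \in \calD \times \calD$ with $\calL \boldsymbol{g}_R = \boldsymbol{f}_R$ and $\|\boldsymbol{g}_R\|_{L^2}$ of moderate size by inverting $L_2$ via variation of parameters against the second component. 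The Duhamel identity
\[
    e^{t\calL} \boldsymbol{g}_R - \boldsymbol{g}_R = \int_0^t e^{s\calL} \boldsymbol{f}_R \, \ud s,
\]
together with the frequency-localized stability bound (iv) applied to $\boldsymbol{f}_R$, which is spectrally concentrated near $\xi = 0$ so that $e^{s\calL} \boldsymbol{f}_R$ stays close to $\boldsymbol{f}_R$ on $s \in [0,t]$ for $t$ below an $R$-dependent threshold, then yields $\|e^{t\calL} \boldsymbol{g}_R\|_{L^2} \gtrsim t$, and an optimization over $R$ gives $\|e^{t\calL}\| \gtrsim \jap{t}$.

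The frequency-localized bound (iv) is the main analytic obstacle, and the principal payoff of the distorted Fourier transform $\widetilde{\mathcal{F}}$ constructed in the body of the paper. Using the spectral representation
\[
    e^{t\calL} P_I f(r) = \int_I e^{it\xi} \, \boldsymbol{e}(r,\xi) \bigl\langle \widetilde{\mathcal{F}} f(\xi), \boldsymbol{e}(\cdot, \xi) \bigr\rangle \, \ud \sigma(\xi),
\]
the task reduces to an $L^2 \to L^2$ bound for an oscillatory integral operator. The fine resolvent analysis near $\xi = 0$ promised in the abstract shows that $\ud \sigma$ and the generalized eigenfunctions $\boldsymbol{e}(\cdot, \xi)$ admit expansions with a resonance-induced singularity of exactly the order producing linear growth in $t$: an integration by parts in $\xi$ converts the $1/\xi$-type singularity into a factor of $t$, with the residual pieces controlled uniformly by a Mihlin-type symbol calculus on dyadic annuli. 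For $0 \notin J$ the density and eigenfunctions are smooth and bounded, so a standard $TT^\ast$ computation yields uniform boundedness. The hardest step, and the one that requires the bulk of the paper's machinery, is the uniform pointwise control of $\boldsymbol{e}(\cdot, \xi)$ and $\ud \sigma(\xi)$ in a small punctured disk around $\xi = 0$, together with tracking the Stokes phenomenon that occurs when continuing resolvent identities across the real axis.
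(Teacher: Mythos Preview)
Your outline has genuine gaps in (i) and (iii), and misidentifies the mechanism in (iv).

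\textbf{Part (i).} The observation that $\calL+\calL^\ast$ is bounded with norm $\le 2$ only yields $\spec(i\calL)\subseteq\{|\Im z|\le 1\}$, not $\spec(i\calL)\subseteq\bbR$. You write ``its resolvent exists off $\bbR$ at distances exceeding $\|\tfrac12(\calL+\calL^\ast)\|$'', which is exactly the strip statement, not the line statement. The paper closes this gap by a Schur complement: writing $(\calL-z)$ in upper-triangular block form isolates $T(z)=z^2L_2^{-1}+L_1$, and the paper shows $T(z)^{-1}$ exists for all $z\notin i\bbR$ by conjugating to the genuinely self-adjoint operator $T_0=L_2^{1/2}L_1L_2^{1/2}$ (checking domain issues and a regularity gain carefully). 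Your Weyl-sequence argument for $\bbR\subseteq\spec(i\calL)$ is also shakier than you suggest: the paper explicitly cautions that the standard Weyl criterion (e.g.\ Reed--Simon XIII.14) is inapplicable because the reference operator is not self-adjoint, and instead argues by contradiction via the analytic Fredholm alternative and the explicit free resolvent $(i\calL_0-z)^{-1}$.

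\textbf{Part (iii).} Your argument invokes (iv) to assert that $e^{s\calL}\boldsymbol f_R$ stays close to $\boldsymbol f_R$ for $s\in[0,t]$, but (iv) only gives $\|e^{s\calL}P_I\|\le C\jap{s}$, which is growing and does not imply approximate stationarity. The paper's proof (Lemma~\ref{lem:nilpotent}) is logically prior to (iv) and does not use it: one writes down the \emph{exact} non-$L^2$ solution $\psi(t,r)=\psi_0+t\calL\psi_0$ with $\psi_0=\binom{L_2^{-1}(\sqrt r\rho_1)}{0}$, truncates $\psi_0$ by $\chi(\cdot/N)$, and controls the difference $\delta\psi^{(N)}(t)=e^{t\calL}\psi_0^{(N)}-\psi(t)\chi(\cdot/N)$ via the PDE it satisfies together with the exponential bound $\|e^{t\calL}\|\le e^t$ from (ii). Since the forcing in that PDE is supported in the transition annulus $r\simeq N$ and carries a factor $N^{-1}$, one gets $\|\delta\psi^{(N)}(t)\|\lesssim e^t N^{-1}\log N$, which for each fixed $t$ is beaten by $\|\psi(t)\chi(\cdot/N)\|\gtrsim tN$ once $N$ is large.

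\textbf{Part (iv).} The growth mechanism is not integration by parts in $\xi$. Integrating $e^{it\xi}/\xi$ by parts produces $t^{-1}$, not $t$. What the paper exploits is \emph{parity}: the most singular piece of the spectral density is of the form $\mathrm{sgn}(\lambda)\cdot(\text{even in }\lambda)$ (ultimately because $\kappa(\lambda)/d_+(\lambda)d_-(\lambda)$ is odd, Lemma~\ref{lem:pmlambda1}, and the distorted eigenfunctions have definite parity). Folding the integral over $I=[-\delta_0,\delta_0]$ to $[0,\delta_0]$ converts $e^{it\lambda}\mathrm{sgn}(\lambda)$ into $2i\sin(t\lambda)$, and then the elementary bound $|\sin(t\lambda)|\le \jap{t}|\lambda|$ supplies exactly one extra power of $|\lambda|$, which is what is needed to make the remaining $L^2_\lambda$-integrals converge (via Schur for $r\lesssim|\lambda|^{-1}$ and a Cotlar--Stein argument for $r\gtrsim|\lambda|^{-1}$). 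Verifying the parity is itself delicate because of the Stokes phenomenon for $h_-(k_1(\lambda)r)$ across $\lambda=0$; the paper checks that the Stokes contribution cancels in the combinations that appear.
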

The bounds obtained in part (iv) indicate that the growth rate of the operator norm in (iii) is sharp and that it is related to the zero-energy resonance of the linearized operator.
As part of the proof of (iv), we provide a complete construction of the distorted Fourier transform associated with the linearized matrix operator $\calL$ at small energies. 
A similar analysis can also be carried out at intermediate and large energies. 
This leads to an oscillatory integral representation of the entire evolution in terms of the distorted Fourier transform, which serves as a launching point for proving dispersive decay estimates for the linear evolution.
The latter have in fact been obtained in the remarkable contemporaneous work \cite{CGP25}. Our findings complement the analysis in \cite{CGP25}.

At this point, we wish to highlight what we consider the most significant contribution of this work,
even though not all terminology has been systematically introduced yet.
It is well-known that the distorted Fourier transform for selfadjoint Schr\"odinger operators can be derived elegantly from Stone’s formula. Gesztesy-Zinchenko~\cite{GZ} follow this approach for selfadjoint Schr\"odinger operators on the half-line, which involves a detailed justification of passing to the limit in the resolvents from the upper and lower half-planes onto the spectrum. For $L^1_{\rm{loc}}$ potentials one can rely on the Herglotz property of the Weyl $m$-function. In that case, $\Im m(\lambda+i\varepsilon)$ converges in the weak-star sense to the Herglotz measure, which then acts as the spectral measure. Instead, for strongly singular potentials, such as the inverse square potentials arising in this work, the $m$-function is no longer Herglotz and~\cite{GZ} carefully deduce the existence of the limit of the resolvents onto the spectrum directly via the spectral theorem for selfadjoint operators. Note that the limiting measure can contain atoms, which then correspond to eigenvalues.

For non-selfadjoint operators -- such as the operator $\calL$ considered in this paper -- no such tool is available,  and the existence of the limit needs to be justified by hand. Due to the fact that zero-energy is embedded in the spectrum and that the operator exhibits delicate behavior there, as reflected by the linear growth in $t$ of the operator norm of the associated semi-group, we devote substantial effort to rigorously work out the existence and the shape of the limit of the resolvents at small complex energies $z = \lambda \pm i \epsilon$ for $\lambda \in \bbR$ as $\epsilon \to 0+$.  
In this regard, we point out that in the context of non-selfadjoint matrix Schr\"odinger operators arising from linearizing around solitary wave solutions to focusing nonlinear Schr\"odinger equations on the line, \cite[Lemma 6.8]{KS} passes to the spectrum. In other words, it applies the limiting absorption principle in a Stone-type formula that is obtained by hand using the Hille-Yosida theorem and contour integration. This is valid due to the presence of a spectral gap and the fact that there are no singularities on the spectrum; threshold resonances  and embedded eigenvalues are ruled out in \cite[Proposition 9.2]{KS}.
In contrast, the spectrum of the matrix operator $\calL$ considered here does not exhibit a gap and there is a resonance at zero energy.

It does not suffice to formally follow \cite{KS} as a procedure for real energies. Any approach that lacks a rigorous analysis in the complex plane is incomplete.
In fact, one of our main concerns had been whether or not a residue appears in the Stone-type formula from an expression of the form $z^{-2} e^{itz} T_0$ for a finite rank operator $T_0$, which would then contribute $t T_0$ to the evolution operator. We obtained a negative answer to this question only after working out the resolvents for small energies in the upper and lower half-planes and after computing the limit. Then it became clear that the linear growth in $t$ of the operator norm arises from the essential spectrum and not from any finite-dimensional subspace. But as the example $z^{-1}-\bar{z}^{-1}=\frac{-2i\varepsilon}{\lambda^2+\varepsilon^2}\to -2\pi i\delta_0$ in the weak-star sense shows, it is insufficient to consider only real energies $\lambda$.

\subsection{Proof ideas and discussion}

In this subsection we discuss the main ideas entering the proof of Theorem~\ref{thm:summary}.

\subsubsection{Spectrum of the linearized operator $\calL$}
In order to determine the spectrum of the linearized matrix operator $\calL$, we decompose it as 
\begin{equation}
    \calL = \calL_0 + \calV_0, \quad
    \calL_0 := \begin{bmatrix} 0 & -\partial_r^2+\frac{3}{4r^2} \\ -\bigl( -\partial_r^2 +\frac{3}{4r^2} +2 \bigr) & 0\end{bmatrix}, \quad \calV_0 := \begin{bmatrix} 0 & \rho_1^2 - 1 \\ -3(\rho_1^2-1) & 0 \end{bmatrix}.
\end{equation}
Here, $\calL_0$ is the corresponding \emph{free} non-selfadjoint matrix operator at spatial infinity, and $\calV_0$ is a matrix with globally bounded, decaying potentials.
Using the resolvent kernel of $i \calL_0$ computed in Appendix~\ref{app:calL0}, one can show that $i\calL_0$ only has essential spectrum equal to $\bbR$.
The analytic Fredholm alternative (see \cite[Theorem VI.14]{RS1}) then allows us to deduce that $\bbR \subseteq \mathrm{spec}(i\calL)$. 
On the other hand, by a delicate analysis using the Schur complement as a tool, we conclude that $\mathrm{spec}(i\calL) \subseteq \bbR$, whence $\mathrm{spec}(i\calL) = \bbR$. We refer to Lemma~\ref{lem:HYos} for the details. 

We point out that \cite[Theorem 6.1]{WeinXin96} establishes that $i\calL$ does not have eigenvalues in $\bbC\backslash\bbR$.
It is tempting to try to use a Weyl criterion type argument based on the decomposition \eqref{equ:intro_decomposition_calL} to deduce that $i\calL$ has the same essential spectrum as the free operator $i\calL_0$. One could then conclude that $\mathrm{spec}(i\calL) = \bbR$.
However, since the reference operator $\calL_0$ is not self-adjoint, it is for instance inadmissible to invoke the Weyl criterion as in \cite[Theorem XIII.14]{RS4}.

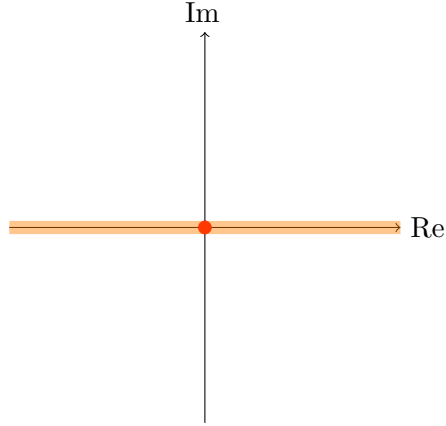
\begin{figure}[ht]
\centering
\begin{tikzpicture}[domain=-1.74:1.74,samples=50, scale=1.3]

\coordinate (px) at ($(0:0)$);		

\draw[->] ($(180:2)$) --++ ($(0:4)$) node[right] {$\Re$}; 
\draw[->] ($(270:2)$) --++ ($(90:4)$)node[above] {$\Im$};

\fill[red] (px) circle (2pt);		
\draw[line width=5pt,color=orange,draw opacity=0.45] (px) --++ ($(0:2)$);
\draw[line width=5pt,color=orange,draw opacity=0.45] (px) --++ ($(0:-2)$);	

\end{tikzpicture}
\caption{Spectral properties of the operator $i\calL$: The orange band corresponds to the spectrum. The red dot at the origin indicates the zero-energy resonance.}
\label{fig:spectrum_picture}
\end{figure}

\subsubsection{The zero-energy resonance and growth of the $L^2$-norm} 
As mentioned earlier, the phase invariance of~\eqref{equ:complex_GL} leads to a zero-energy resonance of $L_1$ of the form $L_1(\sqrt{r}\rho_1(r))=0$. At least formally this implies that $\ker\calL\subsetneq \ker\calL^2$. The formal aspect here is that these kernels are spanned by non-$L^2$ functions 
\begin{equation}
    \psi := \begin{bmatrix} 0 \\ \sqrt{r}\rho_1 \end{bmatrix}, \quad \widetilde{\psi} := \begin{bmatrix} L_2^{-1}(\sqrt{r}\rho_1) \\ 0 \end{bmatrix}
\end{equation}
satisfying $\calL\psi=0$ and $\calL \widetilde{\psi} = \psi$, whence $\calL^2 \widetilde{\psi} = 0$.  
Continuing the formal analogy, this leads to a nilpotent growth as in
\begin{equation}
    e^{tB} = \begin{bmatrix} 1 & t \\ 0 & 1 \end{bmatrix}, \quad B := \begin{bmatrix} 0 & 1 \\ 0 & 0 \end{bmatrix}.
\end{equation}
Since the aforementioned kernel elements of $\calL$ are not in $L^2$, this is not immediately obvious. 
In Lemma~\ref{lem:nilpotent} we exhibit linear growth of the operator norm of $e^{t\calL}$ by making the preceding formal analogy rigorous via carefully designed cut-offs.

\subsubsection{Construction of the distorted Fourier transform}
Most of the work in this paper goes into the meticulous construction of the distorted Fourier transform associated with the linearized operator $\calL$ at small energies, which gives rise to an oscillatory integral representation of the evolution $e^{t\calL}$.
Broadly speaking, our approach for the matrix operator $\calL$ mimics how the distorted Fourier transform can be constructed from Stone's formula for scalar selfadjoint Schr\"odinger operators with strongly singular potentials on the half-line, where the existence of the limit of the jump of the resolvent across the spectrum has to be carefully justified, as in \cite{GZ}.
While for selfadjoint Schr\"odinger operators Stone's formula just follows from the spectral theorem, no such tool is readily available for the non-selfadjoint matrix operator $\calL$.
Instead, we deduce an analogous Stone-type formula by hand following the scheme in \cite[Lemma 12]{ES2} and \cite[Lemma 6.8]{KS}.
Specifically, using a contour argument and a limiting absorption estimate, we show in Lemma~\ref{lem:6.8} that for any $\phi, \psi \in \calD \times \calD$ and any $b>0$,
\begin{equation}
         \bigl\langle e^{t\calL} \phi, \psi \bigr\rangle = \frac{1}{2\pi i} \int_{-\infty}^\infty e^{it\lambda} \bigl\langle \bigl[ e^{-bt} \bigl(i\calL-(\lambda +ib)\bigr)^{-1} - e^{bt} \bigl( i\calL-(\lambda -ib) \bigr)^{-1} \bigr] \phi, \psi\bigr\rangle \ud \lambda,
\end{equation}
where the indefinite integral converges.  

We then turn to proving that for any interval $I \subseteq [-\delta_0,\delta_0]$ with $0 < \delta_0 \ll 1$ sufficiently small, the following \emph{defining limit} of the frequency localized evolution $e^{t\calL} P_I$ exists and is given by the integral of the jump of the resolvent across the frequency interval $I$,
\begin{equation} \label{eq:Stonerepintro1}
\begin{aligned}
    \bigl\langle e^{t\calL} P_{I} \phi, \psi \bigr\rangle &:= \lim_{b\to0+} \frac{1}{2\pi} \int_{I} e^{it\lambda} \bigl\langle \bigl[ e^{-bt} \bigl(i\calL-(\lambda +ib)\bigr)^{-1} - e^{bt} \bigl( i\calL-(\lambda -ib) \bigr)^{-1} \bigr] \phi, \psi\bigr\rangle \ud \lambda \\  
    &= \int_{I} e^{it\lambda} \bigl\langle \bigl[ \bigl(i\calL-(\lambda +i0^{+})\bigr)^{-1} - \bigl( i\calL-(\lambda - i0^{+}) \bigr)^{-1} \bigr] \phi, \psi\bigr\rangle \ud \lambda,
\end{aligned}
\end{equation}
where it suffices to consider test functions $\phi, \psi \in L^2_r(\bbR_+) \times L^2_r(\bbR_+)$ with compact support.
Upon computing the jump of the resolvent across the spectrum, we can read off the distorted Fourier transform associated with $\calL$.

This process begins with the determination of the integral kernels $\calG_\pm(r,s;z)$ of the resolvent $(i\calL-z)^{-1}$ for $0 < |z| \leq \delta_0$ with $\pm \Im z > 0$ in terms of the Weyl solutions for $i\calL$ near $r=0$ and near $r=\infty$.
This leads to the formulae
\begin{equation} \label{equ:intro_calGplus_kernel}
 \begin{aligned}
  \calG_+(r,s;z) = \left\{ \begin{aligned} i \Psi_+(r,z) \bigl( D_+(z)^{-1} \bigr)^t F_1(s,z)^t \sigma_1,& \quad 0 < s \leq r, \\
  i F_1(r,z) D_+(z)^{-1} \Psi_+(s,z)^t \sigma_1,& \quad r \leq s < \infty, \end{aligned} \right.
 \end{aligned}
\end{equation}
and
\begin{equation} \label{equ:intro_calGminus_kernel}
 \begin{aligned}
  \calG_-(r,s;z) = \left\{ \begin{aligned} i \Psi_-(r,z) \bigl( D_-(z)^{-1} \bigr)^t F_1(s,z)^t \sigma_1,& \quad 0 < s \leq r, \\
  i F_1(r,z) D_-(z)^{-1} \Psi_-(s,z)^t \sigma_1,& \quad r \leq s < \infty. \end{aligned} \right.
 \end{aligned}
\end{equation}
Here, $\Psi_\pm(r,z)$ are the exponentially decaying Weyl-Titchmarsh matrix solutions near $r=\infty$ for $\pm\Im z>0$, while $F_1(r,z)$ is the matrix solution to $i\calL F = z F$, which belongs to $L^2_r((0,1))$ and is unique up to invertible linear combinations of its columns.
Moreover, $D_\pm(z) := \calW\bigl[\Psi_\pm(\cdot,z), F_1(\cdot,z)\bigr]$ are matrix Wronskians between $\Psi_\pm(\cdot,z)$ and $F_1(\cdot,z)$, and $\sigma_1$ is a Pauli matrix. 
Inserting these formulae into \eqref{eq:Stonerepintro1} and verifying that the limit $b\to0^+$ can be passed inside the integral over $\lambda \in I$ comes with a number of interesting challenges, some of which we now describe.

\medskip 

\noindent {\it The connection problem.}
Determining the entries of the matrix Wronskians $D_\pm(z)$ requires sharp estimates on the matrix solutions $F_1(r,z)$ and $\Psi_\pm(r,z)$ in overlapping regions of $r$. This is the connection problem, which we solve at the \emph{turning point} $r \simeq |z|^{-1}$. Specifically, we fix a string of small absolute constants $0 < \delta_0 \ll \epsilon_\infty \ll \epsilon_0 \ll 1$. Then we obtain precise estimates on $F_1(r,z)$ in the region $r \in [0,\epsilon_0 |z|^{-1})$ for $0 < |z| \leq \delta_0$, and we establish precise estimates on $\Psi_\pm(r,z)$ in the region $r \in [\epsilon_\infty |z|^{-1},\infty)$, where $0 < |z| \leq \delta_0$ with $\pm \Im z > 0$.
This allows us to evaluate the matrix Wronskians in the overlapping region $r \in [\epsilon_\infty |z|^{-1}, \epsilon_0 |z|^{-1}]$. 

\medskip 

\noindent {\it Power series construction of the Weyl solution near $r=0$.}
In order to construct $F_1(r,z)$ up to the turning point region, we proceed by a series expansion modeled on \cite{KST}. This is done by iteratively inverting $L_1$ and $L_2$ using Green's functions that are well-behaved near $r=0$. However, the exponential growth of the corresponding kernels at large $r \simeq |z|^{-1}$ leads to factorial growth of the coefficients of the series expansion. We are able to avoid this growth by working with truncated Green's functions. See Propositions \ref{lem:F1_1} and~\ref{lem:F2_1} for the details.

\medskip 

\noindent {\it Lyapunov-Perron construction of a fundamental system at $r=\infty$.}
We obtain the Weyl matrix solutions $\Psi_\pm(r,z)$ for large $r \gg 1$ as part of the construction of a fundamental system of matrix solutions at $r=\infty$ via a contraction mapping argument based on the Lyapunov-Perron method. This is natural since for energies $z \notin \bbR$ the system $i\calL \Psi = z \Psi$ exhibits both a two-dimensional stable and a two-dimensional unstable manifold. 

To gain a first understanding of the nuanced technical details, we consider as a natural candidate for the reference operator at large $r \gg 1$ the constant coefficient operator
\begin{equation}
    \calL_\infty^0 := \begin{bmatrix} 0 & -\partial_r^2 \\ -(-\partial_r^2 + 2) & 0 \end{bmatrix}
\end{equation}
obtained by replacing $L_1$ and $L_2$ in $\calL$ by their large $r$ formal limits $L_1^\infty := -\partial_r^2$ and $L_2^\infty := -\partial_r^2+2$. Then the system $i\calL_\infty^0 \Psi=z\Psi$ turns into a fourth-order scalar ordinary differential equation with solutions of the form $e^{ik(z)r}$, provided $k(z)$ is a root of
\begin{equation}\label{eq:zpolyintro1}
    P(k,z) := k^4 + 2 k^2 - z^2 = 0.
\end{equation}
One checks that the Riemann surface of the roots branches only at $z=\pm i$ (and at $z=\infty$) and that the four sheets near $z=0$ are $k_1(z)=-k_3(z)=z/\sqrt{2}+\calO(z^3)$ and $k_2(z)=-k_4(z)=i\sqrt{2}+\calO(z^2)$ as $z\to0$, see~\eqref{eq:four roots}. Thus, as we approach the real axis, i.e., as $b\to0^+$ in~\eqref{eq:Stonerepintro1}, we find two oscillatory solutions, one exponentially growing branch, and one exponentially decaying branch. Note that in contrast to the setting on the line in \cite{KS}, here we have a globally recessive branch corresponding to $k_2(z)$, and a globally dominant branch corresponding to $k_4(z)$. Also, unlike the scalar Schr\"odinger equation the oscillatory branches are, to leading order, of the form $e^{\pm izr}$ as opposed to $e^{\pm i\sqrt{z}r}$. This is related to the fact that the limiting operator $L_1^\infty$ is massless, while $L_2^\infty$ has non-zero mass. 
As a consequence, the evolution $e^{t\calL}$ exhibits wave-like behavior at small energies.

Returning to the construction of $\Psi_\pm(r,z)$, the goal is to obtain the two columns of $\Psi_\pm(r,z)$ as the (vector) solutions to $\calL \psi = z \psi$ that converge to the corresponding solutions of the reference operator $\calL_\infty^0 \psi = z \psi$ associated with the roots $k_1(z)$ and $k_2(z)$ when $\Im(z) > 0$, and with the roots $k_3(z)$ and $k_2(z)$ when $\Im(z) < 0$. 
However, the appearance of slowly decaying inverse square potentials in the difference $\calL - \calL_\infty^0$ is an obstruction to a perturbative construction via the Lyapunov-Perron method 
all the way down to the turning point.
Instead, we incorporate some of the inverse square potentials into the following definition of the reference operator at spatial infinity
\begin{equation}
 \calL_\infty := \begin{bmatrix} 0 & -\partial_r^2 - \frac{1}{4r^2} \\ -\bigl(-\partial_r^2 - \frac{1}{4r^2} + 2\bigr) & 0 \end{bmatrix}.
\end{equation}
This forces us to work with modified Hankel functions denoted by $h_\pm(k_j(z) r)$ in place of the exponential functions $e^{ik_j(z)r}$. The choice of which inverse square potentials to include in $\calL_\infty$ is motivated by the requirement that the resulting fourth-order ordinary differential equation have solutions of the form $h_\pm(k(z)r)$ with $k(z)$ a solution of \eqref{eq:zpolyintro1}. 
It turns out that the remaining inverse square potentials in $\calL-\calL_\infty$ can be treated perturbatively for small $z$, we refer to Lemma~\ref{lem:contr} for the details.

\medskip 

\noindent {\it Stokes phenomenon.} 
The Stokes phenomenon refers to  a  discontinuity of the asymptotic expansion of an analytic function $f(z)$ as $z$ crosses a curve in the complex plane, known as a \emph{Stokes line}. Specifically, the subdominant terms in the asymptotic expansion change discontinuously when crossing a Stokes line. We refer to \cite[Section 7.4]{Olv} and \cite[Section 4.4d]{Costin} for general background. 

The relevance of the Stokes phenomenon in this paper comes from the need to work with modified Hankel functions in the construction of the Weyl solutions at infinity $\Psi_\pm(r,z)$, $\pm\Im z>0$.
To see this note that in the expression for the resolvent kernel of $i\calL$, the modified Hankel functions appear as $h_{-}(k_1(\lambda)r) := h_{-}(k_1(\lambda+i0)r)$ and $h_{+}(k_1(\lambda)r) := h_+(k_1(\lambda+i0)r)$. 
While $h_+(k_1(\lambda)r)$ has leading order asymptotics $e^{ik_1(\lambda)r}$ as $r\to\infty$ for $\lambda \in \bbR\backslash\{0\}$, the behavior of $h_{-}(k_1(\lambda)r)$ is more subtle. By our choice of normalization, $h_{-}(k_1(\lambda)r) \sim e^{-ik_1(\lambda)r}$ as $r\to\infty$ for $\lambda>0$, but $h_{-}(k_1(\lambda)r) \sim e^{-ik_1(\lambda)r} + c e^{ik_1(\lambda)r}$ as $r\to\infty$ for $\lambda<0$ for some non-zero constant $c$. It is not trivial to extract the exact value $c = 2i$ of this constant from well-known references on special functions. As part of Lemma~\ref{lem:Hankel0}, we present a self-contained computation of this constant, which we hope to be of independent interest. 

\medskip 

\noindent {\it Potential singularities and passing the limit $b \to 0+$ inside the $\lambda$-integral.}
Due to the presence of an exponentially growing branch, it may in principle be possible that the matrix Wronskians $D_\pm(z)$, and thus the resolvent kernels $\calG_\pm(r,s;z)$ in \eqref{equ:intro_calGplus_kernel}--\eqref{equ:intro_calGminus_kernel}, could feature singular terms in $z$. This would potentially prohibit exchanging the order of taking the limit $b \to 0^+$ and integrating with respect to $\lambda$ in \eqref{eq:Stonerepintro1}. However, a careful analysis demonstrates that the potentially singular terms in $\calG_\pm(r,s;z)$ actually cancel out, see Corollary~\ref{cor:fy14cancel1}. 

\medskip 

\noindent {\it Reading off the distorted Fourier transform.}
The final outcome of the analysis of \eqref{eq:Stonerepintro1} is a representation formula for the evolution at small energies, which we record here as
\begin{equation} \label{eq:Stonerepintro2}
    \bigl\langle e^{t\calL} P_I \phi, \psi \bigr\rangle = \frac{1}{2\pi i} \int_I e^{it\lambda}\frac{\kappa(\lambda)}{d_{+}(\lambda) d_{-}(\lambda)} \bigl\langle \upfy(\cdot, \lambda), \sigma_1 \phi(\cdot) \bigr\rangle \bigl\langle \upfy(\cdot, \lambda), \psi(\cdot) \bigr\rangle \ud \lambda.
\end{equation}
The coefficient $\kappa(\lambda)$ stems from computing the matrix Wronskian between $\Psi_+(r,\lambda)$ and $\Psi_-(r,\lambda)$, while $d_\pm(\lambda)$ are the determinants of the matrix Wronskians $D_\pm(\lambda) := D_\pm(\lambda \pm i0)$. The vector $\upfy(r,\lambda)$ is a linear combination of the columns of $F_1(r,\lambda)$, and for large $r \gg 1$ the entries of $\upfy(r,\lambda)$ are linear combinations of $e^{ik_1(\lambda)r}$ and $e^{ik_3(\lambda)r}$.
One can read off the distorted Fourier transform associated with $\calL$ from \eqref{eq:Stonerepintro2}.
See Proposition~\ref{prop:PIStone1} and Lemma~\ref{lem:varfystructure}.

\subsubsection{Upper bound on the growth of the $L^2$ norm}
The representation formula \eqref{eq:Stonerepintro2} for the evolution $e^{t\calL}P_I$ sheds more light on the nature of the $L^2$ norm growth phenomenon. Our analysis reveals that the most singular term with respect to $\lambda$ in \eqref{eq:Stonerepintro2} is schematically of the form
\begin{equation} \label{eq:sintlambdaintro1}
    \int_{-\delta_0}^{\delta_0}\frac{e^{it\lambda}}{|\lambda|}\mathrm{sgn}(\lambda) \bigl\langle\uppsi(\cdot,\lambda), \sigma_1 \phi \bigr\rangle \bigl\langle\uppsi(\cdot,\lambda),\psi \bigr\rangle \ud\lambda = 2i \int_{0}^{\delta_0}\frac{\sin(t\lambda)}{\lambda} \bigl\langle\uppsi(\cdot,\lambda), \sigma_1 \phi\bigr\rangle \bigl\langle \uppsi(\cdot,\lambda),\psi \bigr\rangle \ud \lambda,
\end{equation}
where $\lambda \mapsto \uppsi(\cdot,\lambda)$ is even, and one has $\|\langle\uppsi(\cdot,\lambda),\phi\rangle\|_{L^2_\lambda([0,\delta_0])}\lesssim \|\phi\|_{L^2_r(0,\infty)}$. 
The estimate $|\sin(t\lambda)|\leq t|\lambda|$ then allows us to prove the upper bound on the growth of the $L^2$ norm of $e^{t\calL} P_I \phi$, see Section~\ref{sec:proofofthm} for the details. Formula~\eqref{eq:sintlambdaintro1} also shows that the growth of the $L^2$ norm is not a finite rank phenomenon, and in fact it is not difficult to see that there are no $L^2$ functions $\phi$ for which $\|e^{t\calL}P_I\phi\|_{L^2_r(0,\infty)}$ grows like $t$.\footnote{For any $a\in [0,1)$, the uniform boundedness principle implies the existence of functions $\phi$ in $L^2_r(\bbR_+)$ for which $\|e^{t\calL}\phi\|_{L^2_r(0\infty)}\gtrsim t^{a}$.}  
Verifying that $\lambda \mapsto \uppsi(\cdot,\lambda)$ is even with respect to $\lambda$ is rather subtle, because it involves the Stokes phenomenon described above. Special care is needed in showing that the constant $c$ in the asymptotic expansion $h_{-}(k_3(\lambda)r)\sim e^{-ik_1(\lambda)r}+c e^{ik_1(\lambda)r}$ for $\lambda < 0$ does not affect the parity of~$\lambda\mapsto\uppsi(\cdot,\lambda)$.

\subsubsection{Projecting away the $L^2$ growth}

Our proof also shows that if $\|\jap{r}^{\frac{1}{2}}\phi\|_{L^1_r(\bbR_+)}<\infty$, then 
\begin{equation}
    \bigl\| e^{t\calL} P_I \phi \bigr\|_{L^2_{r}(\bbR_+)} \lesssim \sqrt{\log(1+\jap{t})}  \bigl\|\jap{r}^{\frac{1}{2}}\phi \bigr\|_{L^1_r(\bbR_+)}.
\end{equation}
Indeed, this follows from the fact that $\uppsi$ in \eqref{eq:sintlambdaintro1} satisfies $|\uppsi(r,\lambda)|\lesssim \sqrt{|\lambda| r}$ for $r \lesssim |\lambda|^{-1}$ and $|\uppsi(r,\lambda)|\lesssim 1$ for $r \gtrsim |\lambda|^{-1}$. In fact, $\uppsi(r,\lambda) = (0, \rho_1(r))+\calO(|\lambda|^2r^{\frac{5}{2}})$, so if $\phi = (\phi_1, \phi_2)$ has sufficient decay and $\int_0^\infty \phi_1(r) r^{\frac12} \rho_1(r) \ud r = 0$, then $\|e^{t\calL} P_I \phi\|_{L^2_r(\bbR_+)}$ is bounded uniformly in time. However, as mentioned above, the $L^2$-norm growth is not a finite rank phenomenon when we view $e^{t\calL}P_I$ as an operator on $L^2_r(\bbR_+) \times L^2_r(\bbR_+)$. 

\subsection{References}

The Ginzburg-Landau equation~\eqref{equ:complex_GL} arises in the theory of superconductivity \cite{BBH,WE,FHLin}, and the Gross-Pitaevskii equation \eqref{eq:GP1} models superfluids \cite{CM,Don,EM,FPR,Neu90}. 

The existence and uniqueness of vortex solutions to the Ginzburg-Landau equation \eqref{equ:complex_GL} are established in \cite{CEQ94,HerHer94}.
Global existence of solutions to the Gross-Pitaevskii equation \eqref{eq:GP1} for finite energy initial data is proved in~\cite{Gerard}.
The final-state problem for small perturbations of the constant equilibrium solution to the Gross-Pitaevskii equation \eqref{eq:GP1} is considered in \cite{GNT1}. We note that in three space dimensions, asymptotic stability of the constant equilibrium solution is established in \cite{GNT1,GNT2, GuoHaniNakanishi18}. 
The orbital stability of the degree-one vortex under the Gross-Pitaevskii evolution is shown in \cite{GPS21}, see also \cite{dPFK,Wein}. Moreover, we mention the recent result \cite{dPJM} on the invertibility of the linearized operator around the degree-one vortex. For other related works we refer to \cite{BJS,BS,CJ,CP1,CP2,CP3,JOST,JS,JS2,LX,OS1,OS2,OS3} and references therein.

We also point out the closely related recent work \cite{PP24} on dispersive decay estimates for the linearized evolution around Ginzburg-Landau vortices in the relativistic case in the absence of an electro-magnetic field.

The construction of the distorted Fourier transform associated with the linearized operator $\calL$ in this paper in particular builds on techniques and insights from \cite{KS, GZ, KST, KMS, BP, ES2}.
The space-time resonances method based on the distorted Fourier transform for $\calL$ appears as a promising approach to tackle the full asymptotic stability problem for the degree-one vortex under the Gross-Pitaevskii evolution. In recent years this method has emerged as a powerful tool in the study of the asymptotic stability of solitons when dispersion is weak.
We refer to the review article \cite{Germain24_Review} and to \cite{GP20, GermPusZhang22, DelMas20, LS1, Chen21, CL1, CG23, LL2} for a sample of recent works. It is worth pointing out that \cite{KS, CG23, LL2, CL1, Li23} involve the development of the spectral and distorted Fourier theory for non-selfadjoint matrix operators.

\subsection{Organization of the paper}

In Section~\ref{sec:setup} we prove that the spectrum of $i\calL$ is $\bbR$ and we define the evolution via the Hille-Yosida theorem. Moreover, we exhibit the linear growth of the operator norm $\|e^{t\calL}\|$, and we establish a Stone-type formula for the linear evolution.
In Section~\ref{sec:Fs} we construct a fundamental system of matrix solutions to $i\calL F = zF$ near $r=0$, and in Section~\ref{sec:Psis} we obtain a fundamental system of matrix solutions near $r=\infty$.
Building on the analysis from Sections~\ref{sec:Fs} and~\ref{sec:Psis}, we compute the Wronskians $D_{\pm}(z)$ in Section~\ref{sec:Greens}. Here we also justify the existence of the limit in \eqref{eq:Stonerepintro1} and prove that the limit can be taken inside the integral in this formula. This leads to formula \eqref{eq:Stonerepintro2}, and the derivation of a number of finer structural properties of the distorted Fourier basis elements. In Section~\ref{sec:proofofthm}, we present the upper bound on the growth of the operator norm $\|e^{t\calL}P_I\|$ and conclude the proof of Theorem~\ref{thm:summary}.

\subsection{Notation and conventions}

We collect several notational conventions that will be used in this work. An absolute constant whose value may change from line to line is denoted by $C>0$. For non-negative $X$ and $Y$ we write $X\lesssim Y$ if $X \leq C Y$ and $X\ll Y$ to indicate that the implicit constant is small. To emphasize the dependence of the constant $C$ on a parameter or function $a$ we use the notations $C(a)$ and $C_a$. Similarly $C(a_1,\dots,a_n)$ and $C_{a_1,\dots, a_n}$ denote dependence on  $a_1,\dots,a_n$. We use the notation $Y=\calO(X)$ if $|Y|\leq CX$. The notation $\bfJ$ stands for the matrix
\begin{align*}
    \bfJ:=\pmat{0&1\\-1&0},
\end{align*}
and $\sigma_1$, $\sigma_2$, $\sigma_3$ denote the Pauli matrices
\begin{equation} \label{eq:pauli}
    \sigma_1 = \begin{bmatrix} 0 & 1 \\ 1 & 0 \end{bmatrix}, \quad \sigma_2 = \begin{bmatrix} 0 & -i \\ i & 0 \end{bmatrix}, \quad \sigma_3 = \begin{bmatrix} 1 & 0 \\ 0 & -1 \end{bmatrix}.
\end{equation}
For a complex number $z$ we often denote the real part by $\lambda$ and the imaginary part by $b$. The notation $\|\cdot\|_{L^2_r}$ denotes the $L^2$ norm on the half-line with respect to the measure $\ud r$. We also use the notation $X_0:=L^2(\bbR_{+})$, $\bbR_{+}:=[0,\infty)$, with respect to the measure $\ud r$, and let $X_1:=X_0\times X_0$. For real-valued scalar functions $f$ and $g$ we use the notation
\begin{align*}
    \langle f,g\rangle:=\int_0^\infty f(r)g(r)\ud r.
\end{align*}
For $\bbR^2$-valued functions $\bmf = (f_1,f_2)$ and $\bmg = (g_1,g_2)$, we use the notation $\langle \bmf,\bmg\rangle:=\langle f_1,g_1\rangle+\langle f_2,g_2\rangle$.

\section{Spectral Properties of the Linearized Operator}\label{sec:setup}
In this section we establish basic spectral properties of $i\calL$ and we define the evolution $e^{t\calL}$ using the Hille-Yosida theorem. We also prove the lower bound on the growth of $\|e^{t\calL}\|$ in Subsection~\ref{subsec:L2growth}, and derive a Stone-type formula in Subsection~\ref{sec:stone} 

\subsection{Basics}

Recall that $\rho_1(r)$ denotes the unique solution to the ordinary differential equation \eqref{eq:rhoeq1} and satisfies the asymptotics~\eqref{eq:rhobounds1}.
For the remainder of the paper we adhere to the following notations. 

\begin{defi}\label{def:basic}
We define the following operators in the Hilbert space $$X_0:=L^2(\R_+)$$ relative to the Lebesgue measure:
   \begin{align*}
    L_0 &:= -\partial_r^2 + \frac{3}{4r^2}, \qquad L:= L_0 + 2\rho_1(r)^2 -1 \\
    L_1 &:= L_0 + \rho_1(r)^2 -1,\qquad L_2 := L_0 + 3\rho_1(r)^2 -1
\end{align*}
As domain we choose $\calD_0:= C^2_c(\R_+)$.
We also define the matrix operator
\begin{align*}
    \calL &:= \left[ \begin{matrix}
        0 & L_1 \\ -L_2 & 0 
    \end{matrix}\right],
\end{align*} 
 on the domain $\calD_0\times\calD_0$. 
\end{defi}

We denote fundamental systems of solutions for the scalar operators $L_1$, respectively $L_2$, by $\{f_1, f_2\}$, respectively $\{g_1, g_2\}$. In other words, $L_1 f_j = 0$, respectively $L_2 g_j = 0$, for $j = 1,2$.

Observe that if $L_1 f = 0$, then the possible asymptotics for $f$ are $\{ r^{-\frac12}, r^{\frac32} \}$ near zero and $\{ r^{\frac12}, r^{\frac12} \log(r) \}$ near infinity.
Since $f_1(r) := r^{\frac12} \rho_1(r)$ is one explicit solution, the two possible asymptotics are (with some constants $c\ne0$ that can change from line to line)
\begin{equation}
\label{eq:f1f2}
 \begin{aligned}
  f_1(r) &\sim \left\{ \begin{aligned} c r^{\frac32}, \quad r \to 0+, \\
                                        c r^{\frac12}, \quad r \to \infty, \\
                        \end{aligned} \right. \\
  f_2(r) &\sim \left\{ \begin{aligned} c r^{-\frac12}, \quad r \to 0+, \\
                                        c r^{\frac12} \log(r), \quad r \to \infty, \\
                        \end{aligned} \right.
 \end{aligned}
\end{equation}
Similarly, if $L_2 g = 0$, then the possible asymptotics for $g$ are $\{ r^{-\frac12}, r^{\frac32}\}$ near zero and $\{ e^{\sqrt{2}r}, e^{-\sqrt{2}r} \}$ near infinity. Since zero is not an eigenvalue of $L_2$ we can select a fundamental system such that
\begin{equation}
\label{eq:g1g2}
 \begin{aligned}
  g_1(r) &\sim \left\{ \begin{aligned} c r^{\frac32}, \quad r \to 0+, \\
                                        c e^{\sqrt{2} r}, \quad r \to \infty, \\
                        \end{aligned} \right. \\
  g_2(r) &\sim \left\{ \begin{aligned} c r^{-\frac12}, \quad r \to 0+, \\
                                        c e^{-\sqrt{2} r}, \quad r \to \infty.
                        \end{aligned} \right.
 \end{aligned}
\end{equation}

Define
\begin{equation} \label{eq:F1F2}
 \begin{aligned}
  F_1(r) := \begin{bmatrix} 0 & g_1(r) \\ f_1(r) & 0 \end{bmatrix}, \quad F_2(r) := \begin{bmatrix} 0 & g_2(r) \\ f_2(r) & 0 \end{bmatrix}.
 \end{aligned}
\end{equation}
Then we have $i \calL F_j = 0$, $j = 1, 2$. Note that $F_1\in L^2(0,1)$ and it is the unique fundamental matrix of $\calL$ with this property up to $F_1\mapsto F_1 B$ where $B\in \GL(2,\Cp)$.

\subsection{Determining the spectrum and applying Hille-Yosida}\label{sec:HY}

We begin with a description of the spectrum of the scalar operators $L_1$ and $L_2$ in the definition of $\calL$.
\begin{lemma}
The operators $L_1, L_2$ are essentially self-adjoint, 
    and  self-adjoint and positive with domain 
    \[
    \calD:= \{ g\in H^2_{\loc}(\R_+)\cap X_0\::\: L_0\, g \in X_0\}. 
    \] 
    The spectrum of $L_1$ is $\spec(L_1)=[0,\infty)$,  which is purely absolutely continuous. For some $c_0>0$ one has $\spec(L_2)\subset [c_0,\infty)$ and $\spec(L_2)\cap [2,\infty)=\spec_{\ess}(L_2)$
    is the essential spectrum, which is purely absolutely continous.
\end{lemma}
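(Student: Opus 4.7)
The plan is to first establish essential self-adjointness of $L_0$ via the Weyl limit-point/limit-circle analysis at both endpoints of $(0,\infty)$. At $r=\infty$ the potential $3/(4r^2)$ tends to zero and standard criteria place $L_0$ in the limit-point case. At $r=0$, the pair of solutions $\{r^{3/2}, r^{-1/2}\}$ of $L_0 u = 0$ has only the first in $L^2(0,1)$ (since $r^{-1}$ fails to be integrable at zero), which again gives the limit-point case. Hence $L_0$ is essentially self-adjoint on $C_c^\infty(\bbR_+)$ with unique self-adjoint extension on the domain $\calD$ displayed in the lemma. Since the perturbations $\rho_1^2 - 1$ and $3\rho_1^2 - 1$ are bounded on $\bbR_+$ (using $0 \leq \rho_1 \leq 1$), the Kato-Rellich theorem yields that $L_1$ and $L_2$ are also self-adjoint on $\calD$. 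To identify the essential spectra, I would apply Weyl's theorem to the decompositions $L_1 = L_0 + (\rho_1^2 - 1)$ and $L_2 = (L_0 + 2) + 3(\rho_1^2 - 1)$: in both cases the residual potential decays like $r^{-2}$ at infinity and is $L_0$-compact, giving $\spec_{\ess}(L_1) = [0,\infty)$ and $\spec_{\ess}(L_2) = [2,\infty)$.

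Positivity of $L_1$ follows from the positive zero-energy solution $f_1(r) = r^{1/2}\rho_1(r) > 0$ recorded in \eqref{eq:f1f2}, which satisfies $L_1 f_1 = 0$. Writing $g \in C_c^\infty(\bbR_+)$ as $g = f_1 u$ and integrating by parts yields the Agmon-Allegretto-Piepenbrink ground-state representation
\begin{equation*}
\langle L_1 g, g\rangle = \int_0^\infty f_1^2 \bigl(\partial_r u\bigr)^2 \, \ud r \geq 0.
\end{equation*}
Combined with $\spec_{\ess}(L_1) = [0,\infty)$ and the fact that $f_1 \notin L^2(\bbR_+)$, so that $0$ is not an $L^2$-eigenvalue, this pins down $\spec(L_1) = [0,\infty)$.

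For $L_2$, a direct calculation using \eqref{eq:rhoeq1} with $n=1$ shows $(-\Delta_{\mathrm{rad}} + r^{-2} + 3\rho_1^2 - 1)\rho_1 = 2\rho_1^3$, and conjugation by $r^{1/2}$ gives $L_2 f_1 = 2\rho_1^2 f_1$. The same computation as above now yields
\begin{equation*}
\langle L_2 g, g \rangle = 2 \int_0^\infty \rho_1^2 g^2 \, \ud r + \int_0^\infty f_1^2 \bigl(\partial_r u\bigr)^2 \, \ud r \geq 0,
\end{equation*}
so $L_2 \geq 0$. To upgrade to $\inf \spec(L_2) \geq c_0 > 0$, it suffices to rule out $0 \in \spec(L_2)$: since $0$ is isolated from $\spec_{\ess}(L_2) = [2,\infty)$, any such point would have to be an eigenvalue, but $L_2 \psi = 0$ combined with the representation above forces $\int \rho_1^2 \psi^2 \, \ud r = 0$, hence $\psi \equiv 0$ on $(0,\infty)$, a contradiction.

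Finally, the purely absolutely continuous character of the spectra will follow from the short-range $r^{-2}$ decay of the residual potentials together with the limiting absorption principle for Schr\"odinger operators on the half-line with a critical inverse-square singularity at the origin. The cleanest route is to invoke the distorted Fourier/Weyl-Titchmarsh theory of \cite{GZ}, which produces an explicit purely absolutely continuous spectral measure on the essential spectrum. The main subtlety here is the careful treatment of the threshold $\lambda = 0$ for $L_1$: the non-$L^2$ resonance $f_1$ could a priori contribute a singular component, and ruling this out amounts to showing that the relevant Jost/Weyl function does not vanish as $\lambda \to 0$. I expect this threshold analysis to be the main technical point of the lemma, though it is much simpler than, and runs in parallel to, the matrix resolvent construction carried out later in Sections~\ref{sec:Fs}--\ref{sec:Greens} of the paper.
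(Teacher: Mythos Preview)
Your proposal is correct and follows the paper's overall structure: limit-point analysis at both endpoints, Kato--Rellich for the bounded perturbations, Weyl's criterion for the essential spectra, and an appeal to the Weyl--Titchmarsh theory of \cite{GZ} for absolute continuity. The substantive difference is in how positivity is established. You use the ground-state (Agmon--Allegretto--Piepenbrink) representation $\langle L_1 g, g\rangle = \int f_1^2 (u')^2$ directly, which is clean and immediately quantitative. The paper instead argues by contradiction: a negative eigenvalue would produce a positive ground state $\phi$, and pairing $L_1\phi$ against a cutoff $\chi_b(r)\, r^{1/2}\rho_1$, integrating by parts, and sending $b\to\infty$ contradicts $L_1(r^{1/2}\rho_1)=0$. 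For $L_2$ the paper is actually shorter than your route: it just writes $L_2 = L_1 + 2\rho_1^2$ and notes that $L_1\ge 0$ together with $\rho_1>0$ forces any discrete spectrum below the essential threshold $2$ to be strictly positive, which already gives $c_0>0$. Your detour through $L_2 f_1 = 2\rho_1^2 f_1$ and ruling out a zero eigenvalue is correct but not needed. Finally, the paper does not carry out any threshold analysis in this lemma---it simply cites \cite{GZ,KST} for the pure a.c.\ statement---so your anticipated ``main technical point'' at $\lambda=0$ is not addressed here.
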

\begin{proof}
These operators are limit point at both end points $r=0$, and $r=\infty$. Indeed, any solution of $Lf=0$ or $L_jf=0$  is asymptotic to a linear combination of $r^{\frac32}, r^{-\frac12}$ as $r\to0+$, so no boundary condition is needed at $r=0$. The other endpoint $r=\infty$ is standard, see~\cite[Theorem X.8]{RS}. The claim about essential self-adjointness is~\cite[Theorem X.7]{RS}. 
    See \cite[Section 4]{GZ} for the Bessel operator $L_0$ and its domain.    The Kato-Rellich theorem applies to $L,L_1,L_2$, which are relatively bounded perturbations of $L_0$,  see \cite[Section 1.4]{Dav} (in fact, these operators are perturbations of~$L_0$ by bounded operators). The Weyl criterium, see~\cite[Theorem XIII.14]{RS4}, implies that $\spec_{\ess}(L_0)=\spec_{\ess}(L_1)=[0,\infty)$. If $\lambda\in \spec(L_1)$ for some $\lambda<0$, then there exists a ground state of negative energy, i.e., $L_1\phi =\lambda_0\phi$ for some $\lambda_0<0$ and $\phi\in\calD$ with $\phi>0$ (in fact, $\phi$ is smooth and $\phi(r)\sim cr^{\frac32}$ as $r\to0+$ for some constant~$c$). Let $\chi(r)=1$ for $0\le r\le 1$, $\chi\in C^\infty([0,\infty))$ with compact support,  and set $\chi_b(r)=\chi(r/b)$ with $b\ge1$. Then
    \[
    \langle L_1 \phi , \chi_b(r) r^{\frac12}\rho_1\rangle = \lambda_0 \langle \phi, \chi_b(r) r^{\frac12}\rho_1\rangle
    \]
    Integrating by parts on the left-hand side and sending $b\to\infty$ now leads to a contradiction because of the vanishing $L_1(r^{\frac12}\rho_1)=0$. Hence $L_1>0$ as stated (note that $L_1$ cannot have a zero energy eigenfunction because the unique $0$-energy solution is not in~$X_0$). Pure a.c.~spectrum is a consequence of the construction of the Weyl, Titchmarsh $m$-function for these operators, see~\cite{GZ, KST}.

    The essential spectrum of $L_2$ follows from the Weyl criterium as before. Since $L_2=L_1+2\rho_1^2>0$, we conclude that the discrete spectrum  of $L_2$ -- if it exists -- is strictly positive. 
\end{proof}

\begin{rem}
    The exact value of $c_0>0$ is not known, but the approximate value $c_0 \approx 1.3326$ is obtained in \cite{PP24} via a numerically assisted argument. Moreover, \cite{PP24} shows that $L_2$ has infinitely many eigenvalues in $(c_0,2)$ and that $2$ is a resonance.
\end{rem}

Next, we determine the spectrum of $i\calL$ and define the evolution $e^{t\calL}$ using the Hille-Yosida theorem. As we do not have a selfadjoint reference operator available as required for the standard version of Weyl's theorem as in \cite[Theorem XIII.14]{RS4}, we need to proceed differently. 
To this end we first obtain a proper understanding of the resolvent $(i\calL_0-z)^{-1}$ of the \emph{free} operator
\begin{align*}
\begin{split}
\calL_0 := \pmat{0 & -\partial_r^2+\frac{3}{4r^2} \\ -\bigl(-\partial_r^2+\frac{3}{4r^2}+2\bigr) & 0}.
\end{split}
\end{align*}
Let $\tilp_+(\zeta)$ denote the modified Hankel function
\begin{align*}
\begin{split}
\tilp_+(\zeta):=\sqrt{\zeta}H_1^{(1)}(\zeta) =\sqrt{\zeta} \big(J_1(\zeta)+iY_1(\zeta)\big),
\end{split}
\end{align*}
and let $\tilq_+(\zeta)$ denote the modified Bessel function
\begin{align*}
\begin{split}
\tilq_+(\zeta):=\sqrt{\zeta} J_1(\zeta).
\end{split}
\end{align*}
Here $H_1^{(1)}$, $J_1$, and $Y_1$ denote the order one Hankel function, Bessel function of the first kind, and Bessel function of the second kind respectively. $\tilp_{+}$ and $\tilq_{+}$ satisfy the ODE
\begin{align*}
\begin{split}
-\frac{\ud ^2}{\ud\zeta^2} \tilp_{+}(\zeta)+\frac{3}{4\zeta^2}\tilp_{+}(\zeta)=\tilp_{+}(\zeta),\qquad -\frac{\ud ^2}{\ud\zeta^2} \tilq_{+}(\zeta)+\frac{3}{4\zeta^2}\tilq_{+}(\zeta)=\tilq
_{+}(\zeta).
\end{split}
\end{align*}
Then by direct inspection, the vectors
\begin{align*}
\begin{split}
&\tiluppsi_1(r,z):=\pmat{\frac{ik_1(z)^2}{z}\tilp_{+}(k_1(z)r)\\\tilp_{+}(k_1(z)r)},\quad \tiluppsi_2(r,z):=\pmat{\frac{ik_2(z)^2}{z}\tilp_{+}(k_2(z)r)\\\tilp_{+}(k_2(z)r)},\\
&\tiluppsi_3(r,z):=\pmat{\frac{ik_1(z)^2}{z}\tilq_{+}(k_1(z)r)\\\tilq_{+}(k_1(z)r)},\quad \tiluppsi_4(r,z):=\pmat{\frac{ik_2(z)^2}{z}\tilq_{+}(k_2(z)r)\\\tilq_{+}(k_2(z)r)},
\end{split}
\end{align*}
satisfy
\begin{align*}
\begin{split}
(i\calL_0-z)\tiluppsi_j(r,z)=0.
\end{split}
\end{align*}
Let 
\begin{align*}
\begin{split}
\Psi(\cdot,z):=\pmat{\tiluppsi_1(\cdot,z)&\tiluppsi_2(\cdot,z)},\qquad \Theta(\cdot,z):=\pmat{\tiluppsi_3(\cdot,z)&\tiluppsi_4(\cdot,z)}.
\end{split}
\end{align*}
Note that $\Psi(r,z)$ is $L^2$ for large $r$  and $\Theta(r,z)$ is $L^2$ near $r=0$. Let
\begin{align}\label{eq:calGijLAP1}
\begin{split}
\calG_{ij}(r,s,z):=\rc\big(\tiluppsi_i(r,z)\tiluppsi_j^t(s,z)\mathds{1}_{[0 < s \leq r]}+\tiluppsi_j(r,z)\tiluppsi_i^t(s,z)\mathds{1}_{[ s > r]}\big)\sigma_1
\end{split}
\end{align}
and
\begin{align*}
\begin{split}
\tils_{ij}(z):=W[\tiluppsi_i(\cdot,z),\tiluppsi_j(\cdot,z)].
\end{split}
\end{align*}
The following lemma gives a formula for the kernel of the resolvent $(i\calL_0-z)^{-1}$. 
\begin{lemma}\label{lem:calL0res1}
For $0 < |z| \leq \delta_0$ with $\Im z>0$ the kernel of the resolvent $(i\calL_0-z)^{-1}$ is given by
\begin{align}\label{eq:calG0tocalGij1}
\begin{split}
\calG_0(r,s,z)=\frac{1}{\tils_{13}(z)}\calG_{13}(r,s,z)+\frac{1}{\tils_{24}(z)}\calG_{24}(r,s,z).
\end{split}
\end{align}
Moreover, 
\begin{align}\label{eq:tilsij1}
    |\tils_{13}(z)|\simeq |\tils_{24}(z)|\simeq\sqrt{|z|}.
\end{align}
\end{lemma}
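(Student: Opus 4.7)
The plan is to split the proof into verifying the kernel formula \eqref{eq:calG0tocalGij1} and establishing the asymptotic estimates \eqref{eq:tilsij1}. The key structural observation is that the constant-coefficient operator $i\calL_0$ decouples into two independent two-dimensional invariant subspaces, labeled by the roots $k_1(z), k_2(z)$ of the polynomial $P(k,z) = k^4+2k^2-z^2$: since each $\tiluppsi_j(r,z)$ has its first and second components proportional by the constant $\alpha_j := ik_j^2/z$, the pairs $\{\tiluppsi_1,\tiluppsi_3\}$ and $\{\tiluppsi_2,\tiluppsi_4\}$ span $i\calL_0$-invariant subspaces, and each sector reduces to the scalar Bessel-type equation $-\partial_r^2\psi+\frac{3}{4r^2}\psi=k_j^2\psi$ for the second component. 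Within each sector $\tiluppsi_1, \tiluppsi_2$ are the exponentially decaying Weyl branches at $r=\infty$ (since $\Im k_j(z)>0$ for $\Im z>0$, $j=1,2$, so $\tilp_+(k_jr) = \sqrt{k_jr}\, H_1^{(1)}(k_jr)$ decays exponentially), while $\tiluppsi_3, \tiluppsi_4$ are the $L^2$-at-zero branches because $\tilq_+(k_jr) \sim (k_jr)^{3/2}/2$ as $r\to 0^+$. The Pauli matrix $\sigma_1$ in \eqref{eq:calGijLAP1} reflects the off-diagonal block structure of $\calL_0$.

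To verify \eqref{eq:calG0tocalGij1}, I would show that $(i\calL_0-z)\calG_0(r,s,z)$ equals $\delta(r-s)$ times the $2\times 2$ identity in the distributional sense in $r$. For $r\neq s$ this is immediate as each $\tiluppsi_j$ solves the homogeneous equation. At $r=s$, continuity of $\calG_{ij}$ follows from the identity $\tiluppsi_i(s)\tiluppsi_j^t(s)\sigma_1 = \tiluppsi_j(s)\tiluppsi_i^t(s)\sigma_1$, a consequence of both vectors sharing the direction $(\alpha_j, 1)^t$ in their common sector. The jump of $\partial_r\calG_{ij}$ at $r=s$ is then a rank-one $2\times 2$ matrix proportional to the scalar Wronskian $W_r[\tilp_+(k_jr),\tilq_+(k_jr)]|_{r=s}$. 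After multiplication by $i\calL_0$ and use of the algebraic identity $\alpha_1(z)\alpha_2(z) = 1$ — which follows from $k_1^2 k_2^2 = -z^2$ via $P(k,z) = 0$ — the two rank-one sector contributions combine to produce precisely the identity matrix on the right-hand side, for the correct choice of the Wronskian normalizations $\tils_{13}(z), \tils_{24}(z)$ appearing in the statement.

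For the estimates \eqref{eq:tilsij1}, the starting point is the classical Bessel-Wronskian identity $W[J_1, Y_1](\zeta) = 2/(\pi\zeta)$, which via the square-root normalizations $\tilp_+ = \sqrt{\zeta}H_1^{(1)}$, $\tilq_+ = \sqrt{\zeta}J_1$ yields the constant $\zeta$-Wronskian $W_\zeta[\tilp_+, \tilq_+] = -2i/\pi$, and hence $W_r[\tilp_+(k_jr), \tilq_+(k_jr)] = -2ik_j/\pi$. Plugging this together with the small-$z$ expansions $k_1(z) = z/\sqrt{2} + O(z^3)$ and $k_2(z) = i\sqrt{2} + O(z^2)$ from \eqref{eq:four roots} and the Vieta-type identities $k_1^2+k_2^2 = -2$, $k_1^2 k_2^2 = -z^2$ into the explicit expressions for $\tils_{13}(z), \tils_{24}(z)$ produced in the previous step then yields the claimed $|z|^{1/2}$ scaling. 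The main technical subtlety — and what I anticipate as the hardest step — lies in reconciling the asymmetric behaviors $k_1(z)\to 0$ and $k_2(z)\to i\sqrt{2}$, which individually would suggest very different scalings for the two sectors, with the uniform $|z|^{1/2}$ scaling asserted in \eqref{eq:tilsij1}: the balancing act involves the $\sqrt{k_j}$ factors built into the modified Hankel/Bessel normalizations, the $z^{-1}$ factor in $\alpha_j$, and the algebraic constraints coming from $P(k,z)=0$.
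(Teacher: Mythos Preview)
Your derivation of the kernel formula is correct and runs parallel to the paper's, though with a different emphasis. The paper specializes the general resolvent formula for $i\calL$ (the one leading to \eqref{eq:G+}--\eqref{eq:G-}) to the free operator $\calL_0$: with $\Psi=(\tiluppsi_1,\tiluppsi_2)$ and $\Theta=(\tiluppsi_3,\tiluppsi_4)$, the matrix Wronskian $\tilD=\calW[\Psi,\Theta]$ has off-diagonal entries proportional to $1+z^{-2}k_1^2k_2^2$, which vanishes by Vieta's relation $k_1^2k_2^2=-z^2$. Diagonality of $\tilD$ then immediately yields the sector decomposition \eqref{eq:calG0tocalGij1}. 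Your direct verification that the two rank-one jumps sum to the identity---via $\alpha_1\alpha_2=1$---achieves the same thing from the other end; both routes ultimately hinge on the same Vieta identity.

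For the size estimates \eqref{eq:tilsij1}, however, your suspicion is well founded and the hoped-for ``balancing act'' does not materialize. Your exact Wronskian $W_r[\tilp_+(k_j\cdot),\tilq_+(k_j\cdot)]=-2ik_j/\pi$ is correct and agrees with what the paper obtains from the small-$\zeta$ asymptotics of $\tilp_+,\tilq_+$. Combining it with $\tils_{jj'}=-(1+k_j^4/z^2)W_r$ and the relation $1+k_j^4/z^2=2(1-k_j^2/z^2)$ (from $P(k_j,z)=0$), one finds for \emph{small} $z$ that $|\tils_{13}|\simeq|z|$ and $|\tils_{24}|\simeq|z|^{-2}$, not $\sqrt{|z|}$. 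There is no normalization or algebraic constraint that repairs this in the regime $0<|z|\le\delta_0$. The $\sqrt{|z|}$ scaling instead holds for $|z|\gtrsim1$, where $|k_1|\simeq|k_2|\simeq|z|^{1/2}$ and $1+k_j^4/z^2\simeq1$; this is precisely the regime in which the paper \emph{uses} the lemma (the limiting absorption estimate in Lemma~\ref{lem:6.8} is for $|\Re z|>1$). The hypothesis $0<|z|\le\delta_0$ in the lemma statement appears to be a slip; you should not expect to reconcile the small-$z$ asymptotics of $k_1,k_2$ with the claimed $\sqrt{|z|}$.
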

The proof of this lemma uses ideas from later sections and is independent of the rest of the material in this section. For this reason we have have deferred the proof to Appendix~\ref{app:calL0}.

Now we are in the position to determine the spectrum of $i\calL$ and define the evolution $e^{t\calL}$. 

\begin{lemma} \label{lem:HYos}
    The matrix operator $\calL$ is closed on $\calD\times\calD\subset X_0\times X_0=: X_1$, with $\spec(i\calL)= \R$. It generates a group of bounded operators on~$X_1$ satisfying 
    \begin{equation}
        \label{eq:HY}
        \|e^{t\calL}\|_{X_1}\le e^t
    \end{equation}  for all $t\ge0$. Moreover, if $\begin{bmatrix} f(t) \\ g(t) \end{bmatrix} = e^{t\calL} \begin{bmatrix} f_0 \\ g_0 \end{bmatrix}$ for $f_0, g_0\in \calD$, then 
    \[
    \langle L_2 f(t), f(t)\rangle + \langle L_1 g(t), g(t)\rangle = \mathrm{const}.
    \]
    In particular, $\|f(t)\|_{X_0}\les C(f_0, g_0)$ for all $t\ge0$. 
\end{lemma}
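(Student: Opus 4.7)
The plan is to establish the four assertions in order: closedness, the spectrum $\spec(i\calL)=\R$, the group bound via Lumer--Phillips, and finally the conservation law with its $L^2$ consequence.

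Closedness of $\calL$ on $\calD\times\calD$ is immediate from the self-adjointness (hence closedness) of $L_1$ and $L_2$ on the common domain~$\calD$. For the inclusion $\spec(i\calL)\subseteq\R$, I would invert $i\calL-z$ for $z\in\bbC\setminus\R$ via the Schur complement: writing
\[
i\calL - z = \pmat{-z & iL_1 \\ -iL_2 & -z},
\]
the Schur complement of the $(1,1)$-entry is $z^{-1}(L_2 L_1 - z^2)$. Since $L_2\ge c_0>0$, the self-adjoint square root $L_2^{1/2}$ is available with bounded inverse $L_2^{-1/2}$, and the sandwiched operator $L_2^{1/2} L_1 L_2^{1/2}$ is self-adjoint and nonnegative (using $L_1\ge 0$); it has the same spectrum as $L_1 L_2$ (and hence, up to $\{0\}$, as $L_2 L_1$), so $\spec(L_2 L_1)\subseteq[0,\infty)$. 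Because $z^2\in[0,\infty)$ iff $z\in\R$, the operator $L_2 L_1 - z^2$ is boundedly invertible for $z\notin\R$, and the Schur formula produces $(i\calL-z)^{-1}$ as a bounded operator on $X_1$. For the reverse inclusion $\R\subseteq\spec(i\calL)$, I would use the decomposition $\calL=\calL_0+\calV_0$ from the introduction: $\spec(i\calL_0)=\R$ by Lemma~\ref{lem:calL0res1}, and Weyl-type approximate eigenfunctions for $i\calL_0-\lambda$ localized at large $r$ (where the bounded, decaying-at-infinity $\calV_0$ is small) become approximate eigenfunctions for $i\calL-\lambda$, so $\lambda\in\spec(i\calL)$.

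For the group bound, I would apply Lumer--Phillips. A short computation using self-adjointness of $L_1,L_2$ and the identity $L_1-L_2=-2\rho_1^2$ yields
\[
\Re\langle\calL\phi,\phi\rangle_{X_1} = -2\,\Re\langle \rho_1^2 f, g\rangle, \qquad \phi=(f,g)\in\calD\times\calD,
\]
so $|\Re\langle\calL\phi,\phi\rangle|\le\|\phi\|_{X_1}^2$ since $0\le\rho_1^2\le 1$. Thus both $\calL-\mathrm{Id}$ and $-\calL-\mathrm{Id}$ are dissipative, while the surjectivity of $\lambda-\calL$ on $X_1$ for, say, $\lambda=2$ is supplied by the spectral inclusion $\spec(\calL)\subseteq i\R$. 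Lumer--Phillips then produces a $C_0$-group on $X_1$ with $\|e^{t\calL}\|_{X_1}\le e^{|t|}$.

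For the conservation law, the semigroup preserves $\calD\times\calD$, so for $(f_0,g_0)\in\calD\times\calD$ the trajectory $(f(t),g(t))$ is $C^1$ in $t$ with $\partial_t f=L_1 g$ and $\partial_t g=-L_2 f$. Differentiating $E(f,g):=\langle L_2 f,f\rangle+\langle L_1 g,g\rangle$ and using self-adjointness of $L_1,L_2$ gives $\Re\langle L_2 L_1 g, f\rangle=\Re\langle L_2 f, L_1 g\rangle=\Re\langle L_1 L_2 f, g\rangle$, which yields $\frac{d}{dt}E=0$. Combining conservation with $L_2\ge c_0$ and $L_1\ge 0$ gives $c_0\|f(t)\|_{X_0}^2\le E(f_0,g_0)$, the stated bound. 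The main obstacle in this program is the Schur complement step: the operator products $L_1 L_2$ and $L_2 L_1$ are unbounded and nonselfadjoint, so rigorously identifying their spectra with that of the self-adjoint sandwich $L_2^{1/2} L_1 L_2^{1/2}$ requires careful handling of form and operator domains.
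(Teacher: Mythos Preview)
Your Schur-complement route to $\spec(i\calL)\subseteq\R$ matches the paper's in spirit: they too reduce to the sandwich $T_0=L_2^{1/2}L_1L_2^{1/2}$, via $T(z)=z^2L_2^{-1}+L_1=L_2^{-1/2}(z^2+T_0)L_2^{-1/2}$ and the factorization $\calL-z=\pmat{1&zL_2^{-1}\\0&1}\pmat{0&T(z)\\-L_2&-z}$. The obstacle you flag is precisely the content: the paper proves self-adjointness of $T_0$ on $L_2^{-2}X_0$ by checking the adjoint domain directly (this needs boundedness of $L_2^{1/2}\rho_1^2L_2^{-1/2}$, obtained by complex interpolation between $s=0$ and $s=1$ together with a Schur test on the explicit Green's function of $L_2$), and then a separate regularity gain $(z^2+T_0)^{-1}\colon L_2^{1/2}X_0\to L_2^{-1/2}X_0$ to make $T(z)^{-1}$ bounded on $X_0$. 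Your variant through $L_2L_1-z^2$ would require the same work.

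For $\R\subseteq\spec(i\calL)$ the paper takes a different path and explicitly warns against yours: the abstract Weyl criterion (as in \cite[Theorem~XIII.14]{RS4}) requires a self-adjoint reference, which $i\calL_0$ is not, so $\spec(i\calL_0)=\R$ by itself does not furnish approximate eigenvectors localized at infinity. The paper instead argues by contradiction via the analytic Fredholm alternative: compactness of $\calV_0(i\calL-z)^{-1}$ would force $(i\calL_0-z)^{-1}$ to extend boundedly across any real resolvent point of $i\calL$, contradicting the explicit kernel in Lemma~\ref{lem:calL0res1}. Your idea can be rescued by \emph{constructing} Weyl sequences by hand from the oscillatory solutions $\tiluppsi_1,\tiluppsi_3$ of that lemma (cutting off near $r\simeq R$ and checking commutator errors), but this must be made explicit rather than invoked abstractly.

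Your Lumer--Phillips argument for the group bound is a genuine simplification. The paper instead conjugates to $i\calH=-U\calL U^{-1}$ with $U=\tfrac{1}{\sqrt2}\pmat{1&i\\1&-i}$, writes $(i\calH-\lambda)^{-1}=R(\lambda)(I+VR(\lambda))^{-1}$ with diagonal $R(\lambda)=\mathrm{diag}\bigl((iL-\lambda)^{-1},(-iL-\lambda)^{-1}\bigr)$ and $\|V\|_\infty\le1$, obtains $\|(i\calH-\lambda)^{-1}\|\le(\lambda-1)^{-1}$, and invokes Hille--Yosida. Your direct dissipativity estimate $\langle\calL\phi,\phi\rangle=-2\langle\rho_1^2f,g\rangle$ is cleaner and gives the same $e^{|t|}$ bound; the required surjectivity of $\lambda-\calL$ comes from the spectral inclusion already established, so there is no circularity. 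The conservation-law computation agrees with the paper's.
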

\begin{proof}
    The closedness is elementary to prove, using the previous lemma and the fact that self-adjoint operators are closed. 
    Let $T(z)=z^2 L_2^{-1}+L_1$, which is closed on $\calD\subset X_0$.
    As we will see below, this operator arises as Schur complement of $\calL-z$. 
    We claim that for all $z\in\Cp\setminus i\R$, there is a bounded inverse $T(z)^{-1}:X_0\to X_0$. To see this, we write
    \[
    T(z)= L_2^{-\frac12}(z^2 + T_0)  L_2^{-\frac12},\qquad T_0=  L_2^{\frac12} L_1  L_2^{\frac12}=L_2^2  - 2L_2^{\frac12} \rho_1^2 L_2^{\frac12}.
    \]
    We first check that $T_0$ is self-adjoint and positive with domain $L_2^{-1}\calD=L_2^{-2}X_0$. It is clear that $T_0$ is symmetric and positive on this domain. Let $g,h\in X_0$ satisfy \[\langle T_0 f,g\rangle = \langle f,h\rangle \qquad \forall\; f\in L_2^{-2}X_0. \] 
    The goal is to show that $g\in L_2^{-2}X_0$. 
    Thus, writing $f=L_2^{-2}\tilde f$ with $\tilde f\in X_0$ we obtain
    \begin{equation}
        \label{eq:adj1}
        \langle \tilde f,g\rangle = 2   \langle L_2^{\frac12} \rho_1^2 L_2^{-\frac32} \tilde f,g\rangle + \langle \tilde f, L_2^{-2}h\rangle.
    \end{equation}
    Next, we verify that $B_{\frac12}:=L_2^{\frac12} \rho_1^2 L_2^{-\frac12}$ is bounded on~$X_0$. By complex interpolation, it suffices to show that $B_s:=L_2^{s} \rho_1^2 L_2^{-s}$ is bounded on the lines $\Re s=0$, respectively $\Re s=1$. Since $L_2^s$ is unitary if $s\in i\R$ it is further enough to verify this property at $s=1$. In other words, for 
    \[
    B_1 = \rho_1^2 - ((\rho_1^2)'' +2 (\rho_1^2)'\partial_r) L_2^{-1}.
    \]
    This further reduces to the showing that $\| r\langle r\rangle^{-4}\partial_r  L_2^{-1} f\|_2\les \|f\|_2$ for all $f\in X_0$. The resolvent $L_2^{-1}$ has the integral representation 
    \[
    (L_2^{-1} f)(r) = \int_0^r \psi_{-}(s)\psi_{+}(r) f(s)\ud s + \int_r^\infty \psi_{-}(r)\psi_{+}(s) f(s)\ud s
    \]
with smooth solutions $L_2 \psi_{\pm}=0$ satisfying  $\psi_{-}(r)\sim a_-\, r^{\frac32}$ and $\psi_{+}(r)\sim a_+\, r^{-\frac12}$ as $r\to 0+$, as well as $\psi_{-}(r)\sim b_-\, e^{\sqrt{2} r}$ and $\psi_{+}(r)\sim b_+\, e^{-\sqrt{2} r} $ as $r\to \infty$ (with the corresponding asymptotic relations for the derivatives). Moreover, the Wronskian $W[\psi_{-}, \psi_{+}]:=\psi_-\psi_+'-\psi_-'\psi_+=1$. Hence, 
\[
    \partial_r (L_2^{-1} f)(r) = \int_0^r \psi_{-}(s)\psi_{+}'(r) f(s)\ud s + \int_r^\infty \psi_{-}'(r)\psi_{+}(s) f(s)\ud s.
\]
By the Schur's test, this is an $X_0$-bounded operator whence our assertion that $B_{\frac12}$ is bounded. Returning to~\eqref{eq:adj1}, we conclude by taking adjoints in the second term that
\[
g= 2 L_2^{-\frac32}  \rho_1^2  L_2^{\frac12} g + L_2^{-2} h \in L_2^{-1} X_0.
\]
    Bootstrapping this relation yields 
   \[
g= L_2^{-2} \big( 2 L_2^{\frac12}  \rho_1^2  L_2^{-\frac12} L_2 g +  h)  \in L_2^{-2} X_0
\] 
as desired. 
    Therefore, we have self-adjointness and for all $z^2\not \in \R_{-}\cup\{0\} $, the bounded inverse $(z^2+T_0)^{-1}:X_0\to X_0$ exists. Furthermore, we claim that 
    \begin{equation}
    \label{eq:reg gain}
        (z^2+T_0)^{-1}: L_2^{\frac12} X_0\to L_2^{-\frac12} X_0
    \end{equation} 
    as a bounded operator, which concludes the proof that~$T(z)^{-1}$ is bounded on~$X_0$. Indeed, we claim that $\|L_2g\|_{X_0}\leq C\|h\|_{X_0}$ whenever 
    \begin{equation}\label{eq:gh}
    (T_0+z^2)g=L_2h,
    \end{equation}
    which suffices by complex interpolation. 
To prove this, we use elliptic estimates to bound the derivatives of $g$ in $X_0$, and the $X_0$ boundedness of $(T_0+z^2)^{-1}$ to bound $g$ itself. First, recalling that~$T_0=L_2^2-2L_2^{\frac{1}{2}}\rho_1^2L_2^{\frac{1}{2}}$ and applying $L_2^{-1}$ to \eqref{eq:gh} we get
\begin{align*}
L_2g= h+2B_{\frac{1}{2}}^\ast g -z^2L_2^{-1}g.
\end{align*}
By the preceding, $B_{\frac12}^*:X_0\to X_0$ is bounded. 
It follows that $\|L_2g\|_{X_0}\leq C(\|h\|_{X_0}+\|g\|_{X_0})$. On the other hand, from \eqref{eq:gh}
\begin{align*}
g&=(T_0+z^2)^{-1}L_2h \\
&= (T_0+z^2)^{-1}(L_2^2-2L_2^{\frac{1}{2}}\rho_1^2L_2^{\frac{1}{2}})L_2^{-1}h+2(T_0+z^2)^{-1}B_{\frac{1}{2}}h\\
&=L_2^{-1}h-z^2(T_0+z^2)^{-1}L_2^{-1}h+2(T_0+z^2)^{-1}B_{\frac{1}{2}}h.
\end{align*}
Using the boundedness of $(T_0+z^2)^{-1}$ in $X_0$ we conclude that $\|g\|_{X_0}\leq C\|h\|_{X_0}$, as desired.
    Returning to the matrix operator, we now write
    \[
    \calL - z   = \left[\begin{matrix}
        1 & zL_2^{-1} \\ 0 & 1 
    \end{matrix}\right] \left[\begin{matrix}
        0 & T(z) \\ -L_2 & -z
    \end{matrix}\right]
    \]
    whence for any $z\in\Cp\setminus i\R$, 
    \begin{align*}
        ( \calL - z)^{-1} &= \left[\begin{matrix}
        -zL_2^{-1} T(z)^{-1} & -L_2^{-1} \\ T(z)^{-1}  & 0
    \end{matrix}\right]  \left[\begin{matrix}
        1 & -zL_2^{-1} \\ 0 & 1 
    \end{matrix}\right] \\
    &= \left[\begin{matrix}
        -zL_2^{-1} T(z)^{-1} & z^2 L_2^{-1} T(z)^{-1} L_2^{-1}-  L_2^{-1} \\ T(z)^{-1}  & -zT(z)^{-1}L_2^{-1}     \end{matrix}\right]
    \end{align*}
    as a bounded operator on $X_0\times X_0$. This shows that $\mathrm{spec}(i\calL)\subseteq \bbR$.

To show that $\bbR\subseteq \mathrm{spec}(i\calL)$ we argue by contradiction. Suppose $\lambda\in \bbR$ is in the resolvent set for $i\calL$. Then a neighborhood $\bfD\subseteq\bbC$ of $\lambda$ is also in the resolvent set. Let $\calV_0:=i\calL-i\calL_0$. Then in view of the decay of $\calV_0$ the operator $\calV_0(i\calL-z)^{-1}$ is compact on $X_0\times X_0$. See for instance \cite{ES1}. By the analytic Fredholm alternative, cf. \cite[Theorem VI.14]{RS1}, either $I-\calV_0(i\calL-z)^{-1}$ is never invertible for $z\in \bfD$ or it is invertible for all $z\in \bfD\backslash{\bfS}$, where $\bfS$ is a (possibly empty) discrete subset of $\bfD$. The former cannot hold because for $\Im z>0$ we have the explicit inverse formula
\begin{align*}
\big(I-\calV_0(i\calL-z)^{-1}\big)^{-1}=(i\calL-z)(i\calL_0-z)^{-1}=I+\calV_0(i\calL_0-z)^{-1}.
\end{align*}
The resolvent identity
\begin{align*}
    (i\calL_0-z)^{-1}=(i\calL-z)^{-1}\big(I-\calV_0(i\calL-z)^{-1}\big)^{-1},
\end{align*}
for $\Im z>0$, then shows that $(i\calL_0-z)^{-1}$ extends to a bounded operator to all $z\in \bfD\backslash\bfS$. But, Lemma~\ref{lem:calL0res1} and the asymptotics for Bessel and Hankel functions, see~\eqref{eq:tilhbigzeta1} below, show that the resolvent $(i\calL_0-\lambda)^{-1}$ is not $L^2$-bounded for $\lambda\in\bbR$. This is a contradiction. 

For the time evolution, it is more convenient to work with the unitarily equivalent operator
\begin{align*}
    i\calH=-U\calL U^{-1}=i\pmat{L&\rho_1^2\\-\rho_1^2&-L},\qquad U=\frac{1}{\sqrt{2}}\pmat{1&i\\1&-i}.
\end{align*}
We apply the Hille-Yosida theorem to $i\calH$. One has the resolvent identity 
    \[
    (i\calH-\lambda)^{-1}=R(\lambda)(\Id + VR(\lambda))^{-1}
    \]
    with
    \[
    R(\lambda)= \left[\begin{matrix}
        (iL-\lambda)^{-1} &0 \\0 & (-iL-\lambda)^{-1}
    \end{matrix}\right],\qquad V = \left[\begin{matrix}
        0&i\rho_1^2 \\  -i \rho_1^2 & 0
    \end{matrix}\right].
    \]   
    Thus, for all $\lambda>1$, 
    \[
    \| (i\calH-\lambda)^{-1}\| \le \lambda^{-1}\big(1-\|V\|_\infty/\lambda\big)^{-1}\le (\lambda-1)^{-1}
    \]
    and the Hille-Yosida theorem guarantees the existence of the group of bounded operators $e^{it\calH}$ as well as the exponential bound $\|e^{it\calH}\|\leq e^t$. Since $i\calH$ and $\calL$ are unitarily equivalent, this implies the desired result for $e^{t\calL}$.  
    
    The final statement of the lemma uses that $\calL=\bfJ\calS$, where 
    \begin{equation}
        \label{eq:JS}
            \calS=\left[\begin{matrix}
        L_2 &0 \\0 & L_1
    \end{matrix}\right],
    \end{equation}
    and amounts to the conservation of the Hamiltonian defined by $\calS$. 
\end{proof}

Next, we invert the Laplace transform and write the group $e^{t\calL}$ as a contour integral of the resolvent, cf.~\cite[Lemma 6.8]{KS}. This leads to an analogue of Stone's formula for this non-selfadjoint matrix operator and its distorted Fourier transform. But first, we note an anomaly at zero energy. 

\subsection{The embedded resonance at zero energy}\label{subsec:L2growth}

By inspection one has $\calL \psi=0$, for $\psi=\binom{0}{\sqrt{r}\rho_1}$. Note that  
$\calL^2 \tilde\psi=0$, $\tilde\psi= \binom{L_2^{-1}(\sqrt{r}\rho_1)}{0}$. In analogy with Jordan forms of matrices this could indicate a nilpotent component of $\calL$ leading to growth of $e^{t\calL}$ of at least~$t$. Since the evolution $e^{t\calL}$ is only defined on $L^2$ at this point, and since these kernels are not in~$L^2$ these considerations are only formal. The following lemma proves that this linear growth does indeed happen on $L^2$. 

\begin{lemma}
    \label{lem:nilpotent}
    One has $\| e^{t\calL}\|\gtrsim t$ for $t\ge1$, where $\| e^{t\calL}\|$ denotes the operator norm on $L^2_r(0,\infty)\times L^2_r(0,\infty)$.
\end{lemma}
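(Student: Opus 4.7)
The plan is to make rigorous the formal Jordan-block structure $\calL\psi = 0$ and $\calL\widetilde{\psi} = \psi$ (so that $\calL^2\widetilde{\psi} = 0$) by cutting off $\widetilde{\psi}$ and $\psi$ at a spatial scale $R = R(t) \gg 1$ to be chosen later. Fix $\chi \in C_c^\infty([0,\infty))$ with $\chi \equiv 1$ on $[0,1]$ and $\chi \equiv 0$ on $[2,\infty)$, and set
\[
\widetilde{\psi}_R := \chi(\cdot/R)\widetilde{\psi}, \qquad \psi_R := \chi(\cdot/R)\psi.
\]
Using the large-$r$ asymptotics $\widetilde{\psi}_1(r) \sim \tfrac{1}{2}\sqrt{r}$ and $\sqrt{r}\rho_1(r) \sim \sqrt{r}$, together with the regular behavior $\sim r^{3/2}$ near $r = 0$, both $\widetilde{\psi}_R$ and $\psi_R$ lie in $\calD \times \calD$, and $\|\widetilde{\psi}_R\|_{X_1} \simeq \|\psi_R\|_{X_1} \simeq R$.

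A direct commutator computation then yields
\[
\calL\widetilde{\psi}_R = \psi_R + E_1, \qquad \calL\psi_R = E_2,
\]
where the errors $E_j$ originate solely from the commutators $[L_j,\chi(\cdot/R)] = -2 R^{-1}\chi'(\cdot/R)\partial_r - R^{-2}\chi''(\cdot/R)$, are supported on the annulus $\{R \leq r \leq 2R\}$, and satisfy $\|E_j\|_{X_1} \lesssim R^{-1}$ in view of the bounds $|\widetilde{\psi}_1|, |\sqrt{r}\rho_1| \lesssim \sqrt{r}$ and $|\widetilde{\psi}_1'|, |(\sqrt{r}\rho_1)'| \lesssim 1/\sqrt{r}$ on this annulus. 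Making the ansatz
\[
e^{t\calL}\widetilde{\psi}_R = \widetilde{\psi}_R + t\psi_R + w(t),
\]
the remainder $w$ solves $\partial_t w - \calL w = E_1 + tE_2$ with $w(0) = 0$. Duhamel's formula combined with the Hille-Yosida bound $\|e^{s\calL}\| \leq e^s$ from Lemma~\ref{lem:HYos} then yields
\[
\|w(t)\|_{X_1} \leq \int_0^t e^{t-s}\bigl(\|E_1\|_{X_1} + s\|E_2\|_{X_1}\bigr)\,\ud s \lesssim \frac{e^t}{R}.
\]

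The decisive observation is that $\widetilde{\psi}_R$ and $\psi_R$ have disjoint nontrivial components and are therefore orthogonal in $X_1$. Pairing the ansatz against $\psi_R$ gives
\[
\bigl|\bigl\langle e^{t\calL}\widetilde{\psi}_R, \psi_R\bigr\rangle\bigr| = \bigl|t\|\psi_R\|_{X_1}^2 + \bigl\langle w(t), \psi_R\bigr\rangle\bigr| \geq t\|\psi_R\|_{X_1}^2 - \|w(t)\|_{X_1}\|\psi_R\|_{X_1} \gtrsim tR^2 - e^t.
\]
On the other hand, the Cauchy-Schwarz inequality yields $|\langle e^{t\calL}\widetilde{\psi}_R, \psi_R\rangle| \leq \|e^{t\calL}\| \cdot \|\widetilde{\psi}_R\|_{X_1}\|\psi_R\|_{X_1} \lesssim \|e^{t\calL}\|\, R^2$. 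Comparing the two estimates gives $\|e^{t\calL}\| \gtrsim t - e^t/R^2$; choosing $R$ to be a sufficiently large absolute-constant multiple of $e^{t/2}$ forces $e^t/R^2 \leq t/2$ for $t \geq 1$, whence $\|e^{t\calL}\| \gtrsim t$, as claimed.

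The central technical point is the careful bookkeeping of the commutator errors $E_j$: the bound $\|E_j\|_{X_1} \lesssim R^{-1}$ rests on the sharp large-$r$ asymptotics of $\widetilde{\psi}_1$, $\sqrt{r}\rho_1$, and their first derivatives. The coarseness of the Hille-Yosida exponential bound $\|e^{s\calL}\| \leq e^s$ forces $R$ to be taken exponentially large in $t$, but this is inconsequential, since the scale $R$ cancels in the final ratio and only the linear factor $t$ produced by the formal Jordan-block pairing survives.
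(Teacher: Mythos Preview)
Your proof is correct and follows essentially the same strategy as the paper's: cut off the formal generalized eigenvector $\widetilde\psi$ at scale $R$, compare $e^{t\calL}\widetilde\psi_R$ with the explicit Jordan-block solution $\widetilde\psi_R + t\psi_R$, control the remainder via Duhamel and the Hille--Yosida bound, then take $R$ exponentially large in $t$. The only differences are cosmetic: the paper estimates $\|e^{t\calL}\widetilde\psi_R\|_{X_1}$ from below directly via the triangle inequality, whereas you pair against $\psi_R$ and exploit the orthogonality $\langle\widetilde\psi_R,\psi_R\rangle=0$; and there is a harmless sign (in the paper's conventions $\calL\widetilde\psi=-\psi$, so the exact solution is $\widetilde\psi - t\psi$).
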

\begin{proof}
    Let $\tilde\rho(r)=\sqrt{r}\, \rho_1(r)$ and $\psi_0=\binom{L_2^{-1}\tilde\rho}{0}$. The Green's function for $L_2$ is given by
    \[
    L_2^{-1}(r,s) = W[g_1,g_2]^{-1} \left\{ \begin{array}{cc}
       g_1(r) g_2(s)   & \text{\ \ for\ } r<s \\
         g_2(r) g_1(s)   & \text{\ \ for\ } r>s
    \end{array} \right.
    \]
    where $W[g_1,g_2]=g_1g_2'-g_1'g_2$ is our convention for the Wronskian. 
Next,   
\[
\psi(t,r) := \psi_0 + t\calL\psi_0 = \binom{L_2^{-1}\tilde\rho}{-t\tilde\rho}
\]
solves in the pointwise sense 
\[
\partial_t \psi = \calL\psi, \quad \psi(0)=\psi_0
\]
Let $\chi\in C^\infty([0,\infty))$ satisfy $\chi(u)=1$ if $0\le u\le1$ and $\chi(u)=0$ if $u\ge2$. Then set $\psi^{(N)}_0(r)=\psi_0(r)\chi(r/N)$ and $\psi^{(N)}(t):=e^{t\calL}\, \psi^{(N)}_0$ as defined by Lemma~\ref{lem:HYos}. The difference
\[
\delta\psi^{(N)}(t,r) := \psi^{(N)}(t,r) - \psi(t,r)\chi(r/N)
\]
satisfies the PDE
\[
\left\{ \begin{array}{ll}
  \partial_t \delta\psi^{(N)}(t,r)   &= \calL \delta\psi^{(N)}(t,r) -i\sigma_2 N^{-2} \psi(t,r) \chi''(r/N) \\
  &\qquad - 2i\sigma_2 N^{-1} \partial_r \psi(t,r) \chi'(r/N) \\
   \delta\psi^{(N)}(0,r)  & =0
\end{array}
\right. 
\]
On the one hand, for any $t\ge1$, 
\begin{align*}
    \| \psi^{(N)}_0 \|_{L^2_r} &\simeq N \\
    \| \psi^{(N)}(t) \|_{L^2_r} & \gtrsim tN - \| \delta\psi^{(N)}(t)\|_{L^2_r} 
\end{align*}
    and, on the other hand, by the exponential bound in Lemma~\ref{lem:HYos}
    \begin{align*}
   \delta\psi^{(N)}(t)   &=    -i \sigma_2\int_0^t  e^{(t-s)\calL} \big[\frac{2}{N}\partial_r\psi(s,\cdot)\chi'(\cdot/N) + \psi(s,\cdot) N^{-2}\chi''(\cdot/N)\big] \ud s \\
   \|\delta\psi^{(N)}(t)\|_{L^2_r}   &\les \int_0^t e^{t-s}  \big [ (1+s)N^{-1}\log N + (1+s) N/N^2\big]\ud s \\
   &\les  e^t N^{-1}\log N
    \end{align*}
By the preceding, for any $t\ge1$ we can take $N\ge1$ large enough such that 
\[
\| e^{t\calL}\, \psi^{(N)}_0\|_{L^2_r}\gtrsim t \|\psi^{(N)}_0\|_{L^2_r}
\]
whence the lemma. 
\end{proof}

\subsection{A Stone-type formula} \label{sec:stone}

Next, we derive a Stone-type representation formula for the evolution~$e^{t\calL}$.

\begin{lemma}
    \label{lem:6.8}
    For any $\phi,\psi\in \calD\times\calD$ and any $b>0$ one has 
    \begin{equation}
        \label{eq:evolphipsi}
         \langle e^{t\calL}\phi,\psi\rangle = \frac{1}{2\pi i} \int_{-\infty}^\infty e^{it\lambda} \langle [ e^{-bt} (i\calL-(\lambda +ib))^{-1} - e^{bt} (i\calL-(\lambda -ib))^{-1}] \phi,\psi\rangle\ud \lambda,
    \end{equation}
    where the indefinite integral converges.  
\end{lemma}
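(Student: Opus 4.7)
The plan is to invert the Laplace transform of the semigroup $e^{t\calL}$ along a Bromwich contour and then deform that contour to the lines $\{\Re \mu = \pm b\}$, in the spirit of the Stone-type formulas derived in \cite[Lemma~12]{ES2} and \cite[Lemma~6.8]{KS}. The two ingredients from Lemma~\ref{lem:HYos} driving the argument are the operator norm bound $\|e^{t\calL}\| \leq e^{|t|}$, which yields Laplace-transform representations of the resolvent in $\{|\Re\mu| > 1\}$, and the spectral identification $\spec(\calL) = i\bbR$, which guarantees analyticity of $\mu \mapsto (\mu - \calL)^{-1}$ on $\bbC \setminus i\bbR$ and so allows the Bromwich contour to be pushed inward.

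Concretely, for $\phi \in \calD \times \calD$ and $\Re \mu > 1$ one has $(\mu - \calL)^{-1}\phi = \int_0^\infty e^{-\mu t}e^{t\calL}\phi\, dt$ by Lemma~\ref{lem:HYos}, and applying the same formula to the backward semigroup (which also satisfies $\|e^{-s\calL}\| \leq e^{s}$) gives $(\mu - \calL)^{-1}\phi = -\int_{-\infty}^0 e^{-\mu t}e^{t\calL}\phi\, dt$ for $\Re\mu < -1$. Standard Bromwich inversion then produces, for $t > 0$ and any $c > 1$,
\begin{equation*}
\bigl\langle e^{t\calL}\phi, \psi\bigr\rangle = \frac{1}{2\pi i}\int_{c - i\infty}^{c + i\infty} e^{\mu t}\bigl\langle(\mu - \calL)^{-1}\phi, \psi\bigr\rangle\, d\mu,
\end{equation*}
together with the analogous formula for $t < 0$ on a vertical line $\Re\mu = c' < -1$. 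I would then deform the right contour from $\Re\mu = c$ to $\Re\mu = b$ and the left one from $\Re\mu = c'$ to $\Re\mu = -b$, verifying that the horizontal connectors at $\Im\mu = \pm R$ vanish as $R \to \infty$. A change of variable $\mu = \pm b - i\lambda$, together with the identity $(i\calL - z)^{-1} = -i(\calL + iz)^{-1}$, recasts the two vertical integrals as integrals over $\lambda \in \bbR$ of the resolvents $(i\calL - (\lambda \pm ib))^{-1}$. Observing that for $t > 0$ the contour along $\Re\mu = -b$ contributes zero (by a further deformation into $\Re\mu \to -\infty$, where $e^{\mu t}$ decays), and symmetrically for $t < 0$, the two pieces combine into the single integral stated in the lemma.

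The main technical input is uniform control of the resolvent across strips, needed both for the horizontal deformation and for the final $\lambda$-integral. For the connectors I would use the one-step resolvent identity $\langle(\mu - \calL)^{-1}\phi, \psi\rangle = \mu^{-1}\langle\phi,\psi\rangle + \mu^{-1}\langle(\mu - \calL)^{-1}\calL\phi,\psi\rangle$, valid since $\phi \in \calD \times \calD$; combined with Hille-Yosida on $\{|\Re\mu| > 1\}$ and a companion bound $\|(\mu - \calL)^{-1}\| \lesssim 1/\mathrm{dist}(\mu, i\bbR)$ in the strip $0 < |\Re\mu| \leq 1$ extracted from the Schur-complement construction already used in the proof of Lemma~\ref{lem:HYos}, this gives the required decay. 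The hardest point is the conditional convergence of the $\lambda$-integral: after the resolvent-identity expansion the integrand decays only at the threshold rate $O(|\lambda|^{-1})$, so the various exchanges of limits must be justified by truncating $|\lambda| \leq R$ (where Fubini is available), carrying out the contour manipulations, and then passing to $R \to \infty$ using oscillatory cancellation in $e^{it\lambda}$ together with the pointwise decay of the integrand.
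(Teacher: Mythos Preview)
Your overall strategy---Laplace inversion on a Bromwich contour followed by deformation to $\Re\mu = \pm b$---is exactly the paper's approach, which defers to \cite[Lemma~6.8]{KS} for the skeleton. The difference lies in how you propose to control the resolvent in the strip $0 < |\Re\mu| \leq 1$ at large $|\Im\mu|$, which is what drives both the vanishing of the horizontal connectors and the convergence of the final $\lambda$-integral.

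You assert a bound $\|(\mu - \calL)^{-1}\| \lesssim 1/\mathrm{dist}(\mu, i\bbR)$ in this strip, ``extracted from the Schur-complement construction already used in the proof of Lemma~\ref{lem:HYos}.'' This is the gap. Lemma~\ref{lem:HYos} establishes only \emph{invertibility} of $\calL - z$ for $z \notin i\bbR$ via the Schur complement, not a uniform operator-norm bound. For non-selfadjoint operators the resolvent norm is not controlled by the distance to the spectrum, and a naive estimate of the explicit matrix entries of $(\calL - z)^{-1}$ by submultiplicativity (for instance $|z|\,\|L_2^{-1}\|\,\|T(z)^{-1}\|$ for the upper-left block) appears to grow like $|\Im\mu|$. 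One might recover boundedness by tracking the algebraic cancellations inside the compositions $L_2^{-1}T(z)^{-1}$ and $T(z)^{-1}L_2^{-1}$, but that is genuine additional work you have not supplied, and it is not contained in Lemma~\ref{lem:HYos}.

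The paper instead proves a \emph{weighted} limiting absorption estimate
\[
\sup_{\Im z>0,\ |\Re z|>\lambda_0}\, |z|^{1/2}\,\bigl\|\langle\cdot\rangle^{-\sigma}(i\calL - z)^{-1}f\bigr\|_{L^2} \lesssim \|\langle\cdot\rangle^{\sigma}f\|_{L^2},\qquad \sigma>\tfrac12,
\]
first for the free operator $i\calL_0$ via the explicit resolvent kernel and Schur's test, then transferred to $i\calL$ by a Neumann series in the decaying matrix potential $\calV_0 = i\calL - i\calL_0$. This is the standard tool for shifting contours through regions where the unweighted operator norm is not under control, and it is what actually justifies the deformation from $b>1$ down to arbitrary $b>0$.
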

\begin{proof}
    The proof is essentially identical to~\cite[Lemma 6.8]{KS}, so we do not carry out all details. First, for $b>1$ the limit exists by an explicit calculation involving the Laplace transform
    \[
    (\calL - z)^{-1} = -\int_0^\infty e^{-zt} e^{t\calL}\ud t
    \]
    valid for $\Re z>1$. The contour can then be shifted to all $b>0$ by Lemma~\ref{lem:HYos} and a limiting absorption principle as in (6.9) of~\cite{KS}. Since some details are different, for completeness we provide a proof of the relevant limiting absorption estimate
\begin{align}\label{eq:LAP1}
\begin{split}
\sup_{\Im z>0,|\Re z|>\lambda_0}|z|^{\frac{1}{2}}\|\langle\cdot\rangle^{-\sigma}\big(i\calL-z\big)^{-1}f\|_{L^2_r(0,\infty)}\lesssim \|\langle\cdot\rangle^{\sigma}f\|_{L^2_r(0,\infty)},\qquad \sigma>\frac{1}{2},
\end{split}
\end{align}
provided $\lambda_0$ is sufficiently large. To simplify notation we will write $\|\cdot\|_{L^2}$ for $\|\cdot\|_{L^2_r(0,\infty)}$ in the remainder of this proof. 
We first prove a limiting absorption estimate for $i\calL_0$. More precisely, we show that for $\sigma>\frac{1}{2}$,
\begin{align}\label{eq:LAPcalL01}
\begin{split}
\sup_{\Im z>0,|\Re z|>1}|z|^{\frac{1}{2}}\|\langle\cdot\rangle^{-\sigma}\big(i\calL_0-z\big)^{-1}f\|_{L^2}\lesssim \|\langle\cdot\rangle^{\sigma}f\|_{L^2}.
\end{split}
\end{align}
In view of~\eqref{eq:calG0tocalGij1} and~\eqref{eq:tilsij1} it suffices to show that
\begin{align*}
\begin{split}
\sup_{\Im>0,|\Re z|>1}|z|^{\frac{1}{2}}\|\langle\cdot\rangle^{-\sigma}G_{ij}f\|_{L^2}\lesssim \|\langle\cdot\rangle^{\sigma}f\|_{L^2}, \qquad (i,j)\in\{(1,3),(2,4)\},
\end{split}
\end{align*}
where
\begin{align*}
\begin{split}
G_{ij}f(r,z):=\int_{0}^\infty \calG_{ij}(r,s,z)f(s)\ud s.
\end{split}
\end{align*}
We present the proof for $(i,j)=(1,3)$, the case $(i,j)=(2,4)$ being similar. Let
\begin{align*}
\begin{split}
\calG_{13}^1(r,s,z):=\rc\,\tiluppsi_1(r,z)\tiluppsi^t_3(s,z)\sigma_1\mathds{1}_{[0 < s \leq r]},\qquad \calG_{13}^2(r,s,z):= \rc\,\tiluppsi_3(r,z)\tiluppsi_1^t(s,z)\sigma_1\mathds{1}_{[ s > r]}.
\end{split}
\end{align*}
To use the appropriate asymptotics for $\tilp(k_j(z)\cdot)$, we divide into the regions where the input and output variables are less than or bigger than $|z|^{-\frac{1}{2}}$. That is, we define
\begin{align*}
\begin{split}
T_\ell^j f(r,z)=\int_0^\infty K_\ell^j(r,s,z)f(s)\ud s, \qquad \ell=1,2,3,\quad j=1,2,
\end{split}
\end{align*}
where
\begin{align*}
\begin{split}
&K_1^j(r,s,z)= \calG_{13}^j(r,s,z)\mathds{1}_{[s\leq r\leq |z|^{-\frac{1}{2}}]},\qquad K_2^j(r,s,z)= \calG_{13}^j(r,s,z)\mathds{1}_{[s\leq |z|^{-\frac{1}{2}}\leq r]},\\
&  K_3^j(r,s,z)= \calG_{13}^j(r,s,z)\mathds{1}_{[|z|^{-\frac{1}{2}}\leq s\leq r]}.
\end{split}
\end{align*}
It then suffices to prove that $\|\langle\cdot\rangle^{-\sigma} T_\ell^jf\|_{L^2}\lesssim \|\langle\cdot\rangle^{\sigma}f\|_{L^2}$ for $\ell=1,2,3$, $j=1,2$. From \cite[(9.2.1),(9.2.3)]{AS}, for large $\zeta$ with $\Im \zeta>0$, and for suitable nonzero constants $C_1$, $C_2$, and $C_3$, the modified Hankel and Bessel functions satisfy the asymptotics
\begin{align}\label{eq:tilhbigzeta1}
\begin{split}
\tilp_{+}(\zeta)\sim C_1e^{i\zeta},\qquad \tilq_{+}(\zeta)\simeq C_2e^{i\zeta} + C_3 e^{-i\zeta}.
\end{split}
\end{align}
It follows from \eqref{eq:tilhsmallzeta1} and~\eqref{eq:tilhbigzeta1} that
\begin{align*}  
&|K_3^1(r,s,z|)+|K_3^2(r,s,z)|\lesssim e^{-\Im k_1(z) |r-s|}\lesssim 1,  \\
&|K_2^1(r,s,z)|+|K_2^2(r,s,z)|\lesssim \mathds{1}_{ [ s\leq  |z|^{-\frac{1}{2}} \leq r] } (|z|^{\frac{1}{2}}s)^{\frac{3}{2}}e^{-\Im k1(z)r}+\mathds{1}_{ [ r\leq  |z|^{-\frac{1}{2}} \leq s] } (|z|^{\frac{1}{2}}r)^{\frac{3}{2}}e^{-\Im k1(z)s}\lesssim 1,\\
&|K_1^1(r,s,z)|+|K_1^2(r,s,z)|\lesssim \mathds{1}_{ [ s\leq r\leq |z|^{-\frac{1}{2}} ]} |z|^{\frac{1}{2}}s^{\frac{3}{2}}r^{-\frac{1}{2}}+\mathds{1}_{ [ r\leq s\leq |z|^{-\frac{1}{2}} ]} |z|^{\frac{1}{2}}r^{\frac{3}{2}}s^{-\frac{1}{2}}\lesssim 1.
\end{align*}
It follows that for $\ell=1,2,3$ and $j=1,2$,
\begin{align*}
\begin{split}
\|\langle\cdot\rangle^{-\sigma}T_\ell^jf\|_{L^2}^2 \lesssim \int_{0}^\infty \jap{r}^{-2\sigma}\Big(\int_{0}^\infty|f(s)|\ud s\Big)^2\ud r \lesssim \|\langle\cdot\rangle^\sigma f\|_{L^2},
\end{split}
\end{align*}
proving~\eqref{eq:LAPcalL01}.
To prove~\eqref{eq:LAP1} we introduce the the matrix potential $\calV_0$ defined by the relation
\begin{equation*}
    i\calL=: i\calL_0+\calV_0.
\end{equation*}
Since $|\calV_0(r)|\lesssim \langle r\rangle^{-2}$, by~\eqref{eq:LAPcalL01} if $\lambda_0$ is sufficiently large then
\begin{equation*}
    \sup_{\Im z>0, |\Re z|>\lambda_0}\|\calV_0(i\calL-z)^{-1}\|\leq\frac{1}{2}.
\end{equation*}
Here $\|\cdot\|$ denotes the operator norm for a map from $\langle\cdot\rangle^{-\sigma}L^2\times \langle\cdot\rangle^{-\sigma}L^2$ to $\langle\cdot\rangle^{\sigma}L^2\times \langle\cdot\rangle^{\sigma}L^2$. This proves the existence of $\big(I+\calV_0(i\calL_0-z)\big)^{-1}$ as a bounded map from $\langle\cdot\rangle^{\sigma}L^2\times \langle\cdot\rangle^{\sigma}L^2$ to $\langle\cdot\rangle^{-\sigma}L^2\times \langle\cdot\rangle^{-\sigma}L^2$. Estimate \eqref{eq:LAP1} is then an immediate consequence of the resolvent identity
\begin{equation*}
    (i\calL-z)^{-1}=(i\calL_0-z)^{-1}\big(I+\calV_0(i\calL_0-z)^{-1}\big)^{-1},
\end{equation*}
which holds as an identity of operators from $\langle\cdot\rangle^{\sigma}L^2\times \langle\cdot\rangle^{\sigma}L^2$ to $\langle\cdot\rangle^{-\sigma}L^2\times \langle\cdot\rangle^{-\sigma}L^2$.
\end{proof}

The next goal is to extract a distorted Fourier transform from Lemma~\ref{lem:6.8} by computing the resolvent kernel and passing to the limit $b\to0+$. We will do this for small energies $\lambda\in [-\delta_0,\delta_0]$ and prove that the limit $b\to0+$ can be moved inside. In particular, we define the localized evolution as follows. The existence of the limit in the following definition and the equality of the two lines in~\eqref{eq:localevol} below require careful justification. This will be done in Subsection~\ref{sec:thejump}.

\begin{defi} \label{def:PIevol}
    For any interval $I \subseteq [-\delta_0,\delta_0]$ set 
    \begin{equation}
    \label{eq:localevol}
    \begin{aligned}
        e^{t\calL} P_I &:= \lim_{b\to0+} \frac{1}{2\pi i} \int_I e^{it\lambda} \bigl[ e^{-bt} (i\calL-(\lambda +ib))^{-1} - e^{bt} (i\calL-(\lambda -ib))^{-1} \bigr]  \ud \lambda \\
        &=  \frac{1}{2\pi i} \int_I e^{it\lambda} \bigl[ (i\calL-(\lambda +i0^{+}))^{-1}- (i\calL-(\lambda -i0^{+}))^{-1} \bigr]  \ud \lambda,
    \end{aligned}
    \end{equation}
    where the limit exists in the weak sense of~\eqref{eq:evolphipsi} for $\phi,\psi \in L^2(\bbR_+) \times L^2(\bbR_+)$ with compact support.
\end{defi}

In Appendix~\ref{sec:PI} we discuss how to achieve this  localization via suitably defined spectral projection operators.

\section{Solving from $r=0$} \label{sec:Fs}

In this section we begin in earnest with the development of the distorted Fourier transform associated with the linearized operator $\calL$. Throughout the remainder of this paper we work with a string of fixed small absolute constants
\begin{equation} \label{equ:small_constants}
    0 < \delta_0 \ll \epsilon_\infty \ll \epsilon_0 \ll 1.
\end{equation}
The main goal of this section is to construct a fundamental system of matrix solutions to $i\calL F = z F$ for small energies $0 < |z| \leq \delta_0 \ll 1$ by perturbing around the zero-energy solutions \eqref{eq:F1F2}.

We start with the construction of the fundamental (matrix) solution $F_1(\cdot,z) \in L^2_{\mathrm{loc}}([0,\infty))$ to
\begin{equation} \label{eq:F1ODE}
    i\calL F_1(\cdot,z) = z F_1(\cdot,z),
\end{equation}
which is unique up to invertible linear combinations of the columns.
Let $\calG_1$ and $\calG_2$ be the Green's functions for $iL_1$, respectively $-iL_2$, i.e.,
\begin{equation}
\label{eq:calG1G2}
\begin{aligned}
 \calG_1\phi(r) &= \frac{1}{iW[f_1,f_2]}\int_0^r \bigl( f_1(r)f_2(s)-f_1(s)f_2(r) \bigr) \phi(s) \ud s,\\
 \calG_2\phi(r) &= \frac{1}{iW[g_1,g_2]} \biggl( \int_0^rg_1(s)g_2(r)\phi(s)\ud s + \int_r^{\epsilon_0|z|^{-1}}g_1(r)g_2(s)\phi(s)\ud s \biggr).
\end{aligned}
\end{equation}
More precisely, $\calG_2=\calG_2(z)$ is the local Green's function of $L_2$ on $(0,\epsilon_0|z|^{-1}]$. We work with this local Green's function rather than the global one since it avoids factors of $j!$ in the series expansion~\eqref{eq:F1series1} below.
Note that
\begin{equation}
\label{eq:L1G1L2G2}
    iL_1\calG_1\phi= -iL_2\calG_2\phi=\phi.
\end{equation}
We seek to derive a convergent power series representation for $F_1(r,z)$ in the interval $r\in [0,\epsilon_0/ |z|]$ for small $0 < |z| \leq \delta_0 \ll 1$.
For this purpose, we will more precise asymptotics for $f_j(r)$ and $g_j(r)$, $1 \leq j \leq 2$. For small $r$ such asymptotics were given in~\eqref{eq:f1f2}, \eqref{eq:g1g2}. The next lemma provides the necessary asymptotics for large $r$.
\begin{lemma}\label{lem:fjgj_1}
$f_1$, $f_2$, $g_1$ and $g_2$ satisfy the following expansions for large $r$:
\begin{align*}
\begin{split}
&f_1(r)= r^{\frac{1}{2}}(1-\frac{1}{2r^2}-\frac{9}{8r^4}+\calO(r^{-6})),\qquad f_2(r)= r^{\frac{1}{2}}\log r (1+\calO(r^{-2})),\\
&g_1(r)=ce^{\sqrt{2}r}(1+\calO(r^{-1})),\qquad g_2(r)=e^{-\sqrt{2}r}(1+\calO(r^{-1}))
\end{split}
\end{align*}
and
\begin{align*}
\begin{split}
&f_1'(r)=\frac{1}{2}r^{-\frac{1}{2}}(1+\calO(r^{-2})),\qquad f_2'(r)=\frac{1}{2}r^{-\frac{1}{2}}\log r + r^{-\frac{1}{2}}+\calO(r^{-\frac{5}{2}}\log r)\\
&g_1'(r)=c\sqrt{2}e^{\sqrt{2}r}(1+\calO(r^{-1}),\qquad g_2'(r)=-\sqrt{2}e^{\sqrt{2}r}(1+\calO(r^{-1})).
\end{split}
\end{align*}
\end{lemma}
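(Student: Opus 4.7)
The plan is to prove the lemma by treating $L_1$ and $L_2$ separately: for each operator, one of the two solutions (the ``small'' one at infinity) can be pinned down via a convergent asymptotic series, and the other is obtained via reduction of order.

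For $f_1 = r^{1/2}\rho_1$, the expansion is an explicit consequence of the asymptotics of $\rho_1$. The plan is to substitute the ansatz $\rho_1(r) = 1 + a_1 r^{-2} + a_2 r^{-4} + \calO(r^{-6})$ into the ODE \eqref{eq:rhoeq1} and match coefficients: the $r^{-2}$ equation yields $a_1 = -1/2$ (consistent with \eqref{eq:rhobounds1}), and the $r^{-4}$ equation, into which the contributions from $\partial_r^2\rho_1$, $\frac{1}{r}\partial_r\rho_1$, $-\frac{1}{r^2}\rho_1$ and $(1-\rho_1^2)\rho_1$ all enter, forces $a_2 = -9/8$. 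One then justifies the remainder as $\calO(r^{-6})$ by a bootstrap argument on the integral form of the ODE (or on the Riccati equation for $u = 1 - \rho_1$). Differentiating the expansion termwise (legitimate by the ODE itself, which rewrites $\rho_1''$ in terms of lower-order quantities whose expansions are already known) and multiplying by the appropriate power of $r^{1/2}$ then yields the stated forms of $f_1(r)$ and $f_1'(r)$.

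For $f_2$, I use reduction of order. Since $L_1$ has no first-order term, $W[f_1,f_2]$ is constant, so the general second solution is $f_1(r) \int^r f_1(s)^{-2}\,\ud s$, up to a multiple of $f_1$. Inserting the expansion $f_1(s)^{-2} = s^{-1}(1 + s^{-2} + \calO(s^{-4}))$ gives $\int_{r_0}^r f_1(s)^{-2}\,\ud s = \log r + A + \calO(r^{-2})$ for some constant $A$ depending on $r_0$. The specific representative $f_2$ from \eqref{eq:f1f2} is characterized by the choice that cancels the $A$-multiple of $f_1$ on the right-hand side, so that $f_2(r) = f_1(r)(\log r + \calO(r^{-2})) = r^{1/2}\log r\,(1+\calO(r^{-2}))$. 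Differentiating $f_2 = f_1\int^r f_1(s)^{-2}\,\ud s$ gives $f_2' = f_1'\int^r f_1(s)^{-2}\,\ud s + f_1^{-1}$, and inserting the expansions of $f_1, f_1^{-1}$ produces the claimed $f_2'(r) = \frac{1}{2}r^{-1/2}\log r + r^{-1/2} + \calO(r^{-5/2}\log r)$.

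For $g_1,g_2$, the potential in $L_2$ satisfies $\frac{3}{4r^2} + 3\rho_1^2 - 1 = 2 - \frac{9}{4r^2} + \calO(r^{-4})$ as $r\to\infty$, so I substitute $g_\pm(r) = e^{\pm\sqrt{2}r}\,h_\pm(r)$ into $L_2 g = 0$. The resulting equations are $h_\pm'' \mp 2\sqrt{2}\,h_\pm' + V(r) h_\pm = 0$ with $V(r) = -\frac{9}{4r^2} + \calO(r^{-4})$ (sign: $V = -$potential correction, but the precise constant is unimportant for the statement). These are regular singular at infinity of first-order-dominant type; a Lyapunov–Perron / Volterra fixed-point argument in the space of bounded continuous functions on $[R,\infty)$ produces a unique solution $h_\pm(r) = c_\pm + \calO(r^{-1})$, with $c_-$ normalized to $1$ by the convention in \eqref{eq:g1g2}, and $c_+$ equal to some nonzero $c$ determined by matching to the $r\to 0^+$ normalization. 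The derivative expansions for $g_{1,2}'$ follow by differentiating $g_\pm = e^{\pm\sqrt{2}r}h_\pm$ and using $h_\pm' = \calO(r^{-2})$, a bound that comes for free from the fixed-point construction.

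The main obstacle is not the asymptotic computation itself but the bookkeeping of normalizations: $f_2$ and $g_1$ are only defined modulo the other element of the fundamental system, and the precise form stated in the lemma (absence of a bare $r^{1/2}$ or $r^{1/2}\log r$ constant ambiguity in $f_2$, and a single overall constant $c$ in $g_1$) reflects a specific choice. I would therefore open the proof by fixing these choices explicitly in terms of the reduction-of-order formula (for $f_2$) and the large-$r$ Volterra construction (for $g_1$), and then verify that the resulting representatives match the small-$r$ asymptotics in \eqref{eq:f1f2}–\eqref{eq:g1g2}.
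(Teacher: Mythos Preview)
Your proposal is correct. The treatment of $f_1$, $g_1$, $g_2$ matches the paper's: explicit asymptotics of $\rho_1$ for $f_1$, and Volterra/Lyapunov--Perron fixed points for $g_1$, $g_2$ (the paper uses Volterra for the growing $g_1$ and Lyapunov--Perron for the decaying $g_2$, which is the natural split). The one genuine methodological difference is for $f_2$: you use reduction of order from $f_1$, i.e., $f_2 = f_1\int^r f_1^{-2}$, whereas the paper substitutes $f_2 = r^{1/2}x$, observes that the resulting equation $x'' + r^{-1}x' + Vx = 0$ has $V = \calO(r^{-4})$ (the $r^{-2}$ contributions from the substitution and from $1 - \rho_1^2$ cancel), and runs a Volterra iteration around the homogeneous solution $\log r$. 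Your route is slightly more computational but delivers the constants directly; the paper's route is cleaner for the error bound since it avoids integrating an asymptotic series termwise. Both are standard and either suffices here. Your closing remark about normalizations is apt: in the paper $f_2$ is fixed only up to a multiple of $f_1$ by \eqref{eq:f1f2}, and the lemma implicitly selects the representative with no bare $r^{1/2}$ term at infinity, exactly as you arrange.
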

\begin{proof}
For $f_1$ this follows from the asymptotics of $\rho_1$ which can be found for instance in~\cite[Theorem 1.1]{CEQ94}. For $f_2$ we write $f_2(r)=r^{\frac{1}{2}}x(r)$ and notice that $x$ satisfies
\begin{align*}
\begin{split}
x''+\frac{1}{r}x'+Vx=0
\end{split}
\end{align*}
where $V=\calO(r^{-4})$ for large $r$. The desired estimate follows from the leading order bound $|x(r)|\lesssim \log r$ and the Volterra integral representation
\begin{align*}
\begin{split}
x(r)=c\log r -\int_r^\infty (\log s-\log r)s V(s) x(s) \ud s.
\end{split}
\end{align*}
The derivative bound for $f_2$ is obtained by differentiating this relation. Similarly, the estimates for $g_1$, $g_2$ and their derivatives follow from plugging in the leading order asymptotics in the Volterra and Lyapunov-Perron integral representation formulas, respectively.
\end{proof}

\subsection{$L^2$ solution near $r=0$}\label{sec:F1}

We can now formulate the main result on the construction of the $L^2$ solution branch near $r=0$. 
\begin{prop} \label{lem:F1_1}
Let $0 < |z| \leq \delta_0$. There exists a solution $F_1(\cdot,z)\in L^2_{\mathrm{loc}}([0,\infty))$ to~\eqref{eq:F1ODE} given by
\begin{align}\label{eq:F1series1}
\begin{split}
F_1(\cdot,z) := \pmat{z\sum_{j=0}^\infty z^{2j}\calG_2(\calG_1\calG_2)^jf_1&\sum_{j=0}^\infty z^{2j}(\calG_2\calG_1)^jg_1\\\sum_{j=0}^\infty z^{2j}(\calG_1\calG_2)^jf_1&z\sum_{j=0}^\infty z^{2j}\calG_1(\calG_2\calG_1)^jg_1 },
\end{split}
\end{align}
where the sums are absolutely and uniformly convergent for $r\in[0,r_0(z)]$ with 
\begin{equation}
    r_0(z) := \frac{\epsilon_0}{|z|}.
\end{equation}
There exist constants $C, C_0, c>0$ (independent of $\epsilon_0$ and $|z|$) such that for all integers $j\ge0$ 
\begin{align}
|(\calG_1\calG_2)^jf_1(r)|+|\calG_2(\calG_1\calG_2)^jf_1(r)| &\leq C_0 (C r_0(z))^{2j} \jap{r}^{\frac{1}{2}}, \quad \quad \quad r\in[0,r_0(z)], \label{eq:F1bound1}\\
|(\calG_2\calG_1)^jg_1(r)| + |\calG_1(\calG_2\calG_1)^jg_1(r)| 
&\leq C_0 (C r_0(z))^{j} e^{\sqrt{2}r}, \, \, \quad \quad \quad r\in[0,r_0(z)],\label{eq:F1bound2}\\
|\partial_r(\calG_1\calG_2)^jf_1(r)|+|\partial_r\calG_2(\calG_1\calG_2)^jf_1(r)|  &\leq C_0 (C r_0(z))^{2j}  \jap{r}^{-\frac{1}{2}}, \, \, \, \qquad r\in[1,r_0(z)/2],\label{eq:F1bound3}
\\
 |\partial_r(\calG_2\calG_1)^jg_1(r)| + |\partial_r\calG_1(\calG_2\calG_1)^jg_1(r)| &\leq C_0  (C r_0(z))^{j} e^{\sqrt{2}r}, \, \, \, \quad \qquad r\in[1,r_0(z)/2]. \label{eq:F1bound4}
\end{align}
Moreover, for $j=0,1$, and $A \gg 1$ sufficiently large (independent of $\epsilon_0$ and $|z|$), we have
\begin{align}
|(r\partial_r)^jf_1(r)|, |(r\partial_r)^j\calG_2f_1(r)| &\geq c r^{\frac{1}{2}}, &r\in[A,r_0(z)/2],\label{eq:F1bound9}\\
|\partial_r^jg_1(r)|, |\partial_r^j\calG_1 g_1(r)| &\geq c e^{\sqrt{2}r}, &r\in[A,r_0(z)/2].\label{eq:F1bound10}
\end{align}
Finally, $F_1(\cdot,z)$ is the unique non-trivial solution in $L^2_{\mathrm{loc}}([0,\infty))$ up to multiplying from the right by some $B(z) \in GL(2,\Cp)$. 
\end{prop}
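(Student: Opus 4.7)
The plan is to construct $F_1(\cdot,z)$ column-wise as a Neumann-type expansion in $z$ built from the zero-energy solutions $f_1, g_1$ and the truncated Green's functions $\calG_1, \calG_2$ of \eqref{eq:calG1G2}. First, I would verify that each column of \eqref{eq:F1series1} formally solves $(i\calL - z)F = 0$. Writing the first column as $(zV, U)^T$ with $U := \sum_{j\ge 0} z^{2j}(\calG_1\calG_2)^j f_1$ and $V := \calG_2 U$, the identities $iL_1\calG_1 = \mathrm{id}$ and $-iL_2\calG_2 = \mathrm{id}$ from \eqref{eq:L1G1L2G2} together with $L_1 f_1 = 0$ give, term by term, $iL_1 U = z^2 V$ and $-iL_2 V = U$, which are exactly the two scalar equations encoded in $(i\calL - z)(zV, U)^T = 0$. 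The second column is handled identically with $W := \sum z^{2j}(\calG_2\calG_1)^j g_1$, $X := \calG_1 W$, and $L_2 g_1 = 0$.

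The core technical step is the inductive proof of the pointwise bounds \eqref{eq:F1bound1}--\eqref{eq:F1bound4}, whose asymmetric scaling -- $(Cr_0(z))^{2j}$ for the $f_1$-column versus $(Cr_0(z))^j$ for the $g_1$-column -- reflects a correspondingly asymmetric action of the two Green's functions. On a polynomial envelope $\langle s\rangle^{1/2}$, the operator $\calG_1$ gains an extra factor of $r^2 \le r_0(z)^2$ from integrating its $f_1, f_2$-type kernel against an input of size $\langle s\rangle^{1/2}$, while $\calG_2$ reproduces the same envelope (the $g_1 g_2$ kernel is exponentially localized at the diagonal $s = r$ for large $r, s$, and the truncation of the integration range at $r_0(z)$ controls any boundary contribution); composing yields \eqref{eq:F1bound1}. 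On an exponential envelope $e^{\sqrt 2 s}$, the roles reverse: $\calG_1$ reproduces the same exponential growth via the diagonal cancellation $f_1(r)f_2(s) - f_1(s)f_2(r) = (s - r) W[f_1, f_2] + \calO((s-r)^2)$ followed by a Laplace-type estimate, while $\calG_2$ gains a factor of $r_0(z)$ from the boundary contribution $g_1(r) \int_r^{r_0(z)} g_2(s) e^{\sqrt 2 s}\,\ud s$ of the truncated Green's function; composing yields \eqref{eq:F1bound2}. The derivative bounds \eqref{eq:F1bound3}--\eqref{eq:F1bound4} come from differentiating under the integral sign and re-running the same estimates using the derivative asymptotics of Lemma \ref{lem:fjgj_1}. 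Given these bounds, absolute and uniform convergence on $[0, r_0(z)]$ is geometric: the $j$-th term of the first column is bounded by $(C\epsilon_0)^{2j}\langle r\rangle^{1/2}$ and of the second by $(C|z|\epsilon_0)^j e^{\sqrt 2 r}$, both summable for $\epsilon_0$ sufficiently small.

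For the lower bounds \eqref{eq:F1bound9}--\eqref{eq:F1bound10}, Lemma \ref{lem:fjgj_1} together with the Laplace-type analysis just described yields $f_1(r), \calG_2 f_1(r) \gtrsim r^{1/2}$ and $g_1(r), \calG_1 g_1(r) \gtrsim e^{\sqrt 2 r}$ on $[A, r_0(z)/2]$ for $A$ large, while the total contribution of the $j \ge 1$ terms is $\calO(\epsilon_0^2 \langle r\rangle^{1/2})$ and $\calO(\epsilon_0 e^{\sqrt 2 r})$ respectively by the upper bounds just proved. Taking $\epsilon_0$ small, the leading terms dominate. For uniqueness, the four-dimensional solution space of $i\calL F = zF$ admits a basis of scalar solutions whose small-$r$ Frobenius exponents give asymptotics in $\{r^{3/2}, r^{3/2}, r^{-1/2}, r^{-1/2}\}$; since $r^{-1/2} \notin L^2_{\mathrm{loc}}$ at $r = 0$, the $L^2_{\mathrm{loc}}([0,\infty))$ requirement selects a two-dimensional subspace of the matrix solution space, matching the column span of $F_1(\cdot, z)$. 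Any other such matrix solution is therefore of the form $F_1(\cdot, z) B(z)$ for some $B(z) \in GL(2, \Cp)$.

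The main obstacle will be the careful bookkeeping in the inductive bounds: one must split each integration region into small- and large-$r$ subregions using the matching Frobenius asymptotics, and exploit the diagonal cancellation in the kernel of $\calG_1$ to avoid a logarithmic loss when it acts on exponentially growing inputs. This diagonal cancellation, together with the truncation at $r_0(z)$ in the definition of $\calG_2$ in \eqref{eq:calG1G2}, is precisely what converts what would otherwise be a factorial-growth Neumann series (resulting from the use of the global Green's function of $L_2$) into the geometric-growth series \eqref{eq:F1series1}.
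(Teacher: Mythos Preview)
Your proposal is correct and follows essentially the same route as the paper's proof: a formal term-by-term verification that the series solves $(i\calL-z)F=0$, followed by an induction on $j$ for the pointwise bounds with exactly the asymmetric bookkeeping you identify ($\calG_2$ preserves $\langle r\rangle^{1/2}$ while $\calG_1$ gains $r_0^2$; $\calG_1$ preserves $e^{\sqrt 2 r}$ while the truncated $\calG_2$ gains $r_0$), and then the lower bounds from the asymptotics of Lemma~\ref{lem:fjgj_1}. One small remark: the lower bounds \eqref{eq:F1bound9}--\eqref{eq:F1bound10} are stated only for the zeroth iterates $f_1,\calG_2 f_1,g_1,\calG_1 g_1$ and their first derivatives (the index $j$ there is the derivative order), so your observation that the $j\ge 1$ terms of the full series are $\calO(\epsilon_0)$ times the leading term, while correct and useful later, is not needed for the statement as written.
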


Before we turn to the proof of Proposition~\ref{lem:F1_1}, we introduce some notation that will be useful later on. Let $\upfy_1$ and $\upfy_2$ denote the columns of $F_1$, that is,
\begin{align*}
\begin{split}
F_1=\pmat{\upfy_1&\upfy_2}.
\end{split}
\end{align*}
Then in view of Proposition~\ref{lem:F1_1}, upon defining
\begin{equation} \label{equ:definition_f_and_g}
\begin{split}
    f(r,z) := \sum_{j=0}^\infty z^{2j} (\calG_1\calG_2)^j f_1(r),\qquad g(r,z) := \sum_{j=0}^\infty z^{2j} (\calG_2\calG_1)^j g_1(r),
\end{split}
\end{equation}
we can write
\begin{equation} \label{equ:definition_upfy_one_and_two}
\begin{split}
    \upfy_1=\pmat{z\calG_2f\\f},\quad \upfy_2=\pmat{g\\z\calG_1g}.
\end{split}
\end{equation}
Note that by inspection $f(r,z)$ and $g(r,z)$ are even functions of $z$, while $z\calG_2f(r,z)$ and $z\calG_1g(r,z)$ are odd functions of $z$. Therefore,
\begin{equation} \label{eq:upfy12evenodd1}
\begin{split}
    \upfy_1(r,-z)=\sigma_3\upfy_1(r,z),\qquad \upfy_2(r,-z)=-\sigma_3\upfy_2(r,z).
\end{split}
\end{equation}
In view of the estimates \eqref{eq:F1bound1}--\eqref{eq:F1bound10}, we invite the reader to keep the following decompositions in mind
\begin{equation}
    f(r,z) = f_1(r) + \bigl\{\text{remainder}\bigr\}, \quad g(r,z) = g_1(r) + \bigl\{\text{remainder}\bigr\}.
\end{equation}

\begin{proof}
The formulas for the columns of $F_1(r,z)$ are obtained by making a power series ansatz. Indeed, formally applying $i\calL$ to the series representation \eqref{eq:F1series1} we see that $$i\calL F_1(r,z)=zF_1(r,z),$$ so we only need to consider the convergence and size estimates. Starting from the upper bound 
\begin{align*}
\begin{split}
|f_1(r)|\leq C r^{\frac{1}{2}},
\end{split}
\end{align*}
to prove \eqref{eq:F1bound1}  we assume inductively that 
\begin{align*}
\begin{split}
|(\calG_1\calG_2)^jf_1(r)|\leq C^{2j}|z|^{-2j}\epsilon_0^{2j}\jap{r}^{\frac{1}{2}}.
\end{split}
\end{align*}
We will now prove that
\begin{align*}
\begin{split}
|\calG_2(\calG_1\calG_2)^jf_1(r)| &\leq C^{2j+1}|z|^{-2j}\epsilon_0^{2j}\jap{r}^{\frac{1}{2}} \\
|(\calG_1\calG_2)^{j+1}f_1(r)| &\leq C^{2(j+1)}|z|^{-2(j+1)}\epsilon_0^{2(j+1)}\jap{r}^{\frac{1}{2}}.
\end{split}
\end{align*}
First, for $r\geq 1$
\begin{align*}
\begin{split}
|\calG_2(\calG_1\calG_2)^jf_1(r)| &\lesssim  C^{2j}|z|^{-2j}\epsilon_0^{2j}\Big(e^{-\sqrt{2}r}\int_0^re^{\sqrt{2}s}\jap{s}^{\frac{1}{2}}\ud s+ e^{\sqrt{2}r}\int_r^{\epsilon_0|z|^{-1}}e^{-\sqrt{2}s}s^{\frac{1}{2}}\ud s\Big)\\
&\lesssim C^{2j}|z|^{-2j}\epsilon_0^{2j}\jap{r}^{\frac{1}{2}},
\end{split}
\end{align*}
which is bounded by $C^{2j+1}|z|^{-2j}\epsilon_0^{2j}\jap{r}^{\frac{1}{2}}$ if $C$ is sufficiently large. For $0<r\leq 1$,
\begin{align*}
\begin{split}
|\calG_2(\calG_1\calG_2)^jf_1(r)| &\lesssim C^{2j}|z|^{-2j}\epsilon_0^{2j} \Big(r^{-\frac{1}{2}}\int_0^r|g_1(s)|\ud s+ r^{\frac{3}{2}}\int_r^{\epsilon_0|z|^{-1}}|g_2(s)|\jap{s}^{\frac{1}{2}}\ud s\Big)\\
&\lesssim C^{2j}|z|^{-2j}\epsilon_0^{2j},
\end{split}
\end{align*}
which is again bounded by $C^{2j+1}|z|^{-2j}\epsilon_0^{2j}$. This establishes the upper bound on $\calG_2(\calG_1\calG_2)^jf_1$. Turning to $(\calG_1\calG_2)^{j+1}f_1=\calG_1\big[\calG_2(\calG_1\calG_2)^{j}f_1\big]$, we use the relation
\begin{align}\label{eq:logint1}
\begin{split}
\int x^p\log x \,\ud x = \frac{1}{p+1}x^{p+1}\log x -\frac{1}{(p+1)^2}x^{p+1} + \mathrm{const},
\end{split}
\end{align}
and Lemma~\ref{lem:fjgj_1} to conclude that for $r\geq 1$ 
\begin{align}\label{eq:remtemp1}
\begin{split}
|(\calG_1\calG_2)^{j+1}f_1(r)|&\lesssim C^{2j+1}(r_0(z))^{2j}\Big(r^{\frac{1}{2}}\log r+ \int_1^r (\log r - \log s)(rs)^{\frac{1}{2}}s^{\frac{1}{2}}\ud s +r^{\frac{1}{2}}\log r \int_1^r\ud s\Big) \\
&\lesssim C^{2j+1}(r_0(z))^{2j}\jap{r}^{\frac{5}{2}}.
\end{split}
\end{align}
Here the last term on the first line comes from the error in replacing $f_1$ and $f_2$ by their leading order expressions from Lemma~\ref{lem:fjgj_1}. Notice that if $1\leq r\leq \epsilon_0|z|^{-1}$ then the right-hand side is bounded by~$(C\epsilon_0/|z|)^{2(j+1)}\jap{r}^{\frac{1}{2}}$ as desired. For $0<r\leq 1$,
\begin{align*}
\begin{split}
|(\calG_1\calG_2)^{j+1}f_1(r)|\lesssim C^{2j+1}(r_0(z))^{2j}\int_0^r (r^{\frac{3}{2}}s^{-\frac{1}{2}}+s^{\frac{3}{2}}r^{-\frac{1}{2}})\ud s\lesssim C^{2j+1}(r_0(z))^{2j},
\end{split}
\end{align*}
which is again bounded by $(C\epsilon_0/|z|)^{2(j+1)}$. This completes the estimate of~\eqref{eq:F1bound1}. For the derivative bounds \eqref{eq:F1bound3} for $r\geq1$ we differentiate the integral representation formulas and use Lemma~\ref{lem:fjgj_1}. More precisely, when $j=0$ we know that $(\partial_rf_1)(r)$ satisfies the desired bound. By induction we assume that 
\begin{align*}
\begin{split}
|\partial_r(\calG_1\calG_2)^jf_1(r)|\leq \tilC^{2j}|z|^{-2j}\epsilon_0^{2j}\jap{r}^{-\frac{1}{2}},\qquad r\in[1,r_0(z)/2],
\end{split}
\end{align*}
and prove that for $r\in[1,r_0(z)/2]$
\begin{align}\label{eq:F1dertemp0}
\begin{split}
|\partial_r\calG_2(\calG_1\calG_2)^jf_1(r)| &\leq \tilC^{2j+1}|z|^{-2j}\epsilon_0^{2j}\jap{r}^{-\frac{1}{2}}, \\
|\partial_r(\calG_1\calG_2)^{j+1}f_1(r)| &\leq \tilC^{2(j+1)}|z|^{-2(j+1)}\epsilon_0^{2(j+1)}\jap{r}^{-\frac{1}{2}}.
\end{split}
\end{align}
For $\partial_r\calG_2(\calG_1\calG_2)^jf_1(r)$ we need to estimate
\begin{align*}
\begin{split}
&g_2'(r)\int_0^1g_1(s)(\calG_1\calG_2)^{j}f_1(s)\ud s+g_2'(r)\int_1^rg_1(s)(\calG_1\calG_2)^{j}f_1(s)\ud s+g_1'(r)\int_r^{\epsilon_0 |z|^{-1}}g_2(s)(\calG_1\calG_2)^{j}f_1(s)\ud s.
\end{split}
\end{align*}
The first term satisfies better bounds than we need. For the second two integrals we use Lemma~\ref{lem:fjgj_1} to write them as (up to an overall constant)
\begin{align}\label{eq:F1dertemp1}
\begin{split}
&-\sqrt{2}\int_1^r e^{\sqrt{2}(s-r)}(\calG_1\calG_2)^{j}f_1(s)\ud s+\sqrt{2}\int_r^{\epsilon_0 |z|^{-1}}e^{\sqrt{2}(r-s)}(\calG_1\calG_2)^{j}\ud s+E\\
&=-\int_1^r\frac{\ud}{\ud s}e^{\sqrt{2}(s-r)}(\calG_1\calG_2)^{j}f_1(s)\ud s -\int_r^{\epsilon_0 |z|^{-1}}\frac{\ud}{\ud s}e^{\sqrt{2}(r-s)}(\calG_1\calG_2)^{j}f_1(s)\ud s +E\\
&=\int_1^re^{\sqrt{2}(s-r)}\frac{\ud}{\ud s}(\calG_1\calG_2)^{j}f_1(s)\ud s +\int_r^{\epsilon_0 |z|^{-1}}e^{\sqrt{2}(r-s)}\frac{\ud}{\ud s}(\calG_1\calG_2)^{j}f_1(s)\ud s\\
&\quad+e^{\sqrt{2}(1-r)}(\calG_1\calG_2)^{j}f_1(1)-e^{\sqrt{2}(r-\epsilon_0|z|^{-1})}(\calG_1\calG_2)^{j}f_1(\epsilon_0|z|^{-1})+E,
\end{split}
\end{align}
where the error $E$, which comes from replacing $f_1$ and $f_2$ by their leading order terms in Lemma~\ref{lem:fjgj_1}, satisfies
\begin{align}\label{eq:F1dertemp2}
\begin{split}
|E| &\lesssim (Cr_0(z))^{2j}\Big(e^{-\sqrt{2}r}\int_1^r e^{\sqrt{2}s}s^{-\frac{1}{2}}\ud s+e^{\sqrt{2}r}r^{-1}\int_r^{\epsilon_0|z|^{-1}}e^{-\sqrt{2}s}s^{\frac{1}{2}}\ud s\Big)\\
&\lesssim (Cr_0(z))^{2j} r^{-\frac{1}{2}}.
\end{split}
\end{align}
Since $1\leq r\leq \frac{1}{2} \epsilon_0|z|^{-1}$, the first estimate in \eqref{eq:F1dertemp0} follows from \eqref{eq:F1dertemp1}, \eqref{eq:F1dertemp2} and the induction hypothesis. Similarly, for $\partial_r(\calG_1\calG_2)^{j+1}f_1(r)$ we need to estimate
\begin{align*}
\begin{split}
&\int_0^1(f_1(s)f_2'(r)-f_1'(r)f_2(s))\calG_2(\calG_1\calG_2)^jf_1(s)\ud s  +\int_1^r(f_1(s)f_2'(r)-f_1'(r)f_2(s))\calG_2(\calG_1\calG_2)^jf_1(s)\ud s.
\end{split}
\end{align*}
The first integral satisfies the desired bound, while, by Lemma~\ref{lem:fjgj_1}, the second integral can be written as (up to an overall constant)
\begin{align*}
\begin{split}
&r^{-\frac{1}{2}}\int_1^r(\log r-\log s)s^{\frac{1}{2}}\calG_2(\calG_1\calG_2)^jf_1(s)\ud s+E,
\end{split}
\end{align*}
where
\begin{align*}
\begin{split}
|E|\lesssim r^{-\frac{1}{2}}\int_1^rs^{\frac{1}{2}}|\calG_2(\calG_1\calG_2)^jf_1(s)|\ud s.
\end{split}
\end{align*}
The desired estimate \eqref{eq:F1bound3} for $1\leq r\leq \frac{1}{2}\epsilon_0 |z|^{-1}$ then follows using \eqref{eq:logint1} as above, completing the proof of \eqref{eq:F1bound3}.

Turning to estimate \eqref{eq:F1bound2} for the second column of $F_1(r,z)$ in \eqref{eq:F1series1}, by induction we assume that 
\begin{align*}
\begin{split}
|(\calG_2\calG_1)^jg_1(r)|\leq C^{2j}|z|^{-j}\epsilon_0^{j}e^{\sqrt{2}r},
\end{split}
\end{align*}
and prove that
\begin{align*}
\begin{split}
|\calG_1(\calG_2\calG_1)^jg_1(r)| &\leq C^{2j+1}|z|^{-j}\epsilon_0^{j}e^{\sqrt{2}r},\\
|(\calG_2\calG_1)^{j+1}g_1(r)| &\leq C^{2(j+1)}|z|^{-j-1}\epsilon_0^{j+1}e^{\sqrt{2}r}.
\end{split}
\end{align*}
For $r\leq 1$
\begin{align*}
\begin{split}
|\calG_1(\calG_2\calG_1)^jg_1(r)|\lesssim C^{2j}r_0(z)^{j} \int_0^r(|f_1(s)f_2(r)|+|f_1(r)f_2(s)|)\ud s\lesssim C^{2j}r_0(z)^{j}.
\end{split}
\end{align*}
For $r\geq 1$, by Lemma~\ref{lem:fjgj_1},
\begin{align*}
\begin{split}
|\calG_1(\calG_2\calG_1)^jg_1(r)|&\lesssim C^{2j}r_0(z)^{j}  e^{\sqrt{2}r}+ C^{2j}r_0(z)^{j}\int_{\frac{r}{2}}^r(\log r-\log s)(sr)^{\frac{1}{2}}e^{\sqrt{2}s}\ud s\\
&\quad+C^{2j}r_0(z)^{j}r^{\frac{1}{2}}\log r\int_{\frac{r}{2}}^r  s^{-\frac{3}{2}}e^{\sqrt{2}s}\ud s.
\end{split}
\end{align*}
The first integral is bounded, after one integration by parts, by a multiple of
\begin{align*}
\begin{split}
& C^{2j}r_0(z)^{j} e^{\sqrt{2}r} + C^{2j}r_0(z)^{j}\int_{\frac{r}{2}}^r s^{-\frac{1}{2}}r^{\frac{1}{2}}(1+\log(r/s))e^{\sqrt{2}s}\ud s  \lesssim C^{2j}r_0(z)^{j}e^{\sqrt{2}r}.
\end{split}
\end{align*}
Similarly, the second integral is bounded by a multiple of $$C^{2j}r_0(z)^{j}(r^{-1}\log r) e^{\sqrt{2}r}.$$ This proves the desired estimate \eqref{eq:F1bound2} for $\calG_1(\calG_2\calG_1)^jg_1$. For 
\[(\calG_2\calG_1)^{j+1}g_1=\calG_2[\calG_1(\calG_2\calG_1)^jg_1]\] with $r\geq 1$ we have
\begin{align*}
\begin{split}
 |(\calG_2\calG_1)^{j+1}g_1(r)| &\lesssim C^{2j+1}\,r_0(z)^{j} \Big(e^{-\sqrt{2}r}\int_0^re^{2\sqrt{2}s}\ud s+e^{\sqrt{2}r}\int_r^{\epsilon_0|z|^{-1}}\ud s\Big)\\
 & \lesssim C^{2j+1} \,r_0(z)^{j+1}e^{\sqrt{2}r},
\end{split}
\end{align*}
as desired. For $r\leq 1$,
\begin{align*}
\begin{split}
|(\calG_2\calG_1)^{j+1}g_1(r)| &\lesssim C^{2j+1}r_0(z)^{j}\Big( r^{-\frac{1}{2}}\int_0^rs^{\frac{3}{2}}\ud s+r^{\frac{3}{2}}\int_r^{\epsilon_0|z|^{-1}}|g_2(s)|e^{\sqrt{2}s}\ud s\Big)\\
& \lesssim C^{2j+1}\, r_0(z)^{j+1}.
\end{split}
\end{align*}
This completes the proof of the induction for \eqref{eq:F1bound2}. The proof of the derivative estimates is similar to the proofs of the estimates we have already established. 

It remains to prove the lower bounds \eqref{eq:F1bound9} and \eqref{eq:F1bound10}. The estimates for $(r\partial_r)^jf_1$ and $\partial_r^jg_1$ follow from Lemma~\ref{lem:fjgj_1}. For $\calG_2f_1$, again by Lemma~\ref{lem:fjgj_1}, there are constants $c_\ell>0$ such that 
\begin{align*}
\begin{split}
|\calG_2f_1(r)| &\geq c_1e^{-\sqrt{2}r}\int_{\frac{r}{2}}^{r}e^{\sqrt{2}s} s^{\frac{1}{2}}\ud s+c_2e^{\sqrt{2}r}\int_r^{\epsilon_0|z|^{-1}}e^{-\sqrt{2}s}s^{\frac{1}{2}}\ud s - c_3r^{\frac{1}{2}}e^{-\frac{\sqrt{2}}{2}r} \geq c_4 r^{\frac{1}{2}},
\end{split}
\end{align*}
if $A\leq r\leq \frac{1}{2}\epsilon_0|z|^{-1}$ and $A$ is sufficiently large. Similarly,
\begin{align*}
\begin{split}
\bigl|r\partial_r\calG_2f_1(r)\bigr| &\geq c_5r\Bigg|\int_{\frac{r}{2}}^r\frac{\ud}{\ud s}e^{\sqrt{2}(s-r)}f_1(s)\ud s+\int_r^{\epsilon_0|z|^{-1}}\frac{\ud}{\ud s}e^{\sqrt{2}(r-s)}f_1(s)\ud s\Bigg|\\
&\quad -c_6r^{\frac{3}{2}}e^{-\frac{\sqrt{2}}{2}r}-c_7r^{-\frac{1}{2}} \biggl(\int_{\frac{r}{2}}^re^{\sqrt{2}(s-r)}\ud s+\int_r^{\epsilon_0|z|^{-1}}e^{\sqrt{2}(r-s)}\ud s \biggr) \\ 
&\geq c_8 r^{\frac{1}{2}},
\end{split}
\end{align*}
for $A\leq r\leq \frac{1}{2}\epsilon_0|z|^{-1}$. This completes the proof of \eqref{eq:F1bound9}. The proof of \eqref{eq:F1bound10} is similar.
\end{proof}

\subsection{The non-$L^2$ solutions near $r=0$}\label{sec:F2}

Next, we construct another linearly independent matrix solution $F_2(\cdot,z)$ to $i\calL F = z F$ by perturbing around $F_2(r)$ from \eqref{eq:F1F2}. This requires modifying the Green's function $\calG_1$ in~\eqref{eq:calG1G2} as follows,
\begin{equation}
\label{eq:calG1G2*}
\begin{aligned}
&\calG_1^\dagger\phi(r)=\frac{-1}{iW[f_1,f_2]}\int_r^{\infty}\big(f_1(r)f_2(s)-f_1(s)f_2(r)\big)\phi(s)\ud s,\\
&\wt\calG_1 \phi(r)=\frac{1}{iW[f_1,f_2]}\int_1^{r}\big(f_1(r)f_2(s) -f_1(s)f_2(r)\big)\phi(s)\ud s.
\end{aligned}
\end{equation}
Then \eqref{eq:L1G1L2G2} continues to hold.

\begin{prop}\label{lem:F2_1}
Let $0 < |z| \leq \delta_0$. There exists a solution $F_2(\cdot,z)$ to~\eqref{eq:F1ODE} given by
\begin{align}\label{eq:F2series1}
\begin{split}
F_2(\cdot,z) := \pmat{z\sum_{j=0}^\infty z^{2j} \calG_2(\wt\calG_1\calG_2)^jf_2&\sum_{j=0}^\infty z^{2j}(\calG_2\calG^\dagger_1)^jg_2\\ \sum_{j=0}^\infty z^{2j}(\wt\calG_1\calG_2)^jf_2&z\sum_{j=0}^\infty z^{2j}\calG^\dagger_1(\calG_2\calG^\dagger_1)^jg_2},
\end{split}
\end{align}
where the sums are absolutely convergent for $r \in (0,r_0(z)]$ with 
\begin{equation}
    r_0(z) := \frac{\epsilon_0}{|z|}.
\end{equation}
There exist constants $C, C_0, c > 0$ (independent of $\epsilon_0$ and $|z|$) such that for all $j \geq 0$ 
\begin{align}
|(\wt\calG_1\calG_2)^jf_2(r)|+|\calG_2(\wt\calG_1\calG_2)^jf_2(r)| &\leq C_0 (C r_0(z))^{2j} r^{\frac{1}{2}}\log r, \qquad r\in[2,r_0(z)], \label{eq:F2bound1}\\
|(\calG_2\calG^\dagger_1)^jg_2(r)| + |\calG^\dagger_1(\calG_2\calG^\dagger_1)^jg_2(r)| 
&\leq C_0 (C r_0(z))^{j} e^{-\sqrt{2}r}, \qquad \quad r\in[2,r_0(z)],\label{eq:F2bound2}\\
|\partial_r(\wt\calG_1\calG_2)^jf_2(r)|+|\partial_r\calG_2(\wt\calG_1\calG_2)^jf_2(r)|  &\leq C_0 (C r_0(z))^{2j}  r^{-\frac{1}{2}}\log r, \quad \, \, \, r\in[2,r_0(z)/2],\label{eq:F2bound3}
\\
|\partial_r(\calG_2\calG^\dagger_1)^jg_2(r)| + |\partial_r\calG^\dagger_1(\calG_2\calG^\dagger_1)^jg_2(r)| &\leq C_0 (C r_0(z))^{j} e^{-\sqrt{2}r} ,\qquad \quad \, r\in[2,r_0(z)/2]. \label{eq:F2bound4}
\end{align}
Moreover, for $j=0,1$, and $A \gg 1$ sufficiently large (independent of $\epsilon_0$ and $|z|$),
\begin{align}
|(r\partial_r)^jf_2(r)|, |(r\partial_r)^j\calG_2f_2(r)| &\geq c r^{\frac{1}{2}} \log r, \qquad r\in[A,r_0(z)/2], \label{eq:F2bound9} \\ 
|\partial_r^jg_2(r)|, |\partial_r^j\calG^\dagger_1 g_2(r)| &\geq c  e^{-\sqrt{2}r}, \qquad \, \, \, \, r \in [A,r_0(z)/2]. \label{eq:F2bound10}
\end{align}
\end{prop}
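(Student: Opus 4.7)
The proof proceeds in close analogy with Proposition~\ref{lem:F1_1}, replacing the base profiles $f_1, g_1$ by $f_2, g_2$ and the Green's function $\calG_1$ by the variants $\wt\calG_1$ (integrating over $[1,r]$) and $\calG_1^\dagger$ (integrating over $[r,\infty)$) introduced in \eqref{eq:calG1G2*}. Since both $\wt\calG_1\phi$ and $\calG_1^\dagger\phi$ differ from $\calG_1\phi$ only by a homogeneous solution of $L_1\psi=0$, the identities $iL_1 \wt\calG_1 = iL_1 \calG_1^\dagger = -iL_2 \calG_2 = I$ hold pointwise, so a termwise application of $i\calL$ to \eqref{eq:F2series1} yields $i\calL F_2(\cdot,z) = zF_2(\cdot,z)$ provided the series converges.

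The size estimates \eqref{eq:F2bound1} and \eqref{eq:F2bound2} are proved by induction on $j$, following the same scheme as in Proposition~\ref{lem:F1_1}. The base cases $j=0$ come from Lemma~\ref{lem:fjgj_1} together with the near-zero asymptotics in \eqref{eq:f1f2}--\eqref{eq:g1g2}. For the first column, applying $\calG_2$ to a profile bounded by $s^{1/2}\log s$ yields again a profile of size $r^{1/2}\log r$ via the same splitting of the $g_1, g_2$ integrals as in Proposition~\ref{lem:F1_1}. The truncation at $s=1$ in $\wt\calG_1$ is the crucial device: it excludes the $s^{-1/2}$-singularity of $f_2$ near $s=0$, which would otherwise generate an uncontrolled factor of $|z|^{-1}$ upon iteration and destroy summability. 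For the second column, the truncation in $\calG_1^\dagger$ to $s\in[r,\infty)$ pairs the exponentially decaying $g_2$ against the polynomially growing $f_1, f_2$, and, as in the derivation of \eqref{eq:F1bound2}, the exponential decay dominates, producing bounds of the form $e^{-\sqrt{2}r}$ times a power of $r_0(z)$.

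The derivative estimates \eqref{eq:F2bound3}--\eqref{eq:F2bound4} then follow by differentiating the integral kernels and using the same integration-by-parts device from \eqref{eq:F1dertemp1}--\eqref{eq:F1dertemp2} to transfer derivatives off the kernel onto the iterate, thereby closing the induction at the level of derivatives. The lower bounds \eqref{eq:F2bound9}--\eqref{eq:F2bound10} reduce to verifying the bounds for the $j=0$ leading terms $f_2$, $\calG_2 f_2$, $g_2$, and $\calG_1^\dagger g_2$ directly from Lemma~\ref{lem:fjgj_1} by splitting the integration intervals as in \eqref{eq:F1bound9}--\eqref{eq:F1bound10}; the higher-order series terms then contribute only an $\calO(\epsilon_0)$-perturbation that can be absorbed.

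The main obstacle I anticipate is preventing the logarithmic factor in \eqref{eq:F2bound1} from accumulating under iteration: a naive estimate would produce $(\log r)^{j+1}$ rather than a single $\log r$. The required stabilization relies on the antisymmetric Wronskian structure of the kernel $f_1(r)f_2(s)-f_1(s)f_2(r)$, which to leading order behaves as $(rs)^{1/2}\log(s/r)$ and vanishes at $s=r$. Integrating against $s^{1/2}\log s$ over $[1,r]$ and applying \eqref{eq:logint1} explicitly produces a leading term of size $r^{5/2}\log r$ rather than $r^{5/2}(\log r)^2$, since the $(\log r)^2$ pieces in the two halves cancel. Restricting to $r\leq r_0(z)$ then absorbs the factor $r^2$ into $(Cr_0(z))^2$, preserving the $r^{1/2}\log r$ bound through each iteration; making this cancellation rigorous at every step of the induction is the main technical point.
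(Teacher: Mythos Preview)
Your proposal is correct and follows the same inductive scheme as the paper: track bounds separately on $(0,1]$ and $[1,r_0]$, use the leading-order kernel $(rs)^{1/2}\log(r/s)$ of $\wt\calG_1$ to kill the $(\log r)^2$ accumulation in the main term, and absorb the resulting $r^2$ factor into $(Cr_0)^2$. One small correction to your motivation: the truncation at $s=1$ in $\wt\calG_1$ is not what prevents divergence---the $s^{-1/2}$ singularity of $f_2$ in the kernel, integrated against $\calG_2(\wt\calG_1\calG_2)^jf_2\sim s^{3/2}\log s$ near zero, is perfectly harmless, and the factors of $r_0$ arise from the large-$r$ part of the integration, not from $s\to 0$; the choice of base point merely fixes a convenient normalization within the solution space (differing from $\calG_1$ by a homogeneous solution). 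Also note that the $O(r^{-2})$ corrections from Lemma~\ref{lem:fjgj_1} produce an error term of order $r^{1/2}\log r\int_1^r s^{-1}\log s\,\ud s\sim r^{1/2}(\log r)^3$ that does \emph{not} enjoy your cancellation; it is absorbed instead via $(\log r)^2\ll r_0^2$, so the single-$\log$ bound still closes.
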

\begin{remark}
    The proof of Proposition~\ref{lem:F2_1} also gives bounds on the entries of $F_2(r,z)$ for $r\leq 2$. See~\eqref{eq:F2temp1}, \eqref{eq:F2temp2}, \eqref{eq:F2temp3}, and \eqref{eq:F2temp4}. But the large $r$ estimates in the statement of the proposition are the only ones we will need going forward.
\end{remark}
For future reference we introduce the notation $\upfy_3$ and $\upfy_4$ to denote the columns of $F_2$, that is,
\begin{align*}
\begin{split}
F_2=\pmat{\upfy_3&\upfy_4}.
\end{split}
\end{align*}
Then in view of Proposition~\ref{lem:F1_1}, defining
\begin{align*}
\begin{split}
\tilf=\sum_{j=0}^\infty z^{2j}(\wt\calG_1\calG_2)^jf_2, \qquad \tilg=\sum_{j=0}^\infty z^{2j}(\calG_2\calG^\dagger_1)^jg_2,
\end{split}
\end{align*}
we can write
\begin{align*}
\begin{split}
\upfy_3=\pmat{z\calG_2\tilf\\\tilf},\qquad \upfy_4=\pmat{\tilg\\z\calG_1^\dagger\tilg}.
\end{split}
\end{align*}
Note that by inspection $\tilf(r,z)$ and $\tilg(r,z)$ are even functions of $z$, while $z\calG_2\tilf(r,z)$ and $z\calG_1^\dagger\tilg(r,z)$ are odd functions of $z$. Therefore,
\begin{equation} \label{eq:upfy34evenodd1}
\begin{split}
    \upfy_3(r,-z)=\sigma_3\upfy_3(r,z),\qquad  \upfy_4(r,-z)=-\sigma_3 \upfy_4(r,z).
\end{split}
\end{equation}

\begin{proof}
The proof is similar to that of Proposition~\ref{lem:F1_1} so we will be brief. The fact that $F_2$ is a solution follows formally by differentiating term by term, so we only need to prove the claimed estimates. Assume inductively that
\begin{align*}
\begin{split}
|(\wt\calG_1\calG_2)^jf_2(r)|\leq \begin{cases} (Cr_0)^{2j}r^{-\frac{1}{2}},\quad &0\leq r\leq 1\\ (Cr_0)^{2j}r^{\frac{1}{2}}\log r,\quad& 1\leq r\leq r_0\end{cases}.
\end{split}
\end{align*}
We will prove that
\begin{align}\label{eq:F2temp1}
\begin{split}
|\calG_2(\wt\calG_1\calG_2)^jf_2(r)|\leq \begin{cases} C(Cr_0)^{2j}r^{\frac{3}{2}}\log r,\quad &0\leq r\leq 1\\ C(Cr_0)^{2j}r^{\frac{1}{2}}\log r,\quad& 1\leq r\leq r_0\end{cases},
\end{split}
\end{align}
and
\begin{align}\label{eq:F2temp2}
\begin{split}
|(\wt\calG_1\calG_2)^{j+1}f_2(r)|\leq \begin{cases} (Cr_0)^{2j+2}r^{-\frac{1}{2}},\quad &0\leq r\leq 1\\ (Cr_0)^{2j+2}r^{\frac{1}{2}}\log r,\quad& 1\leq r\leq r_0\end{cases}.
\end{split}
\end{align}
For $r\leq 1$ by the induction assumption
\begin{align*}
\begin{split}
|\calG_2(\wt\calG_1\calG_2)^jf_2(r)|&\lesssim (Cr_0)^{2j}\int_0^r r^{-\frac{1}{2}}s\ud s+(Cr_0)^{2j}\int_r^1r^{\frac{3}{2}}s^{-1}\ud s+(Cr_0)^{2j}r^{\frac{3}{2}}\int_1^{r_0}e^{-\sqrt{2}s}s^{\frac{1}{2}}\log s\ud s\\
&\lesssim (Cr_0)^{2j}r^{\frac{3}{2}}\log r.
\end{split}
\end{align*}
For $1\leq r\leq r_0$
\begin{align*}
\begin{split}
|\calG_2(\wt\calG_1\calG_2)^jf_2(r)|&\lesssim (Cr_0)^{2j}\int_0^1e^{-\sqrt{2}r}s\ud s+(Cr_0)^{2j}\int_1^re^{-\sqrt{2}(r-s)}s^{\frac{1}{2}}\log s\ud s\\
&\quad+(Cr_0)^{2j}\int_r^{r_0}e^{-\sqrt{2}(s-r)}s^{\frac{1}{2}}\log s\ud s\\
&\lesssim (Cr_0)^{2j}r^{\frac{1}{2}}\log r.
\end{split}
\end{align*}
This proves \eqref{eq:F2temp1}.  For \eqref{eq:F2temp2}, using \eqref{eq:F2temp1} for $r\leq 1$ we have
\begin{align*}
\begin{split}
|(\wt\calG_1\calG_2)^{j+1}f_2(r)|\lesssim C(Cr_0)^{2j}r^{\frac{3}{2}}\int_r^1s\log s\ud s+C(Cr_0)^{2j}r^{-\frac{1}{2}}\int_r^1s^3\log s\ud s\lesssim C(Cr_0)^{2j}r^{-\frac{1}{2}}.
\end{split}
\end{align*}
For $1\leq r\leq r_0$
\begin{align*}
\begin{split}
|(\wt\calG_1\calG_2)^{j+1}f_2(r)|&\lesssim C(Cr_0)^{2j}\int_1^r(rs)^{\frac{1}{2}}\log (\frac{r}{s})s^{\frac{1}{2}}\log s\ud s+C(Cr_0)^{2j}  r^{\frac{1}{2}}\log r\int_1^r s^{-1}\log s \ud s\\
&\lesssim Cr_0^2(Cr_0)^{2j}r^{\frac{1}{2}}\log r,
\end{split}
\end{align*}
where we have used Lemma~\ref{lem:fjgj_1}, equation~\ref{eq:logint1}, and integration by parts. This completes the proof of \eqref{eq:F2temp2} and hence of \eqref{eq:F2bound1}. Turning to \eqref{eq:F2bound2} again by induction we assume that
\begin{align*}
\begin{split}
|(\calG_1\calG_1^\dagger)^jg_2(r)|\leq \begin{cases} (Cr_0)^{j}r^{-\frac{1}{2}},\quad &0\leq r\leq 1\\ (Cr_0)^{j}e^{-\sqrt{2}r},\quad& 1\leq r\leq r_0\end{cases}.
\end{split}
\end{align*}
Our goal is then to prove that
\begin{align}\label{eq:F2temp3}
\begin{split}
|\calG_1^\dagger(\calG_2\calG_1^\dagger)^jg_2(r)|\leq \begin{cases} C^{\frac{1}{2}}(Cr_0)^{j}r^{-\frac{1}{2}},\quad &0\leq r\leq 1\\ C^{\frac{1}{2}}(Cr_0)^{j}e^{-\sqrt{2}r},\quad& 1\leq r\leq r_0\end{cases},
\end{split}
\end{align}
and
\begin{align}\label{eq:F2temp4}
\begin{split}
|(\calG_2\calG_1^\dagger)^{j+1}g_2(r)|\leq \begin{cases} (Cr_0)^{j+1}r^{-\frac{1}{2}},\quad &0\leq r\leq 1\\ (Cr_0)^{j+1}e^{-\sqrt{2}r},\quad& 1\leq r\leq r_0\end{cases}.
\end{split}
\end{align}
For $r\leq1$ by the induction assumption
\begin{align*}
\begin{split}
|\calG_1^\dagger(\calG_2\calG_1^\dagger)^jg_2(r)|&\lesssim (Cr_0)^j\int_r^1(r^{\frac{3}{2}}s^{-1}+r^{-\frac{1}{2}}s)\ud s+(Cr_0)^j\int_1^\infty (r^{\frac{3}{2}}s^{\frac{1}{2}}\log s+r^{-\frac{1}{2}}s^{\frac{1}{2}})e^{\sqrt{2}s}\ud s\\
&\lesssim (Cr_0)^jr^{-\frac{1}{2}}.
\end{split}
\end{align*}
For $1\leq r\leq r_0$ by Lemma~\ref{lem:fjgj_1}
\begin{align*}
\begin{split}
|\calG_1^\dagger(\calG_2\calG_1^\dagger)^jg_2(r)|&\lesssim (C r_0)^j\int_r^\infty (sr)^{\frac{1}{2}}\log (\frac{s}{r})e^{-\sqrt{2}s}\ud s+(C r_0)^jr^{-\frac{3}{2}}\int_r^\infty s^{\frac{1}{2}}\log(\frac{s}{r})e^{-\sqrt{2}s}\ud s\\
&\lesssim (Cr_0)^je^{-\sqrt{2}r}.
\end{split}
\end{align*}
This proves \eqref{eq:F2temp3}. Using \eqref{eq:F2temp3}, for $r\leq1$
\begin{align*}
\begin{split}
|(\calG_2\calG_1^\dagger)^{j+1}g_2(r)|&\lesssim C^{\frac{1}{2}}(Cr_0)^j\int_0^rr^{-\frac{1}{2}}s\ud s+C^{\frac{1}{2}}(Cr_0)^j\int_r^1r^{\frac{3}{2}}s^{-1}\ud s+C^{\frac{1}{2}}(Cr_0)^{j}\int_1^rr^{\frac{3}{2}}e^{-2\sqrt{2}s}\ud s\\
&\lesssim C(Cr_0)^jr^{-\frac{1}{2}}.
\end{split}
\end{align*}
For $1\leq r\leq r_0$
\begin{align*}
\begin{split}
|(\calG_2\calG_1^\dagger)^{j+1}g_2(r)|&\lesssim C^{\frac{1}{2}}(Cr_0)^je^{-\sqrt{2}r}\int_0^1s\ud s+C^{\frac{1}{2}}(Cr_0)^je^{-\sqrt{2}r}\int_1^r \ud s+C^{\frac{1}{2}}(Cr_0)^je^{\sqrt{2}r}\int_r^{r_0}e^{-2\sqrt{2}s}\ud s\\
&\lesssim C^{\frac{1}{2}}(Cr_0)^jr_0e^{-\sqrt{2}r}.
\end{split}
\end{align*}
This completes the proof of \eqref{eq:F2temp4} and hence of \eqref{eq:F2bound2}. The derivative bounds \eqref{eq:F2bound3} and \eqref{eq:F2bound4} are proved in a similar way. Estimates~\eqref{eq:F2bound9} and~\eqref{eq:F2bound10} for $(r\partial_r)^jf_2$ and $\partial_r^j g_2$ follow from Lemma~\ref{lem:fjgj_1}. The arguments for  $(r\partial_r)^j\calG_2f_2$ and $\partial_r^j\calG_1^\dagger g_2$ are similar to the corresponding ones, \eqref{eq:F1bound9} and~\eqref{eq:F1bound10}, in the proof of Proposition~\ref{lem:F1_1}.
\end{proof}

\section{The $L^2$ solution near $r=\infty$}\label{sec:Psis}

In this section, we construct the Weyl-Titchmarsh matrix solutions. 
Specifically, for small $z \in \bbC \backslash \bbR$ we devise matrix solutions $\Psi_{+}(r,z)$, respectively $\Psi_{-}(r,z)$, to
\begin{equation}
 i \calL \Psi_{\pm}(r,z) = z \Psi_{\pm}(r,z)
\end{equation}
so that $\Psi_{+}(\cdot,z) \in L^2((1,\infty))$ when $\Im(z) > 0$, respectively $\Psi_{-}(\cdot, z) \in L^2((1,\infty))$ when $\Im(z) < 0$.
We recall from \eqref{equ:small_constants} that throughout this paper we work with a string of fixed small absolute constants
\begin{equation} \label{equ:small_constants_section_large_r}
    0 < \delta_0 \ll \epsilon_\infty \ll \epsilon_0 \ll 1.
\end{equation}

We seek to construct $\Psi_{\pm}(r,z)$ so that as $r\to\infty$ it approaches the solution $\Psi_\pm^{(0)}(\cdot,z) \in L^2((1,\infty))$ (unique up to normalizations) to
\begin{equation} \label{equ:calLinfty_leading_order}
 i \calL_\infty \Psi_\pm^{(0)}(r,z) = z \Psi_\pm^{(0)}(r,z)
\end{equation}
with the reference operator
\begin{equation}
 \calL_\infty := \begin{bmatrix} 0 & \calH_0 \\ -(\calH_0 + 2) & 0 \end{bmatrix}, \quad \calH_0 := -\partial_r^2 - \frac{1}{4r^2}.
\end{equation}
Then we can write
\begin{equation*}
 i\calL = i\calL_\infty + i \calV(r), \quad \calV(r) := \begin{bmatrix} 0 & V_1 \\ -V_2 & 0 \end{bmatrix},
\end{equation*}
with
\begin{equation}
\begin{aligned}
 V_1(r) := \rho_1(r)^2 - 1 + \frac{1}{r^2}, \quad V_2(r) := 2\bigl( \rho_1(r)^2-1 \bigr) + \frac{1}{r^2}.
\end{aligned}
\end{equation}
Observe that in view of the asymptotics
\begin{equation}
 \rho_1(r) = 1 - \frac{1}{2r^2} - \frac{9}{8r^4} + \calO\Bigl(\frac{1}{r^6}\Bigr), \quad r \gg 1,
\end{equation}
we have that
\begin{equation}
 |V_1(r)| \lesssim \frac{1}{r^4}, \quad |V_2(r)| \lesssim \frac{1}{r^2}, \quad r \gg 1.
\end{equation}
Our choice of reference operator $\calL_\infty$ instead of a simpler candidate for a reference operator at spatial infinity such as the constant coefficient one is related to dealing with the asymmetry between the faster decay of $V_1(r)$ and the slow decay of $V_2(r)$ as $r\to\infty$.

\subsection{Bessel functions and the Stokes phenomenon}

In this subsection we recall the properties of the Jost solutions for the scalar Bessel operator $\calH_0$ from which our reference operator $\calL_\infty$ is built.
This requires that we revisit the classical topic of the asymptotics of Bessel functions in the complex domain. 

\begin{lemma} \label{lem:calH0_ode}
 For $\zeta \in \bbC \backslash \{0\}$, $\Im \zeta\ge 0$, the problem
 \begin{equation} \label{equ:ode_calH0}
  \left\{ \begin{aligned}
   &\calH_0 h = \zeta^2 h, \quad 0 < r < \infty, \\
   &\lim_{r\to\infty} e^{-i\zeta r} h = 1,
 \end{aligned} \right.
 \end{equation}
 has a unique solution denoted by $h(r;\zeta)$ satisfying
 \begin{equation*}
  \lim_{r\to\infty} e^{-i\zeta r} h'(r;\zeta) = i\zeta.
 \end{equation*}
\end{lemma}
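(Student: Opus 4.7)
The plan is to treat the ODE $h''+\zeta^2 h=-\frac{1}{4r^2}h$ as a perturbation of the free Helmholtz equation $h''+\zeta^2 h = 0$ by the decaying potential $-\frac{1}{4r^2}$, and to set up a Volterra integral equation that enforces the prescribed asymptotic at $r=\infty$. Using the fundamental system $\{e^{\pm i\zeta r}\}$ of the free equation and variation of parameters with base point at $+\infty$, the candidate solution satisfies
\begin{equation*}
    h(r;\zeta) \;=\; e^{i\zeta r} \;-\; \int_r^{\infty} \frac{\sin\bigl(\zeta(s-r)\bigr)}{4\zeta s^2}\, h(s;\zeta)\, \ud s.
\end{equation*}
Substituting $\tilde h(r;\zeta) := e^{-i\zeta r}h(r;\zeta)$ turns the integral equation into a fixed-point problem $\tilde h = 1 + \calK \tilde h$, where the Volterra kernel is
\begin{equation*}
    K(r,s;\zeta) \;=\; -\,\frac{e^{2i\zeta(s-r)} - 1}{8 i\,\zeta\, s^2}, \qquad s \ge r.
\end{equation*}

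The key step is a ``Stokes-type'' kernel bound exploiting the hypothesis $\Im\zeta \ge 0$. Writing $e^{2i\zeta(s-r)}-1 = 2i\zeta\int_r^s e^{2i\zeta(u-r)}\,\ud u$, the standing assumption $\Im\zeta\ge 0$ yields $|e^{2i\zeta(u-r)}|\le 1$ for $u\ge r$, hence
\begin{equation*}
    \frac{|e^{2i\zeta(s-r)}-1|}{2|\zeta|} \;\le\; \min\!\Bigl(s-r,\; \frac{1}{|\zeta|}\Bigr).
\end{equation*}
Splitting the $s$-integral at $s = r+|\zeta|^{-1}$ therefore gives $\int_r^\infty |K(r,s;\zeta)|\,\ud s \lesssim |\zeta|^{-2} r^{-1}$ for $r\gg|\zeta|^{-1}$. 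Consequently there exists $R_0=R_0(\zeta)$ such that $\|\calK\|_{L^\infty([R_0,\infty))\to L^\infty([R_0,\infty))} \le \tfrac12$, so the Neumann series $\tilde h = \sum_{n\ge 0}\calK^n 1$ converges in $L^\infty([R_0,\infty))$, produces a continuous solution to the Volterra equation, and satisfies $\tilde h(r;\zeta) \to 1$ as $r\to\infty$ since $\calK^n 1 \to 0$.

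Once the solution exists on $[R_0,\infty)$, I would extend it uniquely to all $r\in(0,\infty)$ by solving the linear second-order ODE as an initial-value problem backwards from $R_0$; the coefficients are smooth on $(0,\infty)$, so this poses no difficulty. Formally differentiating the Volterra representation yields
\begin{equation*}
    h'(r;\zeta) \;=\; i\zeta\, e^{i\zeta r} \;+\; \int_r^\infty \frac{\cos\bigl(\zeta(s-r)\bigr)}{4 s^2}\, h(s;\zeta)\, \ud s,
\end{equation*}
and the same $\Im\zeta\ge 0$ cancellation applied to $e^{-i\zeta r}\cos(\zeta(s-r))e^{i\zeta s} = \tfrac12(e^{2i\zeta(s-r)}+1)$ shows that $e^{-i\zeta r} h'(r;\zeta) \to i\zeta$, establishing the claimed derivative asymptotic.

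For uniqueness, any two solutions $h_1, h_2$ with the prescribed asymptotic satisfy $w := h_1-h_2$ solves the same ODE with $e^{-i\zeta r}w \to 0$. When $\Im\zeta>0$, the space of solutions decomposes into the decaying and growing branches and the boundary condition annihilates both coefficients. When $\Im\zeta=0$, I would instead argue directly from the Volterra equation: $w$ satisfies the homogeneous Volterra identity $w = \calK w$ on $[R_0,\infty)$ after multiplying by $e^{-i\zeta r}$, which forces $w\equiv 0$ on $[R_0,\infty)$ by contractivity, and then $w\equiv 0$ on all of $(0,\infty)$ by uniqueness of the backwards IVP. The main obstacle is the real-$\zeta$ case, where the kernel lacks exponential decay and existence hinges entirely on the oscillatory cancellation encoded in the identity for $e^{2i\zeta(s-r)}-1$; getting the $\min(s-r, 1/|\zeta|)$ bound uniformly in $\Im\zeta \ge 0$ is the technical crux.
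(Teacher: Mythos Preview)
Your Volterra iteration argument is correct and complete in outline; the only slip is the claimed rate $\int_r^\infty |K(r,s;\zeta)|\,\ud s \lesssim |\zeta|^{-2}r^{-1}$, which should be $|\zeta|^{-1}r^{-1}$ (the far part $\int_{r+|\zeta|^{-1}}^\infty \frac{1}{4|\zeta|s^2}\,\ud s \simeq \frac{1}{4|\zeta|r}$ dominates). This does not affect anything: for fixed $\zeta\neq 0$ the integral still vanishes as $r\to\infty$, so your contraction on $[R_0(\zeta),\infty)$ goes through. For uniqueness you implicitly need that any solution $w$ of the ODE with $e^{-i\zeta r}w\to 0$ satisfies the \emph{homogeneous} Volterra equation; this follows by writing $w$ minus the Volterra integral as $Ae^{i\zeta r}+Be^{-i\zeta r}$ and observing that $A+Be^{-2i\zeta r}\to 0$ forces $A=B=0$ whenever $\Im\zeta\ge 0$, $\zeta\neq 0$.

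The paper takes a genuinely different route. Rather than perturbing off $e^{i\zeta r}$, it rescales $\xi=\zeta r$ to reduce to a single complex ODE $-g''-\tfrac{1}{4\xi^2}g=g$ in the closed upper half-plane, and then invokes Lemma~\ref{lem:Hankel0}, where the recessive branch $h_+$ is constructed by a Laplace--Borel transform: one writes $h_+(z)=e^{iz}z^{1/2}\int_0^\infty H(p)e^{-pz}\,\ud p$ and solves an explicit first-order ODE for $H$, obtaining $H(p)=w_0/\sqrt{p(p-2i)}$. The solution is then identified with $\sqrt{\pi/2}\,e^{i\pi/4}\sqrt{z}\,H_0^{(1)}(z)$. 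Your approach is more elementary and entirely sufficient for the bare existence/uniqueness statement of the lemma. The paper's approach is chosen because Lemma~\ref{lem:Hankel0} delivers much more than existence: uniform symbol-type bounds on $e^{-iz}h_+(z)$ and its derivatives throughout $\{\Im z\ge 0,\,|z|\ge 1\}$, the precise Stokes constant $2i$ in the asymptotics of the dominant branch $h_-$ on the negative real axis, and the identification with classical Hankel functions giving the small-$|z|$ expansion. All of this is used heavily downstream (Sections~\ref{sec:Psis}--\ref{sec:Greens}), and the Laplace transform makes the Stokes jump transparent as a residue from the branch cut at $p=-2i$.
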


Lemma~\ref{lem:calH0_ode}  will be a simple consequence of the following lemma about the solutions of the ODE \eqref{eq:bessel0} below. The ODE \eqref{eq:bessel0}  has the Hankel functions $\{ \sqrt{z}H_0^{(1)}(z), \sqrt{z}H_0^{(2)}(z)\}$ as a fundamental system. Our goal is to derive uniform asymptotics for these functions in the closed upper half-plane. The delicate part is the derivative bound in~\eqref{eq:H0dom}, which cannot hold as we include the negative real axis. This is due to the Stokes phenomenon, which manifests itself through a singularity at $p=-2i$ in the Borel plane, see below. Questions of this type are of course classical, see~\cite[Chapter 11]{Olv}, but it is not   a simple matter to extract the needed information from such sources. We therefore give a self-contained and simple presentation. 

\begin{lemma}\label{lem:Hankel0}
    The  equation 
   \begin{equation}\label{eq:bessel0}
       h''(z) + \big( 1+\frac{1}{4z^2}\big)h(z)=0
   \end{equation}
    admits a holomorphic  fundamental system $\{ h_+(z), h_-(z)\}$ in the upper half-plane $\Im z>0$. The recessive branch $h_+(z)$ satisfies
    \begin{equation}
        \begin{aligned}
        \label{eq:H0rec}
    h_+ (z) &\sim e^{ iz} \text{\ as\ }|z|\to\infty\\
    \big| e^{-iz} h_+ (z)\big| &\les 1, \quad \big| {z^2} (e^{- iz} h_+ (z))'\big| &\les 1, \quad \big| {z^3} (e^{- iz} h_+ (z))''\big| \les 1,
    \end{aligned}
    \end{equation}
    uniformly in $z\in\Omega_+:=\{ z\in\Cp\::\: \Im z\ge0,\; |z|\ge1\}$.
   The dominant branch satisfies  
 \begin{equation}
        \begin{aligned}
        \label{eq:H0dom}
    h_- (z) &\sim e^{ -iz} \text{\ as\ }|z|\to\infty\\
    \big| e^{iz} h_- (z)\big| &\le C,\quad |e^{iz}h_{-}'(z)|\le C, \quad \big| {z^2} (e^{ iz} h_- (z))'\big| &\le C_\delta
    \end{aligned}
    \end{equation}
    uniformly in 
    \[ z\in\Omega_{+,\delta}:=\{ z\in\Cp\::\: \Im z\ge0,\; |z|\ge1,\; \Im z \ge -\delta\Re z\}\]
    where $\delta\in(0,1)$ is arbitrary but fixed. Here $C$ is an absolute constant, while $C_\delta\to\infty$ as $\delta\to 0$. In fact, 
   \begin{equation}
       \label{eq:stokes}
       h_-(x) \sim e^{-ix} + 2i e^{ix}\text{\ \ as\ \ } x\to-\infty
   \end{equation}
     An analogous statement holds in the lower half-plane.  
    \end{lemma}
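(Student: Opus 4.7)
The plan is to exploit the identification of~\eqref{eq:bessel0} with Bessel's equation of order zero via $h(z) = \sqrt{z}\,u(z)$ (which converts~\eqref{eq:bessel0} into $u'' + u'/z + u = 0$), making $\{\sqrt{z}H_0^{(1)}(z), \sqrt{z}H_0^{(2)}(z)\}$ a fundamental system and furnishing $h_\pm$ up to normalization. For the uniform bounds on $\Omega_+$ and $\Omega_{+,\delta}$ in~\eqref{eq:H0rec} and~\eqref{eq:H0dom}, and as preparation for extracting the Stokes multiplier self-containedly, I would construct $h_\pm$ directly from Volterra integral equations rather than appeal to external tables.

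For the recessive branch, setting $h_+(z) = e^{iz}(1 + u_+(z))$ reduces~\eqref{eq:bessel0} to
\begin{equation*}
 u_+'' + 2i u_+' + \frac{1 + u_+}{4 z^2} = 0,
\end{equation*}
and, using the fundamental system $\{1, e^{-2iz}\}$ of the leading-order operator $\partial_z^2 + 2i\partial_z$, this is equivalent to
\begin{equation*}
 u_+(z) = -\frac{1}{2i} \int_z^\infty \bigl(1 - e^{2i(w-z)}\bigr)\, \frac{1 + u_+(w)}{4w^2}\ud w,
\end{equation*}
where the contour is the horizontal ray $w = z + t$, $t \geq 0$, which stays in $\Omega_+$ and on which $|e^{2i(w-z)}| \equiv 1$. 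The kernel is then bounded pointwise by $C/|w|^2$, so a contraction argument in $L^\infty(\Omega_+)$ produces a unique $u_+$ with $|u_+(z)| \lesssim |z|^{-1}$, and differentiating the Volterra equation once and twice yields the derivative bounds in~\eqref{eq:H0rec}.

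The construction of the dominant branch $h_-(z) = e^{-iz}(1 + u_-(z))$ is identical in structure, leading to
\begin{equation*}
 u_-(z) = -\frac{1}{2i} \int_z^\infty \bigl(1 - e^{2i(z-w)}\bigr) \frac{1 + u_-(w)}{4w^2}\ud w.
\end{equation*}
Along the horizontal ray one again has $|e^{2i(z-w)}| \equiv 1$, but the new difficulty is that when $z$ lies in a narrow cone around the negative real axis this contour passes through a neighborhood of the origin where $1/w^2$ blows up. For $z \in \Omega_{+,\delta}$ an elementary calculation gives $|w| \gtrsim \delta\, |\Re z|$ uniformly along the ray, producing a Volterra estimate with a constant $C_\delta$ that degrades as $\delta \to 0^+$, which is precisely the source of the deterioration in~\eqref{eq:H0dom}. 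The lower half-plane statement follows from the involution $z \mapsto \bar z$.

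The main obstacle, and the source of the Stokes constant $2i$ in~\eqref{eq:stokes}, is that the negative real axis lies outside $\Omega_{+,\delta}$, so the subdominant $e^{ix}$ term in $h_-(x)$ as $x \to -\infty$ cannot be read off from the Volterra representation. I plan to extract it by analytically continuing $h_-$ around the origin via the connection formula
\begin{equation*}
 H_0^{(2)}(ze^{i\pi}) = 2 H_0^{(2)}(z) + H_0^{(1)}(z),
\end{equation*}
which I would derive self-containedly from $H_0^{(1,2)} = J_0 \pm i Y_0$, the fact that $J_0$ is entire and even, and the explicit logarithmic singularity of $Y_0$ at the origin, yielding $Y_0(ze^{i\pi}) = Y_0(z) + 2i J_0(z)$. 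With the normalization $h_-(z) = e^{-i\pi/4}\sqrt{\pi/2}\,\sqrt{z}\,H_0^{(2)}(z)$ ensuring $h_-(x) \sim e^{-ix}$ as $x \to +\infty$, and with $z = y e^{i\pi}$, $y > 0$, inserting the leading asymptotics $\sqrt{y}\,H_0^{(1,2)}(y) \sim \sqrt{2/\pi}\, e^{\mp i\pi/4} e^{\pm iy}$ produces after a short computation $h_-(ye^{i\pi}) \sim e^{iy} + 2i e^{-iy}$, which under $x = -y$ becomes~\eqref{eq:stokes}. A final consistency check via the constancy of the Wronskian $W[h_+, h_-] = -2i$ — evaluated from the $+\infty$ asymptotics and again at $-\infty$ using the corresponding monodromy relation for $H_0^{(1)}$ — fixes the coefficient $2i$ unambiguously.
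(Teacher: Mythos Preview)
Your approach differs substantially from the paper's. The paper constructs $h_\pm$ via Laplace/Borel integral representations --- writing $h_-(z) = w_0 e^{-iz}z^{1/2}\int_0^\infty e^{-pz}/\sqrt{p(2i+p)}\,dp$ and rotating the $p$-contour --- and the Stokes constant $2i$ emerges from the Hankel-contour contribution around the Borel-plane branch point $p=-2i$ that is picked up when the contour is swung across the lower imaginary axis. Your monodromy computation $H_0^{(2)}(ze^{i\pi}) = 2H_0^{(2)}(z) + H_0^{(1)}(z)$, derived from the explicit logarithm in $Y_0$, is correct and more elementary; the paper in fact remarks that alternative routes to the constant exist, but favors the Borel-plane argument as illuminating the mechanism behind the Stokes phenomenon.

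There is, however, a real gap in your Volterra construction of $h_+$. The horizontal ray $w = z + t$, $t \geq 0$, does \emph{not} stay in $\Omega_+$ when $\Re z < 0$ and $\Im z$ is small: it passes near the origin where $1/(4w^2)$ is singular. For $z = -R + i\varepsilon$ the ray crosses near $w = i\varepsilon$, and $\int_0^\infty |z+t|^{-2}\,dt \sim \pi/\varepsilon$, so the Volterra bound blows up rather than yielding $|u_+(z)| \lesssim |z|^{-1}$. You correctly flagged this obstruction for $h_-$ and retreated to $\Omega_{+,\delta}$, but the very same issue afflicts $h_+$, where the lemma demands uniform bounds on all of $\Omega_+$ including the negative real axis. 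The fix is to rotate the contour with $z$: for $\arg z \in [\pi/2,\pi]$ integrate radially along $w = z(1+t)$, on which $|w| = |z|(1+t)$ avoids the origin and $|e^{2i(w-z)}| = e^{-2t\Im z} \leq 1$, so that $\int_0^\infty |z|\,dt/|z(1+t)|^2 = |z|^{-1}$ and uniformity follows. The paper's Laplace representation bypasses this entirely: in $e^{-iz}h_+(z) = c\int_0^\infty e^{-t}/\sqrt{t(t/z-2i)}\,dt$ one has $|t/z - 2i| \geq 2$ for every $z \in \Omega_+$ because $t/z$ lies in the closed lower half-plane, so the bound is immediate.

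A smaller point: your Volterra argument for $h_-$ yields $|e^{iz}h_-(z)| \leq C_\delta$ on $\Omega_{+,\delta}$, whereas the lemma claims an absolute $C$ for this quantity (only the derivative bound carries $C_\delta$). To recover a $\delta$-independent constant one must supplement the $\Omega_{+,\delta_0}$ estimate, for some fixed $\delta_0$, with a direct bound near the negative real axis via the Stokes decomposition $h_- \sim e^{-iz} + 2ie^{iz}$, which is effectively what the paper's splitting $h_- = h_{-,1} + h_{-,2}$ provides.
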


First we present the proof of Lemma~\ref{lem:calH0_ode} assuming Lemma~\ref{lem:Hankel0}.
\begin{proof}[Proof of Lemma~\ref{lem:calH0_ode}]
The assertion reduces to studying the following ordinary differential equation for holomorphic functions on the punctured plane
\begin{equation} \label{equ:fuchsian_ode}
 \left\{ \begin{aligned}
  &-g'' - \frac{1}{4\xi^2} g = g, \quad \xi \in \bbC \backslash \{0\},\;\Im\xi\ge0 \\
  &\lim_{|\xi|\to\infty} e^{-i\xi} g(\xi) = 1.
 \end{aligned} \right.
\end{equation}
Setting $h(r;\zeta) := h_+(\zeta r)$ gives the (unique) solution to \eqref{equ:ode_calH0}, where $h_+$ is the recessive branch Lemma~\ref{lem:Hankel0}. 
\end{proof}
 Next we prove Lemma~\ref{lem:Hankel0}.
\begin{proof}[Proof of Lemma~\ref{lem:Hankel0}]
We seek a dominant solution in the form $h_-(z)=e^{-iz} z^{\frac12} g(z)$. Then one checks that 
\[
g''(z) + (\frac{1}{z}-2i)g'(z) -\frac{i}{z}g(z)=0
\]
We solve this ODE in the right-half plane $\Re z>0$ via the Laplace transform
\[
g(z) = \int_0^\infty H(p) e^{-pz}\, dp 
\]
Then one computes that $H$ satisfies
\[
-\int_0^p H(q)q\, dq - i\int_0^p H(q)\, dq + H(p)p(p+2i)=0
\]
After passing to a derivative we obtain the ODE
\[
H'(p) p (2i+p) + H(p) (i+p)=0
\]
with solution
\[
H(p)= \frac{w_0}{\sqrt{p(2i+p)}}
\]
and some complex $w_0\ne0$. 
Thus, we obtain a dominant branch $h_-(z)$ in $\Re z>0$ of the form
\begin{equation}
    \label{eq:dom}
    h_-(z)=w_0 e^{-iz} z^{\frac12} \int_0^\infty \frac{e^{-pz}}{\sqrt{p(2i+p)}}\, dp
\end{equation}
Expanding around $p=0$ yields
\[
\int_0^\infty \frac{e^{-pz}}{\sqrt{p(2i+p)}}\, dp  = \sqrt{\frac{\pi}{2iz}}(1+\calO(z^{-1}))\qquad z\to \infty
\]
which means that $w_0=\sqrt{\frac{2}{\pi}}e^{i\frac{\pi}{4}}$. This   defines a solution of~\eqref{eq:bessel0} analytic in the right-half plane and such that $h_-(z)\sim e^{-iz}$ as $z\to\infty$. The function $\Phi:p\mapsto \sqrt{p(2i+p)} $ is holomorphic on the domain $U:=\bbC\setminus [0,i\infty)\cup [-2i,-i\infty)$, which is the complex plane slit from $-2i$ vertically down, and from $0$ vertically up. By our normalizations,  $\Phi(x)\sim x$ as $x\to\infty$ and 
\begin{equation}
    \label{eq:Phi_jump}
    \Phi(-i\sigma+0)=-\Phi(-i\sigma-0) \text{\ \ for all\ \ }\sigma>2.
\end{equation}
The domain $U$ is invariant under the reflection $-i+\zeta\mapsto -i-\zeta$ for all $\zeta\in i+U$.  Moreover, \begin{equation}
    \label{eq:Phi_symm}
    \Phi(-i+\zeta)=\sqrt{(-i+\zeta)(i+\zeta)}=\sqrt{1+\zeta^2}=\Phi(-i-\zeta) 
\end{equation}
for all such~$\zeta$. In particular, $\Phi(x)\sim -x$ as $x\to-\infty$.

Turning the contour to $e^{-i\theta}(0,\infty)$ for $0\le\theta<\frac{\pi}{2}$, we analytically continue $h_-(z)$ to $\Re (e^{-i\theta} z)>0$.
In fact, for $z\in \Omega_{+,\delta}$ as above 
\[
h_-(z)= w_0 e^{-iz}   \int_{e^{i\beta}(0,\infty)} \frac{e^{-t}}{\sqrt{t(2i+t/z)}}\, dt
\]
provided $0<\beta<\frac{\pi}{2}$ is sufficiently close to $\frac{\pi}{2}$. The bounds~\eqref{eq:H0dom} hold by inspection.

To compute the $\lim_{z\to x} h_-(z)$ for $x\le -1$ from the upper half-plane, we rotate the $p$-contour in~\eqref{eq:dom} from the positive half-axis to the negative one in the clockwise sense. In the process we cross the branch cut $[-2i,-i\infty)$. 
To be more specific, 
by Cauchy's theorem we can write for all $z\in \Omega_+$ with $\frac{3\pi}{4}<\arg(z)<\pi$
\[
h_-(z)= w_0 e^{-iz} z^{\frac12} \int_\gamma \frac{e^{-pz}}{\sqrt{p(2i+p)}}\, dp  - w_0 e^{-iz} z^{\frac12} \int_\Gamma \frac{e^{-pz}}{\sqrt{p(2i+p)}}\, dp =: h_{-,1}(z)+h_{-,2}(z)
\]
where $\Gamma $ is a Hankel contour going from $-i\infty$ circling around $-2i$ counter-clockwise, and then returning to $-i\infty$. Thus, $\Gamma$ runs around the vertical slit starting at~$-2i$. The first integral runs along $\gamma=e^{-\theta i}(0,\infty)$, with $\theta=\pi-\delta$, $\delta>0$ small,  which we can then rotate into the negative real axis by sending $\delta\to0+$, and converges absolutely.  See the picture below. The contribution of the dashed circular arcs vanishes as $R\to\infty$ in the picture, because of the exponentially decaying factor in the corresponding integrals.
\begin{center}
\scalebox{0.8}{
\begin{tikzpicture}
\begin{scope}[thick,decoration={
    markings,
    mark=at position 0.5 with {\arrow{>}}}]
\draw[black, very thick] (-5,0) -- (5,0) node[right,above]{$\Re z$};
\draw[black, very thick] (0,1) node[right]{$\Im z$} -- (0,-5) ;
\draw[blue,postaction={decorate}] (-5,-1)-- node[below]{$-\gamma$} (0,-0.05) ;
\draw[blue,postaction={decorate}] (0,-0.05) --(5,-0.05) node[right= 0.2cm,below] {$R$};
\draw[blue,postaction={decorate}] (0.1,-5)-- node [right]{$\Gamma$} (0.1,-1.9);
\draw[blue,postaction={decorate}] (-0.1,-1.9)--(-0.1,-5);
\draw [blue] (-0.1,-1.9) arc  (180:0:0.1) (0.1,-1.9);
\draw [blue, dashed,postaction={decorate}] (5,-0.05) arc  (0:-90:4.93) (0.1,-5);
\draw [blue, dashed,postaction={decorate}] (-0.1,-5) arc  (-90:-169:4.97) (-5,-1);
\end{scope}
\end{tikzpicture}}
\end{center}
The roots in the integrand are given by the holomorphic function $\Phi$ defined above.   Thus, on the one hand, 
\begin{equation}
\label{eq:h-1}
h_{-,1}(z)=-w_0 e^{-iz} z^{\frac12} \int_0^\infty \frac{e^{sz}}{\sqrt{s(s-2i)}}\, ds \sim e^{-iz}\qquad z\to -\infty,\; \Im z>0
\end{equation}
and on the other hand, 
\begin{align*}
   h_{-,2}(z) &= -2 i w_0 e^{-iz} z^{\frac12} \int_0^\infty \frac{e^{-z(-2i-is)}}{\Phi(-2i-is+0)}\, ds = -2 i w_0 e^{iz} z^{\frac12} \int_0^\infty \frac{e^{isz}}{\sqrt{(-2i-is)(-is)}}\, ds \\ 
   &= -2 i w_0 e^{iz} z^{\frac12} \int_0^\infty \frac{e^{isz}}{-i\sqrt{s(2+s)}}\, ds = 2i  e^{iz} (1+\calO(z^{-1}))\qquad z\to -\infty
\end{align*}
We arrive at the choice of sign in \eqref{eq:h-1} as follows: parametrizing the negative axis by $\gamma(s)=-s$, with $s>0$, we have
\[
h_{-,1}(z)=-w_0 e^{-iz} z^{\frac12} \int_0^\infty \frac{e^{sz}}{\Phi(-s)}\, ds 
\]
Now $\Phi(-s)=\sqrt{s(s-2i)}\sim s$ as $s\to\infty$. The sign here comes from~\eqref{eq:Phi_symm}, and proves the first equality sign in~\eqref{eq:h-1}. The final statement in~\eqref{eq:h-1} holds either in the form $\sim e^{-iz}$ or $\sim -e^{-iz}$. To determine which of these it is, we calculate the integral with a specific choice of $z=-L+i0+$ with $L>0$ and large.   Then 
\[
 h_{-,1}(z)=-\sqrt{\frac{2}{\pi}} e^{iL} i L^{\frac12} \int_0^\infty \frac{e^{-sL}}{\sqrt{s(s-2i)}}\, ds \sim -\sqrt{\frac{2}{\pi}} e^{i\frac{\pi}{4}} e^{iL} i L^{\frac12} \sqrt{\frac{\pi}{2}} e^{i\frac{\pi}{4}} L^{-\frac12} \sim e^{iL}
\]
as $L\to\infty$.

A similar Laplace transform analysis yields a recessive 
branch $h_-(z)$ in $\Re z>0$ of the form
\begin{equation}
    \label{eq:rec}
    \begin{aligned}
         h_+(z) &=\sqrt{\frac{2}{\pi}} e^{-i\frac{\pi}{4}}  e^{iz} z^{\frac12} \int_0^\infty \frac{e^{-pz}}{\sqrt{p(p-2i)}}\, dp 
    \end{aligned}
\end{equation}
$\Psi(p):=\sqrt{p(p-2i)} $ can be defined as a holomorphic function in the entire lower half-plane in such a way that $\Psi(p)\sim p$ as $p\to\infty$ in that region. By Cauchy's theorem, we can thus deform the contour for every $z\in\Omega_+$ in such a way that~\eqref{eq:rec} turns into 
\begin{equation}\label{eq:h+stokestemp1}
 h_+(z) =\sqrt{\frac{2}{\pi}} e^{-i\frac{\pi}{4}}  e^{iz}   \int_0^\infty \frac{e^{-t}}{\sqrt{t(t/z-2i)}}\, dt,
\end{equation}
where the square root denotes the principal branch. This is due to 
$$|2i-t/z|=|2i-se^{-i\theta}| \geq 2$$ for all $0\le\theta\le\pi$ and $s\ge0$. The estimates in~\eqref{eq:H0rec} follow from~\eqref{eq:h+stokestemp1}
\end{proof}
\begin{remark}
    In principle, the coefficients of the expansion $h_{-}(x)\sim a e^{-ix}+b e^{ix}$ as $x\to-\infty$ can also be computed by expanding $h_{-}(x)$ in terms of the fundamental system $h_{+}(\pm x)$. The coefficients can then be computed by taking Wronskians with $h_{+}(\pm x)$ using the small $x$ asymptotics of $h_\pm(x)$. See~\eqref{eq:Hankelsmall1} below. However, we find that the proof above using analytic continuation is more direct and sheds light on the source of the Stokes phenomenon.
\end{remark}
In view of the expansions
\begin{align*}
    \sqrt{x}\, H_0^{(1)}(x) \sim \sqrt{\frac{2}{\pi}}e^{i(x-\frac{\pi}{4})}, \quad 
    \sqrt{x}\, H_0^{(2)}(x) \sim \sqrt{\frac{2}{\pi}}e^{-i(x-\frac{\pi}{4})} \quad \text{as } x \to \infty,
\end{align*}
we can identify $h_\pm(z)$ with the functions on the left. In fact, it holds 
\begin{equation}
    \label{eq:hpmHankel}
    h_+(z)= \sqrt{\frac{\pi}{2}}e^{i\frac{\pi}{4}}  \sqrt{z}\, H_0^{(1)}(z),\qquad h_-(z)= \sqrt{\frac{\pi}{2}}e^{-i\frac{\pi}{4}}  \sqrt{z}\, H_0^{(2)}(z) \quad \text{for all} \quad \Im z > 0.
\end{equation}
In particular, we have for some universal constant $C > 0$ that
\begin{equation}
    \label{eq:hpmbds}
    \sup_{0<\Im \zeta<1} |e^{\pm\Im \zeta}h_{\pm}(\zeta)|\le C.
\end{equation}
The importance of~\eqref{eq:hpmHankel} lies in the well-known asymptotic expansion of the Hankel functions as $z\to0$. This automatically solves the delicate problem of finding the behavior of   $h_{\pm}(z)$ for small~$z$.

\subsection{Construction of the solutions $\Psi_{\pm}^{(0)}(r,z)$ at infinity}

We now seek to construct solutions $\Psi_\pm^{(0)}(r,z)$ to $(i\calL_\infty-z) \Psi_\pm^{(0)} = 0$ for $\pm \Im(z) > 0$ with $\Psi_\pm^{(0)}(\cdot,z) \in L^2((1,\infty))$.
Suppose $\Phi = \begin{bmatrix} p \\ q \end{bmatrix}$ solves $i \calL_\infty \Phi = z \Phi$. Then $p(r,z)$ and $q(r,z)$ satisfy
\begin{equation} \label{equ:calLinfty_leading_order_coupled_system}
 \left\{ \begin{aligned}
  i \calH_0 q &= z p, \\
  (-i) (\calH_0 + 2) p &= z q.
 \end{aligned} \right.
\end{equation}
Thus, $q(r,z)$ must satisfy
\begin{equation}\nonumber
 \begin{aligned}
  (-i) (\calH_0 + 2) i \calH_0 q &= (-i) (\calH_0 + 2) z p = z (-i) (\calH_0 + 2) p = z^2 q,
 \end{aligned}
\end{equation}
whence
\begin{equation}\nonumber
 (\calH_0 + 2) \calH_0 q = z^2 q.
\end{equation}
Once we have determined $q(r,z)$, we obtain the corresponding choice for $p(r,z)$ from
\begin{equation}\nonumber
 p(r,z) = \frac{i}{z} (\calH_0 q)(r,z).
\end{equation}
The ansatz $q(r,z) = h_+(k(z)r)$ with $k \equiv k(z)$ leads to the equation
\begin{equation} \label{equ:Pkz}
 P(k,z) := k^4 + 2 k^2 - z^2 = 0
\end{equation}
and its Riemann surface which we now determine. 
Observe that
\begin{equation}\nonumber
 \partial_k P(k, z) = 4 k^3 + 4 k = 4k (k^2 + 1).
\end{equation}
Hence, $P(k,z) = \partial_k P(k,z) = 0$ for
\begin{equation}\nonumber
 (k,z) \in \bigl\{ (0,0), (i, i), (i, -i), (-i, i), (-i,-i) \bigr\}.
\end{equation}
It follows from the implicit function theorem that for $z \notin \{0, i, -i\}$ the equation \eqref{equ:Pkz} has four distinct roots, which we denote by $k_1(z)$, $k_2(z)$, $k_3(z)$, and $k_4(z)$.

For all $x > 0$ define
\begin{equation}\label{eq:four roots}
 \begin{aligned}
  f_1(x) &:= \sqrt{-1+\sqrt{1+x^2}}, \\
  f_2(x) &:= i \sqrt{1+\sqrt{1+x^2}}, \\
  f_3(x) &:= -\sqrt{-1+\sqrt{1+x^2}}, \\
  f_4(x) &:= -i \sqrt{1+\sqrt{1+x^2}},
 \end{aligned}
\end{equation}
where we use the convention that $\sqrt{a} > 0$ for positive real numbers $a > 0$.
Then we have $P(f_j(x), x) = 0$ for all $x > 0$ for $1 \leq j \leq 4$. Define the simply-connected domain
\begin{equation}\label{eq:Omega}
 \Omega := \bbC \, \backslash \, \bigl(  [i, i\infty) \cup (-i\infty, -i] \bigr).
\end{equation}
Then analytic continuation of these functions to $\Omega$ leads to the four distinct roots. Note that $z=0$ is not a branch point of any of these roots. For $1 \leq j \leq 4$, we denote by $k_j(z)$ the analytic continuation of $f_j(x)$ to $\Omega$.

\begin{lemma} \label{lem:positive_imaginary_parts}
 We have that
 \begin{equation}\nonumber
  \Im \, \bigl( k_1(z) \bigr) > 0 \quad \text{and} \quad \Im \, \bigl( k_2(z) \bigr) > 0 \quad \text{for all } z \in \Omega \text{ with } \Im(z) > 0.
 \end{equation}
 Similarly, we have that
 \begin{equation}\nonumber
  \Im \, \bigl( k_3(z) \bigr) > 0 \quad \text{and} \quad \Im \, \bigl( k_2(z) \bigr) > 0 \quad \text{for all } z \in \Omega \text{ with } \Im(z) < 0.
 \end{equation}
 Moreover, for $z\in\Omega$
\begin{align*}
\begin{split}
k_1(z)=-k_1(-z)=k_3(-z)=-k_3(z),\qquad k_2(z)=k_2(-z)=-k_4(z)=-k_4(-z).
\end{split}
\end{align*}
\end{lemma}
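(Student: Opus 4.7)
The approach rests on two elementary observations about the polynomial $P(k,z) = k^4 + 2k^2 - z^2$: it is even in $k$ and in $z$ separately, and if a root $k$ is real then $z^2 = k^2(k^2+2) \geq 0$, forcing $z$ to be real as well.

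For the symmetry identities, the relation $P(-k,z) = P(k,z)$ means that $-k_j(z)$ is again one of the four roots. Since $f_3 = -f_1$ and $f_4 = -f_2$ on $(0,\infty)$, uniqueness of analytic continuation on $\Omega$ yields $k_3(z) = -k_1(z)$ and $k_4(z) = -k_2(z)$ throughout $\Omega$. Next, $P(k,-z) = P(k,z)$ together with the invariance of $\Omega$ under $z \mapsto -z$ shows that each $z \mapsto k_j(-z)$ is also a root. To identify which one, I would work near $z=0$: the function $k_1(z)^2 = -1 + \sqrt{1+z^2} = \frac{1}{2}z^2 + O(z^4)$ is even and vanishes to second order, so $k_1$ is analytic near $0$ with $k_1(0)=0$, and comparing Taylor coefficients in $k_1(z)\cdot k_1(z)$ inductively forces the expansion of $k_1$ to contain only odd powers, giving $k_1(-z) = -k_1(z)$. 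Likewise $k_2(z)^2 = -1 - \sqrt{1+z^2} = -2 + O(z^2)$ is even and nonzero at $z=0$, so $k_2$ itself is even near $0$ and $k_2(-z) = k_2(z)$. Both identities propagate to all of $\Omega$ by analytic continuation.

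For the sign statements I would argue by connectedness. If $\Im k_j(z_0) = 0$ for some $z_0 \in \Omega$, the opening observation forces $z_0 \in \bbR$. Hence on the open set $\Omega \cap \{\pm \Im z > 0\}$ the function $\Im k_j$ never vanishes, and since this set is connected (removing the vertical ray $[\pm i, \pm i\infty)$ from a half-plane leaves a connected set, as one may route around the endpoint $\pm i$ through the segment $(0,\pm i)\subset \Omega$), $\Im k_j$ has constant sign there. The sign is pinned down at $z = x + i\varepsilon$ with $x, \varepsilon > 0$ small by a first-order expansion:
\begin{equation*}
    \Im k_1(x+i\varepsilon) = \varepsilon f_1'(x) + O(\varepsilon^2) > 0, \qquad \Im k_2(x + i\varepsilon) = \sqrt{1+\sqrt{1+x^2}} + O(\varepsilon) > 0,
\end{equation*}
using that $f_1$ is strictly increasing on $(0,\infty)$ and that $f_2(x) = i\sqrt{1+\sqrt{1+x^2}}$ already has positive imaginary part. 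The lower half-plane statements follow either by the identical expansion at $z = x - i\varepsilon$ (giving $\Im k_1 < 0$, hence $\Im k_3 = -\Im k_1 > 0$, while $\Im k_2$ remains positive since $k_2$ is purely imaginary on $(0,\infty)$), or equivalently via Schwarz reflection: $k_1$ is real on $\bbR \cap \Omega$, so $\overline{k_1(z)} = k_1(\bar z)$.

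The main obstacle is truly minor: confirming connectedness of $\Omega \cap \{\pm \Im z > 0\}$ after deleting the ray to $\pm i\infty$, and carefully performing the first-order sign computation that globalizes from a neighborhood of $(0,\infty)$. The heavy lifting is done by the single observation that real roots of $P(\cdot,z) = 0$ force $z$ to be real.
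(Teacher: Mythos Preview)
Your proof is correct and follows essentially the same approach as the paper: both hinge on the observation that a real root $k$ of $P(k,z)=0$ forces $z^2 = k^2(k^2+2)\ge 0$, hence $z\in\bbR$, so $\Im k_j$ has constant sign on each connected half of $\Omega$, and the symmetry identities are obtained by analytic continuation from the parity of the power series at $z=0$. The only cosmetic difference is where the sign is pinned down: the paper evaluates directly at $z=iy$ with $0<y<1$, obtaining the explicit values $k_1(iy)=i\sqrt{1-\sqrt{1-y^2}}$ and $k_2(iy)=i\sqrt{1+\sqrt{1-y^2}}$, whereas you use a first-order expansion near the positive real axis; the paper's choice is slightly more direct since it avoids the Taylor step.
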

\begin{proof}
 We begin with the case of positive imaginary part $\Im(z) > 0$.
 Observe that for $0 < y < 1$,
 \begin{equation}\nonumber
  k_1(iy) = i \sqrt{1-\sqrt{1-y^2}}, \quad k_2(iy) = i \sqrt{1 + \sqrt{1-y^2}},
 \end{equation}
 whence we have for $0 < y < 1$,
 \begin{equation}\nonumber
  \Im\bigl( k_1(iy) \bigr) > 0 \quad \text{and} \quad \Im \bigl( k_2(iy) \bigr) > 0.
 \end{equation}
 Now suppose that there exists $z_0 \in \Omega$ with $\Im(z_0) > 0$ such that say $\Im(k_1(z_0)) = 0$. Then by definition
 \begin{equation}
 \nonumber
  z_0^2 = k_1(z_0)^4 + 2 k_1(z_0)^2 \in (0,\infty),
 \end{equation}
 which is a contradiction to $\Im(z_0) > 0$. The assertion follows.

 The case of negative imaginary part $\Im(z) < 0$ can be handled analogously starting with the observation that for $0 < y < 1$,
 \begin{equation}\nonumber
  k_3(-iy) = i \sqrt{1-\sqrt{1-y^2}}, \quad k_2(-iy) = i \sqrt{1 + \sqrt{1-y^2}},
 \end{equation}
 whence we have for $0 < y < 1$,
 \begin{equation}\nonumber
  \Im\bigl( k_3(-iy) \bigr) > 0 \quad \text{and} \quad \Im \bigl( k_2(-iy) \bigr) > 0,
 \end{equation}
 as desired.
 
 Finally, for the relations amongst $k_j$, note that $k_1(x)=-k_3(x)$ and $k_2(x)=-k_4(x)$ hold for $x\in(0,\infty)$ and hence in $\Omega$ by analytic continuation. For the statements $k_1(z)=-k_1(-z)$ and $k_2(z)=k_2(-z)$, again by analytic continuation it suffices to show these relations near $z=0$. The latter holds by expanding $k_1(x)$ and $k_2(x)$ in power series near $x=0$ and observing that only odd powers of $x$ appear for $k_1$ and only even powers of $x$ appear for $k_2$.
\end{proof}

\medskip

For $q(r,z) := h_{+}(rk(z)) \sim e^{i k(z) r}$ (as $r\to\infty$) to be square-integrable near infinity, we need $\Im k(z) > 0$.
In view of Lemma~\ref{lem:positive_imaginary_parts} we arrive at the following definitions
\begin{equation}\label{eq:Psipm0+1}
   \Psi_+^{(0)}(r,z) := \begin{bmatrix}
                      \frac{i k_1(z)^2}{z} h_+(r k_1(z)) & \frac{i k_2(z)^2}{z} h_+(r k_2(z)) \\
                      h(_+r k_1(z)) & h_+(r k_2(z))
                     \end{bmatrix}, \quad \Im(z) > 0,
\end{equation}
and
\begin{equation}\label{eq:Psipm0-1}
 \begin{aligned}
   \Psi_-^{(0)}(r,z) &:= \begin{bmatrix}
                      \frac{i k_3(z)^2}{z} h_{+}(r k_3(z)) & \frac{i k_2(z)^2}{z} h_{+}(r k_2(z)) \\
                      h_{+}(r k_3(z)) & h_{+}(r k_2(z))
                     \end{bmatrix}, \quad \Im(z) < 0.
 \end{aligned}
\end{equation}
These are the Weyl-Titchmarsh matrix solutions of the reference operator $\calL_\infty$.

\subsection{Construction of $\Psi_+(r,z)$ via Lyapunov-Perron}\label{sec:Upsilon1}
Next, we establish the existence of a solution $\Psi_+(r,z)$ to $(i\calL - z ) \Psi_+(r,z) = 0$ for $\Im(z) > 0$ with prescribed behavior $\Psi_+^{(0)}(r,z)$ at $r=\infty$. We also seek to establish error bounds how well $\Psi_+(r,z)$ is approximated by $\Psi_+^{(0)}(r,z)$. 

For $z \in \Omega$ with $\Im(z) > 0$ we rewrite the system of second-order ODEs
\begin{equation}\nonumber
 i \calL \Psi = z \Psi, \quad \Psi(r,z) = \begin{bmatrix} p(r,z) \\ q(r,z) \end{bmatrix},
\end{equation}
as a system of first-order ODEs
\begin{equation} \label{equ:first_order_system_infinity}
 \begin{aligned}
  \partial_r \Upsilon(r;z) = A(r;z) \Upsilon(r;z) + R(r) \Upsilon(r;z), \quad \Upsilon(r;z) := \begin{bmatrix} q(r,z) \\ \partial_r q(r,z) \\ p(r,z) \\ \partial_r p(r,z) \end{bmatrix},
 \end{aligned}
\end{equation}
where
\begin{equation}\label{eq:ARdef1}
 \begin{aligned}
  A(r;z) := \begin{bmatrix} 0 & 1 & 0 & 0 \\ -\frac{1}{4r^2} & 0 & iz & 0 \\ 0 & 0 & 0 & 1 \\ -iz & 0 & 2-\frac{1}{4r^2} & 0 \end{bmatrix}, \quad R(r) := \begin{bmatrix} 0 & 0 & 0 & 0 \\ V_1(r) & 0 & 0 & 0 \\ 0 & 0 & 0 & 0 \\ 0 & 0 & V_2(r) & 0 \end{bmatrix}.
 \end{aligned}
\end{equation}
For $j = 1, \ldots, 4$ define 
\begin{align}\label{eq:wjdef}
\begin{split}
&w^+_1(r;z):=\pmat{h_{+}(k_1(z)r)\\k_1(z)h_{+}'(k_1(z)r)\\\frac{ik_1^2(z)}{z}h_{+}(k_1(z)r)\\\frac{ik_1(z)^3}{z}h_{+}'(k_1(z)r)},\quad w^+_2(r;z):=\pmat{h_{+}(k_2(z)r)\\k_2(z)h_{+}'(k_2(z)r)\\\frac{ik_2^2(z)}{z}h_{+}(k_2(z)r)\\\frac{ik_2(z)^3}{z}h_{+}'(k_2(z)r)},\\
&w^+_3(r;z):=\pmat{h_{-}(k_1(z)r)\\k_1(z)h_{-}'(k_1(z)r)\\\frac{ik_1^2(z)}{z}h_{-}(k_1(z)r)\\\frac{ik_1(z)^3}{z}h_{-}'(k_1(z)r)},\quad w^+_4(r;z):=\pmat{h_{-}(k_2(z)r)\\k_2(z)h_{-}'(k_2(z)r)\\\frac{ik_2^2(z)}{z}h_{-}(k_2(z)r)\\\frac{ik_2(z)^3}{z}h_{-}'(k_2(z)r)}.
\end{split}
\end{align}
Then we have for $j = 1, \ldots, 4$ that
\begin{equation}\nonumber
 (\partial_r - A(r;z)) w_j^+(r;z) = 0.
\end{equation}
We want to approximate the solutions to $i\calL\Psi =z\Psi$ and their derivatives by the corresponding components of $w_j(r;z)$ in a suitable sense to be made precise below. This will be done for $r\geq r_\infty$ where
\begin{align*}
\begin{split}
r_\infty(z) := \frac{\epsilon_\infty}{|z|}
\end{split}
\end{align*}  
with $0 < |z| \leq \delta_0 \ll \epsilon_\infty \ll 1$ as in \eqref{equ:small_constants_section_large_r}.
The norms $X_j$ below are meant to capture the behavior of $w_j^+$ as $r\to\infty$.
\begin{defi}\label{def:Xnorms}
    For an $\bbR^4$ valued function $F=(F_1,\dots,F_4)$ let 
\begin{align*}
    \|F\|_{X_1} &:=\sup_{r\geq r_\infty}\big(|e^{-ik_1(z)r}F_1(r)|+|z|^{-1}|e^{-ik_1(z)r}F_2(r)|\\ &
    \qquad\qquad +|z|^{-1}|e^{-ik_1(z)r}F_3(r)|+|z|^{-2}|e^{-ik_1(z)r}F_4(r)|\big),\\
    \|F\|_{X_2} &:=\sup_{r\geq r_\infty}\big(|e^{-ik_2(z)r}F_1(r)|+|e^{-ik_2(z)r}F_2(r)|\\
    & \qquad\qquad +|z||e^{-ik_2(z)r}F_3(r)|+|z||e^{-ik_2(z)r}F_4(r)|\big),\\
    \|F\|_{X_3} & :=\sup_{r\geq r_\infty}\big(|e^{ik_1(z)r}F_1(r)|+|z|^{-1}|e^{ik_1(z)r}F_2(r)|\\
    &\qquad\qquad +|z|^{-1}|e^{ik_1(z)r}F_3(r)|+|z|^{-2}|e^{ik_1(z)r}F_4(r)|\big),\\
    \|F\|_{X_4} &:=\sup_{r\geq r_\infty}\big(|e^{ik_2(z)r}F_1(r)|+|e^{ik_2(z)r}F_2(r)|\\
    & \qquad\qquad +|z||e^{ik_2(z)r}F_3(r)|+|z||e^{ik_2(z)r}F_4(r)|\big).
\end{align*}
We will use the same notation when $F$ depends on $z$ as well as $r$.
\end{defi}
The choice of these norms is motivated by the formulae \eqref{eq:wjdef}
as explained by the following lemma. 
\begin{lemma} \label{lem:wjXj}
    There exists an absolute constant $C > 0$ such that for $0 < |z| \leq \delta_0 \ll \epsilon_\infty$ with $\Im(z) > 0$ and $r_\infty := \epsilon_\infty |z|^{-1}$, 
    \[
    \max_{j=1,3} \, \|w^+_j(\cdot; z)\|_{X_j} \leq C\epsilon_\infty^{-\frac{1}{2}} \log\epsilon_\infty^{-1}, \qquad  \max_{j=2,4} \, \|w^+_j(\cdot; z)\|_{X_j} \leq C.
    \]
\end{lemma}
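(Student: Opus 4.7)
The plan is to unravel the definition of $w_j^+(r;z)$ in \eqref{eq:wjdef} and test each component against the $X_j$ norm in Definition~\ref{def:Xnorms}, reducing matters to a case analysis based on whether the argument $k_j(z)r$ is in the small-argument or large-argument regime for the Hankel functions. The two quantitative inputs I would use are: (i) the large-argument bounds from Lemma~\ref{lem:Hankel0}, namely $|e^{\mp i\zeta} h_\pm(\zeta)| \lesssim 1$ and $|e^{\mp i\zeta} h_\pm'(\zeta)| \lesssim 1$ uniformly for $\Im \zeta \geq 0$, $|\zeta| \geq 1$ (the derivative bound comes by combining the estimate on $(e^{\mp i\zeta} h_\pm)'$ with that on $h_\pm$); and (ii) the small-argument expansions of $H_0^{(1,2)}$ and $H_1^{(1,2)}$, which through the identification \eqref{eq:hpmHankel} give $|h_\pm(\zeta)| \lesssim |\zeta|^{1/2} |\log \zeta|$ and $|h_\pm'(\zeta)| \lesssim |\zeta|^{-1/2}|\log \zeta|$ for $0 < |\zeta| \leq 1$. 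The positivity $\Im(k_j(z) r) > 0$ needed to apply these in our setting is exactly Lemma~\ref{lem:positive_imaginary_parts}.

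For $j = 2, 4$, the crucial observation is that $k_2(z) = i\sqrt{2} + \calO(z^2)$ as $z \to 0$, so $|k_2(z)| \simeq 1$ and hence $|k_2(z) r| \geq |k_2(z)| r_\infty \gtrsim \epsilon_\infty/|z| \gg 1$ throughout $r \geq r_\infty$. Thus only the large-argument regime (i) is relevant. Plugging into Definition~\ref{def:Xnorms} and using $|z| \cdot |k_2|^m / |z| = |k_2|^m \lesssim 1$ for $m = 1, 2, 3$ yields $\|w_2^+(\cdot; z)\|_{X_2}, \|w_4^+(\cdot; z)\|_{X_4} \lesssim 1$ at once.

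For $j = 1, 3$, we have $k_1(z) = z/\sqrt{2} + \calO(z^3)$, so $|k_1(z)| \simeq |z|$ and $|k_1(z) r|$ sweeps the full range $[\simeq \epsilon_\infty, \infty)$ as $r$ varies over $[r_\infty, \infty)$. I would split this range at $r = |k_1(z)|^{-1}$: the region $r \geq |k_1(z)|^{-1}$ falls under (i) and the region $r_\infty \leq r \leq |k_1(z)|^{-1}$ under (ii). The norms $X_1$ and $X_3$ are designed precisely so that the $|k_1|^m/|z|^m$ prefactors in the components of $w_1^+, w_3^+$ balance the $|z|^{-m}$ weights, leaving (up to bounded multiplicative constants) the bare quantities $|e^{\mp ik_1 r} h_\pm(k_1 r)|$ for the first and third entries and $|e^{\mp ik_1 r} h_\pm'(k_1 r)|$ for the second and fourth. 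The former are bounded by $|k_1 r|^{1/2}|\log(k_1 r)| \lesssim 1$ in the small-argument regime and by a constant in the large-argument regime. The latter are bounded by $|k_1 r|^{-1/2}|\log(k_1 r)|$ in the small-argument regime, a quantity that attains its maximum over $|k_1 r| \geq \epsilon_\infty$ at the lower endpoint, producing the stated bound $\epsilon_\infty^{-1/2} \log \epsilon_\infty^{-1}$.

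The main obstacle in carrying this out is not conceptual but rather a matter of careful book-keeping: one must verify that no compensating factor has been overlooked, and in particular confirm that the $\log \epsilon_\infty^{-1}$ loss appears only through the derivative components in the small-argument regime at the turning point $|k_1 r| \simeq \epsilon_\infty$. Once the case distinction is set up and the weight cancellations checked, each of the eight estimates reduces to a one-line computation using (i) and (ii).
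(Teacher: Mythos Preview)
Your proposal is correct and follows essentially the same approach as the paper: the paper's proof simply cites Lemma~\ref{lem:Hankel0} for the large-argument regime and records the small-$\zeta$ asymptotics $h_\pm(\zeta)=c_1^\pm\sqrt{\zeta}\log\zeta+c_2^\pm\sqrt{\zeta}+\calO(\zeta^{5/2}\log\zeta)$, $h_\pm'(\zeta)=\tfrac{c_1^\pm}{2}\zeta^{-1/2}\log\zeta+(c_1^\pm+\tfrac{c_2^\pm}{2})\zeta^{-1/2}+\calO(\zeta^{3/2}\log\zeta)$, exactly your inputs (i) and (ii). Your explicit case split at $|k_1(z)r|\simeq 1$ and your identification of the $\epsilon_\infty^{-1/2}\log\epsilon_\infty^{-1}$ loss as arising from $h_\pm'$ at the turning point $r=r_\infty$ are precisely what the paper leaves implicit.
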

\begin{proof}
This follows from \eqref{eq:hpmHankel}, Lemma~\ref{lem:Hankel0}, and the following small $\zeta$ asymptotics 
\begin{align}\label{eq:Hankelsmall1}
\begin{split}
h_{\pm}(\zed) &= c_1^\pm\sqrt{\zed}\log\zed+c_2^\pm\sqrt{\zed}+\calO(\zed^{\frac{5}{2}}\log\zed),\\
h_{\pm}'(\zed) &=\frac{c_1^\pm}{2}\zed^{-\frac{1}{2}}\log\zed+( c_1^\pm+\frac{c_2^\pm}{2})\zed^{-\frac{1}{2}} + \calO(\zed^{\frac{3}{2}}\log\zed).
\end{split}
\end{align}
See \cite[(9.1.12)--(9.1.13)]{AS}. 
\end{proof}
In the next proposition we construct solutions to $i\calL\Psi=z\Psi$ for $\Im z>0$ with leading order given by $w_j^+(\cdot,z)$.
\begin{prop}
\label{prop:Upsilon}
 Let $0 < |z| \leq \delta_0 \ll \epsilon_\infty \ll 1$ with $\Im(z) > 0$. Then the first-order system
 \begin{equation}\nonumber
  \partial_r \Upsilon(r;z) = A(r;z) \Upsilon(r;z) + R(r) \Upsilon(r;z)
 \end{equation}
 has a fundamental set of solutions $\Upsilon_j^+(r,z)$, $1 \leq j \leq 4$, satisfying
 \begin{equation}\nonumber
    \lim_{r\to\infty} e^{-ik_j(z) r} \bigl( \Upsilon_j^+(r;z) - w_j^+(r;z) \bigr) = 0.
 \end{equation}
 In fact there exists a constant $C_{\epsilon_\infty} > 0$, depending on $\epsilon_\infty$ but not $z$, such that
\begin{align}\label{eq:Upsilon+lead1}
\begin{split}
\|\Upsilon_j^{+}(\cdot;z)-w_j^{+}(\cdot;z)\|_{X_j}\leq C_{\epsilon_\infty}|z|.
\end{split}
\end{align}
 The solution $\Upsilon^+_2$ is the most recessive branch and thus unique, while the other 
  $\Upsilon^+_j$ are unique up to adding subordinate branches: to $\Upsilon^+_j$ for $j=3,4$ we can add multiples of $\Upsilon^+_i$ for all $1\le i<j$. To $\Upsilon^+_1$ we can add a multiple of $\Upsilon^+_2$. 
\end{prop}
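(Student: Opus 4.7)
The plan is to run a Lyapunov--Perron fixed-point argument for the first-order system~\eqref{equ:first_order_system_infinity} on $[r_\infty,\infty)$, treating the $R$-term as a perturbation of the $A$-term. Assemble the $4\times 4$ fundamental matrix $W^+(r;z) := [\,w_1^+(r;z),\, w_2^+(r;z),\, w_3^+(r;z),\, w_4^+(r;z)\,]$ of the unperturbed system $\partial_r \Upsilon = A\Upsilon$. First I would verify that $W^+$ is invertible for $0<|z|\leq\delta_0$ by computing its determinant from the $r\to\infty$ asymptotics $h_\pm(\zeta)\sim e^{\pm i\zeta}$ in Lemma~\ref{lem:Hankel0}, using that $k_1(z), k_2(z)$ are distinct and non-zero for small $z$ by Lemma~\ref{lem:positive_imaginary_parts}.

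Next, order the four branches from most recessive to most dominant as $(w_2^+, w_1^+, w_3^+, w_4^+)$; this follows from Lemma~\ref{lem:positive_imaginary_parts} together with the identifications $k_3=-k_1$, $k_4=-k_2$, noting $\Im k_2 > \Im k_1 > 0$ for the relevant range of $z$. For each $j\in\{1,2,3,4\}$, set up the integral equation
\begin{equation*}
\Upsilon_j^+(r) \;=\; w_j^+(r;z) \,+\, W^+(r;z)\!\int \Lambda_j(r,s)\,(W^+(s;z))^{-1}\,R(s)\,\Upsilon_j^+(s)\,ds,
\end{equation*}
with the diagonal matrix $\Lambda_j(r,s)$ prescribing, component by component, whether to integrate on $[r,\infty)$ (used for every mode $w_i^+$ that is no more recessive than $w_j^+$) or on $[r_\infty,r]$ (used for every $w_i^+$ strictly more recessive than $w_j^+$). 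The first choice is forced by the boundary condition $\lim_{r\to\infty}e^{-ik_j(z)r}(\Upsilon_j^+-w_j^+)=0$, since the integrand in the relevant mode is integrable at $\infty$ precisely when $\Im k_j\ge \Im k_i$. The second choice introduces integration constants at $r=r_\infty$ that correspond exactly to the allowed freedom of adding multiples of the $\Upsilon_i^+$ for $i<j$ in the chosen ordering; this yields the uniqueness of $\Upsilon_2^+$ and the uniqueness-modulo-subordinate-branches statement for $j=1,3,4$.

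Then I would show that the map defined by the right-hand side is a contraction on a ball of radius $C_{\epsilon_\infty}|z|$ in $(X_j,\|\cdot\|_{X_j})$ from Definition~\ref{def:Xnorms}. Combining the bounds on $h_\pm$ from Lemma~\ref{lem:Hankel0}, the norm bound $\|w_j^+\|_{X_j}\lesssim \epsilon_\infty^{-1/2}\log\epsilon_\infty^{-1}$ from Lemma~\ref{lem:wjXj}, and the decay $|V_1(r)|\lesssim r^{-4}$, $|V_2(r)|\lesssim r^{-2}$, one proves mode-by-mode that the integral operator has $X_j\to X_j$ norm $\lesssim_{\epsilon_\infty} |z|$, with the gain coming from the $s$-integral picking up a power $r_\infty^{-1} = |z|/\epsilon_\infty$. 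The asymptotic statement $\lim_{r\to\infty} e^{-ik_j(z)r}(\Upsilon_j^+-w_j^+)=0$ and the bound~\eqref{eq:Upsilon+lead1} then follow by evaluating the fixed-point equation as $r\to\infty$.

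The principal obstacle will be the bookkeeping of the $|z|$-dependence across the inverse Wronskian of $W^+$, which carries a factor of size $|z|^{-1}$, and the subsequent integration against the decaying potentials: the final $|z|$-gain in~\eqref{eq:Upsilon+lead1} emerges only after accounting for the improved decay $|V_1|\lesssim r^{-4}$ (rather than merely $r^{-2}$) and for sharp exponential cancellations between $W^+(r)$ and $(W^+(s))^{-1}$ within each mode. Both features are direct consequences of incorporating the $-1/(4r^2)$ term into the reference operator $\calL_\infty$; without this choice the $V_1$-contribution would be only $O(r^{-2})$ and the Lyapunov--Perron contraction would fail at the turning-point scale $r\simeq r_\infty$.
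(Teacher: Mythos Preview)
Your proposal is correct and follows essentially the same Lyapunov--Perron strategy as the paper: the same ordering $(w_2^+,w_1^+,w_3^+,w_4^+)$ from most recessive to most dominant, the same choice of integration intervals (modes at least as dominant as $w_j^+$ on $[r,\infty)$, strictly more recessive modes on $[r_\infty,r]$), and the same source of the $|z|$-gain in the contraction constant. Your uniqueness discussion and your remark about the necessity of the $r^{-4}$ decay of $V_1$ are also on target and align with the detailed estimates in Lemma~\ref{lem:contr}.

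The one organizational difference worth noting: rather than inverting the full $4\times4$ fundamental matrix $W^+$, the paper exploits that $R(r)\Upsilon$ has only two nonzero entries (in positions $2$ and $4$) and writes
\[
R(r)\Upsilon \;=\; \begin{bmatrix} 0\\ 2ik_1\\ 0\\ -2k_1^3/z\end{bmatrix}\alpha(r;z) \;+\; \begin{bmatrix} 0\\ 2ik_2\\ 0\\ -2k_2^3/z\end{bmatrix}\beta(r;z),
\]
with $\alpha,\beta$ obtained from a $2\times2$ inversion (see \eqref{eq:alphabeaell1}). This has the effect of diagonalizing the Green's function into a $k_1$-block (involving $w_1^+,w_3^+$) and a $k_2$-block (involving $w_2^+,w_4^+$), and it makes the $|z|$-bookkeeping you flag as the ``principal obstacle'' entirely transparent: the singular $|z|^{-1}$ in $\alpha$ meets the $r^{-4}$ decay of $V_1$, while the $V_2$ contribution to $\alpha$ comes with an extra factor of $z$. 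Your $4\times4$ approach is equivalent but requires tracking these cancellations through the cofactor expansion of $(W^+)^{-1}$; the paper's $\alpha,\beta$ decomposition is simply a cleaner packaging of the same computation.
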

\begin{rem}
Estimate \eqref{eq:Upsilon+lead1}  is especially important in Section~\ref{sec:Greens} where we compute the Wronskians between the $\Psi^{\pm}(r,z)$ and $F_1(r,z)$, $F_2(r,z)$. Estimate \eqref{eq:Upsilon+lead1}  will allow us to replace  $\Upsilon_j^{+}$ by $w_j^{+}$ in those computations. See Lemma~\ref{lem:bellj1} for more details.
\end{rem}
We prove this proposition by a contraction mapping argument. To set the stage, we define the relevant linear operators. That these are the relevant operators will become clear in the proof of Proposition~\ref{prop:Upsilon} below.  Given
\begin{equation}\nonumber
 \begin{aligned}
  \Upsilon^+_\ell(r;z) = \begin{bmatrix} \Upsilon^+_{\ell,1}(r;z) \\ \Upsilon^+_{\ell;2}(r;z) \\ \Upsilon^+_{\ell,3}(r;z) \\ \Upsilon^+_{\ell,4}(r;z) \end{bmatrix},\quad \ell=1,2,3,4,
 \end{aligned}
\end{equation}
set
\begin{equation} \label{eq:alphabeaell1}
    \begin{aligned}
    &\alpha_\ell(r;z) := \frac{1}{2iz}\frac{1}{k_2^2(z)-k_1^2(z)}\big(-ik_2^3(z)V_1(r)\Upsilon^+_{\ell,1}(r;z)+zk_2(z)V_2(r)\Upsilon^+_{\ell,3}(r;z)\big),\\
    &\beta_\ell(r;z) := \frac{1}{2iz}\frac{1}{k_2^2(z)-k_1^2(z)}\big(ik_1^3(z)V_1(r)\Upsilon^+_{\ell,1}(r;z)-zk_1(z)V_2(r)\Upsilon^+_{\ell,3}(r;z)\big).
    \end{aligned}
\end{equation}
Then for $1 \leq j \leq 4$, define $\calF_j(r,z,\Upsilon^+_j(r;z))$ by
\begin{flalign} 
\calF_1(r,z,\Upsilon^+_1(r;z)) &:= \int_r^\infty \big(w^+_3(r;z)h_{+}(k_1(z)s)-w^+_1(r;z)h_{-}(k_1(z)s)\big)\alpha_1(s;z)\ud s&&\label{eq:Upsilonapp1}\\
&\quad+ \int_{r_\infty}^r w^+_2(r;z)h_{-}(k_2(z)s)\beta_1(s;z)\ud s+\int_r^\infty w^+_4(r;z)h_{+}(k_2(z)s)\beta_1(s;z)\ud s,&&
\end{flalign}
\begin{flalign} 
\calF_2(r,z,\Upsilon^+_2(r;z))&:=\int_r^\infty \big(w^+_3(r;z)h_{+}(k_1(z)s)-w^+_1(r;z)h_{-}(k_1(z)s)\big)\alpha_2(s;z)\ud s&&\label{eq:Upsilonapp2}\\
&\phantom{:=}+\int_r^\infty \big(w^+_4(r;z)h_{+}(k_2(z)s)-w^+_2(r;z)h_{-}(k_2(z)s)\big)\beta_2(s;z)\ud s,&&
\end{flalign}
\begin{flalign}
    \calF_3(r,z,\Upsilon^+_3(r;z))&:=\int_{r_\infty}^{r}w^+_1(r;z)h_{-}(k_1(z)s)\alpha_3(s;z)\ud s+\int_{r_\infty}^rw^+_2(r;z)h_{-}(k_2(z)s)\beta_3(s;z)\ud s&&\label{eq:Upsilonapp3}\\
    &\quad+\int_{r}^\infty w^+_3(r;z)h_{+}(k_1(z)s)\alpha_3(s;z)\ud s+\int_r^{\infty}w^+_4(r;z)h_{+}(k_2(z)s)\beta_3(s;z)\ud s,&&
\end{flalign}
\begin{flalign}
    \calF_4(r,z,\Upsilon^+_4(r;z))&:=\int_{r_\infty}^{r}w^+_1(r;z)h_{-}(k_1(z)s)\alpha_4(s;z)\ud s+\int_{r_\infty}^rw^+_2(r;z)h_{-}(k_2(z)s)\beta_4(s;z)\ud s&&\label{eq:Upsilonapp4} \\
    &\quad-\int_{r_\infty}^rw^+_3(r;z)h_{+}(k_1(z)s)\alpha_4(s;z)\ud s+\int_r^{\infty}w^+_4(r;z)h_{+}(k_2(z)s)\beta_4(s;z)\ud s.&&
\end{flalign}
The main technical ingredient of the proof of Proposition~\ref{prop:Upsilon} is the following lemma.
\begin{lemma} \label{lem:contr}
    Suppose $0 < |z| \leq \delta_0 \ll \epsilon_\infty \ll 1$ and $r_\infty := \epsilon_\infty |z|^{-1}$. Then $\calF_j$ is a contraction on~$X_j$. More precisely, $\|\calF_j\|_{X_j}\lesssim_{\epsilon_\infty} |z|\|\Upsilon_j\|_{X_j}$. 
\end{lemma}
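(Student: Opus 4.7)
The plan is to leverage the fact that each $\calF_j$ is linear in $\Upsilon^+_j$ through the coefficients $\alpha_j, \beta_j$ of~\eqref{eq:alphabeaell1}, so that the contraction assertion reduces to proving the quantitative operator-norm estimate $\|\calF_j(\Upsilon)\|_{X_j} \lesssim_{\epsilon_\infty} |z|\, \|\Upsilon\|_{X_j}$, which is strictly less than $1$ once $\delta_0$ is chosen sufficiently small in terms of $\epsilon_\infty$.

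First I would extract pointwise bounds on $\Upsilon^+_{j,1}$ and $\Upsilon^+_{j,3}$ directly from Definition~\ref{def:Xnorms} and substitute them into~\eqref{eq:alphabeaell1}. Using the expansions $k_1(z) = z/\sqrt{2} + \calO(z^3)$ and $k_2(z) = i\sqrt{2} + \calO(z^2)$ from~\eqref{eq:four roots}, the uniform boundedness of $(k_2^2 - k_1^2)^{-1}$ near $z=0$, and the decay $|V_1(s)| \lesssim s^{-4}$, $|V_2(s)| \lesssim s^{-2}$, this yields pointwise bounds of the schematic form
\begin{equation*}
|\alpha_j(s;z)| \lesssim \frac{1}{|z|}\bigl(V_1(s) + V_2(s)\bigr)\|\Upsilon\|_{X_j}\,\mathcal{E}_j(s), \qquad |\beta_j(s;z)| \lesssim \bigl(|z|^2 V_1(s) + V_2(s)\bigr)\|\Upsilon\|_{X_j}\,\mathcal{E}_j(s),
\end{equation*}
where $\mathcal{E}_j(s)$ denotes the exponential weight intrinsic to the $X_j$ norm. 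The extra factor of $k_1(z) \sim z$ appearing in the coefficient of $\Upsilon^+_{j,3}$ in $\beta_j$ is responsible for the improved second bound.

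Next I would estimate each of the eight Green's-function integrals in~\eqref{eq:Upsilonapp1}--\eqref{eq:Upsilonapp4} by splitting the integration range according to whether $|k_i(z) s| \gtrsim 1$ or $|k_i(z) s| \lesssim 1$. In the former regime I would invoke the sharp large-argument asymptotics of Lemma~\ref{lem:Hankel0}, i.e.\ $h_+(k_i s) \sim e^{ik_i s}$ and $h_-(k_i s) \sim e^{-ik_i s}$; in the latter I would use the small-argument expansions~\eqref{eq:Hankelsmall1}. The quantitative $|z|$-gain comes from two sources: the constraint $r \geq r_\infty = \epsilon_\infty/|z|$ converts slow potential decay into $\int_r^\infty s^{-2}\,ds \lesssim r^{-1} \leq |z|/\epsilon_\infty$, while the explicit $|z|^2$-factor in $\beta_j$ handles the remaining terms in the ``massive'' direction associated with $k_2$. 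I would then verify term by term that, after multiplication by the outer weight $e^{\mp ik_j r}$ from the $X_j$ norm, all exponential growth of $w^+_i(r;z)$ cancels against the exponential decay of the integrand in $s$, leaving only admissible polynomial and logarithmic factors.

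The main obstacle, requiring the most delicate analysis, will be the mixed contributions of the form $h_-(k_1(z) r)\int_{r_\infty}^r h_+(k_1(z) s)\,\alpha_j(s;z)\,ds$ that appear in $\calF_3$ and $\calF_4$. Over this range both Hankel arguments satisfy $|k_1 s| \lesssim \epsilon_\infty \ll 1$, so both the dominant and recessive branches exhibit only $\sqrt{k_1 s}\,\log(k_1 s)$ growth rather than genuine exponential behavior; one must therefore track the logarithmic losses $\log(\epsilon_\infty^{-1})$ that accumulate from the small-argument expansion and absorb them into the $\epsilon_\infty$-dependent implicit constant $C_{\epsilon_\infty}$. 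A second subtle point is to confirm that the specific contour choices in~\eqref{eq:Upsilonapp1}--\eqref{eq:Upsilonapp4}, i.e.\ which branches integrate from $r_\infty$ to $r$ versus from $r$ to $\infty$, indeed enforce the correct recessive/dominant pairing for the target $X_j$ norm, so that no net exponential growth survives in the $X_j$-norm of $\calF_j(\Upsilon)$.
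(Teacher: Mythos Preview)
Your strategy coincides with the paper's: bound $\alpha_j,\beta_j$ pointwise via Definition~\ref{def:Xnorms} and~\eqref{eq:alphabeaell1}, then estimate each integral in~\eqref{eq:Upsilonapp1}--\eqref{eq:Upsilonapp4} by tracking exponential weights. Two points, however, deserve correction. First, the case split $|k_i(z)s|\gtrsim 1$ versus $\lesssim 1$ is unnecessary: the paper simply invokes the uniform bounds $|h_\pm(\zeta)|\lesssim e^{\mp\Im\zeta}$ (valid throughout the closed upper half-plane by~\eqref{eq:H0rec}, \eqref{eq:H0dom}, \eqref{eq:hpmbds}) together with the pre-established bounds $\|w_j^+\|_{X_j}\lesssim_{\epsilon_\infty}1$ of Lemma~\ref{lem:wjXj}, into which all small-argument logarithmic losses have already been absorbed. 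Second, your assertion that ``both Hankel arguments satisfy $|k_1 s|\lesssim\epsilon_\infty$'' is inverted: since $s\ge r_\infty$ and $|k_1(z)|\simeq|z|$, in fact $|k_1(z) s|\gtrsim\epsilon_\infty$ with no upper bound, as the integration variable runs all the way to $r$ or to $\infty$. Consequently the term you flag as the ``main obstacle'' (which appears only in $\calF_4$, not $\calF_3$) is routine once the uniform Hankel bounds are in hand: in the $X_4$-norm, the weight $e^{ik_2 r}$ against the $e^{\Im k_2 s}$ growth inside $\alpha_4$ produces $e^{-(\Im k_2-\Im k_1)(r-s)}$, whose $O(1)$ decay rate upgrades $\int_{r_\infty}^r s^{-2}\,ds$ to $O(r_\infty^{-2})$ and secures the required factor of~$|z|$.
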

Lemma~\ref{lem:contr} will be proved in Subsection~\ref{sec:Upsilon2} below. Assuming Lemma~\ref{lem:contr} for now, we present the proof of Proposition~\ref{prop:Upsilon}.
\begin{proof}[Proof of Proposition~\ref{prop:Upsilon}]
We begin with the case $j=1$. Write
\begin{equation}\nonumber
 \begin{aligned}
  \Upsilon^+_1(r;z) = \begin{bmatrix} \Upsilon^+_{1,1}(r;z) \\ \Upsilon^+_{1;2}(r,z) \\ \Upsilon^+_{1,3}(r;z) \\ \Upsilon^+_{1,4}(r;z) \end{bmatrix}.
 \end{aligned}
\end{equation}
Observe that
\begin{equation}\nonumber
 \begin{aligned}
  R(r) \Upsilon^+_1(r;z) = \begin{bmatrix} 0 \\ V_1(r) \Upsilon^+_{1,1}(r;z) \\ 0 \\ V_2(r) \Upsilon^+_{1,3}(r;z) \end{bmatrix}.
 \end{aligned}
\end{equation}
We can therefore write $R(r) \Upsilon^+_1(r;z)$ as a linear combination
\begin{equation} \label{equ:source_term_in_terms_of_alpha_beta}
 \begin{aligned}
  R(r) \Upsilon^+_1(r;z) = \begin{bmatrix} 0 \\ 2 i k_1(z) \\ 0 \\ \frac{-2 k_1(z)^3}{z} \end{bmatrix} \alpha(r;z) + \begin{bmatrix} 0 \\ 2 i k_2(z) \\ 0 \\ \frac{-2 k_2(z)^3}{z} \end{bmatrix} \beta(r;z),
 \end{aligned}
\end{equation}
where the coefficients $\alpha(r;z)$ and $\beta(r;z)$ are the solutions to the linear system
\begin{equation}\nonumber
 \begin{aligned}
  \begin{bmatrix} 2 i k_1(z) & 2 i k_2(z) \\ \frac{-2 k_1(z)^3}{z} & \frac{-2 k_2(z)^3}{z} \end{bmatrix}
  \begin{bmatrix} \alpha(r;z) \\ \beta(r;z) \end{bmatrix}
  = \begin{bmatrix} V_1(r) \Upsilon^+_{1,1}(r;z) \\ V_2(r) \Upsilon^+_{1,3}(r;z) \end{bmatrix}.
 \end{aligned}
\end{equation}
Using that $k_1(z) k_2(z) = i z$ for all $z \in \Omega$ (which follows by analytic continuation from the corresponding identity $k_1(x) k_2(x) = i x$ for $x > 0$), we compute
\begin{equation}\nonumber
 \begin{aligned}
  \begin{bmatrix} 2 i k_1(z) & 2 i k_2(z) \\ \frac{-2 k_1(z)^3}{z} & \frac{-2 k_2(z)^3}{z} \end{bmatrix}^{-1} = \frac{1}{2iz} \frac{1}{k_2(z)^2-k_1(z)^2} \begin{bmatrix} -ik_2(z)^3 & z k_2(z) \\ i k_1(z)^3 & -z k_1(z) \end{bmatrix}.
 \end{aligned}
\end{equation}
Thus, we obtain
\begin{equation} \label{equ:definition_alpha_beta}
 \begin{aligned}
  \alpha(r;z) &= \frac{1}{2iz} \frac{1}{k_2(z)^2-k_1(z)^2} \Bigl( -i k_2(z)^3 V_1(r) \Upsilon^+_{1,1}(r;z) + z k_2(z) V_2(r) \Upsilon^+_{1;3}(r;z) \Bigr), \\
  \beta(r;z) &= \frac{1}{2iz} \frac{1}{k_2(z)^2-k_1(z)^2} \Bigl( i k_1(z)^3 V_1(r) \Upsilon^+_{1,1}(r;z) - z k_1(z) V_2(r) \Upsilon^+_{1;3}(r;z) \Bigr),
 \end{aligned}
\end{equation}
which agrees with \eqref{eq:alphabeaell1} with $\ell=1$ above. We will write $\alpha_1$ and $\beta_1$ for $\alpha$ and $\beta$ from now on. Note that $h_\pm(k_1(z)r)$ are two (linearly independent) solutions to the same ODE
\begin{equation}\nonumber
 -u''-\frac{1}{4r^2} u = k_1(z)^2 u ,
\end{equation}
and analogously for $h_\pm(k_2(z)r)$. Their Wronskians must therefore be independent of $r$, and can be evaluated from their asymptotic behavior
\begin{equation}\nonumber
 \begin{aligned}
  W\bigl[ h_+(\cdot k_1(z)), h_-(\cdot k_1(z)) \bigr] &= k_1(z)h_+( k_1(z)r) h_{-}'( k_1(z)r) - k_1(z)h_{+}'(k_1(z)r) h_{-}(k_3(z)r) \\
  & = -2ik_1(z),
 \end{aligned}
\end{equation}
and analogously for $h_\pm(\cdot k_2(z))$,
\begin{equation}\nonumber
 \begin{aligned}
  W\bigl[ h_+(\cdot k_2(z)), h_-(\cdot k_2(z)) \bigr] &= k_2(z)h_{+}( k_2(z)r) h_{-}'(k_2(z)r) -k_2(z) h_{+}'(k_2(z)r) h_{-} (k_4(z)r)\\ &= -2ik_2(z).
 \end{aligned}
\end{equation}
Following the Lyapunov-Perron approach, we will construct $\Upsilon^+_1(r;z)$ on an interval $[r_\infty, \infty)$ as a solution to the following integral equation
\begin{equation} \label{equ:Upsilon1_integral_equation} 
\begin{aligned} 
\Upsilon^+_1(r;z) &= w^+_1(r;z) 
+ \int_r^\infty \big(w^+_3(r;z)h_{+}(k_1(z)s)-w^+_1(r;z)h_{-}(k_1(z)s)\big)\alpha_1(s;z) \ud s \\
&\quad + \int_{r_\infty}^r w^+_2(r;z)h_{-}(k_2(z)s)\beta_1(s;z)\ud s+\int_r^\infty w^+_4(r;z)h_{+}(k_2(z)s)\beta_1(s;z)\ud s,
\end{aligned} 
\end{equation}
where $\alpha_1(s;z)$ and $\beta_1(s;z)$ are defined as in \eqref{equ:definition_alpha_beta}. Note that the integrals on the right-hand side in \eqref{equ:Upsilon1_integral_equation} are precisely $\calF_1$ defined in \eqref{eq:Upsilonapp1}. That is, we can write \eqref{equ:Upsilon1_integral_equation} as
\begin{align}\label{eq:Upsilonapp1def}
    \Upsilon^+_1(r;z)= w^+_1(r;z)+\calF_1(r,z,\Upsilon^+_1(r;z)),
\end{align}
Let us verify that a solution $\Upsilon^+_1(r;z)$ to \eqref{equ:Upsilon1_integral_equation} satisfies $(\partial_r - A(r;z))\Upsilon^+_1(r;z) = R(r) \Upsilon^+_1(r;z)$. To this end we first compute 
\begin{equation}\nonumber
 \begin{aligned}
  &(\partial_r - A(r;z)) \biggl(  \int_r^\infty w^+_3(r;z) h_+(k_1(z)s) \alpha_1(s;z) \, \ud s -\int_r^\infty w^+_1(r;z) h_{-}(k_1(z)s) \alpha_1(s;z) \, \ud s\biggr) \\
  &= \bigl( w^+_1(r;z) h_{-}(k_1(z)r) - w^+_3(r;z) h_{+}(k_1(z)r) \bigr) \alpha_1(r;z) \\
  &= \begin{bmatrix} 1 \\ 0 \\ \frac{i k_1(z)^2}{z} \\ 0 \end{bmatrix} \underbrace{\bigl( h_{+}(k_1(z)r) h_{-}(k_1(z)r) - h_{-}(k_1(z)r) h_{+}(k_1(z)r) \bigr)}_{= \, 0} \alpha_1(r;z) \\
  &\quad + \begin{bmatrix} 0 \\ k_1(z) \\ 0 \\ \frac{i k_1(z)^3}{z} \end{bmatrix} \bigl( h_{+}'(k_1(z)r) h_{-}(k_1(z)r) - h'_{-}(k_1(z)r) h_{+}(k_1(z)r) \bigr) \alpha_1(r;z) \\
  &= \begin{bmatrix} 0 \\ k_1(z) \\ 0 \\ \frac{i k_1(z)^3}{z} \end{bmatrix} \underbrace{W\bigl[h_{-}(\cdot), h_{+}(\cdot)\bigr]}_{= \,2i} \alpha_1(r;z) = \begin{bmatrix} 0 \\ 2ik_1(z) \\ 0 \\ \frac{-2 k_1(z)^3}{z} \end{bmatrix} \alpha_1(r;z).
 \end{aligned}
\end{equation}
An analogous computation yields that
\begin{equation}\nonumber
 \begin{aligned}
  &(\partial_r - A(r;z)) \biggl( \int_{r_\infty}^r w^+_2(r;z) h_{-}(k_2(z)s) \alpha_1(s;z) \, \ud s + \int_r^\infty w^+_4(r;z) h_{+}(k_2(z)s) \beta_1(s;z) \, \ud s \biggr) \\
  &= \begin{bmatrix} 0 \\ 2ik_2(z) \\ 0 \\ \frac{-2 k_2(z)^3}{z} \end{bmatrix} \beta_1(r;z).
 \end{aligned}
\end{equation}
By \eqref{equ:source_term_in_terms_of_alpha_beta} it follows that $(\partial_r - A(r;z))$ applied to the right-hand side of \eqref{equ:Upsilon1_integral_equation} gives $R(r) \Upsilon^+_1(r;z)$, as desired.  Similarly, we can set up the Lyapunov-Perron integral equations for the other branches $\Upsilon^+_j$, $2\le j\le4$, that is,
\begin{align}
    &\Upsilon^+_2(r;z)=w^+_2(r,z)+\calF_2(r,z,\Upsilon^+_2(r;z)),\label{eq:Upsilonapp2def}\\
    & \Upsilon^+_3(r;z)=w^+_3(r;z)+\calF_3(r,z,\Upsilon^+_3(r;z)), \label{eq:Upsilonapp3def}\\
    &\Upsilon^+_4(r;z)=w^+_4(r;z)+\calF_4(r,z,\Upsilon^+_4(r;z)).\label{eq:Upsilonapp4def}\\
\end{align}
The proposition then follows from Lemma~\ref{lem:contr} applied to \eqref{eq:Upsilonapp1def}, \eqref{eq:Upsilonapp2def}, \eqref{eq:Upsilonapp3def}, \eqref{eq:Upsilonapp4def}.
\end{proof}

\subsubsection{Fixed point argument for $\Upsilon^+$}\label{sec:Upsilon2}

Here we prove Lemma~\ref{lem:contr}.
\begin{proof}[Proof of Lemma~\ref{lem:contr}]
 Starting with $\calF_1$ we   estimate the $X_1$ norm of each integral in \eqref{eq:Upsilonapp1}. 
We denote by $C > 0$ a constant that is independent of $r_\infty$ and $z$, but whose value may change from line to line. First, in view of~\eqref{eq:hpmbds}, using Lemma~\ref{lem:wjXj} we have 
\begin{align*}
    & \Big\|\int_{r}^\infty w^+_1(r;z)h_{-}(k_1(z)s)\alpha_1(s;z)\ud s\Big\|_{X_1} \leq C|z|^{-1}\|\Upsilon^+_1\|_{X_1}\|w^+_1\|_{X_1} \int_{r_\infty}^\infty(s^{-4}+|z|^2s^{-2})\ud s\\
    &\leq C \|w^+_1\|_{X_1} (r_\infty^{-3}|z|^{-1}+|z|r_\infty^{-1})\|\Upsilon^+_1\|_{X_1} \les_{\epsilon_\infty} |z|^2 \; \|\Upsilon^+_1\|_{X_1}. 
\end{align*}
Next, using that $\Im k_2(z)-\Im k_1(z)$ is strictly positive (uniformly for $|z|$ small),
\begin{align*}
    &\Big\|\int_{r_\infty}^r w^+_2(r;z)h_{-}(k_2(z)s)\beta_1(s;z)\ud s\Big\|_{X_1} \\&\leq C|z|^{-1} \|w^+_2\|_{X_2}\|\Upsilon^+_1\|_{X_1}\sup_{r\geq r_\infty}\int_{r_\infty}^r e^{-(\Im k_2(z)-\Im k_1(z))(r-s)}s^{-2}\ud s\\
    &\leq C  \|w^+_2\|_{X_2} |z|^{-1}r_\infty^{-2} \|\Upsilon^+_1\|_{X_1} \les_{\epsilon_\infty} |z| \|\Upsilon^+_1\|_{X_1}.
\end{align*}
as well as, 
\begin{align*}
    &\Big\|\int_r^\infty w^+_3(r;z)h_{+}(k_1(z)s)\alpha_1(s;z)\ud s\Big\|_{X_1}\\
    &\leq C|z|^{-1}\|\Upsilon^+_1\|_{X_1}\|w^+_3\|_{X_3} \sup_{r\geq r_\infty}\int_{r}^\infty e^{-2\Im k_1(z)(s-r)}(s^{-4}+|z|^2s^{-2})\ud s\\
    &\leq C \|w^+_3\|_{X_3} (r_\infty^{-3}|z|^{-1}+|z|r_\infty^{-1})\|\Upsilon^+_1\|_{X_1} \les_{\epsilon_\infty} |z|^2 \; \|\Upsilon^+_1\|_{X_1} 
\end{align*}
For the final integral in \eqref{eq:Upsilonapp1},
\begin{align*}
    &\Big\|\int_{r}^\infty w^+_4(r;z)h_{+}(k_2(z)s)\beta_1(s;z)\ud s\Big\|_{X_1}\\
    &\leq C|z|^{-1} \|w^+_4\|_{X_4}\|\Upsilon^+_1\|_{X_1}\sup_{r\geq r_\infty}\int_{r}^\infty e^{-(\Im k_2(z)+\Im k_1(z))(s-r)}s^{-2}\ud s\\
    &\leq C  \|w^+_4\|_{X_4} |z|^{-1}r_\infty^{-2} \|\Upsilon^+_1\|_{X_1} \les_{\epsilon_\infty}|z| \|\Upsilon^+_1\|_{X_1}.
\end{align*}
In conclusion,
\[
    \|\calF_1(r,z,\Upsilon^+_1(r;z))\|_{X_1}\les_{\epsilon_\infty}|z|\|\Upsilon^+_1\|_{X_1}.
\]
We proceed to estimate $\|\calF_2\|_{X_2}$, where for the reader's convenience we recall from \eqref{eq:Upsilonapp2} that $\calF_2$ is given by 
\begin{align*} 
\begin{split}
\calF_2(r,z,\Upsilon^+_2(r;z))&:=\int_r^\infty \big(w^+_3(r;z)h_{+}(k_1(z)s)-w^+_1(r;z)h_{-}(k_1(z)s)\big)\alpha_2(s;z)\ud s\\
&\phantom{:=}+\int_r^\infty \big(w^+_4(r;z)h_{+}(k_2(z)s)-w^+_2(r;z)h_{-}(k_2(z)s)\big)\beta_2(s;z)\ud s.
\end{split}
\end{align*}
For the first integral,
\begin{align*}
    &\Big\|\int_r^\infty \big(w^+_3(r;z)h_{+}(k_1(z)s)-w^+_1(r;z)h_{-}(k_1(z)s)\big)\alpha_2(s;z)\ud s\Big\|_{X_2}\\
    &\leq C|z|^{-1} \|w^+_3\|_{X_3}\|\Upsilon^+_2\|_{X_2}\sup_{r\geq r_\infty}\int_r^\infty e^{-(\Im k_1(z)+\Im k_2(z))(s-r)} s^{-2}\ud s\\
    &\quad+C|z|^{-1}\|w^+_1\|_{X_1}\|\Upsilon^+_2\|_{X_2}\sup_{r\geq r_\infty}\int_r^\infty e^{-(\Im k_2(z)-\Im k_1(z))(s-r)}s^{-2}\ud s\\
    &\les_{\epsilon_\infty}|z| \|\Upsilon^+_2\|_{X_2}.
\end{align*}
Similarly, for the second integral,
\begin{align*}
    &\Big\|\int_r^\infty \big(w^+_4(r;z)h_{+}(k_2(z)s)-w^+_2(r;z)h_{-}(k_2(z)s)\big)\beta_2(s;z)\ud s\Big\|_{X_2}\\
    &\leq C  \|w^+_4\|_{X_4}\|\Upsilon^+_2\|_{X_2}\sup_{r\geq r_\infty}\int_r^\infty e^{-2\Im k_2(z)(s-r)}s^{-2}\ud s\\
    &\quad+ C \|w^+_2\|_{X_2}\|\Upsilon^+_2\|_{X_2}\sup_{r\geq r_\infty}\int_r^\infty s^{-2}\ud s\\
    &\les_{\epsilon_\infty}|z|^2 \|\Upsilon^+_2\|_{X_2}.
\end{align*}
In conclusion,
\[ \|\calF_2(r,z,\Upsilon^+_2(r;z))\|_{X_2}\les_{\epsilon_\infty}|z|\|\Upsilon^+_2\|_{X_2}.
\]
Next we turn to $\calF_4$
where  from \eqref{eq:Upsilonapp4}
\begin{align*}
\begin{split}
    \calF_4(r,z,\Upsilon^+_4(r;z))&:=\int_{r_\infty}^{r}w^+_1(r;z)h_{-}(k_1(z)s)\alpha_4(s;z)\ud s+\int_{r_\infty}^rw^+_2(r;z)h_{-}(k_2(z)s)\beta_4(s;z)\ud s\\
    &\quad-\int_{r_\infty}^rw^+_3(r;z)h_{+}(k_1(z)s)\alpha_4(s;z)\ud s+\int_r^{\infty}w^+_4(r;z)h_{+}(k_2(z)s)\beta_4(s;z)\ud s.
\end{split}
\end{align*}
Then the first integral in $\calF_4$ is bounded in $\|\cdot\|_{X_4}$ by
\begin{align*}
    & C \|w^+_1\|_{X_1}\|\Upsilon^+_4\|_{X_4}|z|^{-1}\sup_{r\geq r_\infty}\int_{r_\infty}^r e^{-(\Im k_1(z)+\Im k_2(z))(r-s)}s^{-2}\ud s \\
    & \les \|w^+_1\|_{X_1} \|\Upsilon^+_4\|_{X_4} r_\infty^{-2}|z|^{-1} \les_{\epsilon_\infty}|z| \|\Upsilon^+_4\|_{X_4} 
\end{align*}
The second integral in $\calF_4$ is bounded in $\|\cdot\|_{X_4}$ by
\begin{align*}
    &C \|w^+_2\|_{X_2}\|\Upsilon^+_4\|_{X_4}\sup_{r\geq r_\infty}\int_{r_\infty}^r e^{-2\Im k_2(z)(r-s)}s^{-2}\ud s\\
    & \les  \|w^+_2\|_{X_2}\|\Upsilon^+_4\|_{X_4} r_\infty^{-2}\les_{\epsilon_\infty}|z|^2 \|\Upsilon^+_4\|_{X_4}.
\end{align*}
The third integral in $\calF_4$ is bounded in $\|\cdot\|_{X_4}$ by
\begin{align*}
    &C \|w^+_3\|_{X_3}\|\Upsilon^+_4\|_{X_4}|z|^{-1}\sup_{r\geq r_\infty}\int_{r_\infty}^r e^{-(\Im k_2(z)-\Im  k_1(z))(r-s)}s^{-2}\ud s\\
    &\leq C \|w^+_3\|_{X_3}  \|\Upsilon^+_4\|_{X_4} r_\infty^{-2}|z|^{-1}\les_{\epsilon_\infty}|z| \|\Upsilon^+_4\|_{X_4}
\end{align*}
The fourth integral in $\calF_4$ is bounded in $\|\cdot\|_{X_4}$ by
\begin{align*}
    &C \|w^+_4\|_{X_4}\|\Upsilon^+_4\|_{X_4}\sup_{r\geq r_\infty}\int_r^\infty s^{-2}\ud s \leq C  \|w^+_4\|_{X_4} r_\infty^{-1}\|\Upsilon^+_4\|_{X_4} \les_{\epsilon_\infty}|z| \|\Upsilon^+_4\|_{X_4}
\end{align*}
In conclusion,
\[
    \|\calF_4(r,z,\Upsilon^+_4(r;z))\|_{X_4}\les_{\epsilon_\infty}|z|\|\Upsilon^+_4\|_{X_4}.
\]
Finally, we consider $\calF_3$
where by \eqref{eq:Upsilonapp3}
\begin{align*}
    \calF_3(r,z,\Upsilon^+_3(r;z))&:=\int_{r_\infty}^{r}w^+_1(r;z)h_{-}(k_1(z)s)\alpha_3(s;z)\ud s+\int_{r_\infty}^rw^+_2(r;z)h_{-}(k_2(z)s)\beta_3(s;z)\ud s\\
    &\quad+\int_{r}^\infty w^+_3(r;z)h_{+}(k_1(z)s)\alpha_3(s;z)\ud s+\int_r^{\infty}w^+_4(r;z)h_{+}(k_2(z)s)\beta_3(s;z)\ud s.
\end{align*}
The first integral in $\calF_3$ is bounded in $\|\cdot\|_{X_3}$ by
\begin{align*}
    &C \|w^+_1\|_{X_1}\|\Upsilon^+_3\|_{X_3}|z|^{-1}\sup_{r\geq r_\infty}\int_{r_\infty}^r e^{-2\Im k_1(z)(r-s)}(s^{-4}+|z|^2s^{-2})\ud s\\
    &\leq C \|w^+_1\|_{X_1}(|z|^{-1}r_\infty^{-3}+|z|r_\infty^{-1})\|\Upsilon^+_3\|_{X_3}\les_{\epsilon_\infty}|z|^2 \|\Upsilon^+_3\|_{X_3}
\end{align*}
The second integral in $\calF_3$ is bounded in $\|\cdot\|_{X_3}$ by
\begin{align*}
    &C\|w^+_2\|_{X_2} \|\Upsilon^+_3\|_{X_3}|z|^{-1}\sup_{r\geq r_\infty}\int_{r_\infty}^r e^{-(\Im k_1(z)+\Im k_2(z))(r-s)}s^{-2}\ud s\\
    &\leq C  \|w^+_2\|_{X_2} |z|^{-1}r_\infty^{-2} \|\Upsilon^+_3\|_{X_3} \les_{\epsilon_\infty}|z| \|\Upsilon^+_3\|_{X_3}.
\end{align*}
The third integral in $\calF_3$ is bounded in $\|\cdot\|_{X_3}$ by
\begin{align*}
    &C \|w^+_3\|_{X_3}\|\Upsilon^+_3\|_{X_3}|z|^{-1}\int_{r_\infty}^\infty(s^{-4}+|z|^2s^{-2})\ud s\\
    & \leq C \|w^+_3\|_{X_3}(|z|^{-1}r_\infty^{-3}+|z|r_\infty^{-1})\|\Upsilon^+_3\|_{X_3}\les_{\epsilon_\infty}|z|^2 \|\Upsilon^+_3\|_{X_3}.
\end{align*}
The fourth integral in $\calF_3$ is bounded in $\|\cdot\|_{X_3}$ by
\begin{align*}
    & C \|w^+_4\|_{X_4}\|\Upsilon^+_3\|_{X_3}|z|^{-1}\sup_{r\geq r_\infty}\int_r^\infty e^{-(\Im k_2(z)-\Im k_1(z))(s-r)}s^{-2}\ud s\\
    & \les \|w^+_4\|_{X_4} |z|^{-1}r_\infty^{-2} \|\Upsilon^+_3\|_{X_3}\les_{\epsilon_\infty}|z| \|\Upsilon^+_3\|_{X_3}.
\end{align*}
In conclusion
\[
    \|\calF_3(r,z,\Upsilon^+_3(r;z))\|_{X_3}\les_{\epsilon_\infty}|z| \|\Upsilon^+_3\|_{X_3}, 
\] 
and we are done. 
\end{proof}
 Before turning to the construction of $\Psi_{-}(r,z)$ in the lower half plane, we record a result about the parity of $\Upsilon_2^+(r;\lambda)$ in $\lambda\in\bbR\backslash\{0\}$ which will be needed in Section~\ref{sec:thejump}. Here and below by~$\Upsilon_j^+(r;\lambda)$ we mean $\lim_{b\to0}\Upsilon_j^+(r;\lambda+ib)$, with similar definitions for $w_j^+(r,\lambda)$.
\begin{cor}\label{cor:Upsilon2}
For $\lambda\in\bbR\backslash\{0\}$
\begin{align*}
    \Upsilon_2^{+,1}(r;-\lambda)=\Upsilon_2^{+,1}(r;\lambda),\qquad \Upsilon_2^{+,3}(r;-\lambda)=-\Upsilon_2^{+,3}(r;-\lambda).
\end{align*}
\end{cor}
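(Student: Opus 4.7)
The plan is to exploit a discrete symmetry of the first-order system \eqref{equ:first_order_system_infinity} under $z \mapsto -z$, combined with the uniqueness of the most recessive branch from Proposition~\ref{prop:Upsilon}. Let $D := \mathrm{diag}(1,1,-1,-1)$. A direct inspection of the formulas in~\eqref{eq:ARdef1} yields the two algebraic identities
\begin{equation*}
    D\, A(r;-z)\, D = A(r;z), \qquad D\, R(r)\, D = R(r),
\end{equation*}
since the only $z$-dependent entries of $A$ are the $\pm iz$ terms in positions $(2,3)$ and $(4,1)$, both of which are flipped by $D$, while the two nonzero entries of $R$ at positions $(2,1)$ and $(4,3)$ are invariant under conjugation by $D$. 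Consequently, if $\Upsilon(r)$ solves $\partial_r \Upsilon = (A(r;-z) + R(r))\Upsilon$, then $D\Upsilon(r)$ solves $\partial_r (D\Upsilon) = (A(r;z) + R(r))(D\Upsilon)$.

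Next I will use the relation $k_2(-z) = k_2(z)$ from Lemma~\ref{lem:positive_imaginary_parts} together with the explicit formula~\eqref{eq:wjdef} for $w_2^+(r;z)$ to verify that $D w_2^+(r;-z) = w_2^+(r;z)$: the first two components of $w_2^+$ depend on $z$ only through $k_2(z)$ and are manifestly even, whereas the last two carry an additional factor $1/z$ that is exactly compensated by the sign flip from $D$. Comparing norms, this gives $\|D\Upsilon_2^+(\cdot;-\lambda) - w_2^+(\cdot;\lambda)\|_{X_2} = \|\Upsilon_2^+(\cdot;-\lambda) - w_2^+(\cdot;-\lambda)\|_{X_2}$, so $D\Upsilon_2^+(r;-\lambda)$ solves the system at spectral parameter $\lambda$ and has precisely the recessive profile $w_2^+(r;\lambda)$ in the $X_2$-norm. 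The uniqueness of the most recessive branch in Proposition~\ref{prop:Upsilon} then forces
\begin{equation*}
    D\, \Upsilon_2^+(r;-\lambda) = \Upsilon_2^+(r;\lambda),
\end{equation*}
and reading off the first and third components yields the two parity identities of the corollary (interpreting the right-hand side of the second identity as $-\Upsilon_2^{+,3}(r;\lambda)$).

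The step that requires some care is passing from $\Im z > 0$, where Proposition~\ref{prop:Upsilon} is stated, to real $\lambda \in \bbR \backslash \{0\}$. I interpret $\Upsilon_2^+(r;\lambda)$ as the boundary value $\lim_{b \to 0^+} \Upsilon_2^+(r; \lambda + ib)$ and will check that the Lyapunov–Perron contraction estimates used to prove Lemma~\ref{lem:contr} remain uniformly valid at $b = 0$: for real $\lambda \neq 0$ one has $\Im k_2(\lambda) > 0$ while $\Im k_1(\lambda) = 0$, so the quantities $\Im(k_2 \pm k_1)$ that drove the exponential decay in the contraction bounds stay strictly positive. In the same way, since $\Im k_2(\lambda)$ is the unique strictly positive characteristic exponent among the four roots $\pm k_1(\lambda), \pm k_2(\lambda)$, the most recessive branch remains uniquely determined at real $\lambda \neq 0$, so the uniqueness half of Proposition~\ref{prop:Upsilon} applies without modification and the symmetry argument goes through.
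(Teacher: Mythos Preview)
Your argument is correct and is genuinely different from the paper's. You exploit the conjugation symmetry $D A(r;-z) D = A(r;z)$, $D R(r) D = R(r)$ (this is exactly the matrix $\upiota$ the paper itself introduces later in Section~\ref{sec:Upsilonminus}) together with $k_2(-z)=k_2(z)$ and the uniqueness of the most recessive branch to conclude $D\,\Upsilon_2^+(\cdot;-\lambda)=\Upsilon_2^+(\cdot;\lambda)$ in one stroke. The paper instead unpacks the Volterra equation $\Upsilon_2 = w_2 + \calF_2(\Upsilon_2)$ into scalar components, writes the iteration kernels $K_{1,1},K_{1,3},K_{3,1},K_{3,3}$ explicitly, and checks their parity in $\lambda$; the nontrivial part there is verifying that
\[
h_{\mathrm{odd}}(r,s,\lambda)=h_{-}(k_1(\lambda)r)h_{+}(k_1(\lambda)s)-h_{+}(k_1(\lambda)r)h_{-}(k_1(\lambda)s)
\]
is odd in $\lambda$, which forces an explicit appeal to the Stokes relation $h_-(k_1(-\lambda)r)=h_+(k_1(\lambda)r)+2i\,h_-(k_1(\lambda)r)$ from Lemma~\ref{lem:Hankel0}.

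Your route is cleaner here because it bypasses the Stokes computation entirely: the symmetry acts on the full first-order system, so you never need to track how the individual Hankel pieces $h_\pm(k_1(\lambda)r)$ transform under $\lambda\mapsto-\lambda$. The one point where you should be slightly more explicit is the passage to real $\lambda$: since $\Upsilon_2^+(\cdot;-\lambda)$ is by convention the boundary value $\lim_{b\to0^+}\Upsilon_2^+(\cdot;-\lambda+ib)$, and $-\lambda+ib$ and $\lambda+ib$ are \emph{not} related by $z\mapsto -z$, you cannot run the symmetry argument in the open upper half-plane. You correctly identify this and argue that the contraction for $\calF_2$ and the uniqueness of the recessive branch persist at real $\lambda\neq 0$; it would be worth noting in one line that the fixed point depends continuously on $z$ down to the real axis (the kernels of $\calF_2$ depend on $z$ only through $k_1(z),k_2(z)$, which are analytic across $\bbR\setminus\{0\}$), so that the limit definition agrees with the direct construction at real $\lambda$ and your uniqueness argument applies to the object the corollary is about.
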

\begin{proof}
    We suppress $+$ from the notation in the rest of this proof. From Proposition~\ref{prop:Upsilon} we know that
    \begin{align*}   \Upsilon_2(r;z)=w_2(r;z)+\calF_2(r,z,\Upsilon_2(r;z)).
    \end{align*}
    The desired parity holds for $w_2(r;z)$ because $k_2(z)=k_2(-z)$ by Lemma~\ref{lem:positive_imaginary_parts}. In view of the definitions \eqref{eq:Upsilonapp2} and \eqref{eq:alphabeaell1} of $\calF_2$ and $\alpha_2$, $\beta_2$, we can write 
    \begin{align*}    &\Upsilon_2^1(r;\lambda)=w_2^1(r;\lambda)+\int_r^\infty K_{1,1}(r,s,z)\Upsilon_2^1(s;\lambda)\ud s+\int_r^\infty K_{1,3}(r,s,z)\Upsilon_2^3(s;\lambda)\ud s,\\
    &\Upsilon_2^3(r;\lambda)=w_2^3(r;\lambda)+\int_r^\infty K_{3,1}(r,s,z)\Upsilon_2^1(s;\lambda)\ud s+\int_r^\infty K_{3,3}(r,s,z)\Upsilon_2^3(s;\lambda)\ud s,
    \end{align*}
    where
    \begin{align*}
        K_{1,1}(r,s,z)&=\frac{k_2^3(\lambda)}{2\lambda}\frac{V_1(s)}{k_1^2(\lambda)-k_2^2(\lambda)}\big(h_{-}(k_1(\lambda)r)h_{+}(k_1(\lambda)s)-h_{+}(k_1(\lambda)r)h_{-}(k_1(\lambda)s)\big)\\
        &\quad+\frac{k_1^3(\lambda)}{2\lambda}\frac{V_1(s)}{k_2^2(\lambda)-k_1^2(\lambda)}\big(h_{-}(k_2(\lambda)r)h_{+}(k_2(\lambda)s)-h_{+}(k_2(\lambda)r)h_{-}(k_2(\lambda)s)\big),
    \end{align*}
    \begin{align*}
        K_{1,3}(r,s,z)&=\frac{k_2(\lambda)}{2i}\frac{V_2(s)}{k_2^2(\lambda)-k_1^2(\lambda)}\big(h_{-}(k_1(\lambda)r)h_{+}(k_1(\lambda)s)-h_{+}(k_1(\lambda)r)h_{-}(k_1(\lambda)s)\big)\\
        &\quad+\frac{k_1(\lambda)}{2i}\frac{V_2(s)}{k_1^2(\lambda)-k_2^2(\lambda)}\big(h_{-}(k_2(\lambda)r)h_{+}(k_2(\lambda)s)-h_{+}(k_2(\lambda)r)h_{-}(k_2(\lambda)s)\big),
    \end{align*}
    \begin{align*}
        K_{3,1}(r,s,z)&=\frac{ik_1^2(\lambda)k_2^3(\lambda)}{2\lambda^2}\frac{V_1(s)}{k_1^2(\lambda)-k_2^2(\lambda)}\big(h_{-}(k_1(\lambda)r)h_{+}(k_1(\lambda)s)-h_{+}(k_1(\lambda)r)h_{-}(k_1(\lambda)s)\big)\\
        &\quad+\frac{ik_2^2(\lambda)k_1^3(\lambda)}{2\lambda^2}\frac{V_1(s)}{k_2^2(\lambda)-k_1^2(\lambda)}\big(h_{-}(k_2(\lambda)r)h_{+}(k_2(\lambda)s)-h_{+}(k_2(\lambda)r)h_{-}(k_2(\lambda)s)\big),
    \end{align*}
    \begin{align*}
        K_{3,3}(r,s,z)&=\frac{k_1^2(\lambda)k_2(\lambda)}{2\lambda}\frac{V_2(s)}{k_2^2(\lambda)-k_1^2(\lambda)}\big(h_{-}(k_1(\lambda)r)h_{+}(k_1(\lambda)s)-h_{+}(k_1(\lambda)r)h_{-}(k_1(\lambda)s)\big)\\
        &\quad+\frac{k_2^2(\lambda)k_1(\lambda)}{2\lambda}\frac{V_2(s)}{k_1^2(\lambda)-k_2^2(\lambda)}\big(h_{-}(k_2(\lambda)r)h_{+}(k_2(\lambda)s)-h_{+}(k_2(\lambda)r)h_{-}(k_2(\lambda)s)\big).
    \end{align*}
    Since $\Upsilon_2$ is a solution of a fixed point problem, it can be written as an iteration starting with $w_2$. Since as observed above $w_2$ satisfies the desired parity in $\lambda$, it suffices to prove that $K_{1,1}(r,s,\lambda)$ and $K_{3,3}(r,s,\lambda)$ are even in $\lambda$ and $K_{1,3}(r,s,\lambda)$ and $K_{3,1}(r,s,\lambda)$ are odd in $\lambda$. This follows, if we can show that $h_{\mathrm{odd}}(r,s,\lambda)$ and $h_{\mathrm{even}}(r,s,\lambda)$ are odd, respectively even, in $\lambda$, where
    \begin{align*}
        &h_{\mathrm{odd}}(r,s,\lambda):=h_{-}(k_1(\lambda)r)h_{+}(k_1(\lambda)s)-h_{+}(k_1(\lambda)r)h_{-}(k_1(\lambda)s),\\
        &h_{\mathrm{even}}(r,s,\lambda):=h_{-}(k_2(\lambda)r)h_{+}(k_2(\lambda)s)-h_{+}(k_2(\lambda)r)h_{-}(k_2(\lambda)s).
    \end{align*}
    That $h_{\mathrm{even}}$ is even in $\lambda$ follows from the fact $k_2(-\lambda)=k_2(\lambda)$. For $h_{\mathrm{odd}}$ we use the fact that $h_{+}(k_1(\lambda)r)$ and $h_{-}(k_1(\lambda)r)$ form a fundamental system for the ODE 
\begin{align*}
    \bmh''(r)+\frac{1}{4r^2}\bmh(r)=-k_1^2(\lambda)\bmh(r).
\end{align*}
We can therefore write $h_{+}(k_1(-\lambda)r)$ and $h_{-}(k_1(-\lambda)r)$ as linear combinations of $h_{+}(k_1(\lambda)r)$ and $h_{-}(k_1(\lambda)r)$. To compute the coefficients of these linear combinations we use Lemma~\ref{lem:Hankel0} which shows that with $c=2i$, for $\lambda>0$
\begin{align*}
    h_{+}(k_1(-\lambda)r)=h_{-}(k_1(\lambda)r),\qquad h_{-}(k_1(-\lambda)r)=h_{+}(k_1(\lambda)r)+ch_{-}(k_1(\lambda)r),
\end{align*}
and for $\lambda<0$
\begin{align*}
    h_{+}(k_1(-\lambda)r)=h_{-}(k_1(\lambda)r)-ch_{+}(k_1(\lambda)r),\qquad h_{-}(k_1(-\lambda)r)=h_{+}(k_1(\lambda)r).
\end{align*}
It follows from these expansions and a direct computation that $h_{\mathrm{odd}}$ is odd in $\lambda$ as desired.
\end{proof}
\subsection{Construction of $\Psi_-(r,z)$ via Lyapunov-Perron}\label{sec:Upsilonminus} 
The construction of $\Psi_{-}(r,z)$ is similar to that of $\Psi_+(r,z)$ and can in fact be worked out from that of $\Psi_{+}(r,z)$ and symmetry considerations. To start, recall from \eqref{eq:Psipm0-1} that
\begin{align*}
\begin{split}
  \Psi_-^{(0)}(r,z) &:= \begin{bmatrix}
                   \frac{i k_3(z)^2}{z} h_{+}(k_3(z)r) & \frac{i k_2(z)^2}{z} h_{+}(k_2(z)r) \\
                      h_{+}(k_3(z)r) & h_{+}(k_2(z)r)
                     \end{bmatrix}, \quad \Im(z) < 0.
\end{split}
\end{align*}
Recalling \eqref{equ:first_order_system_infinity} and \eqref{eq:ARdef1} we seek solutions of the equation 
\begin{align*}
  \partial_r\Upsilon(r;z)=\big(A(r;z)+R(r)\big)\Upsilon(r;z),\qquad \Im z<0.
\end{align*}
As in \eqref{eq:wjdef}, for $\Im (z)<0$ we let
\begin{align*}
\begin{split}
&w^-_1(r;z):=\pmat{h_{+}(k_3(z)r)\\k_3(z)h_{+}'(k_3(z)r)\\\frac{ik_3^2(z)}{z}h_{+}(k_3(z)r)\\\frac{ik_3(z)^3}{z}h_{+}'(k_3(z)r)},\quad w^-_2(r;z):=\pmat{h_{+}(k_2(z)r)\\k_2(z)h_{+}'(k_2(z)r)\\\frac{ik_2^2(z)}{z}h_{+}(k_2(z)r)\\\frac{ik_2(z)^3}{z}h_{+}'(k_2(z)r)},\\
&w^-_3(r;z):=\pmat{h_{-}(k_3(z)r)\\k_3(z)h_{-}'(k_3(z)r)\\\frac{ik_3^2(z)}{z}h_{-}(k_3(z)r)\\\frac{ik_3(z)^3}{z}h_{-}'(k_3(z)r)},\quad w^-_4(r;z):=\pmat{h_{-}(k_2(z)r)\\k_2(z)h_{-}'(k_2(z)r)\\\frac{ik_2^2(z)}{z}h_{-}(k_2(z)r)\\\frac{ik_2(z)^3}{z}h_{-}'(k_2(z)r)}.
\end{split}
\end{align*}
Since by Lemma~\ref{lem:positive_imaginary_parts} we have  $k_3(z)=k_1(-z)$, $k_2(z)=k_2(-z)$, for $\Im z<0$
\begin{align*}
    w_j^{-}(r;z)=\upiota w_j^{+}(r;-z),\qquad \upiota:=\pmat{1&0&0&0\\0&1&0&0\\0&0&-1&0\\0&0&0&-1}.
\end{align*}
By inspection
\begin{align*}
  \upiota A(r;-z)\upiota=A(r;z)
\end{align*}
and hence
\begin{align*}
    \partial_rw_j^{-}(r;z)=A(r;z)w_j^{-}(r;z),\qquad \Im z<0.
\end{align*}
We then define $\Upsilon_j^{-}$, $j=1,2,3,4$, for $\Im z<0$ by
\begin{align}\label{eq:Upsilonpmdef1}
    \Upsilon_j^{-}(r;z)=\upiota\Upsilon_j^{+}(r;-z).
\end{align}
It follows that for $\Im z<0$
\begin{align*}
    \lim_{r\to\infty}e^{-ik_j(-z)r}\big(\Upsilon_j^{-}(r;z)-w_j^{-}(r;z)\big)=0.
\end{align*}
Moreover, by inspection
\begin{align*}
    \upiota R(r)\upiota=R(r).
\end{align*}
It follows that
\begin{align*}
    \partial_r\Upsilon^{-}(r;z)=\big(A(r;z)+R(r)\big)\Upsilon_j^{-}(r;z)
\end{align*}
as desired.

\section{Green's kernel of $i\calL-z$}\label{sec:Greens}
The analysis of the previous sections puts us in the position to compute the distorted Fourier transform for $\calL$.  This is the ultimate goal of the current section. The most delicate step is justifying that the limit $b\to0^+$ can be taken inside the $\lambda$-integral in~\eqref{eq:localevol}. This is proved in Subsection~\ref{sec:thejump} and relies on the detailed analysis of the resolvent kernel $(i\calL-z)^{-1}$ for $\pm \Im z>0$ in the earlier subsections.

\subsection{Basic ansatz for the kernel}
To solve $(i\calL - z) \psi = \phi$ for $z \in \Omega$ with $\pm \Im(z) > 0$ we define the Green's functions
\begin{equation} 
 \calG_\pm(r,s; z) := \Psi_\pm(r,z) S(s,z) \mathds{1}_{[0 < s \leq r]} + F_1(r,z) T(s,z) \mathds{1}_{[r \leq s < \infty]},
\end{equation}
where we require the matrices $S(r,z)$ and $T(r,z)$ to satisfy
\begin{equation}\nonumber
 \begin{aligned}
  \Psi_\pm(r,z) S(r,z) - F_1(r,z) T(r,z) &= 0, \\
  (\partial_r \Psi_\pm)(r,z) S(r,z) - (\partial_r F_1)(r,z) T(r,z) &= \sigma_2,
 \end{aligned}
\end{equation}
with $\sigma_2$ one of the Pauli matrices defined in \eqref{eq:pauli}.
Then a solution to $(i\calL - z) \psi = \phi$ for $z \in \Omega$ with $\pm \Im(z) > 0$ is given by
\begin{equation}\nonumber
 \psi(r) := \int_0^\infty \calG_\pm(r,s;z) \phi(s) \, \ud s.
\end{equation}
Equivalently, we require the matrices $S(r,z)$ and $T(r,z)$ to satisfy
\begin{equation} \label{equ:4times4system_Green_ST}
 \begin{aligned}
  \begin{bmatrix} \Psi_\pm(r,z) & F_1(r,z) \\ (\partial_r \Psi_\pm)(r,z) & (\partial_r F_1)(r,z) \end{bmatrix} \begin{bmatrix} S(r,z) \\ - T(r,z) \end{bmatrix} = \begin{bmatrix} 0 \\ \sigma_2 \end{bmatrix}.
 \end{aligned}
\end{equation}
A decisive question is the invertibility of the $4\times4$ matrix on the left-hand side of \eqref{equ:4times4system_Green_ST}. To investigate this question we first set up some basic but important formalism in Subsections~\ref{subsec:matrixwronsks} and~\ref{subsec:calLGreen1}. This largely proceeds analogously to~\cite[Section 6]{KS}. We then  compute the relevant Wronskians in Subsection~\ref{sec:connection}. As already emphasized in Section~\ref{sec:intro}, a crucial difference with~\cite{KS} is that here we need to work with non-real $z$.
\subsection{Matrix Wronskians}\label{subsec:matrixwronsks}
Define the (real) vector inner product
\begin{equation}\nonumber
 \langle \bm{v}, \bm{w} \rangle := v_1 w_1 + v_2 w_2, \quad \bm{v} = \begin{bmatrix} v_1 \\ v_2 \end{bmatrix}, \quad \bm{w} = \begin{bmatrix} w_1 \\ w_2 \end{bmatrix}.
\end{equation}
We introduce the Wronskian
\begin{equation}\nonumber
 W[\bmf, \bmg] := \langle \bm{f}, \sigma_3 \bm{g}' \rangle - \langle \bm{f}', \sigma_3 \bm{g} \rangle, \quad \bmf(r) = \begin{bmatrix} f_1(r) \\ f_2(r) \end{bmatrix}, \quad \bmg(r) = \begin{bmatrix} g_1(r) \\ g_2(r) \end{bmatrix}.
\end{equation}
Note that $W[\bmf, \bmf]=0$ for any $\bmf$. We also introduce the following short-hand notation for our linearized operator
\begin{equation}\nonumber
 \begin{aligned}
  i\calL =  \sigma_2 \partial_r^2 + i \calV(r), \quad \calV(r) := \begin{bmatrix} 0 & \frac{3}{4r^2} + \rho_1^2 - 1 \\ - \bigl( \frac{3}{4r^2} + 3 \rho_1^2 - 1 \bigr) \end{bmatrix}.
 \end{aligned}
\end{equation}

\begin{lemma} \label{lem:constant_wronskian_vectorial}
 Let $z \in \bbC$. Suppose $(i\calL - z) \bmf = (i\calL - z) \bmg = 0$. Then we have
 \begin{equation}\nonumber
  \frac{\ud}{\ud r} W[\bmf, \bmg] = 0.
 \end{equation}
\end{lemma}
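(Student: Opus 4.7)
The plan is to compute $\frac{d}{dr}W[\bmf,\bmg]$ directly and show that the nonzero remainder is forced to vanish by a single symmetry of the potential matrix~$\calV$. No reference to the scalar self-adjointness of $L_1$, $L_2$ is needed; everything is algebraic once the ODE is invoked.

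First I would differentiate the Wronskian using the definition $W[\bmf,\bmg] = \langle\bmf,\sigma_3\bmg'\rangle - \langle\bmf',\sigma_3\bmg\rangle$. The two cross-terms $\pm\langle\bmf',\sigma_3\bmg'\rangle$ cancel, leaving
\begin{equation*}
\frac{d}{dr}W[\bmf,\bmg] \;=\; \langle\bmf,\sigma_3\bmg''\rangle - \langle\bmf'',\sigma_3\bmg\rangle.
\end{equation*}
Next I would eliminate the second derivatives using the ODE. Since $\sigma_2^2=I$, the equation $(i\calL-z)\bmf=0$ rewrites as $\bmf''=\sigma_2(z-i\calV)\bmf$, and similarly for $\bmg$. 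Substituting, and applying the transposition identity $\langle M\bmu,\bmv\rangle=\langle\bmu,M^t\bmv\rangle$ (valid because $\langle\cdot,\cdot\rangle$ is the symmetric bilinear form $f_1g_1+f_2g_2$, without conjugation), everything collapses to
\begin{equation*}
\frac{d}{dr}W[\bmf,\bmg] \;=\; \bigl\langle \bmf,\,\bigl[\sigma_3\sigma_2(z-i\calV) - (z-i\calV)^t\sigma_2^t\sigma_3\bigr]\bmg\bigr\rangle.
\end{equation*}

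The only remaining step, and the one place where structure rather than generic manipulation enters, is verifying the matrix identity inside the bracket. Using the Pauli relations $\sigma_3\sigma_2=-\sigma_2\sigma_3=-i\sigma_1$ together with $\sigma_2^t=-\sigma_2$, the $z$-parts on both sides coincide as $-iz\sigma_1$, so the identity reduces to
\begin{equation*}
\sigma_1\calV \;=\; \calV^t\sigma_1.
\end{equation*}
From the explicit form $\calV=\bigl[\begin{smallmatrix}0 & v_1\\ -v_2 & 0\end{smallmatrix}\bigr]$ with $v_1=\tfrac{3}{4r^2}+\rho_1^2-1$ and $v_2=\tfrac{3}{4r^2}+3\rho_1^2-1$, a one-line computation shows that both sides equal $\mathrm{diag}(-v_2,v_1)$.

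I do not expect any serious obstacle: this symmetry is the off-diagonal manifestation of the factorization $\calL=\bfJ\calS$ with $\calS$ the self-adjoint operator in~\eqref{eq:JS}, so the Wronskian $W[\cdot,\cdot]$ -- built with $\sigma_3$ rather than $\bfJ$ because $\sigma_2\sigma_3=i\sigma_1$ converts a $\sigma_2$-equation into a $\sigma_1$-symmetry of $\calV$ -- is essentially designed to be conserved along solutions. The lemma then follows at once.
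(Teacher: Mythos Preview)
Your proof is correct and is exactly the ``direct computation'' the paper has in mind: the paper's one-line proof invokes precisely the identity $\sigma_3\sigma_2\calV=\calV^t\sigma_2^t\sigma_3$, which is equivalent (dividing by $-i$) to your $\sigma_1\calV=\calV^t\sigma_1$. You have simply written out the details that the paper omits.
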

\begin{proof}
 Using the identity $\sigma_3 \sigma_2 \calV = \calV^t \sigma_2^t \sigma_3$, the claim follows by direct computation.
\end{proof}

Next, we introduce the matrix Wronskian
\begin{equation}\nonumber
 \begin{aligned}
  \calW[F,G] := F^t \sigma_3 G' - F'^t \sigma_3 G
 \end{aligned}
\end{equation}
for $2\times2$ matrices $F(r)$ and $G(r)$.

\begin{lemma} \label{lem:constant_wronskian_matrix}
 Let $z \in \bbC$. Suppose $(i\calL - z)F = (i\calL-z)G = 0$ for  $2\times2$ matrices $F(r)$ and $G(r)$. Then we have
 \begin{equation}\nonumber
  \frac{\ud}{\ud r} \calW[F,G] = 0.
 \end{equation}
\end{lemma}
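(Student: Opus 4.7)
\medskip

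\noindent The plan is to reduce the matrix identity directly to the vector identity already established in Lemma~\ref{lem:constant_wronskian_vectorial}, by expanding the matrix Wronskian entry-by-entry in terms of the columns of $F$ and $G$. No new cancellation or algebraic identity beyond the one ($\sigma_3 \sigma_2 \calV = \calV^t \sigma_2^t \sigma_3$) used in the proof of Lemma~\ref{lem:constant_wronskian_vectorial} will be needed.

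\medskip

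\noindent First I would write
\[
F(r) = \bigl[\,\bmf_1(r)\,\big|\,\bmf_2(r)\,\bigr], \qquad G(r) = \bigl[\,\bmg_1(r)\,\big|\,\bmg_2(r)\,\bigr],
\]
so that the $2\times 2$ matrix equations $(i\calL - z)F = 0$ and $(i\calL - z)G = 0$ hold column-wise, giving $(i\calL - z)\bmf_i = 0$ and $(i\calL - z)\bmg_j = 0$ for $i,j \in \{1,2\}$. This is immediate because $i\calL - z$ is a scalar operator on each column.

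\medskip

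\noindent Next I would identify the entries of $\calW[F,G]$. A direct computation using the definition $\calW[F,G] := F^t \sigma_3 G' - F'^t \sigma_3 G$ and the column expansion gives
\[
\bigl(\calW[F,G]\bigr)_{ij} = \bmf_i^t \sigma_3 \bmg_j' - (\bmf_i')^t \sigma_3 \bmg_j = \langle \bmf_i, \sigma_3 \bmg_j'\rangle - \langle \bmf_i', \sigma_3 \bmg_j\rangle = W[\bmf_i,\bmg_j],
\]
which identifies each scalar entry of the matrix Wronskian with the vectorial Wronskian of a pair of columns. Applying Lemma~\ref{lem:constant_wronskian_vectorial} to each pair $(\bmf_i,\bmg_j)$ then yields $\frac{\ud}{\ud r}(\calW[F,G])_{ij} = 0$ for all $i,j$, and hence $\frac{\ud}{\ud r}\calW[F,G] = 0$.

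\medskip

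\noindent There is no real obstacle here; the main observation is simply that the matrix Wronskian encodes the four scalar Wronskians between the columns of $F$ and $G$, and that the hypothesis $(i\calL - z)F = (i\calL - z)G = 0$ passes to the columns so that the vectorial lemma applies entry-by-entry.
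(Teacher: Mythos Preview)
Your proof is correct and follows essentially the same approach as the paper: both write $F$ and $G$ in terms of their columns, identify the $(i,j)$-entry of $\calW[F,G]$ with the vectorial Wronskian $W[\bmf_i,\bmg_j]$, and then apply Lemma~\ref{lem:constant_wronskian_vectorial} entry-by-entry.
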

\begin{proof}
 Denote by $\bmf_1(r)$ and $\bmf_2(r)$ the columns of $F(r)$, and by $\bmg_1(r)$ and $\bmg_2(r)$ the columns of $G(r)$. Then we have
 \begin{equation}\nonumber
  \calW[F,G] = \begin{bmatrix} W[\bmf_1, \bmg_1] & W[\bmf_1, \bmg_2] \\ W[\bmf_2, \bmg_1] & W[\bmf_2, \bmg_2] \end{bmatrix},
 \end{equation}
 and the assertion is a direct consequence of Lemma~\ref{lem:constant_wronskian_vectorial}.
\end{proof}

\begin{remark} \label{rem:analogues_wronskian_calL_infty}
 Analogues of Lemma~\ref{lem:constant_wronskian_vectorial} and Lemma~\ref{lem:constant_wronskian_matrix} also hold for our limiting operator $i \calL_\infty$ (at $r=\infty$).
\end{remark}

Next, we point out the following inversion identity (compare with \cite[Lemma~6.5]{KS}).
\begin{lemma} \label{lem:inverse_4times4}
 Let $F(r)$ and $G(r)$ be two ($r$-dependent) $2\times2$ matrices. Suppose that $\calW[F,F] = \calW[G,G] = 0$. Moreover, suppose that $D := \calW[F,G]$ is invertible. Then we have
 \begin{equation}\nonumber
 \begin{aligned}
  \begin{bmatrix} F & G \\ F' & G' \end{bmatrix}^{-1} = \begin{bmatrix} (D^t)^{-1} & 0 \\ 0 & D^{-1} \end{bmatrix} \begin{bmatrix} 0 & - I \\ I & 0 \end{bmatrix} \begin{bmatrix} F^t & F'^t \\ G^t & G'^t \end{bmatrix} \begin{bmatrix} 0 & \sigma_3 \\ -\sigma_3 & 0 \end{bmatrix}.
 \end{aligned}
\end{equation}
\end{lemma}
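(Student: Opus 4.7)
\medskip

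\noindent\textbf{Proof plan for Lemma~\ref{lem:inverse_4times4}.}

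The plan is to verify the identity by direct computation, checking that the claimed right-hand side, call it $A(r)$, is a left inverse of the $4\times 4$ block matrix $M(r):=\begin{bmatrix} F & G \\ F' & G' \end{bmatrix}$. Since $M$ is a square matrix over a field, a left inverse is automatically a two-sided inverse, so no separate computation of $M\cdot A$ is needed. The first step is to carry out the three matrix multiplications on the right-hand side to obtain the compact form
\begin{equation*}
A=\begin{bmatrix} (D^t)^{-1} G'^t \sigma_3 & -(D^t)^{-1} G^t \sigma_3 \\ -D^{-1} F'^t \sigma_3 & D^{-1} F^t \sigma_3 \end{bmatrix}.
\end{equation*}

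Next, I would compute $A\cdot M$ block by block. The four resulting $2\times 2$ blocks are
\begin{align*}
[A M]_{11} &= (D^t)^{-1}\bigl(G'^t\sigma_3 F - G^t\sigma_3 F'\bigr), & [A M]_{12} &= (D^t)^{-1}\bigl(G'^t\sigma_3 G - G^t\sigma_3 G'\bigr), \\
[A M]_{21} &= D^{-1}\bigl(F^t\sigma_3 F' - F'^t\sigma_3 F\bigr), & [A M]_{22} &= D^{-1}\bigl(F^t\sigma_3 G' - F'^t\sigma_3 G\bigr).
\end{align*}
The off-diagonal blocks $[AM]_{12}$ and $[AM]_{21}$ vanish immediately: by definition $[AM]_{12}=-(D^t)^{-1}\calW[G,G]=0$ and $[AM]_{21}=D^{-1}\calW[F,F]=0$ by the hypotheses $\calW[G,G]=\calW[F,F]=0$. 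The bottom-right block is $[AM]_{22}=D^{-1}\calW[F,G]=D^{-1}D=I$ directly by the definition of $D$.

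The only step that requires a small observation is the top-left block. Here I would use the general transpose identity
\begin{equation*}
\calW[G,F]^t = (G^t\sigma_3 F' - G'^t\sigma_3 F)^t = F'^t\sigma_3 G - F^t\sigma_3 G' = -\calW[F,G] = -D,
\end{equation*}
where we used that $\sigma_3$ is symmetric. Equivalently, $\calW[G,F]=-D^t$, so
\begin{equation*}
G'^t\sigma_3 F - G^t\sigma_3 F' = -\calW[G,F] = D^t,
\end{equation*}
and therefore $[AM]_{11}=(D^t)^{-1}D^t=I$. This yields $A M = I_{4\times 4}$, proving the lemma.

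The calculation is entirely mechanical; the only potential pitfall is careful bookkeeping of transposes and the use of the identity $\calW[G,F]=-\calW[F,G]^t$, which is the one nontrivial algebraic fact being invoked.
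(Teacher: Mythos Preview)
Your proof is correct and follows essentially the same approach as the paper: direct verification that the claimed expression is a left inverse via block matrix multiplication, using the hypotheses $\calW[F,F]=\calW[G,G]=0$ and the transpose identity $\calW[G,F]=-\calW[F,G]^t=-D^t$. The only cosmetic difference is that the paper computes the product of the three middle factors with $M$ in one step to obtain the block-diagonal matrix $\mathrm{diag}(-\calW[G,F],\,\calW[F,G])$ before applying the outer diagonal factor, whereas you first simplify $A$ and then multiply by $M$; the algebra and the key observation are identical.
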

\begin{proof}
 Under the assumption that $\calW[F,F] = \calW[G,G] = 0$, we have that
 \begin{equation} \label{equ:4times4_inversion_observation}
 \begin{aligned}
  \begin{bmatrix} 0 & - I \\ I & 0 \end{bmatrix} \begin{bmatrix} F^t & F'^t \\ G^t & G'^t \end{bmatrix} \begin{bmatrix} 0 & \sigma_3 \\ -\sigma_3 & 0 \end{bmatrix} \begin{bmatrix} F & G \\ F' & G' \end{bmatrix} = \begin{bmatrix} - \calW[G,F] & 0 \\ 0 & \calW[F,G] \end{bmatrix}.
 \end{aligned}
 \end{equation}
 Noting that $D^t = -\calW[G,F]$, the assertion follows from the preceding identity \eqref{equ:4times4_inversion_observation}.
\end{proof}

Let us now return to the question of the invertibility of the $4\times4$ matrix on the left-hand side of \eqref{equ:4times4system_Green_ST}, say for $z \in \Omega$ with $\Im(z) > 0$.
Here we have that $(i\calL - z) \Psi_+(\cdot,z) = (i\calL - z) F_1(\cdot,z) = 0$.
Thus, by Lemma~\ref{lem:constant_wronskian_matrix} the matrix Wronskians
\begin{equation}
 \calW[\Psi_+(\cdot,z), \Psi_+(\cdot, z)], \quad \calW[F_1(\cdot,z), F_1(\cdot, z)], \quad \calW[\Psi_+(\cdot,z), F_1(\cdot, z)]
\end{equation}
are constant in $r$.
Since all entries of $\Psi_+^{(0)}(r,z)$ are exponentially decaying as $r\to\infty$, one has $\calW[\Psi_+(\cdot,z), \Psi_+(\cdot, z)] = 0$.

 Since $F_1(r) = \calO(r^{\frac32})$ as $r \to 0$ and $F'(r) = \calO(r^{\frac12})$ as $r \to 0$, we have $\calW[F_1(\cdot,z), F_1(\cdot, z)] = \calO(r^2)$ as $r \to 0$. Letting $r \to 0$ and keeping in mind that $\calW[F_1(\cdot,z), F_1(\cdot, z)]$ must be constant in $r$, it follows that $\calW[F_1(\cdot,z), F_1(\cdot, z)] = 0$.

\subsection{Solving for the Green's kernel}\label{subsec:calLGreen1}

By the preceding considerations, for $z \in \Omega$ with $\pm \Im(z) > 0$, the Green's function for $i\calL-z$ is given by
\begin{equation}
 \calG_\pm(r,s; z) := \Psi_\pm(r,z) S(s,z) \mathds{1}_{[0 < s \leq r]} + F_1(r,z) T(s,z) \mathds{1}_{[r \leq s < \infty]},
\end{equation}
with the matrices $S(r,z)$ and $T(r,z)$ determined by
\begin{equation}
 \begin{aligned}
  \begin{bmatrix} \Psi_\pm(r,z) & F_1(r,z) \\ \Psi_\pm'(r,z) & F_1'(r,z) \end{bmatrix} \begin{bmatrix} S(r,z) \\ - T(r,z) \end{bmatrix} = \begin{bmatrix} 0 \\ \sigma_2 \end{bmatrix}.
 \end{aligned}
\end{equation}
For $z \in \Omega$ with $\pm \Im(z) > 0$ we introduce the following short-hand notation for the matrix Wronskians (that are constant in $r$)
\begin{equation}
 D_\pm(z) := \calW\bigl[ \Psi_\pm(\cdot, z), F_1(\cdot, z) \bigr].
\end{equation}
Suppose $D_\pm(z)$ is invertible. Then by Lemma~\ref{lem:inverse_4times4} and the preceding observations,
\begin{equation}
 \begin{aligned}
  \begin{bmatrix} S(r,z) \\ - T(r,z) \end{bmatrix} &= \begin{bmatrix} \bigl( D_\pm(z)^{-1} \bigr)^t & 0 \\ 0 & D_\pm(z)^{-1} \end{bmatrix} \begin{bmatrix} 0 & -I \\ I & 0 \end{bmatrix} \begin{bmatrix} \Psi_\pm(r,z)^t & \Psi_\pm'(r,z)^t \\ F_1(r,z)^t & F_1'(r,z)^t \end{bmatrix} \begin{bmatrix} 0 & \sigma_3 \\ -\sigma_3 & 0 \end{bmatrix} \begin{bmatrix} 0 \\ \sigma_2 \end{bmatrix} \\
  &= \begin{bmatrix} - \bigl( D_\pm(z)^{-1} \bigr)^t F_1(r,z)^t \sigma_3 \sigma_2 \\ D_\pm(z)^{-1} \Psi_\pm(r,z)^t \sigma_3 \sigma_2 \end{bmatrix},
 \end{aligned}
\end{equation}
whence
\begin{equation}
 \begin{aligned}
  S(r,z) &= - \bigl( D_\pm(z)^{-1} \bigr)^t F_1(r,z)^t \sigma_3 \sigma_2, \\
  T(r,z) &= - D_\pm(z)^{-1} \Psi_\pm(r,z)^t \sigma_3 \sigma_2.
 \end{aligned}
\end{equation}
We arrive at the following expressions for the Green's functions
\begin{equation}\label{eq:G+}
 \begin{aligned}
  \calG_+(r,s;z) = \left\{ \begin{aligned} i \Psi_+(r,z) \bigl( D_+(z)^{-1} \bigr)^t F_1(s,z)^t \sigma_1,& \quad 0 < s \leq r, \\
  i F_1(r,z) D_+(z)^{-1} \Psi_+(s,z)^t \sigma_1,& \quad r \leq s < \infty, \end{aligned} \right.
 \end{aligned}
\end{equation}
and
\begin{equation}\label{eq:G-}
 \begin{aligned}
  \calG_-(r,s;z) = \left\{ \begin{aligned} i \Psi_-(r,z) \bigl( D_-(z)^{-1} \bigr)^t F_1(s,z)^t \sigma_1,& \quad 0 < s \leq r, \\
  i F_1(r,z) D_-(z)^{-1} \Psi_-(s,z)^t \sigma_1,& \quad r \leq s < \infty. \end{aligned} \right.
 \end{aligned}
\end{equation}

\subsection{The connection problem}\label{sec:connection}

The goal of the current subsection is to solve the connection problem. This refers to the evaluation of the Wronskians between the solutions to $(i\calL-z)\Psi=0$ constructed from $r=\infty$ and those constructed from $r=0$. The careful work in Sections~\ref{sec:Fs} and~\ref{sec:Psis} for $\pm\Im z>0$ starts to play  crucial at this point in the analysis. 

Recall from Section~\ref{sec:Fs} that the columns of $F_1$ and $F_2$ are denoted by $\upfy_j$, $1 \leq j \leq 4$, i.e.,
\begin{equation*}
    F_1=\pmat{\upfy_1&\upfy_2},\quad F_2=\pmat{\upfy_3&\upfy_4}.
\end{equation*}
Using the notation of Sections~\ref{sec:Upsilon1} and~\ref{sec:Upsilon2}, we set
\begin{equation} \label{equ:uppsi_definition}
    \uppsi_j^\pm = \pmat{\Upsilon_{j,3}^{\pm}\\\Upsilon_{j,1}^{\pm}},\qquad 1 \leq j \leq 4.
\end{equation}
Then by definition
\begin{align*}
\begin{split}
\partial_r\uppsi_j^\pm=\pmat{\Upsilon_{j,4}^{\pm}\\\Upsilon_{j,2}^{\pm}}.
\end{split}
\end{align*}
Moreover, the columns of $\Psi_\pm$ are given by $\uppsi_1^\pm$ and $\uppsi_2^\pm$, i.e.,
\begin{align*}
\begin{split}
    \Psi_{\pm}=\pmat{\uppsi_1^\pm&\uppsi_2^\pm}.
\end{split}
\end{align*}

The Wronskians between $\uppsi_i^\pm$ and $\upfy_j$ will play an important role and will be denoted by
\begin{align}\label{eq:omegacij1}
\begin{split}
c_{ij}(\lambda) &:= W[\upfy_i(\cdot,\lambda),\upfy_j(\cdot,\lambda)], \\ 
\omega_{ij}^\pm(\lambda)&:= W[\uppsi_i^\pm(\cdot,\lambda),\upfy_j(\cdot,\lambda)], \\
s_{ij}^{\pm}(\lambda) &:= W[\uppsi_i^\pm(\cdot,\lambda),\uppsi^\pm_j(\cdot,\lambda)],
\end{split}
\end{align}
where by definition $\uppsi_j^\pm(\cdot,\lambda):= \lim_{b\to0^+}\uppsi_j^\pm(\cdot,\lambda\pm i b)$. Since $\{\upfy_1,\dots,\upfy_4\}$ is a fundamental system, we can find constants $b_\ell^{\pm,j}(z)$ such that
\begin{align*}
\begin{split}
\uppsi_\ell^\pm(r,z)=\sum_{j=1}^4 b_\ell^{\pm,j}(z)\upfy_j(r,z),\quad \ell=1,\dots,4.
\end{split}
\end{align*}
For $D_\pm(z)$ we use the notation
\begin{align*}
\begin{split}
 D_\pm(z) := \calW\bigl[ \Psi_\pm(\cdot, z), F_1(\cdot, z) \bigr]=\pmat{\delta_\pm&-\gamma_\pm\\-\beta_\pm&\alpha_\pm},
\end{split}
\end{align*}
so that with $d_\pm:=\det D_\pm=\alpha_\pm\delta_\pm-\beta_\pm\gamma_\pm$,
\begin{align*}
\begin{split}
 D_\pm^{-1}(z)=\frac{1}{d_\pm}\pmat{\alpha_\pm&\gamma_\pm\\ \beta_\pm&\delta_\pm}.
\end{split}
\end{align*}
It follows from \eqref{eq:G+} and \eqref{eq:G-} that
\begin{align}\label{eq:G+2}
\begin{split}
\calG_{+}(r,s;z)&=\frac{i}{d_{+}}\sum_{j=1}^4(\alpha_+ b_1^{+,j}+\gamma_+ b_2^{+,j})(\upfy_j(r,z)\upfy_1^t(s,z)\sigma_1\mathds{1}_{\{0 < s \leq r\}}+\upfy_1(r,z)\upfy_j^t(s,z)\sigma_1\mathds{1}_{\{r < s\}})\\
&\quad+\frac{i}{d_{+}}\sum_{j=1}^4(\beta_+ b_1^{+,j}+\delta_+ b_2^{+,j})(\upfy_j(r,z)\upfy_2^t(s,z)\sigma_1\mathds{1}_{\{0 < s \leq r\}}+\upfy_2(r,z)\upfy_j^t(s,z)\sigma_1\mathds{1}_{\{r < s\}})
\end{split}
\end{align}
and
\begin{align}\label{eq:G-2}
\begin{split}
\calG_{-}(r,s;z)&=\frac{i}{d_{-}}\sum_{j=1}^4(\alpha_- b_1^{-,j}+\gamma_- b_2^{-,j})(\upfy_j(r)\upfy_1^t(s)\sigma_1\mathds{1}_{\{0 < s \leq r\}}+\upfy_1(r,z)\upfy_j^t(s,z)\sigma_1\mathds{1}_{\{r < s\}})\\
&\quad+\frac{i}{d_{-}}\sum_{j=1}^4(\beta_- b_1^{-,j}+\delta_- b_2^{-,j})(\upfy_j(r,z)\upfy_2^t(s,z)\sigma_1\mathds{1}_{\{0 < s \leq r\}}+\upfy_2(r,z)\upfy_j^t(s,z)\sigma_1\mathds{1}_{\{r < s\}}).
\end{split}
\end{align}
For simplicity, we suppressed the $z$-dependence from the notation on the right-hand side. 
We now set out to estimate the coefficients $b_\ell^{\pm,j}(z)$ and $\alpha_\pm(z),\beta_\pm(z),\gamma_\pm(z),\delta_\pm(z)$.  First we prove a lemma that states the relation between $\omega_{ij}^+$ and $\omega_{ij}^-$.
\begin{lemma}\label{lem:omegapm}
    For $\lambda\in\bbR\backslash\{0\}$
    \begin{align*}
        \omega_{i1}^{-}(\lambda)=\omega_{i1}^{+}(-\lambda), \quad \omega_{i2}^{-}(\lambda)=-\omega_{i2}^{+}(-\lambda),\quad \omega_{i3}^{-}(\lambda)=\omega_{i3}^{+}(-\lambda),\quad \omega_{i4}^{-}(\lambda)=-\omega_{i4}^{+}(-\lambda).
    \end{align*}
    Moreover,
    \begin{align*}
        \omega_{21}^+(-\lambda)=\omega_{21}^+(\lambda),\qquad \omega_{22}^+(-\lambda)=-\omega_{22}^+(\lambda).
    \end{align*}
\end{lemma}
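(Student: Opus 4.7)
The plan is to derive both statements by direct algebraic manipulation from three ingredients that are already at hand: the identity \eqref{eq:Upsilonpmdef1} expressing $\Upsilon_j^-$ as $\upiota\Upsilon_j^+(\cdot,-z)$ with the diagonal sign matrix $\upiota=\mathrm{diag}(1,1,-1,-1)$; the parity relations \eqref{eq:upfy12evenodd1}--\eqref{eq:upfy34evenodd1} for the $\upfy_j$'s under $z\mapsto -z$; and the elementary invariance
$W[\sigma_3\bmf,\sigma_3\bmg]=W[\bmf,\bmg]$, which follows from $\sigma_3^2=I$, the commutativity of $\sigma_3$ with $\partial_r$, and the symmetry $\langle\sigma_3\bmf,\bmg\rangle=\langle\bmf,\sigma_3\bmg\rangle$ of the real inner product.

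For Part~1, I would first combine \eqref{eq:Upsilonpmdef1} with the definition \eqref{equ:uppsi_definition}, which extracts the first and third components of $\Upsilon_j^\pm$. The diagonal action of $\upiota$ then becomes a $\sigma_3$-conjugation at the level of the two-vectors $\uppsi_j^\pm$, giving a relation of the form $\uppsi_j^-(r,\lambda)=\varepsilon_0\,\sigma_3\uppsi_j^+(r,-\lambda)$ for an explicit sign $\varepsilon_0$ independent of $j$. Inserting this together with the $\sigma_3$-conjugation identity for $\upfy_j$ from \eqref{eq:upfy12evenodd1}--\eqref{eq:upfy34evenodd1} into the Wronskian $\omega_{ij}^-(\lambda)=W[\uppsi_i^-(\cdot,\lambda),\upfy_j(\cdot,\lambda)]$, and then applying $W[\sigma_3\cdot,\sigma_3\cdot]=W[\cdot,\cdot]$ to strip the $\sigma_3$'s from both arguments, collapses the expression to an overall sign times $\omega_{ij}^+(-\lambda)$. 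The sign depends only on the parity of $\upfy_j$ under $z\mapsto -z$, which is even for $j\in\{1,3\}$ and odd for $j\in\{2,4\}$, producing the four claimed identities.

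For Part~2, the key input is Corollary~\ref{cor:Upsilon2}, whose proof already encodes the delicate Stokes-phenomenon computation. Its two component identities for $\Upsilon_2^{+,1}$ and $\Upsilon_2^{+,3}$ translate, via \eqref{equ:uppsi_definition}, into a single vector relation of the form $\uppsi_2^+(r,-\lambda)=-\sigma_3\uppsi_2^+(r,\lambda)$. Plugging this, together with the parity of $\upfy_1$, respectively $\upfy_2$, from \eqref{eq:upfy12evenodd1}, into $\omega_{2j}^+(-\lambda)$ and once more invoking the Wronskian invariance yields $\omega_{21}^+(-\lambda)=\omega_{21}^+(\lambda)$ and $\omega_{22}^+(-\lambda)=-\omega_{22}^+(\lambda)$.

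The main obstacle in this proof is purely bookkeeping: one has to track the signs arising from the various $\sigma_3$-conjugations with some care to match the precise pattern $(+,-,+,-)$ for $j=1,2,3,4$ in Part~1 and to reproduce the parities in Part~2. No substantive new analysis is required; in particular, all the nontrivial work has already been done in the proof of Corollary~\ref{cor:Upsilon2}, where the Stokes phenomenon enters and produces the even-odd split of $\Upsilon_2^{+,1}$ and $\Upsilon_2^{+,3}$ that drives Part~2.
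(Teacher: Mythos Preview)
Your approach is correct and essentially identical to the paper's: the paper's proof also reduces Part~1 to the identity $\uppsi_j^-(r,\lambda)=-\sigma_3\uppsi_j^+(r,-\lambda)$ (from \eqref{eq:Upsilonpmdef1} and \eqref{equ:uppsi_definition}) together with the $\sigma_3$-parity relations \eqref{eq:upfy12evenodd1}--\eqref{eq:upfy34evenodd1} for the $\upfy_j$, and reduces Part~2 to the vector relation for $\uppsi_2^+$ coming from Corollary~\ref{cor:Upsilon2}. One word of caution on the bookkeeping you rightly flag: the signs in \eqref{eq:upfy12evenodd1}--\eqref{eq:upfy34evenodd1} as written in the paper are actually off by an overall minus (e.g.\ the first component of $\upfy_1$ is $z\calG_2 f$, which is \emph{odd} in $z$, forcing $\upfy_1(r,-z)=-\sigma_3\upfy_1(r,z)$), and your formula $\uppsi_2^+(r,-\lambda)=-\sigma_3\uppsi_2^+(r,\lambda)$ is the correct one (the paper's proof records it with the opposite sign); once you use these corrected signs together with $\varepsilon_0=-1$, your sign count indeed yields the pattern $(+,-,+,-)$ and the parities in Part~2.
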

\begin{proof}
    The relation between $\omega_{ij}^+$ and $\omega_{ij}^-$ is a result of the identities
    \begin{align*}
        &\uppsi_j^-(r,\lambda)=-\sigma_3\uppsi_j^+(r,-\lambda),\qquad \upfy_1(r,-\lambda)=\sigma_3\upfy_1(r,\lambda),\qquad \upfy_2(r,-\lambda)=-\sigma_3\upfy_2(r,\lambda),\\&\upfy_3(r,-\lambda)=\sigma_3\upfy_3(r,\lambda),\qquad \upfy_4(r,-\lambda)=-\sigma_3\upfy_4(r,\lambda).
    \end{align*}
    These identities follow from \eqref{eq:upfy12evenodd1}, \eqref{eq:upfy34evenodd1}, and \eqref{eq:Upsilonpmdef1}. For the claims about $\omega_{12}^+$ and $\omega_{22}^+$ we also use Corollary~\ref{cor:Upsilon2} which shows that $\uppsi_2^+(r,-\lambda)=\sigma_3\uppsi_2^+(r,\lambda)$.
\end{proof}
We recall that throughout this paper we work with a fixed string of small absolute constants $0 < \delta_0 \ll \epsilon_\infty \ll \epsilon_0 \ll 1$.
Now we fix $\epsilon_\infty \ll \epsilon \ll \epsilon_0$. For $0 < |z| \leq \delta_0$ we then define
\begin{equation} \label{equ:repsilon_definition}
    r_\epsilon(z) := \frac{\epsilon}{|z|}.
\end{equation}
Note that then all the estimates on $\upfy_j(r,z)$ and $\uppsi_j^\pm(r,z)$ from Sections~\ref{sec:Fs} and~\ref{sec:Psis} are valid for $r \in [r_\epsilon/10,10r_\epsilon]$.

In the next lemma we determine upper (and in many cases also lower) bounds on the coefficients $b_1^{\pm,j}(z)$, $1 \leq j \leq 4$, as well as $\alpha_\pm(z)$, $\beta_\pm(z)$, $\gamma_\pm(z)$, $\delta_\pm(z)$, $d_\pm(z)$ in the expressions \eqref{eq:G+2} and \eqref{eq:G-2} for the resolvent kernels. In view of the obtained upper bounds, observe that the coefficients of $\upfy_4(r,z)\upfy_1(s,z)^t$ and $\upfy_1(r,z)\upfy_4(s,z)^t$ in \eqref{eq:G+2} and \eqref{eq:G-2} may potentially be too singular to take the limit $b\to0^+$ in Proposition~\ref{prop:PI} inside the integral. However as we will see in Corollary~\ref{cor:fy14cancel1} below, these potentially singular coefficients actually drop out.
\begin{lemma} \label{lem:bellj1}
For $0 < |z| \leq \delta_0$ with $\pm \Im z > 0$ we have
\begin{align*}
\begin{split}
&|b_1^{\pm,1}|\simeq |z|^{\frac{1}{2}}\log|z|^{-1},\quad |b_3^{\pm,1}|\simeq |z|^{\frac{1}{2}}\log|z|^{-1}, \quad |b_1^{\pm,3}|\simeq |z|^{\frac{1}{2}}, \quad |b_3^{\pm,3}|\simeq |z|^{\frac{1}{2}},\quad |b_2^{\pm,4}|\simeq |z|^{-1},\\
&|b_2^{\pm,1}|\lesssim e^{-2\sqrt{2}r_\epsilon(z)},\quad |b_1^{\pm,2}|\lesssim e^{-2\sqrt{2}r_\epsilon(z)},\quad|b_2^{\pm,2}|\lesssim e^{-4\sqrt{2}r_\epsilon(z)},\quad |b_3^{\pm,2}|\lesssim e^{-2\sqrt{2}r_\epsilon(z)},\\
&|b_2^{\pm,3}|\lesssim e^{-2\sqrt{2}r_\epsilon(z)},\quad |b_1^{\pm,4}|\lesssim e^{\frac{\sqrt{2}}{2}r_\epsilon(z)}, \quad |b_3^{\pm,4}|\lesssim e^{\frac{\sqrt{2}}{2}r_\epsilon(z)},\\
& |\alpha_{\pm}|\simeq |z|^{-1},\quad |\beta_\pm|\lesssim e^{-2\sqrt{2}r_\epsilon(z)},\quad |\gamma_{\pm}|\lesssim e^{\frac{\sqrt{2}}{2}r_\epsilon(z)},\quad |\delta_\pm|\simeq |z|^{\frac{1}{2}},\quad |d_\pm|\simeq |z|^{-\frac{1}{2}}.
\end{split}
\end{align*}
\end{lemma}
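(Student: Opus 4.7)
The strategy is to exploit the fact that all relevant matrix and vector Wronskians are constant in $r$ (Lemmas~\ref{lem:constant_wronskian_vectorial}--\ref{lem:constant_wronskian_matrix} and the analogous statement for the vector pairings), and evaluate them at the turning-point scale $r = r_\epsilon(z) = \epsilon/|z|$ from~\eqref{equ:repsilon_definition}. This point lies in the overlap region where both the power-series expansions of $F_1, F_2$ from Propositions~\ref{lem:F1_1} and~\ref{lem:F2_1} and the Lyapunov--Perron expansions of $\Psi_\pm$ from Proposition~\ref{prop:Upsilon} are simultaneously valid. At $r = r_\epsilon$ one has $|k_2(z)r_\epsilon| \sim \sqrt{2}\,\epsilon/|z| \gg 1$, so $h_\pm(k_2 r_\epsilon)$ lie in the exponential regime with $e^{\pm i k_2 r_\epsilon} \sim e^{\mp \sqrt{2} r_\epsilon}$, while $|k_1(z)r_\epsilon| \sim \epsilon \ll 1$ places $h_\pm(k_1 r_\epsilon)$ in the small-argument regime~\eqref{eq:Hankelsmall1}. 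By \eqref{eq:Upsilon+lead1}, we may replace $\uppsi_\ell^\pm$ by the third and first components of $w_\ell^\pm$ up to a relative error of size $|z|$, so the leading-order values of the Wronskians come from explicit substitutions of the $w_\ell^\pm$ together with the zero-energy profiles $f_j, g_j$.

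For the entries of $D_\pm$, use the identifications $\delta_\pm = \omega_{11}^\pm$, $\gamma_\pm = -\omega_{12}^\pm$, $\beta_\pm = -\omega_{21}^\pm$, $\alpha_\pm = \omega_{22}^\pm$ with $\omega_{ij}^\pm := W[\uppsi_i^\pm, \upfy_j]$. The first component of $\uppsi_2^\pm$ carries the prefactor $ik_2(z)^2/z \simeq |z|^{-1}$ and decays like $e^{-\sqrt{2} r_\epsilon}$, while $\upfy_2$ grows like $e^{\sqrt{2} r_\epsilon}$ (since $g \sim g_1$ at leading order by Proposition~\ref{lem:F1_1}); the exponentials cancel in $W[\uppsi_2^\pm,\upfy_2]$ at $r = r_\epsilon$, leaving $|\alpha_\pm| \simeq |z|^{-1}$. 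The pair $(\uppsi_1^\pm, \upfy_1)$ is algebraic at $r_\epsilon$, and substituting~\eqref{eq:Hankelsmall1} with $\zeta = k_1 r_\epsilon$ together with $f_1 \sim r^{1/2}$ and $\calG_2 f_1 \sim r^{1/2}$ gives $|\delta_\pm| \simeq |z|^{1/2}$. The off-diagonals $\beta_\pm$ and $\gamma_\pm$ pair modes of mismatched exponential behavior and inherit the stated exponential upper bounds from the dominant term. Finally, $|d_\pm| \simeq |z|^{-1/2}$ follows because the product $\alpha_\pm\delta_\pm$ has size $|z|^{-1/2}$ and exponentially dominates $\beta_\pm\gamma_\pm$.

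For the connection coefficients, from $\uppsi_\ell^\pm = \sum_j b_\ell^{\pm,j}\upfy_j$ and taking Wronskians with each $\upfy_k$ we obtain the $4\times4$ linear system $(c_{jk})\cdot(b_\ell^{\pm,j})_j = (\omega_{\ell k}^\pm)_k$, with $c_{jk} = W[\upfy_j,\upfy_k]$. Evaluating $c_{jk}$ near $r = 0$ using~\eqref{eq:f1f2},~\eqref{eq:g1g2} shows that $(c_{jk})$ has a $2\times 2$ block structure reflecting that $(\upfy_1, \upfy_3)$ pair within the $L_2$ fundamental system and $(\upfy_2, \upfy_4)$ within the $L_1$ one, with block determinants built from $W[f_1,f_2]$ and $W[g_1,g_2]$ which are nonzero. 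Inverting, the dominant behaviors $b_1^{\pm,1}, b_3^{\pm,1} \simeq |z|^{1/2}\log|z|^{-1}$ and $b_1^{\pm,3}, b_3^{\pm,3} \simeq |z|^{1/2}$ come from the $\sqrt{\zeta}\log\zeta$ and $\sqrt{\zeta}$ summands of~\eqref{eq:Hankelsmall1} respectively, while $b_2^{\pm,4} \simeq |z|^{-1}$ matches the $|z|^{-1}$ enhancement of $\uppsi_2^\pm$'s first component paired with the exponentially decaying branch $\upfy_4$. The remaining coefficients are exponentially small since they represent the projection of $\uppsi_\ell^\pm$ onto the ``wrong'' exponential mode among $\upfy_1,\ldots,\upfy_4$ at $r_\epsilon$.

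The main obstacle is securing two-sided lower bounds ($\simeq$) for $|\alpha_\pm|, |\delta_\pm|, |d_\pm|$ and the dominant $b_\ell^{\pm,j}$, rather than merely upper bounds. This requires that the leading contributions coming from the nonzero constants $c_1^\pm, c_2^\pm$ in~\eqref{eq:Hankelsmall1} and from the lower bounds \eqref{eq:F1bound9}--\eqref{eq:F1bound10}, \eqref{eq:F2bound9}--\eqref{eq:F2bound10} on $\upfy_j$ at $r_\epsilon$ are not cancelled by perturbative corrections. The $|z|$-smallness of the error $\|\Upsilon_j^\pm - w_j^\pm\|_{X_j}$ from~\eqref{eq:Upsilon+lead1}, combined with the hierarchy $\delta_0 \ll \epsilon_\infty \ll \epsilon$, ensures that these corrections remain strictly subordinate to the leading terms. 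By the symmetry identities of Lemma~\ref{lem:omegapm}, all bounds for the $-$ case transfer from the $+$ case.
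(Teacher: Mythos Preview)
Your overall strategy coincides with the paper's: compute the Wronskians $\omega_{\ell k}^\pm = W[\uppsi_\ell^\pm,\upfy_k]$ and $c_{jk}=W[\upfy_j,\upfy_k]$ at the turning-point scale, then solve the linear system $\omega_{\ell k}^\pm=\sum_j b_\ell^{\pm,j}c_{jk}$ for the connection coefficients. Your treatment of $\alpha_\pm,\beta_\pm,\gamma_\pm,\delta_\pm,d_\pm$ is correct in outline and matches the paper.

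There is, however, a genuine gap in your handling of the matrix $(c_{jk})$. You assert that evaluating near $r=0$ yields a clean $2\times 2$ block structure. In fact, from the $r\to 0$ asymptotics one obtains $c_{12}=c_{23}=0$ and $c_{13},c_{24}\simeq 1$, but the cross terms $c_{14},c_{34}$ are only $\calO(z)$ by that method, not zero. This $\calO(z)$ bound is \emph{not} small enough: when you solve the system you get
\[
b_1^{\pm,3}=-\frac{\omega_{11}^\pm + c_{14}\,b_1^{\pm,4}}{c_{13}},\qquad
b_1^{\pm,1}=\frac{\omega_{13}^\pm + c_{34}\,b_1^{\pm,4}}{c_{13}},
\]
and since $|b_1^{\pm,4}|\lesssim e^{\sqrt{2}r_\epsilon/2}$ is exponentially large, a bound $|c_{14}|=\calO(|z|)$ gives $|c_{14}\,b_1^{\pm,4}|\lesssim |z|\,e^{\sqrt{2}r_\epsilon/2}$, which swamps $|\omega_{11}^\pm|\simeq |z|^{1/2}$ and destroys the two-sided bound $|b_1^{\pm,3}|\simeq |z|^{1/2}$. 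The same issue arises for $b_1^{\pm,1},b_3^{\pm,1},b_3^{\pm,3}$. What is actually required, and what the paper establishes, is the much stronger estimate $|c_{14}|,|c_{34}|\lesssim e^{-4\sqrt{2}r_\epsilon}$, obtained by evaluating these particular Wronskians at a \emph{large} radius $r\sim 4r_\epsilon$, where $\upfy_4$ decays like $e^{-\sqrt{2}r}$ while $\upfy_1,\upfy_3$ are only polynomially large. With this in hand the cross terms $c_{14}b_\ell^{\pm,4},c_{34}b_\ell^{\pm,4}$ become exponentially negligible and your inversion goes through.

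A minor point: your labeling is inverted. The pair $(\upfy_1,\upfy_3)$ has dominant second components $f,\tilde f$ in the $L_1$ system (so $c_{13}\sim W[f_1,f_2]$), and $(\upfy_2,\upfy_4)$ has dominant first components $g,\tilde g$ in the $L_2$ system (so $c_{24}\sim W[g_1,g_2]$).
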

\begin{proof}
We will prove the estimates for the $+$ sign, and the ones for the $-$ sign follow from Lemma~\ref{lem:omegapm} and~\eqref{eq:bomegasystem1} and~\eqref{eq:alphabetabellj} below. Recall the following notation for the columns of $F_1$ and $F_2$ introduced at the end of Subsections~\ref{sec:F1} and~\ref{sec:F2}:
\begin{align}\label{eq:upfyjnotation1}
\begin{split}
\upfy_1=\pmat{z\calG_2f\\f},\quad \upfy_2=\pmat{g\\z\calG_1g},\quad \upfy_3=\pmat{z\calG_2\tilf\\\tilf},\quad\upfy_4=\pmat{\tilg\\z\calG_1^\dagger\tilg}.
\end{split}
\end{align}
Estimates on $f$, $\tilf$, $g$, $\tilg$ follow from Propositions~\ref{lem:F1_1} and~\ref{lem:F2_1}. 
For $\uppsi_\ell^\pm$ we use the notation
\begin{align*}
\begin{split}
\uppsi_\ell^\pm=\pmat{\uppsi_\ell^{\pm,1}\\\uppsi_\ell^{\pm,2}},
\end{split}
\end{align*}
where the individual components and their derivatives can be estimated using Lemmas~\ref{lem:wjXj} and~\ref{lem:contr}.
We claim that $c_{ij}(z)=W[\upfy_i(z),\upfy)j(z)]$
satisfy 
\begin{align}\label{eq:cijclaim1}
\begin{split}
c_{12}=c_{23}=0, \quad |c_{14}|,|c_{34}|\lesssim e^{-4\sqrt{2}r_\epsilon(z)},
\quad c_{13},c_{24}\simeq 1.
\end{split}
\end{align}
It follows that, see~\eqref{eq:omegacij1},
\begin{align}\label{eq:bomegasystem1}
\begin{split}
&\omega_{\ell 1}^\pm=-c_{13}b_\ell^{\pm,3}-c_{14}b_\ell^{\pm,4},\\
&\omega_{\ell 2}^\pm=-c_{24}b_\ell^{\pm,4},\\
&\omega_{\ell 3}^\pm=c_{13}b_\ell^{\pm,1}-c_{34}b_\ell^{\pm,4},\\
&\omega_{\ell 4}^\pm=c_{14}b_\ell^{\pm,1}+c_{24}b_\ell^{\pm,2}+c_{34}b_\ell^{\pm,3},
\end{split}
\end{align}
and
\begin{align}\label{eq:alphabetabellj}
\begin{split}
&\alpha_{\pm}=W[\uppsi_2^\pm,\upfy_2]=\omega_{2 2}^\pm,\quad \beta_{\pm} = -W[\uppsi_2^\pm,\upfy_1]=-\omega_{21}^\pm,\\
&\gamma_{\pm}=-W[\uppsi_1^\pm,\upfy_2]=-\omega_{12}^\pm,\quad \delta_{\pm}=W[\uppsi_1^\pm,\upfy_1]=\omega_{11}^\pm,\\
&d_{\pm}=W[\uppsi_1^\pm,\upfy_1]W[\uppsi_2^\pm,\upfy_2]-W[\uppsi_2^\pm,\upfy_1]W[\uppsi_1^\pm,\upfy_2]=\omega_{11}^\pm\omega_{22}^\pm-\omega_{21}^\pm\omega_{12}^\pm.
\end{split}
\end{align}
To prove \eqref{eq:cijclaim1} we evaluate some of the Wronskians at $r=0$ and some at $r=r_\epsilon(z)$. By inspection using the formulas for $\calG_1$, $\calG_2$, $\tilde{\calG_1}$, $\calG_1^\dagger$ from \eqref{eq:calG1G2} and \eqref{eq:calG1G2*}, as well as \eqref{eq:F1series1} and \eqref{eq:F2series1} near $r=0$ we get 
\begin{align}\label{eq:fgftildegtilde1}
\begin{split}
&f(r)=c_fr^{\frac{3}{2}}+O(r^{\frac{5}{2}}),\quad \calG_2f(r)=\tilc_fr^{\frac{3}{2}}+O(r^{\frac{5}{2}}),\\
&g(r)=c_gr^{\frac{3}{2}}+O(r^{\frac{5}{2}}),\quad \calG_1g(r)=\tilc_gr^{\frac{7}{2}}+O(r^{\frac{9}{2}}),\\
&\tilf(r)=c_\tilf r^{-\frac{1}{2}}+O(r^{\frac{1}{2}}),\quad \calG_2\tilf(r)=\tilc_\tilf r^{\frac{3}{2}}\log r+O(r^{\frac{3}{2}}),\\
&\tilg(r)=c_\tilg r^{-\frac{1}{2}}+O(r^{\frac{1}{2}}),\quad \calG_1^\dagger\tilg(r)=\tilc_\tilg r^{-\frac{1}{2}}+O(r^{\frac{1}{2}}),
\end{split}
\end{align}
where $c_f$, $c_g$, etc., are constants and where the remainder terms $O(\ldots)$ enjoy symbol-type behavior under differentiation with respect to $r$.
By evaluating the Wronskians for small $r$ this already shows that $c_{12}=c_{23}=0$ and that $c_{13},c_{24}\simeq 1$. For the other Wronskians $c_{14}$ and $c_{34}$ we evaluate at $r=4r_\epsilon(z)$. The estimates from Propositions~\ref{lem:F1_1} and~\ref{lem:F2_1} then immediately give the desired results.

It remains to compute the Wronskians $W[\uppsi_\ell^+,\upfy_j]$ which we evaluate at an appropriate~$r$ in $[\frac{r_\epsilon(z)}{3},3r_\epsilon(z)]$. To estimate the resulting expressions we use Propositions~\ref{lem:F1_1} and~\ref{lem:F2_1} and Lemmas~\ref{lem:wjXj} and~\ref{lem:contr}. To simplify notation we will write $\uppsi_\ell$ for $\uppsi_\ell^+$ and $\omega_{jk}$ for $\omega_{jk}^+$ in the rest of this proof. We start with $\omega_{\ell3}$ given by
\begin{align}\label{eq:b11temppre1}
\begin{split}
\omega_{\ell3}=z\uppsi_\ell^1(r_\epsilon)(\partial_r\calG_2\tilf)(r_\epsilon)-z\partial_r\uppsi_\ell^1(r_\epsilon)\calG_2\tilf(r_\epsilon)+\partial_r\uppsi_\ell^2(r_\epsilon)\tilf(r_\epsilon)-\uppsi_\ell^2(r_\epsilon)\partial_r\tilf(r_\epsilon).
\end{split}
\end{align}
We claim that for $\ell=1$ this gives
\begin{align}\label{eq:b11temp1}
\begin{split}
|\omega_{13}|\simeq |z|^{\frac{1}{2}}\log|z|^{-1}.
\end{split}
\end{align}
Using Proposition~\ref{prop:Upsilon} and~\ref{lem:F2_1} and \eqref{eq:b11temppre1} one has as $z\to0$ 
\begin{equation}\label{eq:om13}
\begin{aligned}
    \omega_{13} &= - \Upsilon_1^2(r_\epsilon) \tilde f(r_\epsilon)+\Upsilon_1^1(r_\epsilon)\partial_r \tilde f(r_\epsilon)+\calO(|z|^{\frac{5}{2}}\log |z|^{-1}).
\end{aligned}
\end{equation}
 In view of Proposition~\ref{lem:F2_1}, Lemma~\ref{lem:contr}, and Lemma~\ref{lem:wjXj} (specifically \eqref{eq:Hankelsmall1}), the main contribution is from
\begin{equation}
    \begin{aligned}
        \Upsilon_1^2(r_\epsilon) \tilde f(r_\epsilon)-\Upsilon_1^1(r_\epsilon)\partial_r \tilde f(r_\epsilon) &= k_1 h_+'(k_1 r_\epsilon) f_2(r_\epsilon) - h_+ (r_\epsilon) f_2'(r_\epsilon) + \rho \\
        &= c_1 \sqrt{k_1(z)}\log r_\epsilon -c_1 k_1^\frac12 \log(k_1r_\epsilon) - c_2 k_1^\frac12 + \calO(k_1^{\frac52} r_\epsilon^2 \log\epsilon \log r_\epsilon) + \rho\\
        &=-(c_1\log k_1+c_2)k_1^{\frac{1}{2}}+ \calO(k_1^{\frac52} r_\epsilon^2 \log\epsilon \log r_\epsilon) + \rho.
    \end{aligned}
\end{equation}
The error $\rho$ satisfies
\[
|\rho| \lesssim \epsilon^2  |z|^{\frac12} \log |z|^{-1},
\]
which proves \eqref{eq:b11temp1}.
For $\ell=2$ evaluating \eqref{eq:b11temppre1} at $r=3r_\epsilon$, by a naive bound we get
\begin{align*}
\begin{split}
|\omega_{23}|\lesssim e^{-2\sqrt{2}r_\epsilon(z)}.
\end{split}
\end{align*}
For $\omega_{33}$ by a similar argument as for $\omega_{13}$ we have 
\begin{align*}
\begin{split}
|\omega_{33}|\simeq |z|^{\frac{1}{2}}\log |z|^{-1}.
\end{split}
\end{align*}
Next, for $\omega_{\ell4}$ we have
\begin{align}\label{eq:bell2-1}
\begin{split}
\omega_{\ell4}=\uppsi_\ell^1(r_\epsilon)\partial_r\tilg(r_\epsilon)-\partial_r\uppsi_\ell^1(r_\epsilon)\tilg(r_\epsilon)+z\partial_r\uppsi_\ell^2(r_\epsilon)\calG_1^\dagger \tilg(r_\epsilon)-z\uppsi_\ell^2(r_\epsilon)(\partial_r\calG_1^\dagger\tilg)(r_\epsilon).
\end{split}
\end{align}
For $\ell=1$, evaluating at $r=3r_\epsilon$ a naive estimate gives
\begin{align*}
\begin{split}
|\omega_{14}|\lesssim e^{-2\sqrt{2}r_\epsilon(z)}, \quad |\omega_{24}|\lesssim e^{-4\sqrt{2}r_\epsilon(z)}, \quad |\omega_{34}|\lesssim e^{-2\sqrt{2}r_\epsilon(z)}.
\end{split}
\end{align*}

Next, we write
\begin{align*}
\begin{split}
\omega_{\ell1}=z\uppsi_\ell^1(r_\epsilon)(\partial_r\calG_2f)(r_\epsilon)-z\partial_r\uppsi_\ell^1(r_\epsilon)\calG_2f(r_\epsilon)+\partial_r\uppsi_\ell^2(r_\epsilon)f(r_\epsilon)-\uppsi_\ell^2(r_\epsilon)\partial_rf(r_\epsilon).
\end{split}
\end{align*}
For $\ell=1$ we get
\begin{align*}
\begin{split}
|\omega_{11}|\simeq |z|^{\frac{1}{2}}.
\end{split}
\end{align*}
The argument here is similar to the one for \eqref{eq:b11temp1}, where the main contribution comes from replacing $\uppsi_1$ by the corresponding components of $w_1$ and $f$ by $f_1$ and is given by $c k_1(z)^{\frac{1}{2}}+\calO(|z|^{\frac{5}{2}})$ (see Lemma~\ref{lem:fjgj_1} and~\ref{lem:wjXj}) for a suitable constant $c \neq 0$.
For $\ell=2$, evaluating at $r=3r_\epsilon$, a naive estimate gives
\begin{align*}
\begin{split}
|\omega_{21}|\lesssim e^{-2\sqrt{2}r_\epsilon(z)}.
\end{split}
\end{align*}
For $\ell=3$ by the same argument as for $\ell=1$ we get 
\begin{align*}
\begin{split}
|\omega_{31}|\simeq |z|^{\frac{1}{2}}.
\end{split}
\end{align*}

Next, we write
\begin{align*}
\begin{split}
\omega_{\ell2}=\uppsi_\ell^1(r_\epsilon)\partial_rg(r_\epsilon)-\partial_r\uppsi_\ell^1(r_\epsilon)g(r_\epsilon)+z\partial_r\uppsi_\ell^2(r_\epsilon)\calG_1g(r_\epsilon)-z\uppsi_\ell^2(r_\epsilon)(\partial_r\calG_1g)(r_\epsilon).
\end{split}
\end{align*}
For $\ell=1$ we evaluate at $r=\frac{r_\epsilon}{3}$. A naive estimate then gives 
\begin{align*}
\begin{split}
|\omega_{12}|\lesssim e^{\frac{\sqrt{2}}{2}r_\epsilon(z)}.
\end{split}
\end{align*}
For $\ell=2$ we evaluate at $r=r_\epsilon$ and simply get
\begin{align*}
\begin{split}
|\omega_{22}|\simeq |z|^{-1}.
\end{split}
\end{align*}
Here the main contribution comes from replacing $\uppsi_2$ by $w_2$ and $g$ by $g_1$ (see Lemma~\ref{lem:fjgj_1} and~\ref{lem:wjXj}). Using $k_2=i\sqrt{2}+\calO(|z|^2)$ this gives $\frac{4\sqrt{2}ci}{c_{24}z}+\calO(|z|)$. For $\omega_{32}$ by a similar argument as for $\omega_{12}$ we have 
\begin{align*}
\begin{split}
|\omega_{32}|\lesssim e^{\frac{\sqrt{2}}{2}r_\epsilon(z)}.
\end{split}
\end{align*}
The estimates for $d_\pm$, $\alpha_\pm$, $\beta_\pm$, $\gamma_\pm$, and $\delta_\pm$ follow from those for $\omega_{\ell j}$ and~\eqref{eq:alphabetabellj}. For the bounds on $b_\ell^{\pm,j}$, we conclude from  \eqref{eq:cijclaim1} and the second line in  \eqref{eq:bomegasystem1}
\[
|\omega_{\ell2}|\simeq |b_\ell^4|,\quad |\omega_{\ell1}|\simeq |b_\ell^3|, \quad|\omega_{\ell3}|\simeq |b_\ell^1|,\quad |\omega_{\ell4}|\simeq |b_\ell^2|.
\]
Using the bounds on the $\omega$-coefficients we arrive at the esimtae for $b_\ell^4$. With these, the bounds for $b_\ell^1$ and $b_\ell^3$ can then be deduced from the first and third line of \eqref{eq:bomegasystem1}. The bounds for $b_\ell^1$, $b_\ell^3$, and $b_\ell^4$ together with the last line of \eqref{eq:bomegasystem1} then give the bonds for $b_\ell^2$.
\end{proof}
An important corollary of Lemma~\ref{lem:bellj1} is that the expressions \eqref{eq:G+2} and \eqref{eq:G-2} are regular enough to allow us to take the limit $b\to0^+$ in Proposition~\ref{prop:PI} inside the integral. Most notably, as mentioned earlier, the potentially singular coefficients of $\upfy_4(r,z)\upfy_1(s,z)$ and $\upfy_1(r,z)\upfy_4(s,z)$ in \eqref{eq:G+2} and \eqref{eq:G-2} cancel. 
\begin{cor} \label{cor:fy14cancel1}
For $0 < |z| \leq \delta_0$ with $\pm \Im (z) > 0$ we have 
\begin{align}\label{eq:G+3}
\begin{split}
\calG_{+}(r,s;z)&=\sum_{j=1}^3m_{+}^j(z)\big(\upfy_j(r,z)\upfy_1^t(s,z)\sigma_1\mathds{1}_{\{0 < s \leq r\}}+\upfy_1(r,z)\upfy_j^t(s,z)\sigma_1\mathds{1}_{\{r < s\}}\big)\\
&\quad+\sum_{j=1}^4n_{+}^j(z)\big(\upfy_j(r,z)\upfy_2^t(s,z)\sigma_1\mathds{1}_{\{0 < s \leq r\}}+\upfy_2(r,z)\upfy_j^t(s,z)\sigma_1\mathds{1}_{\{r < s\}}\big)
\end{split}
\end{align}
and
\begin{align}\label{eq:G-3}
\begin{split}
\calG_{-}(r,s;z)&=\sum_{j=1}^3m_{-}^j(z)\big(\upfy_j(r,z)\upfy_1^t(s,z)\sigma_1\mathds{1}_{\{0 < s \leq r\}}+\upfy_1(r,z)\upfy_j^t(s,z)\sigma_1\mathds{1}_{\{r < s\}}\big)\\
&\quad+\sum_{j=1}^4n_{-}^j(z)\big(\upfy_j(r,z)\upfy_2^t(s,z)\sigma_1\mathds{1}_{\{0 < s \leq r\}}+\upfy_2(r,z)\upfy_j^t(s,z)\sigma_1\mathds{1}_{\{r < s\}}\big),
\end{split}
\end{align}
where
\begin{equation} \label{equ:resolvent_kernels_z_coeff_bounds}
\begin{aligned}
&|m_\pm^1(z)|\lesssim \log|z|^{-1},\quad |m_\pm^2(z)|\lesssim e^{-\frac{3\sqrt{2}}{2}r_\epsilon(z)},\quad |m_\pm^3(z)|\lesssim 1,\\
&|n_\pm^1(z)|\lesssim  e^{-2\sqrt{2}r_\epsilon(z)},\quad |n_\pm^2(z)|\lesssim  e^{-4\sqrt{2}r_\epsilon(z)},\quad |n_\pm^3(z)|\lesssim e^{-2\sqrt{2}r_\epsilon(z)},\quad |n_\pm^4(z)|\lesssim 1.
\end{aligned}
\end{equation}
\end{cor}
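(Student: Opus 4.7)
The plan is to read off the coefficients $m_\pm^j(z)$ and $n_\pm^j(z)$ from the explicit formulas \eqref{eq:G+2} and \eqref{eq:G-2}, then observe the decisive cancellation that produces $m_\pm^4(z)=0$, and finally verify the remaining bounds as a routine application of Lemma~\ref{lem:bellj1}. Specifically, matching \eqref{eq:G+2}--\eqref{eq:G-2} against \eqref{eq:G+3}--\eqref{eq:G-3}, the natural candidates are
\begin{equation*}
    m_\pm^j(z) := \frac{i}{d_\pm(z)}\bigl(\alpha_\pm(z)\,b_1^{\pm,j}(z)+\gamma_\pm(z)\,b_2^{\pm,j}(z)\bigr), \quad n_\pm^j(z) := \frac{i}{d_\pm(z)}\bigl(\beta_\pm(z)\,b_1^{\pm,j}(z)+\delta_\pm(z)\,b_2^{\pm,j}(z)\bigr),
\end{equation*}
so that the statement with $j$ running only to $3$ in the first sum of \eqref{eq:G+3}--\eqref{eq:G-3} amounts precisely to the vanishing $m_\pm^4(z)=0$.

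The key algebraic point is this cancellation. From \eqref{eq:alphabetabellj} we have $\alpha_\pm = \omega_{22}^\pm$ and $\gamma_\pm = -\omega_{12}^\pm$, while the second line of \eqref{eq:bomegasystem1} gives $\omega_{\ell 2}^\pm = -c_{24}\,b_\ell^{\pm,4}$ for $\ell=1,2$. Substituting these,
\begin{equation*}
    \alpha_\pm\,b_1^{\pm,4}+\gamma_\pm\,b_2^{\pm,4} = -c_{24}\,b_2^{\pm,4}\,b_1^{\pm,4}+c_{24}\,b_1^{\pm,4}\,b_2^{\pm,4}=0,
\end{equation*}
which identifies $m_\pm^4(z)=0$ and hence removes the $\upfy_4(r,z)\upfy_1^t(s,z)\sigma_1$ and $\upfy_1(r,z)\upfy_4^t(s,z)\sigma_1$ contributions from $\calG_\pm$. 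This is the mechanism that ensures the resolvent kernel is not overly singular as $|z|\to 0$ despite the individual factor $|b_2^{\pm,4}|\simeq |z|^{-1}$ appearing in Lemma~\ref{lem:bellj1}.

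For the remaining bounds in \eqref{equ:resolvent_kernels_z_coeff_bounds}, I would simply plug the estimates from Lemma~\ref{lem:bellj1} into the formulas for $m_\pm^j$ and $n_\pm^j$, using in particular $|d_\pm|\simeq |z|^{-1/2}$. For instance,
\begin{equation*}
    |m_\pm^1(z)|\lesssim |z|^{\frac12}\Bigl(|z|^{-1}\cdot|z|^{\frac12}\log|z|^{-1}+e^{\frac{\sqrt 2}{2}r_\epsilon(z)}\,e^{-2\sqrt 2 r_\epsilon(z)}\Bigr)\lesssim \log|z|^{-1},
\end{equation*}
where the exponentially small term absorbs any power of $|z|^{-1}$ for $0<|z|\le\delta_0$. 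The same kind of estimate delivers $|m_\pm^3|\lesssim 1$, $|n_\pm^4|\lesssim 1$, and the various exponentially small bounds for $m_\pm^2$, $n_\pm^1$, $n_\pm^2$, $n_\pm^3$; for the latter three one uses the elementary fact that for any $N>0$ and any $\nu\in\bbR$ the quantity $|z|^{\nu}e^{-N r_\epsilon(z)}=|z|^{\nu}e^{-N\epsilon/|z|}$ is dominated by $e^{-(N-\eta)r_\epsilon(z)}$ for an arbitrarily small $\eta>0$ once $|z|\le\delta_0$ is sufficiently small.

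The only point that requires genuine care is the cancellation step above; the rest is bookkeeping. I expect no serious obstacle, since once the identities $\alpha_\pm=\omega_{22}^\pm$, $\gamma_\pm=-\omega_{12}^\pm$, and $\omega_{\ell 2}^\pm=-c_{24}b_\ell^{\pm,4}$ are combined, the cancellation is immediate and purely algebraic. The size estimates are then nothing more than an arithmetic verification against the table of bounds compiled in Lemma~\ref{lem:bellj1}, with all the analytic work already done.
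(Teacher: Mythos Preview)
Your proposal is correct and follows essentially the same approach as the paper: you identify $m_\pm^j$ and $n_\pm^j$ by comparison with \eqref{eq:G+2}--\eqref{eq:G-2}, derive the cancellation $m_\pm^4=0$ from the identities $\alpha_\pm=\omega_{22}^\pm$, $\gamma_\pm=-\omega_{12}^\pm$ and $\omega_{\ell 2}^\pm=-c_{24}b_\ell^{\pm,4}$, and read off the remaining bounds from Lemma~\ref{lem:bellj1}. The only cosmetic difference is that the paper substitutes $b_\ell^{\pm,4}=-c_{24}^{-1}\omega_{\ell 2}^\pm$ into $\alpha_\pm b_1^{\pm,4}+\gamma_\pm b_2^{\pm,4}$, whereas you go the other direction; the algebra is the same.
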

\begin{proof}
The estimates on $m_\pm^i(z)$ and $n_\pm^i(z)$ follow directly from the statement of Lemma~\ref{lem:bellj1} and equations~\eqref{eq:G+3} and~\eqref{eq:G-3}. The more subtle point is that the (potentially singular) coefficient of $\upfy_4(r,z)\upfy_1^t(s,z)\sigma_1\mathds{1}_{\{0 < s \leq r\}}+\upfy_1(r,z)\upfy_4^t(s,z)\sigma_1\mathds{1}_{\{r < s\}}$ vanishes. The proof is almost identical for~\eqref{eq:G+3} and~\eqref{eq:G-3}, so we present the details only for~\eqref{eq:G+3}. Based on~\eqref{eq:G+2}, we need to show that
\begin{align}\label{eq:fy13cancel1}
 \alpha_+b_1^{+,4}+\gamma_{+}b_2^{+,4}=0.
\end{align}
From \eqref{eq:alphabetabellj},
\begin{align*}
    \alpha_{+}=\omega_{22}^+,\qquad \gamma_{+}=-\omega_{12}^+.
\end{align*}
On the other hand, from the second line of~\eqref{eq:bomegasystem1},
\begin{align*}
    b_{1}^{+,4}=-\frac{1}{c_{24}}\omega_{12}^+,\qquad b_{2}^{+,4}=-\frac{1}{c_{24}}\omega_{22}^+.
\end{align*}
Combining these identities we get
\begin{align*}
    \alpha_+b_1^{+,4}+\gamma_{+}b_2^{+,4}= -\frac{1}{c_{24}}\omega_{22}^+\omega_{12}^++\frac{1}{c_{24}}\omega_{12}^+\omega_{22}^+=0,
\end{align*}
proving the desired identity~\eqref{eq:fy13cancel1}.
\end{proof}
We also record the following corollary of the proof of Lemma~\ref{lem:bellj1}.
\begin{cor}\label{cor:omegaij}
For $0 < |z| \leq \delta_0$ with $\pm \Im(z) > 0$, we have
\begin{align*}
\begin{split}
&|\omega_{13}^\pm|, |\omega_{33}^\pm|\simeq |z|^{\frac{1}{2}}\log|z|^{-1},\quad |\omega_{11}^\pm|, |\omega_{31}^\pm|\simeq |z|^{\frac{1}{2}}, \quad |\omega_{22}^\pm|\simeq |z|^{-1}\\
&|\omega_{14}^\pm|, |\omega_{34}^\pm|\lesssim e^{-2\sqrt{2}r_\epsilon(z)}, \quad |\omega_{23}^\pm|\lesssim e^{-2\sqrt{2}r_\epsilon(z)},\quad 
|\omega_{24}^\pm|\lesssim e^{-4\sqrt{2}r_\epsilon(z)},\quad |\omega_{21}^\pm|\lesssim e^{-2\sqrt{2}r_\epsilon(z)},\\
&
|\omega_{12}^\pm|, |\omega_{32}^\pm|\lesssim e^{\frac{\sqrt{2}}{2}r_\epsilon(z)},
\quad|\omega_{41}^\pm|\lesssim e^{\frac{\sqrt{2}}{5}r_\epsilon(z)},\quad |\omega_{42}^\pm|\lesssim e^{\frac{\sqrt{2}}{5} r_\epsilon(z)}.
\end{split}
\end{align*}
\end{cor}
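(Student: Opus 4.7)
The plan is to observe that the bulk of the bounds claimed in Corollary~\ref{cor:omegaij} have already been established during the proof of Lemma~\ref{lem:bellj1}. Indeed, the Wronskians $\omega_{\ell j}^\pm$ for $\ell \in \{1,2,3\}$ and $j \in \{1,2,3,4\}$ are computed there: $\omega_{\ell 1}^\pm, \omega_{\ell 2}^\pm, \omega_{\ell 3}^\pm, \omega_{\ell 4}^\pm$ are evaluated by taking the Wronskian at an appropriate $r$ in the overlap region $[r_\epsilon/3, 3r_\epsilon]$ and invoking Propositions~\ref{lem:F1_1}--\ref{lem:F2_1} for $\upfy_j$ together with Proposition~\ref{prop:Upsilon} and Lemma~\ref{lem:wjXj} for $\uppsi_\ell^\pm$. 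In particular, the bounds on $\omega_{1j}^\pm, \omega_{2j}^\pm, \omega_{3j}^\pm$ listed in Corollary~\ref{cor:omegaij} are read off directly from those computations. Thus only $\omega_{41}^\pm$ and $\omega_{42}^\pm$ require new input.

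For these remaining two, the key observation is that $\uppsi_4^\pm$ is the exponentially \emph{dominant} branch at infinity. From the definition of $\|\cdot\|_{X_4}$ in Definition~\ref{def:Xnorms} together with Lemma~\ref{lem:wjXj} and Proposition~\ref{prop:Upsilon}, we infer that
\[
  |\uppsi_4^\pm(r,z)| + |\partial_r \uppsi_4^\pm(r,z)| \lesssim e^{\sqrt{2}\, r}, \qquad r \geq r_\infty(z),
\]
uniformly in $0 < |z| \leq \delta_0$ with $\pm \Im z > 0$. Since the Wronskian $W[\uppsi_4^\pm, \upfy_j]$ is independent of $r$ by Lemma~\ref{lem:constant_wronskian_vectorial}, the plan is to evaluate it at the point $r_\star := r_\epsilon(z)/10$, which lies in the common region of validity of the estimates on $\uppsi_4^\pm$ and $\upfy_j$ because $\epsilon_\infty \ll \epsilon \ll \epsilon_0$ implies $r_\infty(z) \leq r_\star \leq r_0(z)/2$.

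For $\omega_{41}^\pm$, we use $|\upfy_1(r,z)| \lesssim r^{1/2}$ and $|\partial_r \upfy_1(r,z)| \lesssim r^{-1/2}$ on $[1, r_0(z)/2]$ from Proposition~\ref{lem:F1_1}. Evaluating at $r_\star$, each term in the Wronskian is bounded by $e^{\sqrt{2}\, r_\epsilon/10}\, r_\epsilon^{1/2}$, which for $0 < |z| \leq \delta_0$ is controlled by $e^{\sqrt{2}\, r_\epsilon/5}$, as claimed. For $\omega_{42}^\pm$, we use the bounds $|\upfy_2(r,z)| + |\partial_r \upfy_2(r,z)| \lesssim e^{\sqrt{2}\, r}$ on $[1, r_0(z)/2]$, again from Proposition~\ref{lem:F1_1}. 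Evaluating at $r_\star$ then produces the bound
\[
  |\omega_{42}^\pm| \lesssim e^{\sqrt{2}\, r_\epsilon/10} \cdot e^{\sqrt{2}\, r_\epsilon/10} = e^{\sqrt{2}\, r_\epsilon/5},
\]
as claimed. No step is particularly delicate here once the correct evaluation point $r_\star = r_\epsilon/10$ is identified: the mild obstacle is merely bookkeeping the fact that $\uppsi_4^\pm$ grows exponentially, so one cannot afford to evaluate further to the right than $r_\epsilon/10$, while $r_\infty \ll r_\epsilon$ ensures this point is still in the region where Proposition~\ref{prop:Upsilon} supplies the necessary estimates.
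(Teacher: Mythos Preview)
Your proposal is correct and follows the same approach as the paper: the bounds for $\ell\in\{1,2,3\}$ are lifted directly from the proof of Lemma~\ref{lem:bellj1}, and $\omega_{41}^\pm,\omega_{42}^\pm$ are handled by evaluating the (constant) Wronskian at a small multiple of $r_\epsilon$ (you use $r_\epsilon/10$; the paper uses $r_\epsilon/6$). One minor bookkeeping point: from the $X_4$-norm one actually gets $|\uppsi_4^{\pm,1}|=|\Upsilon_{4,3}^\pm|\lesssim |z|^{-1}e^{\sqrt{2}r}$ for the first component, not $e^{\sqrt{2}r}$, so your estimate for $\omega_{42}^\pm$ carries an extra factor $|z|^{-1}$; this polynomial prefactor is harmless and is absorbed either by evaluating at a slightly smaller point (say $r_\epsilon/11$) to leave slack in the exponent, or simply by noting that $|z|^{-1}\simeq r_\epsilon$ is dominated by any exponential in $r_\epsilon$.
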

\begin{proof}
 Except for $\omega_{41}^\pm$ and $\omega_{42}^\pm$, these estimates are taken directly from the proof of Lemma~\ref{lem:bellj1} above. The estimates for $\omega_{41}^\pm$ and for $\omega_{42}^\pm$ are proved in a similar way by evaluating the corresponding Wronskians at $r=\frac{r_\epsilon}{6}$.
\end{proof}

\subsection{Computing the jump of the resolvent} \label{sec:thejump}

We are now in the position to return to the frequency localized evolution $e^{t\calL} P_I$, which was introduced in Definition~\ref{def:PIevol} for any interval $I \subseteq [-\delta_0,\delta_0]$ as a limit in the weak sense of testing against compactly supported functions $\phi, \psi \in L^2_r((0,\infty))$,
\begin{equation} \label{equ:localevol_recalled}
    \begin{aligned}
        \bigl\langle e^{t\calL} P_I \phi, \psi \bigr\rangle &:= \lim_{b\to0+} \frac{1}{2\pi i} \int_I e^{it\lambda} \bigl\langle \bigl[ e^{-bt} (i\calL-(\lambda +ib))^{-1} - e^{bt} (i\calL-(\lambda -ib))^{-1} \bigr] \phi, \psi \bigr\rangle \ud \lambda.
    \end{aligned}
    \end{equation}
In the next proposition we establish that the limit \eqref{equ:localevol_recalled} exists and we extract a distorted Fourier transform representation of the evolution at small energies by computing the jump of the resolvent across the essential spectrum.
The proof builds on the analysis of the resolvent kernels obtained in the previous subsections.
We emphasize that the bounds \eqref{equ:resolvent_kernels_z_coeff_bounds} on the coefficients appearing in the representations of the resolvent kernels \eqref{eq:G+3}--\eqref{eq:G-3} ensure that these coefficients are not too singular so that the limit $b\to0+$ can be moved inside the integral.

\begin{prop}\label{prop:PIStone1}
For any interval $I \subseteq [-\delta_0,\delta_0]$ and any compactly supported functions $\phi, \psi \in L^2_r((0,\infty))$, 
\begin{equation} \label{eq:StonePI3}
\begin{aligned}
    \bigl\langle e^{t\calL}P_I \phi, \psi \bigr\rangle &= \frac{1}{2\pi i} \int_I e^{it\lambda} \biggl\langle \int_0^\infty F_1(\cdot,\lambda) C(\lambda) F_1(s,\lambda)^t \sigma_1 \phi(s) \ud s, \psi(\cdot) \biggr\rangle_{L^2_r} \ud \lambda, 
\end{aligned}
\end{equation}
where
\begin{equation} \label{equ:Clambda_definition}
 C(\lambda) := \kappa(\lambda) D_-(\lambda)^{-1} \underline{e}_{11} D_+(\lambda)^{-t}
\end{equation}
with 
\begin{equation}
 \kappa(\lambda) = -2ik_1(\lambda)(1+\calO(|\lambda|)), \qquad \underline{e}_{11} := \begin{bmatrix} 1 & 0 \\ 0 & 0 \end{bmatrix},\qquad D_\pm(\lambda):=D_\pm(\lambda\pm i0). 
\end{equation}
\end{prop}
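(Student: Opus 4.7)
The plan is to substitute the explicit resolvent kernels \eqref{eq:G+}--\eqref{eq:G-} at $z = \lambda \pm ib$ into the definition of $e^{t\calL} P_I$ in \eqref{eq:localevol}, reduce the jump $\calG_+ - \calG_-$ to a form involving only $F_1$ at $r=0$ by an algebraic cancellation of the $F_2$-contributions, pass the limit $b \to 0^+$ inside the $\lambda$-integral via dominated convergence, and finally identify the resulting coefficient matrix with $\kappa(\lambda) D_-^{-1} \underline{e}_{11} D_+^{-t}$.

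\noindent\textbf{Cancellation of $F_2$ in the jump kernel.} Since $\{F_1(\cdot,z), F_2(\cdot,z)\}$ is a fundamental matrix system near $r=0$, I would write $\Psi_\pm(r,z) = F_1(r,z) A_\pm(z) + F_2(r,z) B_\pm(z)$ for suitable $2\times 2$ matrices $A_\pm(z), B_\pm(z)$. Using $\calW[F_1,F_1] = 0$ (established in Subsection~\ref{subsec:calLGreen1}) and the invertibility of $W_{12}(z) := \calW[F_2,F_1]$, one checks
\begin{equation*}
    D_\pm(z) = B_\pm(z)^t W_{12}(z), \qquad (D_\pm(z)^{-1})^t = B_\pm(z)^{-1} W_{12}(z)^{-t},
\end{equation*}
whence
\begin{equation*}
    \Psi_\pm(r,z)(D_\pm(z)^{-1})^t = F_1(r,z) A_\pm(z) B_\pm(z)^{-1} W_{12}(z)^{-t} + F_2(r,z) W_{12}(z)^{-t}.
\end{equation*}
The last summand is independent of the $\pm$ sign and cancels in the difference $\calG_+ - \calG_-$; the analogous manipulation in the region $r \leq s$ produces the transpose expression, and the two regional formulas glue into a single identity valid for all $r,s > 0$,
\begin{equation} \label{eq:FoneCFoneplan}
    \calG_+(r,s;z) - \calG_-(r,s;z) = i F_1(r,z) \widetilde C(z) F_1(s,z)^t \sigma_1,
\end{equation}
with $\widetilde C(z) := (A_+(z) B_+(z)^{-1} - A_-(z) B_-(z)^{-1}) W_{12}(z)^{-t}$ and $C(\lambda) = i \widetilde C(\lambda)$ to be identified below; consistency at $r=s$ forces $\widetilde C(z)$ to be symmetric.

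\noindent\textbf{Passing to the limit and rank-one identification.} The continuity of $A_\pm(z), B_\pm(z), D_\pm(z)$ up to the real axis away from $\lambda = 0$ follows from the continuity of the Lyapunov--Perron iterates $\Upsilon_j^\pm(r;z)$ (Proposition~\ref{prop:Upsilon}) via \eqref{eq:alphabetabellj} and Lemma~\ref{lem:bellj1}. Combining the coefficient bounds from Corollary~\ref{cor:fy14cancel1} with the pointwise bounds on $F_1$ from Proposition~\ref{lem:F1_1} and the compact support of $\phi,\psi$, the integrand in \eqref{eq:FoneCFoneplan} is $b$-uniformly dominated by an $L^1_\lambda$ function on $I$, so dominated convergence permits exchanging the limit with the $\lambda$-integral. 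The decisive structural input for identifying $\widetilde C(\lambda)$ is that on $\bbR \setminus \{0\}$ the exponentially decaying columns agree, $\uppsi_2^+(\cdot,\lambda) = \uppsi_2^-(\cdot,\lambda)$, because $k_2(z)$ extends analytically through the real axis in a neighborhood of $z = 0$ (Lemma~\ref{lem:positive_imaginary_parts}) and the Lyapunov--Perron normalizations of Subsections~\ref{sec:Upsilon1} and~\ref{sec:Upsilonminus} match. Consequently the second columns of $A_\pm, B_\pm$ coincide and the second rows of $D_\pm$ agree, which forces $N(\lambda) := D_+(\lambda) D_-(\lambda)^{-1}$ to be upper triangular with $N_{22}=1$. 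The difference $A_+ B_+^{-1} - A_- B_-^{-1}$ then has rank at most one (it annihilates the common second column of $B_\pm$), so $\widetilde C(\lambda)$ is rank one, and matching the resulting outer product against the rank-one form $D_-^{-1} \underline{e}_{11} D_+^{-t}$ pins down a single scalar $\kappa(\lambda)$. This scalar is evaluated as the matrix Wronskian $W[\uppsi_1^+(\cdot,\lambda), \uppsi_1^-(\cdot,\lambda)]$ at $r = \infty$; using the leading-order asymptotics from Proposition~\ref{prop:Upsilon} together with the Hankel Wronskian identity $W[h_+(k_1 \cdot), h_-(k_1 \cdot)] = -2ik_1(\lambda)$ yields $\kappa(\lambda) = -2ik_1(\lambda)(1 + \calO(|\lambda|))$.

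\noindent\textbf{Main obstacle.} The principal challenge is verifying $\uppsi_2^+(\cdot,\lambda) = \uppsi_2^-(\cdot,\lambda)$ on $\bbR \setminus \{0\}$ with exactly matching normalizations, since the two solutions are built by independent contraction arguments in disjoint half-planes. This rests on the analyticity of $k_2(z)$ near $z = 0$ and the absence of a Stokes line for the $k_2$-column in the relevant regime, in sharp contrast with the delicate sign-of-$\lambda$ behavior of $\uppsi_1^\pm$ captured by Lemma~\ref{lem:Hankel0} and Corollary~\ref{cor:Upsilon2}. A secondary concern is that the near-logarithmic singularity at $\lambda = 0$ in the coefficients (cf.\ Corollary~\ref{cor:fy14cancel1} and \eqref{equ:resolvent_kernels_z_coeff_bounds}) remains $L^1_\lambda$-integrable and $b$-uniformly dominated when paired with compactly supported bounded test functions, so the dominated-convergence argument proceeds without producing a residue contribution at $\lambda = 0$.
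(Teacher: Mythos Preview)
Your approach is essentially correct but takes a genuinely different route from the paper, and the comparison is instructive.

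The paper bypasses your rank-one argument entirely. After reducing the jump kernel to the form $S(r,s;\lambda) = iF_1(r,\lambda)C(\lambda)F_1(s,\lambda)^t\sigma_1$ with $C = C^t$ (exactly as you do), it reads off from the region $r>s$ that $F_1(r,\lambda)C(\lambda) = \Psi_+(r,\lambda)D_+^{-t} - \Psi_-(r,\lambda)D_-^{-t}$, and then takes the matrix Wronskian of both sides against $\Psi_+$. Since $\calW[\Psi_+ D_+^{-t},\Psi_+]=0$, this yields directly
\[
C(\lambda) = D_-(\lambda)^{-1}\,\calW[\Psi_-(\cdot,\lambda),\Psi_+(\cdot,\lambda)]\,D_+(\lambda)^{-t}.
\]
The rank-one structure then drops out by evaluating $\calW[\Psi_-,\Psi_+]$ at $r\to\infty$: the $(1,2)$, $(2,1)$, $(2,2)$ entries vanish because $\uppsi_2^\pm$ decay exponentially, leaving only the $(1,1)$ entry $\kappa(\lambda)=W[\uppsi_1^-,\uppsi_1^+]$. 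This needs only the \emph{decay} of $\uppsi_2^\pm$, not their equality.

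Your route instead establishes rank one via the identity $\uppsi_2^+(\cdot,\lambda)=\uppsi_2^-(\cdot,\lambda)$. This is true, but your justification by ``analyticity of $k_2$ and matching normalizations'' is thinner than it looks: the Lyapunov--Perron schemes for $\Upsilon_2^\pm$ are set up in disjoint half-planes, and the paper's $\Upsilon_2^-$ is \emph{defined} in Section~\ref{sec:Upsilonminus} by the symmetry $\Upsilon_2^-(r;z)=\upiota\,\Upsilon_2^+(r;-z)$. The equality $\uppsi_2^+=\uppsi_2^-$ on the real axis is then equivalent to the parity statement of Corollary~\ref{cor:Upsilon2}, which is exactly the result you cite only as a contrast for $\uppsi_1^\pm$. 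Alternatively one can argue that the contraction for $\Upsilon_2^+$ extends analytically through the real axis (since $\Im k_2>0$ throughout and the integrals in $\calF_2$ all run to $+\infty$), and then invoke uniqueness of the recessive branch; either way more is needed than you wrote. Your extraction of $\kappa$ is also slightly imprecise: $\kappa$ is the \emph{vector} Wronskian $W[\uppsi_1^-,\uppsi_1^+]$ with the $\sigma_3$-pairing, not the scalar Hankel Wronskian, and the sign is the opposite of what you wrote.

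In short: the paper's Wronskian identity $C=D_-^{-1}\calW[\Psi_-,\Psi_+]D_+^{-t}$ is cleaner and structurally more robust, since it isolates $\kappa\underline{e}_{11}$ using only asymptotic decay; your kernel-matching argument is valid but leans on an identity that itself requires nontrivial work.
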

\begin{proof}
 In what follows, we use the continuity of $F_1(\cdot,z)$ and $F_2(\cdot,z)$ as $z$ crosses the real line. Since $F_1(\cdot,z)$, $F_2(\cdot,z)$ are given in terms of power series of $z$ for $r\leq r_0=\epsilon|z|^{-1}$, continuity holds in this region. The validity for all $r>0$ is a consequence of the standard theory of existence, uniqueness, and continuous dependence on parameters for ordinary differential equations. It follows from the expressions~\eqref{eq:G+2}--\eqref{eq:G-2} and the bounds \eqref{equ:resolvent_kernels_z_coeff_bounds} from Corollary~\ref{cor:fy14cancel1} that the integrand in \eqref{equ:localevol_recalled} is non-singular on the interval~$I$, and the limit can be taken inside the integral to give
 \begin{equation}\label{eq:StonePI2}
    \bigl\langle e^{t\calL} P_I \phi, \psi \bigr\rangle = \frac{1}{2\pi i} \int_I e^{it\lambda} \bigl\langle \bigl[ (i\calL-(\lambda +i0^{+}))^{-1}- (i\calL-(\lambda -i0^{+}))^{-1}\bigr] \phi, \psi \bigr\rangle \ud \lambda.
 \end{equation}
We denote the integral kernel of the jump of the resolvent across the essential spectrum by 
\begin{equation} 
    S(r,s; \lambda) := \bigl( i\calL - (\lambda +i0^{+}) \bigr)^{-1}(r,s) - \bigl( i\calL - (\lambda - i0^{+}) \bigr)^{-1}(r,s).
\end{equation}
Inserting \eqref{eq:G+} and \eqref{eq:G-}, we obtain
\begin{equation} \label{equ:definition_Srslambda}
 \begin{aligned}
  S(r,s; \lambda) &= i \Bigl( \Psi_+(r,\lambda) D_+(\lambda)^{-t} F_1(s,\lambda)^t - \Psi_-(r,\lambda) D_-(\lambda)^{-t} F_1(s, \lambda)^t \Bigr) \sigma_1 \mathds{1}_{[0 < s \leq r]} \\
  &\quad + i \Bigl( F_1(r,\lambda) D_+(\lambda)^{-1} \Psi_+(s,\lambda)^t - F_1(r,\lambda) D_-(\lambda)^{-1} \Psi_-(s,\lambda)^t \Bigr) \sigma_1 \mathds{1}_{[r \leq s < \infty]},
 \end{aligned}
\end{equation}
where $\Psi_{\pm}(\lambda):=\Psi_{\pm}(\lambda\pm i0)$ and similarly for $D_\pm(\lambda)$. Observe that the structure of \eqref{equ:definition_Srslambda} implies that
\begin{equation} \label{equ:Srslambda_transpose_identity}
 S(r,s;\lambda)^t = \sigma_1 S(s,r;\lambda) \sigma_1.
\end{equation}
On the other hand, for $z \neq 0$ we can write
\begin{equation} \label{equ:Psipm_in_terms_of_F12}
 \Psi_\pm(r,z) = F_2(r,z) N_\pm(z) + F_1(r,z) M_\pm(z),
\end{equation}
where the entries of $N_\pm$ can be determined in terms of the coefficients $b_\ell^j$ in Section~\ref{sec:connection}. 
Since $\calW[F_1(\cdot,z) M_\pm(z), F_1(\cdot,z)] = 0$ by the behavior of $F_1(r,z)$ as $r \searrow 0$, we have
\begin{equation}
 D_\pm(z) = \calW\bigl[ \Psi_\pm(\cdot,z), F_1(\cdot, z) \bigr] = N_\pm(z)^t \calW\bigl[F_2(\cdot,z), F_1(\cdot,z)\bigr].
\end{equation}
Let
\begin{equation}
 D(z) := \calW\bigl[ F_2(\cdot, z), F_1(\cdot, z) \bigr]
\end{equation}
and note that the entries of this matrix are precisely the coefficients $c_{ij}$ introduced in \eqref{eq:omegacij1}, with $\det D = c_{13}c_{24}-c_{23}c_{14}$. In particular, it follows from Lemma~\ref{lem:bellj1} and \eqref{eq:cijclaim1} that $D_{\pm}(z)$ and $D(z)$, and hence $N_\pm(z)$, are invertible for small $z\neq0$ with 
\begin{equation}
 D_\pm(z)^{-t} = N_\pm(z)^{-1} D(z)^{-t}.
\end{equation}
Then inserting \eqref{equ:Psipm_in_terms_of_F12} into \eqref{equ:definition_Srslambda}, we find that the terms with one factor $F_1$ and one factor $F_2$ cancel out. For this reason $S(r,s;\lambda)$ must be of the form
\begin{equation}
 \begin{aligned}
  S(r,s;\lambda) &= i F_1(r,\lambda) \calM_<(s,\lambda) \sigma_1 \mathds{1}_{[0 < s \leq r]} + i F_1(r,\lambda) \calM_>(s,\lambda) \sigma_1 \mathds{1}_{[0 < r \leq s]}.
 \end{aligned}
\end{equation}
By continuity, we must have $F_1(r,\lambda)(\calM_<(r,\lambda)-\calM_>(r,\lambda))=0$. But $\det F_1\neq0$, because otherwise we could find $c=c(r,\lambda)$ such that $cf=\lambda \calG_1g$ and $cg=\lambda\calG_2f$ in the notation \eqref{eq:upfyjnotation1}. But this is not possible in view of the small $r$ asymptotics in \eqref{eq:fgftildegtilde1}. It follows that $\calM_< = \calM_>$, whence $S(r,s;\lambda)$ is of the form
\begin{equation}
 S(r,s;\lambda) = i F_1(r,\lambda) \calM(s,\lambda) \sigma_1.
\end{equation}
From \eqref{equ:Srslambda_transpose_identity} we now conclude
\begin{equation}
 \begin{aligned}
  i \sigma_1 \calM(s,\lambda)^t F_1(r,\lambda)^t = S(r,s;\lambda)^t = \sigma_1 S(s,r;\lambda) \sigma_1 = i \sigma_1 F_1(s,\lambda) \calM(r,\lambda) \sigma_1^2,
 \end{aligned}
\end{equation}
whence
\begin{equation}
 \begin{aligned}
  F_1(s,\lambda)^{-1} \calM(s,\lambda)^t = \calM(r,\lambda) F_1(r,\lambda)^{-t} =: C(\lambda).
 \end{aligned}
\end{equation}
It follows that
\begin{equation} \label{equ:Srs_symmetric_form}
 \begin{aligned}
  S(r,s;\lambda) = i F_1(r,\lambda) C(\lambda) F_1(s,\lambda)^t \sigma_1,
 \end{aligned}
\end{equation}
and from \eqref{equ:Srslambda_transpose_identity} that
\begin{equation}
 C(\lambda)^t = C(\lambda).
\end{equation}

Now back to \eqref{equ:definition_Srslambda}, we have for $r > s$ that
\begin{equation}
 \begin{aligned}
  S(r,s; \lambda) &= i \Bigl( \Psi_+(r,\lambda) D_+(\lambda)^{-t} - \Psi_-(r,\lambda) D_-(\lambda)^{-t}  \Bigr) F_1(s, \lambda)^t \sigma_1.
 \end{aligned}
\end{equation}
Hence \begin{equation}
 \begin{aligned}
  F_1(r,\lambda) C(\lambda) = \Psi_+(r,\lambda) D_+(\lambda)^{-t} - \Psi_-(r,\lambda) D_-(\lambda)^{-t}.
 \end{aligned}
\end{equation}
Since in view of the large $r$ asymptotics of the columns of $\Psi_{+}$
\begin{equation}
 \calW\bigl[ \Psi_+(\cdot,\lambda) D_+(\lambda)^{-t}, \Psi_+(\cdot, \lambda) \bigr] = D_+(\lambda)^{-1} \calW\bigl[ \Psi_+(\cdot,\lambda), \Psi_+(\cdot,\lambda) \bigr] = 0,
\end{equation}
we conclude that
\begin{equation}
 \begin{aligned}
  \calW\bigl[ F_1(\cdot, \lambda) C(\lambda), \Psi_+(\cdot,\lambda) \bigr] &= - \calW\bigl[ \Psi_-(\cdot, \lambda) D_-(\lambda)^{-t}, \Psi_+(\cdot,\lambda) \bigr] \\
  &= - D_-(\lambda)^{-1} \calW\bigl[ \Psi_-(\cdot,\lambda), \Psi_+(\cdot, \lambda) \bigr].
 \end{aligned}
\end{equation}
Then from
\begin{equation}
 \begin{aligned}
  \calW\bigl[ F_1(\cdot, \lambda) C(\lambda), \Psi_+(\cdot,\lambda) \bigr] &= C(\lambda)^t \calW\bigl[ F_1(\cdot, \lambda), \Psi_+(\cdot,\lambda) \bigr] = - C(\lambda) D_+(\lambda)^t,
 \end{aligned}
\end{equation}
we arrive at
\begin{equation}\label{eq:CDpm1}
 \begin{aligned}
  C(\lambda) = D_-(\lambda)^{-1} \calW\bigl[ \Psi_-(\cdot,\lambda), \Psi_+(\cdot, \lambda) \bigr] D_+(\lambda)^{-t}.
 \end{aligned}
\end{equation}
Finally, we observe that (letting $r\to\infty$ and using that $k_1(\lambda) = - k_3(\lambda)$ for $\lambda \in \bbR$)
\begin{equation}
 \begin{aligned}
  \calW\bigl[ \Psi_-(\cdot,\lambda), \Psi_+(\cdot, \lambda) \bigr] = \begin{bmatrix} \kappa(\lambda) & 0 \\ 0 & 0 \end{bmatrix}
 \end{aligned}
\end{equation}
where with the notation from \eqref{eq:four roots},
\begin{equation}
 \kappa(\lambda) := -2 i k_1(\lambda) (1+\calO(|\lambda|)). 
\end{equation}
More precisely, 
\begin{align}\label{eq:kappaodd1}    
\kappa(\lambda)=W[\uppsi_1^-(\cdot,\lambda),\uppsi_1^+(\cdot,\lambda)]=W[\sigma_3\uppsi_1^+(\cdot,-\lambda),\uppsi_1^+(\cdot,\lambda)]
\end{align}
which using Lemmas~\ref{lem:Hankel0} and~\ref{lem:contr} gives the claimed result.
\end{proof}
It may seem surprising at first that \eqref{equ:Clambda_definition} involves the rank one matrix $\underline{e}_{11}$. However, as we will see in the next result, this has the effect of projecting away the unstable manifold near $r=\infty$. More concretely, expressing the columns of $F_1$ in terms of $\uppsi_j=\pmat{\Upsilon_{j,3}\\\Upsilon_{j,1}}$ we will see that the contributions of $\Upsilon_4$ drop out in \eqref{eq:StonePI3}. To state this result we need to recall and introduce some notation. First, recall the notation $\omega_{ij}(z) = W[\uppsi_i(\cdot,z),\fy_j(\cdot,z)]$ introduced in \eqref{eq:cijclaim1}. Since $\uppsi_j(\cdot,z)$ form a fundamental system, we can find coefficients $a_\ell^{\pm,j}(z)$ such that
\begin{align*}
\begin{split}
    \upfy_\ell(r,z) = \sum_{j=1}^4 a_\ell^{\pm,j}(z) \uppsi_j^\pm(r,z).
\end{split}
\end{align*}
Finally, recall the notation $s_{ij}^\pm(z) = W\bigl[ \uppsi_i^\pm(\cdot,z), \uppsi_j^\pm(\cdot,z) \bigr]$ from \eqref{eq:omegacij1}, and note that in view of the exponential decay of $\uppsi_2^\pm(r,z)$ and by Lemma~\ref{lem:contr}, for small $z \neq 0$ we have
\begin{align*}
\begin{split}
    |s_{24}^\pm(z)| \simeq |z|^{-2}, \qquad s_{2j}^\pm(z) = 0, \, \, j\neq4. 
\end{split}
\end{align*}
Next, we uncover the tensorial structure of the expression $F_1(r,\lambda) C(\lambda) F_1^t(s,\lambda)$ appearing in \eqref{eq:StonePI3}. 

\begin{lemma}\label{lem:varfystructure}
For $\lambda \in [-\delta_0, \delta_0]$ one has 
\begin{equation} \label{equ:F1CF1rs_tensor_structure}
\begin{split}
    F_1(r,\lambda)C(\lambda)F_1(s,\lambda)^t = \frac{\kappa(\lambda)}{d_{+}(\lambda) d_{-}(\lambda)}\upfy(r,\lambda)\upfy^t(s,\lambda),
\end{split}
\end{equation}
where $d_\pm(\lambda) := \det D_\pm(\lambda\pm i0)$ and
\begin{align} \label{eq:upfyexpansion1}
\begin{split}
    \upfy(\cdot,\lambda)=\omega_{22}^+(\lambda)\upfy_1(\cdot,\lambda)-\omega_{21}^+(\lambda)\upfy_2(\cdot,\lambda). 
\end{split}
\end{align}
Moreover, we have the representation 
\begin{equation} \label{eq:upfyexpansion2}
    \upfy(\cdot,\lambda) = \sum_{j=1}^3 \gamma_j^{+}(\lambda) \uppsi_j^{+}(\cdot,\lambda), 
\end{equation}
with 
\begin{align}\label{eq:aomega1}
\begin{split}
    \gamma_1^+ &:= s_{24}^+ \bigl( a_2^{+,4} a_1^{+,1} - a_1^{+,4} a_2^{+,1} \bigr) = (s_{13}^+)^{-1} \bigl(\omega_{21}^+ \omega_{32}^+ - \omega_{22}^+ \omega_{31}^+ \bigr), \\
    \gamma_3^+ &:= s_{24}^+ \bigl( a_2^{+,4}a_1^{+,3}-a_1^{+,4}a_2^{+,3} \bigr) = (s_{31}^+)^{-1} \bigl(\omega_{21}^+\omega_{12}^+ - \omega_{22}^+ \omega_{11}^+ \bigr), \\
    \gamma_2^+ &:= s_{24}^+ \bigl( a_2^{+,4} a_1^{+,2} - a_1^{+,4} a_2^{+,2} \bigr) = (s_{24}^{+})^{-1} \bigl(\omega_{21}^+ \omega_{42}^+ - \omega_{22}^+ \omega_{41}^+ \bigr) - s_{14}^+ \bigl(a_2^{+,4}a_1^{+,1}-a_1^{+,4} a_2^{+,1}\bigr) \\
    &\qquad \qquad \qquad \qquad \qquad \qquad \quad - s_{34}^+ \bigl(a_2^{+,4} a_1^{+,3} - a_1^{+,4} a_2^{+,3}\bigr),
\end{split}
\end{align}
and
\begin{equation} \label{equ:gamma_bounds}
    \bigl| \gamma_1^+(\lambda) \bigr| \simeq |\lambda|^{-\frac32}, \quad |\gamma_3^{+}(\lambda)| \simeq |\lambda|^{-\frac32}, \quad |\gamma_2^{+}(\lambda)| \lesssim e^{\frac{\sqrt{2}}{5} r_\epsilon(\lambda)}.
\end{equation}
\end{lemma}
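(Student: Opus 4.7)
The plan is to exploit the rank-one structure $\underline{e}_{11} = e_1 e_1^t$, with $e_1 = \binom{1}{0}$, which factorizes $C(\lambda) = \kappa(\lambda) D_-^{-1} \underline{e}_{11} D_+^{-t}$ as an outer product and yields $F_1 C F_1^t = \kappa \bigl(F_1 D_-^{-1} e_1\bigr) \bigl(F_1 D_+^{-1} e_1\bigr)^t$. Using the entries of $D_\pm$ recorded in \eqref{eq:alphabetabellj}, the first column of $D_\pm^{-1}$ is $d_\pm^{-1} \bigl(\omega_{22}^\pm, -\omega_{21}^\pm\bigr)^t$, so that
\begin{equation*}
F_1(r,\lambda) D_\pm(\lambda)^{-1} e_1 = d_\pm(\lambda)^{-1} \bigl(\omega_{22}^\pm(\lambda) \upfy_1(r,\lambda) - \omega_{21}^\pm(\lambda) \upfy_2(r,\lambda)\bigr).
\end{equation*}

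To collapse the two factors into a single vector, I would next verify that $\omega_{22}^-(\lambda) = \omega_{22}^+(\lambda)$ and $\omega_{21}^-(\lambda) = \omega_{21}^+(\lambda)$ for real $\lambda \neq 0$. This follows by combining the parity relations in Lemma~\ref{lem:omegapm}: $\omega_{21}^-(\lambda) = \omega_{21}^+(-\lambda) = \omega_{21}^+(\lambda)$ (by the evenness of $\omega_{21}^+$), and $\omega_{22}^-(\lambda) = -\omega_{22}^+(-\lambda) = \omega_{22}^+(\lambda)$ (by the oddness of $\omega_{22}^+$). Consequently both $F_1 D_\pm^{-1} e_1$ equal $\upfy/d_\pm$ for the common vector $\upfy := \omega_{22}^+ \upfy_1 - \omega_{21}^+ \upfy_2$, which yields both the tensor representation \eqref{equ:F1CF1rs_tensor_structure} and the identity \eqref{eq:upfyexpansion1}.

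For the expansion \eqref{eq:upfyexpansion2}, I would write $\upfy = \sum_{j=1}^4 \gamma_j^+ \uppsi_j^+$ in the fundamental system and establish $\gamma_4^+ = 0$ by computing $W[\upfy, \uppsi_2^+]$ in two ways. Directly from \eqref{eq:upfyexpansion1}, the bilinearity of the Wronskian and the definition of $\omega_{2j}^+$ give $W[\upfy, \uppsi_2^+] = -\omega_{22}^+ \omega_{21}^+ + \omega_{21}^+ \omega_{22}^+ = 0$. From the expansion, $W[\upfy, \uppsi_2^+] = \sum_j \gamma_j^+ s_{j2}^+ = -\gamma_4^+ s_{24}^+$, since $s_{2j}^+ = 0$ for $j \neq 4$ as noted immediately before the lemma statement. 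Together with $|s_{24}^+| \simeq |\lambda|^{-2} \neq 0$, this forces $\gamma_4^+ = 0$.

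The remaining coefficients are determined by analogous pairings. Pairing $\upfy$ with $\uppsi_3^+$, using $s_{23}^+ = s_{33}^+ = 0$ and $\gamma_4^+ = 0$, collapses the expansion side to $\gamma_1^+ s_{13}^+$, and the direct side to $\omega_{21}^+ \omega_{32}^+ - \omega_{22}^+ \omega_{31}^+$. Pairing with $\uppsi_1^+$ yields $\gamma_3^+$ via $W[\upfy, \uppsi_1^+] = -d_+$ and $s_{31}^+$, while pairing with $\uppsi_4^+$ determines $\gamma_2^+$ in terms of $\gamma_1^+, \gamma_3^+$ and the Wronskians $s_{14}^+, s_{34}^+$. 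The alternative $a$-coefficient expressions in \eqref{eq:aomega1} follow by substituting $\omega_{22}^+ = s_{24}^+ a_2^{+,4}$ and $\omega_{21}^+ = s_{24}^+ a_1^{+,4}$ -- both read off from $W[\upfy_\ell, \uppsi_2^+] = a_\ell^{+,4} s_{42}^+ = -\omega_{2\ell}^+$ -- into the direct expansion $\gamma_j^+ = \omega_{22}^+ a_1^{+,j} - \omega_{21}^+ a_2^{+,j}$. The size bounds \eqref{equ:gamma_bounds} then follow by inserting the estimates from Lemma~\ref{lem:bellj1} and Corollary~\ref{cor:omegaij} into these explicit formulas. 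The main subtlety is the parity identification in the second paragraph, which leans on the Stokes-type behavior recorded in Corollary~\ref{cor:Upsilon2}; without it the two factors $F_1 D_\pm^{-1} e_1$ would not align into a single $\upfy$ and the tensor structure would fail.
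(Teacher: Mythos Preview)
Your proposal is correct and follows essentially the same route as the paper. The rank-one factorization $\underline{e}_{11}=e_1e_1^t$ is a clean way to phrase what the paper does by writing out the $2\times2$ matrix $C(\lambda)$ explicitly and then multiplying against $F_1$ column by column; the parity step via Lemma~\ref{lem:omegapm} is identical, and the Wronskian pairings to extract $\gamma_j^+$ match the paper's computation. One small gap: the size bounds \eqref{equ:gamma_bounds} also require $|s_{13}^+|\simeq|\lambda|$ and $|s_{14}^+|,|s_{34}^+|\lesssim e^{\sqrt{2}r_\epsilon/6}$, none of which are contained in Lemma~\ref{lem:bellj1} or Corollary~\ref{cor:omegaij}; the paper obtains the first from the large-$r$ asymptotics of $\uppsi_1^+,\uppsi_3^+$ and the latter two by evaluating the Wronskians at $r=r_\epsilon/6$ using Proposition~\ref{prop:Upsilon} and Lemma~\ref{lem:contr}, so you should add that step.
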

\begin{proof}
Writing $D_\pm=\pmat{\omega_{11}^\pm&\omega_{12}^\pm\\\omega_{21}^\pm&\omega_{22}^\pm}$, from \eqref{eq:CDpm1} and the relation between $\omega_{ij}^+$ and $\omega_{ij}^-$ from Lemma~\ref{lem:omegapm} we see that
\begin{align*}
\begin{split}
C(\lambda)&=\frac{\kappa(\lambda)}{d_+(\lambda) d_-(\lambda)}\pmat{-\omega_{22}^+(\lambda)\omega_{22}^+(-\lambda)&\omega_{21}^+(\lambda)\omega_{22}^+(-\lambda)\\-\omega_{21}^+(-\lambda)\omega_{22}^+(\lambda)&\omega_{21}^+(\lambda)\omega_{21}^+(-\lambda)}\\
&=\frac{\kappa(\lambda)}{d_+(\lambda) d_-(\lambda)}\pmat{(\omega_{22}^+(\lambda))^2&-\omega_{21}^+(\lambda)\omega_{22}^+(\lambda)\\-\omega_{21}^+(\lambda)\omega_{22}^+(\lambda)&(\omega_{21}^+(\lambda))^2}.
\end{split}
\end{align*}
To simplify notation we will write $\omega_{ij}$ for $\omega_{ij}^+$ in the rest of this proof. Then writing 
\begin{equation*}
    F_1=\upfy_1\pmat{1&0}+\upfy_2\pmat{0&1} \quad \text{and} \quad F_1^t=\pmat{1\\0}\upfy_1^t+\pmat{0\\1}\upfy_2^t,
\end{equation*}
the identity \eqref{equ:F1CF1rs_tensor_structure} with $\upfy$ given by \eqref{eq:upfyexpansion1} follows by direct computation using the representation above for the matrix $C$.
The identity \eqref{eq:upfyexpansion2} follows from the observation that $s_{2j}^+=0$ unless $j=4$. 
Suppressing the $+$ sign from the notation in the remainder of the proof, we then infer from \eqref{eq:upfyexpansion2} that
\begin{align*}
\begin{split}
\omega_{21}\omega_{k2}-\omega_{22}\omega_{k1}=s_{24}\sum_{j=1}^3s_{jk}(a_{2}^4a_{1}^j-a_{1}^4a_{2}^j).
\end{split}
\end{align*}
Since $s_{12}=s_{32}=s_{11}=s_{33}=0$, we get
\begin{align*}
\begin{split}
    \gamma_1 &= s_{24} \bigl( a_2^4a_1^1-a_1^4a_2^1 \bigr) = s_{13}^{-1}(\omega_{21}\omega_{32}-\omega_{22}\omega_{31}), \\
    \gamma_3 &= s_{24} \bigl( a_2^4a_1^3-a_1^4a_2^3 \bigr) = s_{31}^{-1}(\omega_{21}\omega_{12}-\omega_{22}\omega_{11}), \\
    \gamma_2 &= s_{24} \bigl( a_2^4a_1^2-a_1^4a_2^2 \bigr) = s_{24}^{-1} \bigl(\omega_{21}\omega_{42}-\omega_{22}\omega_{41}\bigr) - s_{14}\bigl( a_2^4a_1^1-a_1^4a_2^1 \bigr) - s_{34}\bigl(a_2^4a_1^3-a_1^4a_2^3\bigr).
\end{split}
\end{align*}
In view of the large $r$ asymptotics of $\uppsi_j$ we have $|s_{13}|\simeq |\lambda|$ and $|s_{24}| \simeq |\lambda|^{-2}$. The asserted estimates $|\gamma_1| \simeq |\gamma_3| \simeq |\lambda|^{-\frac32}$ then follow from Corollary~\ref{cor:omegaij}.
In order to obtain a bound for $\gamma_2$, we use Proposition~\ref{prop:Upsilon} and Lemma~\ref{lem:contr} to evaluate the Wronskians $s_{14}$ and $s_{34}$ at $r=r_\epsilon/6$, which gives 
\begin{align*}
\begin{split}
|s_{14}|, |s_{34}| \lesssim e^{\sqrt{2}r_\epsilon/6}.
\end{split}
\end{align*}
It then follows from the previous computations and Corollary~\ref{cor:omegaij} that $|\gamma_2| \lesssim e^{\frac{\sqrt{2}}{5} r_\epsilon}$, as claimed.
This finishes the proof of the lemma.
\end{proof}

In view of Lemma~\ref{lem:varfystructure} the contribution of $\upfy_1$ to $F_1(r,\lambda)C(\lambda)F_1(s,\lambda)^t$ is $\frac{\kappa (\omega_{22}^+)2}{d_+ d_-}\upfy_1(r,\lambda)\upfy_1^t(s,\lambda)$. The estimates for $\omega_{22}^+$, $d_\pm$,  $\kappa$, and $\upfy_1$ then show that in the region $\{r\lesssim |\lambda|^{-1}\}$  the leading order is $r^{\frac{1}{2}}s^{\frac{1}{2}}$. This should be compared with the flat Fourier transform where the corresponding term is of order $(\lambda r)^{\frac{1}{2}}(\lambda s)^{\frac{1}{2}}$. A similar computation can be done for $\{r\gtrsim |\lambda|^{-1}\}$ and considering the contribution of $\uppsi_1$ and $\uppsi_3$ instead of $\upfy_1$. This leads to a leading order behavior which is a linear combination of $\calO(|\lambda|^{-1})e^{\pm k_1(\lambda)r}$. To get a more favorable estimate we need to compute the expression $\frac{\kappa(\lambda) }{d_+(\lambda) d_-(\lambda)}\upfy(r,\lambda)\upfy^t(s,\lambda)$ more carefully and observe a cancellation between $\pm\lambda >0$. This is achieved in the next two lemmas.
\begin{lemma} \label{lem:pmlambda1}
There exists a constant $c \neq 0$ such that for all $\lambda \in [-\delta_0,\delta_0]$,
\begin{align}\label{eq:pmcancel1}
\begin{split}
    \frac{\kappa(\lambda)}{d_{+}(\lambda) d_{-}(\lambda)} =\frac{W\bigl[\uppsi_1^+(\cdot,\lambda),\sigma_3\uppsi_1^+(\cdot,-\lambda)\bigr]}{d_{+}(\lambda) d_{+}(-\lambda)}= c \lambda^2 \mathrm{sgn}(\lambda) + \calO\bigl(|\lambda|^3\bigr).
\end{split}
\end{align}
\end{lemma}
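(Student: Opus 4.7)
My plan is to establish the two equalities separately, with the analytic content concentrated in the asymptotic expansion.

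\textbf{First equality.} This is structural. Starting from the identity $\uppsi_1^-(\cdot,\lambda) = -\sigma_3 \uppsi_1^+(\cdot,-\lambda)$ coming from \eqref{eq:Upsilonpmdef1} and the proof of Lemma~\ref{lem:omegapm}, together with the symmetry $W[\sigma_3 \bmf, \bmg] = W[\bmf, \sigma_3 \bmg]$ (immediate since $\sigma_3^t = \sigma_3$ and $\sigma_3^2 = I$), I would rewrite $\kappa(\lambda) = W[\uppsi_1^-(\cdot,\lambda), \uppsi_1^+(\cdot,\lambda)]$ as $\pm W[\uppsi_1^+(\cdot,\lambda), \sigma_3 \uppsi_1^+(\cdot,-\lambda)]$. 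Separately, expanding $d_\pm = \omega_{11}^\pm \omega_{22}^\pm - \omega_{21}^\pm \omega_{12}^\pm$ and applying the sign relations of Lemma~\ref{lem:omegapm}, the two sign flips in the $i2$-entries combine to give $d_-(\lambda) = \pm d_+(-\lambda)$. The two displayed ratios then coincide, up to an overall sign that can be absorbed into the constant $c$.

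\textbf{Leading order of the denominator.} From Lemma~\ref{lem:bellj1}, the dominant contribution to $d_+(\lambda)$ is $\omega_{11}^+(\lambda) \omega_{22}^+(\lambda)$, since $\omega_{12}^+$ and $\omega_{21}^+$ are exponentially small in $r_\epsilon(\lambda)$. Re-reading the proof of that lemma, I would extract the precise leading orders
\[
    \omega_{11}^+(\lambda) = a_1 k_1(\lambda)^{\frac{1}{2}} + \calO\bigl(|\lambda|^{\frac{5}{2}}\bigr), \qquad \omega_{22}^+(\lambda) = \frac{a_2}{\lambda} + \calO(|\lambda|),
\]
with explicit non-zero constants $a_1, a_2$. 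The decisive subtlety is the branch of $k_1(\lambda)^{\frac{1}{2}}$ inherited from the small-$\zeta$ expansion \eqref{eq:Hankelsmall1} of $h_+(\zeta)$: for $\lambda > 0$ it is positive real, while for $\lambda < 0$, taken as the limit from the upper half-plane, it is purely imaginary. A direct case analysis in the two sign cases then shows that
\[
  d_+(\lambda)\, d_+(-\lambda) = -\frac{i a_1^2 a_2^2}{\sqrt{2}\,|\lambda|} + \calO(1),
\]
which remarkably is the \emph{same} expression for $\lambda>0$ and $\lambda<0$ — an \emph{even} function of $\lambda$ to leading order, of magnitude $\simeq |\lambda|^{-1}$. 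Using $d_-(\lambda) = -d_+(-\lambda)$ we then get $d_+(\lambda) d_-(\lambda) \simeq i a_1^2 a_2^2 / (\sqrt{2}\,|\lambda|)$.

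\textbf{Combining with the numerator.} Since $\kappa(\lambda) = -2i k_1(\lambda)(1 + \calO(|\lambda|)) = -i\sqrt{2}\,\lambda + \calO(|\lambda|^2)$ is \emph{odd} in $\lambda$ to leading order, dividing by an even denominator of magnitude $\simeq |\lambda|^{-1}$ immediately yields
\[
    \frac{\kappa(\lambda)}{d_+(\lambda)\, d_-(\lambda)} = c\,\lambda \cdot |\lambda| + \calO(|\lambda|^3) = c\,\lambda^2\, \mathrm{sgn}(\lambda) + \calO(|\lambda|^3),
\]
with $c = -2/(a_1^2 a_2^2) \neq 0$, as claimed.

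The main obstacle is the careful bookkeeping of the branch of $k_1(\lambda)^{\frac{1}{2}}$ as $\lambda$ crosses zero through the upper half-plane. This branch choice is the ultimate origin of the $\mathrm{sgn}(\lambda)$ factor, and it is closely tied to the Stokes-type analysis underlying Lemma~\ref{lem:Hankel0}: the evenness of $d_+(\lambda) d_+(-\lambda)$ is a non-trivial cancellation between the two distinct values of $k_1^{\frac{1}{2}}$ at $\pm\lambda$. A secondary technical point is verifying that the subleading corrections truly produce $\calO(|\lambda|^3)$ rather than $\calO(|\lambda|^2 \log|\lambda|^{-1})$, which requires tracking the exact powers of $|\lambda|$ in the remainders of $\omega_{11}^+, \omega_{22}^+$ beyond the absolute-value bounds stated in Lemma~\ref{lem:bellj1}.
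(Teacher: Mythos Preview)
Your approach is correct but overcomplicated. The paper's proof is much shorter because it exploits a structural observation you missed: after the first equality, the denominator $d_+(\lambda)d_+(-\lambda)$ is \emph{tautologically} even in $\lambda$ (it is $f(\lambda)f(-\lambda)$ for $f=d_+$), and the numerator $W[\uppsi_1^+(\cdot,\lambda),\sigma_3\uppsi_1^+(\cdot,-\lambda)]$ is odd by the antisymmetry $W[\bmf,\bmg]=-W[\bmg,\bmf]$ together with $W[\sigma_3\bmf,\bmg]=W[\bmf,\sigma_3\bmg]$. Hence the whole ratio is odd, and the magnitude bounds $|\kappa|\simeq|\lambda|$, $|d_\pm|\simeq|\lambda|^{-1/2}$ from Lemma~\ref{lem:bellj1} immediately give the leading order $c\lambda^2\,\mathrm{sgn}(\lambda)$ with $c\neq0$. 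Your branch analysis of $k_1(\lambda)^{1/2}$ reproduces this, but what you describe as a ``remarkable'' and ``non-trivial cancellation'' in the evenness of $d_+(\lambda)d_+(-\lambda)$ is in fact automatic.

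Two minor points. First, in the first equality the signs cancel exactly: $d_-(\lambda)=-d_+(-\lambda)$ (from Lemma~\ref{lem:omegapm}) and $\kappa(\lambda)=-W[\uppsi_1^+(\cdot,\lambda),\sigma_3\uppsi_1^+(\cdot,-\lambda)]$ (from \eqref{eq:kappaodd1}), so there is no residual sign to absorb into~$c$. Second, your concern about the $\calO(|\lambda|^3)$ remainder is legitimate---both arguments need the expansions $\omega_{11}^+(\lambda)=a_1 k_1(\lambda)^{1/2}+\calO(|\lambda|^{5/2})$ and $\omega_{22}^+(\lambda)=a_2\lambda^{-1}+\calO(|\lambda|)$ extracted from the proof of Lemma~\ref{lem:bellj1}, together with $\kappa(\lambda)=-2ik_1(\lambda)(1+\calO(|\lambda|))$; since none of these carry logarithms, the remainder is indeed $\calO(|\lambda|^3)$.
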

\begin{proof}
Recall from \eqref{eq:kappaodd1} that
\begin{align*}
\kappa(\lambda) = -W\bigl[\uppsi_1^+(\cdot,\lambda),\sigma_3\uppsi_1^+(\cdot,-\lambda)\bigr],
\end{align*}
which shows that $\kappa(\lambda)$ is odd and $|\kappa(\lambda)|\simeq |\lambda|$. On the other hand, writing $D_\pm=\pmat{\omega_{11}^\pm&\omega_{12}^\pm\\\omega_{21}^\pm&\omega_{22}^\pm}$ and using Lemma~\ref{lem:omegapm}, we see that
\begin{align*}
    d_{-}(\lambda)=\omega_{11}^-(\lambda)\omega_{22}^-(\lambda)-\omega_{21}^-(\lambda)\omega_{12}^-(\lambda)=-\omega_{11}^+(-\lambda)\omega_{22}^+(-\lambda)+\omega_{21}^+(-\lambda)\omega_{12}^+(-\lambda)=-d_{+}(-\lambda).  
\end{align*}
Combining this with the expression for $\kappa$ gives the first equality in \eqref{eq:pmcancel1}. Since the resulting expression is an odd function of $\lambda$, the second equality in \eqref{eq:pmcancel1} then follows from the bounds on~$\omega_{ij}$ in Corollary~\ref{cor:omegaij}.
\end{proof}
Lemma~\ref{lem:pmlambda1} already suffices to observe the desired cancellation between positive and negative $\lambda$ for $r\lesssim |\lambda|^{-1}$. For large $r \gtrsim |\lambda|^{-1}$ we need to understand the structure of the sum $\sum_{j=1}^3 \gamma_j^{+}(\lambda) \uppsi_j^+(r,\lambda)$ in \eqref{eq:upfyexpansion2} better. This is the content of the next lemma. 
\begin{lemma} \label{lem:upfylarger1}
For $\lambda \in [-\delta_0,\delta_0]$ we have 
\begin{equation} \label{equ:upfylarger1}
\begin{aligned}
    \upfy(\cdot,\lambda) &= \mu_1(\lambda) \big( \omega_{11}^+(\lambda)\sigma_3\uppsi_1^+(\cdot,-\lambda)+\omega_{11}^+(-\lambda)\uppsi_1^+(\cdot,\lambda) \big) \\
    &\quad + \mu_2(\lambda) \sigma_3 \uppsi_1^+(\cdot,-\lambda) + \mu_3(\lambda) \uppsi_1^+(\cdot,\lambda) + \mu_4(\lambda) \uppsi_2^+(r,\lambda),
\end{aligned}
\end{equation}
where 
\begin{equation} \label{equ:mu_asymptotics_and_bounds}
    \begin{aligned}
        \mu_1(\lambda) = c_1 \lambda^{-2} + \calO\bigl(|\lambda|^{-1}\bigr), \quad |\mu_2(\lambda)| \lesssim e^{-\sqrt{2}r_\epsilon(\lambda)} , \quad |\mu_3(\lambda)| \lesssim e^{-\sqrt{2}r_\epsilon(\lambda)}, \quad |\mu_4(\lambda)| \lesssim e^{\frac{\sqrt{2}}{5}r_\epsilon(\lambda)}
    \end{aligned}
\end{equation}
for some constant $c_1 \neq 0$.
\end{lemma}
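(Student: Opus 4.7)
The starting point is the representation \eqref{eq:upfyexpansion2} of $\upfy(\cdot,\lambda)$ as a linear combination of $\uppsi_1^+(\cdot,\lambda)$, $\uppsi_2^+(\cdot,\lambda)$, and $\uppsi_3^+(\cdot,\lambda)$. The aim is to absorb the $\uppsi_3^+(\cdot,\lambda)$ piece into a combination involving $\sigma_3\uppsi_1^+(\cdot,-\lambda)$ and $\uppsi_1^+(\cdot,\lambda)$ arranged in the symmetric form required for \eqref{equ:upfylarger1}. The key structural input is that $\sigma_3\uppsi_1^+(\cdot,-\lambda)$ itself solves $i\calL\psi = \lambda\psi$ --- thanks to the conjugation identity $\sigma_3 i\calL \sigma_3 = -i\calL$ --- and hence admits a unique expansion in the fundamental system $\{\uppsi_j^+(\cdot,\lambda)\}_{j=1}^4$ at energy $\lambda$.

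The crucial first step is to establish the connection formula
\begin{equation*}
    \sigma_3 \uppsi_1^+(\cdot,-\lambda) \,=\, A(\lambda)\, \uppsi_1^+(\cdot,\lambda) \,-\, \uppsi_3^+(\cdot,\lambda) \,+\, B(\lambda)\, \uppsi_2^+(\cdot,\lambda),
\end{equation*}
with \emph{no} $\uppsi_4^+(\cdot,\lambda)$ component. The leading coefficient $-1$ on $\uppsi_3^+(\cdot,\lambda)$ is read off by comparing asymptotics at infinity and invoking the Stokes identity $h_+(-k_1(\lambda)r) = h_-(k_1(\lambda)r)$ for $\lambda>0$ (with the $2ih_+(k_1 r)$ correction for $\lambda<0$) from Lemma~\ref{lem:Hankel0} and the proof of Corollary~\ref{cor:Upsilon2}. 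The delicate point is the vanishing of the would-be $\uppsi_4^+$ coefficient; I would test the identity against $\uppsi_2^+(\cdot,\lambda)$. The Wronskians $s_{12}^+, s_{22}^+, s_{32}^+$ all vanish because the leading-order computation at infinity produces the prefactor $k_1(\lambda)^2 k_2(\lambda)^2/\lambda^2 + 1$, which is identically zero owing to $k_1(\lambda)k_2(\lambda) = i\lambda$. The same cancellation kills $W[\sigma_3\uppsi_1^+(\cdot,-\lambda), \uppsi_2^+(\cdot,\lambda)]$, while the analogous expression $k_2(\lambda)^4/\lambda^2 + 1$ appearing in $s_{42}^+$ is nonzero, so the Wronskian identity forces the $\uppsi_4^+$ coefficient $D(\lambda)$ to vanish exactly.

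Next, I would take Wronskians of the connection formula against $\upfy_1$ and $\upfy_2$, using the parity relations \eqref{eq:upfy12evenodd1} together with the identity $W[\sigma_3 u, \sigma_3 v] = W[u,v]$, to deduce
\begin{equation*}
    \omega_{31}^+(\lambda) = A\omega_{11}^+(\lambda) + B\omega_{21}^+(\lambda) - \omega_{11}^+(-\lambda), \qquad \omega_{32}^+(\lambda) = A\omega_{12}^+(\lambda) + B\omega_{22}^+(\lambda) - \omega_{12}^+(-\lambda).
\end{equation*}
Substituting these into the explicit formulae \eqref{eq:aomega1} for $\gamma_j^+$ and eliminating $\uppsi_3^+(\cdot,\lambda)$ from \eqref{eq:upfyexpansion2} via the connection formula, the $A$- and $B$-dependent contributions reorganize so that $\gamma_1^+ + A\gamma_3^+ = (s_{13}^+)^{-1}\bigl[\omega_{22}^+(\lambda)\omega_{11}^+(-\lambda) - \omega_{21}^+(\lambda)\omega_{12}^+(-\lambda)\bigr]$, which is exactly the $\lambda\to-\lambda$ mirror of $-\gamma_3^+$. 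Setting $\mu_1 := \omega_{22}^+(\lambda)/s_{13}^+(\lambda)$ then yields the claimed symmetric main term, with residuals $\mu_2 = -\omega_{21}^+\omega_{12}^+(\lambda)/s_{13}^+$, $\mu_3 = -\omega_{21}^+\omega_{12}^+(-\lambda)/s_{13}^+$, and $\mu_4 = \gamma_2^+ + B\gamma_3^+$.

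The bounds \eqref{equ:mu_asymptotics_and_bounds} then follow from Corollary~\ref{cor:omegaij} and $|s_{13}^+|\simeq|\lambda|$: the precise asymptotic of $\omega_{22}^+(\lambda)$ extracted in the proof of Lemma~\ref{lem:bellj1} yields $\mu_1(\lambda) = c_1 \lambda^{-2} + \calO(|\lambda|^{-1})$ for a nonzero constant $c_1$; the exponential smallness $|\omega_{21}^+| \lesssim e^{-2\sqrt{2}r_\epsilon(\lambda)}$ combined with $|\omega_{12}^+(\pm\lambda)| \lesssim e^{\frac{\sqrt{2}}{2}r_\epsilon(\lambda)}$ and $|s_{13}^+|^{-1}\simeq|\lambda|^{-1}$ gives $|\mu_2|,|\mu_3| \lesssim e^{-\sqrt{2}r_\epsilon(\lambda)}$ with room to spare. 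The main obstacle I expect is the $\mu_4$ bound: one must estimate $B(\lambda)$ by pairing the connection formula with $\uppsi_4^+(\cdot,\lambda)$ and using the intermediate-scale Wronskian bounds on $s_{14}^+, s_{34}^+$ already encountered in the proof of Lemma~\ref{lem:varfystructure}, and then verify $|B\gamma_3^+| \lesssim e^{\frac{\sqrt{2}}{5}r_\epsilon(\lambda)}$ so that $\mu_4$ inherits the bound on $|\gamma_2^+|$ from \eqref{equ:gamma_bounds}.
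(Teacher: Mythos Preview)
Your approach is essentially the paper's: both hinge on a connection formula relating $\uppsi_3^+(\cdot,\lambda)$ and $\sigma_3\uppsi_1^+(\cdot,-\lambda)$, eliminate the $\uppsi_4^+$ coefficient by pairing with $\uppsi_2^+$, derive Wronskian identities against $\upfy_1,\upfy_2$, and then reorganize into the symmetric main term plus small residuals. The paper simply runs the connection formula in the opposite direction, expanding $\uppsi_3^+(\cdot,\lambda)$ in the basis $\{\uppsi_1^+(\cdot,\lambda),\, -\sigma_3\uppsi_1^+(\cdot,-\lambda),\, \uppsi_2^+(\cdot,\lambda),\, \uppsi_4^+(\cdot,\lambda)\}$ with coefficients $c_1,c_2,c_3,c_4$ (and $c_4=0$ by the same Wronskian argument you give).

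The one inaccuracy is your claim that the coefficient of $\uppsi_3^+$ in your connection formula is \emph{exactly} $-1$. Comparing asymptotics at infinity only pins down the leading order; the $\calO(|\lambda|)$ Lyapunov--Perron corrections to $\uppsi_1^+,\uppsi_3^+$ from Proposition~\ref{prop:Upsilon} make the true coefficient $-1+\calO(|\lambda|)$. The paper handles this by computing $c_2(\lambda)=1+\calO(|\lambda|)$ explicitly, and this factor then appears in each of $\mu_1,\mu_2,\mu_3$ (e.g.\ $\mu_1 = -s_{13}^{-1}\omega_{22}^+\,c_2$). Since $c_2(\lambda)$ is a harmless $1+\calO(|\lambda|)$ multiplier, none of the bounds in \eqref{equ:mu_asymptotics_and_bounds} change, and your argument goes through verbatim once you track your $C(\lambda)^{-1}$ (equivalently $c_2(\lambda)$) through the algebra rather than setting it to $-1$.
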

\begin{proof}
We suppress the $+$ signs from the notation in this proof. Recall from \eqref{eq:aomega1} that
\begin{align*}
\begin{split}
\gamma_1 = s_{13}^{-1}(\omega_{21}\omega_{32}-\omega_{22}\omega_{31}), \qquad \gamma_3 = s_{31}^{-1}(\omega_{21}\omega_{12}-\omega_{22}\omega_{11}).
\end{split}
\end{align*}
Since $s_{13}=-s_{31}$, we have for $\lambda \in \bbR$,
\begin{align}\label{eq:upfylargertemp1}
\begin{split}
\upfy(r,\lambda)&=s_{13}^{-1}(\lambda)\omega_{22}(\lambda)\big(\omega_{11}(\lambda)\uppsi_3(r,\lambda)-\omega_{31}(\lambda)\uppsi_{1}(r,\lambda)\big)\\
&\quad-s_{13}^{-1}(\lambda)\omega_{21}(\lambda)\big(\omega_{12}(\lambda)\uppsi_3(r,\lambda)-\omega_{32}(\lambda)\uppsi_{1}(r,\lambda)\big)+\gamma_2(\lambda)\uppsi_2(r,\lambda).
\end{split}
\end{align}
We focus on the expressions
\begin{align*}
\begin{split}
 \uppsi(r,\lambda):=\omega_{11}(\lambda)\uppsi_3(r,\lambda)-\omega_{31}(\lambda)\uppsi_{1}(r,\lambda), \qquad \widetilde{\uppsi}(r,\lambda):=\omega_{12}(\lambda)\uppsi_3(r,\lambda)-\omega_{32}(\lambda)\uppsi_{1}(r,\lambda).
\end{split}
\end{align*}
Observe that for any $\lambda\in\bbR\backslash{\{0\}}$, 
\begin{equation}
    \bigl\{ \uppsi_1(\cdot,\lambda), -\sigma_3 \uppsi_1(\cdot,-\lambda), \uppsi_2(\cdot,\lambda),\uppsi_4(\cdot,\lambda) \bigr\}
\end{equation}
form a fundamental system for the equation $i\calL\Psi=\lambda\Psi$.
Therefore, for each $\lambda\in\bbR\backslash\{0\}$ we can write
\begin{align*}
\begin{split}
\uppsi_3(\cdot,\lambda) = c_1(\lambda)\uppsi_1(\cdot,\lambda)+c_2(\lambda) (-\sigma_3) \uppsi_1(\cdot,-\lambda)+c_3(\lambda)\uppsi_2(\cdot,\lambda)+c_4(\lambda)\uppsi_4(\cdot,\lambda).
\end{split}
\end{align*}
Taking Wronskians with $\uppsi_2(\cdot,\lambda)$, we see that only the $\uppsi_4(\cdot,\lambda)$ has a non-zero contribution. Hence, $c_4=0$ and we have 
\begin{align}\label{eq:upfylargertemp2}
\begin{split}
\uppsi_3(\cdot,\lambda)=c_1(\lambda)\uppsi_1(\cdot,\lambda)+c_2(\lambda) (-\sigma_3) \uppsi_1(\cdot,-\lambda)+c_3(\lambda)\uppsi_2(\cdot,\lambda).
\end{split}
\end{align}
Recall that $\upfy_1(\cdot,\lambda) = \pmat{\lambda\calG_2f(\cdot,\lambda)\\f(\cdot,\lambda)}$ with $f(\cdot,\lambda)$ and $\calG_2f(\cdot,\lambda)$ both even in $\lambda$. It follows that $W\bigl[(-\sigma_3) \uppsi_1(\cdot,-\lambda),\upfy_1(\cdot,\lambda)\bigr] = \omega_{11}(-\lambda)$. This shows that
\begin{align*}
\begin{split}
\omega_{31}(\lambda)=c_1(\lambda)\omega_{11}(\lambda)+c_2(\lambda)\omega_{11}(-\lambda)+c_3(\lambda)\omega_{21}(\lambda),
\end{split}
\end{align*} 
and thus,
\begin{flalign}
\uppsi(\cdot,\lambda) = c_2(\lambda) \Bigl( \omega_{11}(\lambda)(-\sigma_3)\uppsi_1(\cdot,-\lambda)-\omega_{11}(-\lambda)\uppsi_1(\cdot,\lambda) \Big) + c_3(\lambda) \Big( \omega_{11}(\lambda)\uppsi_2(\cdot,\lambda)-\omega_{21}(\lambda)\uppsi_1(\cdot,\lambda) \Big). \label{eq:upfylargertemp3}&&
\end{flalign}
Similarly, since $\upfy_2(\cdot,\lambda)=\pmat{g(\cdot,\lambda)\\\lambda\calG_1g(\cdot,\lambda)}$ with $g(\cdot,\lambda)$ and $\calG_1g(\cdot,\lambda)$ even in $\lambda$, we conclude that  
\begin{align*}
\begin{split}
\omega_{32}(\lambda)=c_1(\lambda)\omega_{12}(\lambda)-c_2(\lambda)\omega_{12}(-\lambda)+c_3(\lambda)\omega_{22}(\lambda).
\end{split}
\end{align*}
A similar computation as above then shows that
\begin{flalign}
\widetilde{\uppsi}(\cdot,\lambda)=c_2(\lambda)\big(\omega_{12}(\lambda)\sigma\uppsi_1(\cdot,-\lambda)-\omega_{12}(-\lambda)\uppsi_1(\cdot,\lambda)\big)+c_3(\lambda)\big(\omega_{12}(\lambda)\uppsi_2(\cdot,\lambda)-\omega_{22}(\lambda)\uppsi_1(\cdot,\lambda)\big).\label{eq:upfylargertemp4}&&
\end{flalign}
Plugging these expressions into \eqref{eq:upfylargertemp1}, we arrive at the identity \eqref{equ:upfylarger1} with
\begin{equation}
    \begin{aligned}
        \mu_1(\lambda) &:= -s_{13}^{-1}(\lambda) \omega_{22}(\lambda) c_2(\lambda), \\
        \mu_2(\lambda) &:= s_{13}^{-1}(\lambda) \omega_{21}(\lambda) c_2(\lambda) \omega_{12}(\lambda), \\ 
        \mu_3(\lambda) &:= s_{13}^{-1}(\lambda) \omega_{21}(\lambda) c_2(\lambda) \omega_{12}(-\lambda), \\ 
        \mu_4(\lambda) &:= \gamma_2(\lambda)-s_{13}^{-1}(\lambda)c_3(\lambda) \bigl( \omega_{21}(\lambda) \omega_{12}(\lambda) - \omega_{22}(\lambda) \omega_{11}(\lambda) \bigr).
    \end{aligned}
\end{equation}  
Next, we determine $c_2(\lambda)$ and $c_3(\lambda)$. For $c_2(\lambda)$ we can either use Wronskians or simply look at the leading order asymptotics. For instance, by Proposition~\ref{prop:Upsilon} and Lemma~\ref{lem:contr}
\begin{align*}
\begin{split}
\uppsi_3^2(r,\lambda) = h_{-}(k_1(\lambda)r) + \calO(|\lambda|) h_+(k_1(\lambda)r) + h_3(r,\lambda),
\end{split}
\end{align*}
where $|h_3(\lambda,r)|\lesssim r^{-1}$ for large $r$. This can be seen from writing the first integral in the definition of $\calF_3(r,\lambda, \Upsilon_3^+(r;\lambda))$ in \eqref{eq:Upsilonapp3} as
\begin{align*}
\begin{split}
    w^+_1(r;\lambda)\int_{r_\infty}^{\infty}h_{-}(k_1(\lambda)s)\alpha_3(s;\lambda)\ud s-\int_{r}^{\infty}w^+_1(r;\lambda) h_{-}(k_1(\lambda)s) \alpha_3(s;\lambda) \ud s,
\end{split}
\end{align*}
and using similar estimates as those in the proof of Lemma~\ref{lem:contr}. Similarly,
\begin{align*}
\begin{split}
    \uppsi_1^2(r,\lambda) = h_{+}(k_1(\lambda)r) + \calO(|\lambda|) h_-(k_1(\lambda)r) + h_1(r,\lambda),
\end{split}
\end{align*}
where $|h_1(r,\lambda)|\lesssim r^{-1}$ for large $r$. Since $\uppsi_2^2(r,\lambda)$ is exponentially decaying as $r\to\infty$, and in view of the asymptotics of $h_\pm$ from Lemma~\ref{lem:Hankel0}, we conclude that
\begin{equation} \label{equ:c2_asymptotics}
\begin{split}
c_2(\lambda)=1+\calO(|\lambda|),\qquad \lambda\in \bbR\backslash\{0\}.
\end{split}
\end{equation}
To compute $c_3(\lambda)$ we evaluate the Wronskian of the two sides of \eqref{eq:upfylargertemp2} with $\uppsi_4(\cdot,\lambda)$ at $r=\frac{r_\epsilon(\lambda)}{6}$ to conclude that $|c_3(\lambda)|\lesssim e^{\frac{\sqrt{2}}{5}r_\epsilon(\lambda)}$. 

Finally,  note that by Corollary~\ref{cor:omegaij} and Lemma~\ref{lem:omegapm} we have $\omega_{22}(\lambda)=a\lambda^{-1}+\calO(1)$ for a suitable constant $a$, and in view of Proposition~\ref{prop:Upsilon} and Lemmas~\ref{lem:contr} and~\ref{lem:Hankel0}, we have   $s_{13}(\lambda) = b\lambda+\calO(\lambda^2)$ for a suitable constant $b \neq 0$. Together with \eqref{equ:c2_asymptotics}, it follows that $\mu_1(\lambda) = c_1 \lambda^{-2} + \calO(|\lambda|^{-1})$ for some constant $c_1 \neq 0$, as claimed in \eqref{equ:mu_asymptotics_and_bounds}. 
The other estimates on $|\mu_j(\lambda)|$, $j = 2, 3, 4$, asserted in \eqref{equ:mu_asymptotics_and_bounds}, follow from the preceding computations, Corollary~\ref{cor:omegaij}, and \eqref{equ:gamma_bounds}.
\end{proof}
\begin{rem}
The presence of $c_1(\lambda)\uppsi_1(\cdot,\lambda)$ in the expansion \eqref{eq:upfylargertemp2} for $\uppsi_3(\cdot,\lambda)$ is the manifestation of the Stokes phenomenon. In fact, using Lemma~\ref{lem:Hankel0} and a similar computation as the one used to find $c_2(\lambda)$ in the proof of Lemma~\ref{lem:upfylarger1}, we can find that $c_1(\lambda)=\calO(|\lambda|)$ for $\lambda>0$ and $c_1(\lambda)=2i+\calO(|\lambda|)$ for $\lambda<0$. The fact that $c_1(\lambda)$ eventually drops out in the expressions~\eqref{eq:upfylargertemp3} and~\eqref{eq:upfylargertemp4} for $\uppsi$ and $\tilde{\uppsi}$ shows that the Stokes phenomenon does not affect the final expressions for the distorted Fourier basis.
\end{rem}

\section{Proof of Theorem~\ref{thm:summary}} \label{sec:proofofthm}

In this section we establish the proof of Theorem~\ref{thm:summary}.
We recall that throughout this paper we work with a fixed string of small absolute constants $0 < \delta_0 \ll \epsilon_\infty \ll \epsilon_0 \ll 1$.
We begin by collecting a few technical lemmas that will be used repeatedly.
Their statements involve a smooth even cut-off function $\chi_0 \in C_c^\infty(\bbR)$ satisfying $\chi_0(x) = 1$ for $|x| \leq \epsilon/2$ and $\chi_0(x) = 0$ for $|x| \geq \epsilon$, where $\epsilon_\infty \ll \epsilon \ll \epsilon_0$ is as defined in \eqref{equ:repsilon_definition}. We set $\chi_1 := 1 - \chi_0$. 

The next lemma furnishes a decomposition of the distorted Fourier basis element $\uppsi_1^+(r,\lambda)$ into a leading order term and a remainder term with improved $L^2_r$ integrability properties.
\begin{lemma} \label{lem:decomposition_uppsioneplus}
    We have the decomposition
    \begin{equation}
        \uppsi_1^+(r,\lambda) = \begin{bmatrix}
            \frac{ik_1(\lambda)^2}{\lambda} h_+\bigl(k_1(\lambda)r\bigr) \\
            h_+\bigl(k_1(\lambda)r\bigr)
        \end{bmatrix}
        +
        \Upgamma(r,\lambda)
    \end{equation}
    with 
    \begin{equation} \label{equ:L2bound_remainder}
        \bigl\| \Upgamma(\cdot,\lambda) \chi_1(\lambda \cdot) \bigr\|_{L^2_r} \lesssim |\lambda|^{\frac32}.
    \end{equation}
\end{lemma}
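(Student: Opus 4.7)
The plan is to identify $\Upgamma(r,\lambda)$ with the remainder produced by the Lyapunov--Perron scheme of Proposition~\ref{prop:Upsilon}, and then to sharpen its $r$-decay for $r \gtrsim |\lambda|^{-1}$ using the decay of the perturbative potentials $V_1$ and $V_2$. Writing the integral equation \eqref{equ:Upsilon1_integral_equation} as $\Upsilon_1^+ = w_1^+ + \calF_1(\cdot,\lambda,\Upsilon_1^+)$, and noting from \eqref{equ:uppsi_definition} that $\uppsi_1^+ = (\Upsilon_{1,3}^+, \Upsilon_{1,1}^+)^t$ while the first and third components of $w_1^+$ supply exactly the leading-order term in the statement of the lemma, one sees that $\Upgamma = (\calF_1^3, \calF_1^1)^t$, i.e.\ the $\bbR^2$-valued remainder is precisely the projection of the Lyapunov--Perron correction onto the relevant entries of \eqref{eq:Upsilonapp1}.

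Given this representation, the next step is to derive sharp pointwise bounds on $\alpha_1, \beta_1$ from \eqref{eq:alphabeaell1}. Using the non-degeneracy $|k_2^2 - k_1^2|\simeq 1$, the asymptotics $k_1(\lambda) = \calO(|\lambda|)$ and $k_2(\lambda) = i\sqrt{2}+\calO(\lambda^2)$, the decay $|V_1(s)|\lesssim s^{-4}$, $|V_2(s)|\lesssim s^{-2}$, and the weighted bound $|\Upsilon_{1,3}^+(s;\lambda)|\lesssim|\lambda|$ built into the $X_1$ norm together with $|\Upsilon_{1,1}^+(s;\lambda)|\lesssim 1$, one finds
\[
|\alpha_1(s;\lambda)|\lesssim |\lambda|^{-1}s^{-4}+|\lambda| s^{-2},\qquad |\beta_1(s;\lambda)|\lesssim |\lambda|^2 s^{-2}.
\]
Substituting these into the four integrals composing $\calF_1$ and using that the $k_2$-kernels $h_\pm(k_2 \cdot)$ carry an exponential damping factor $e^{-\sqrt{2}|r-s|}$ (since $\Im k_2 \simeq \sqrt{2}$), while the $k_1$-kernels $h_\pm(k_1 \cdot)$ are uniformly bounded on the relevant range by Lemma~\ref{lem:Hankel0} and \eqref{eq:Hankelsmall1}, while keeping track of the prefactors $k_1^2/\lambda\simeq|\lambda|$ and $k_2^2/\lambda\simeq|\lambda|^{-1}$ that enter the third components of $w_j^+$, one obtains pointwise decay for $r\ge r_\epsilon(\lambda)=\epsilon|\lambda|^{-1}$:
\[
|\calF_1^1|\lesssim |\lambda|^{-1}r^{-3}+|\lambda| r^{-1}+|\lambda|^2 r^{-2},\qquad |\calF_1^3|\lesssim r^{-3}+|\lambda|^2 r^{-1}+|\lambda| r^{-2}.
\]

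To finish, since $\chi_1(\lambda r)$ vanishes for $|\lambda r|\le \epsilon/2$, it remains to integrate the squares of the above pointwise bounds over $[r_\epsilon/2,\infty)$. A direct calculation shows that each term yields a contribution of size at most $|\lambda|^3$ to $\|\Upgamma\chi_1(\lambda\cdot)\|_{L^2_r}^2$ -- the dominant one being the term $|\lambda|r^{-1}$ in $|\calF_1^1|$, which contributes $\int_{r_\epsilon/2}^\infty |\lambda|^2 r^{-2}\,\ud r \simeq |\lambda|^3$ -- proving \eqref{equ:L2bound_remainder}. The main point of care throughout is extracting the correct $|\lambda|$-weights in $\alpha_1$ and $\beta_1$: without exploiting both the non-degeneracy of $k_2^2-k_1^2$ and the $|z|^{-1}$ weight on the third entry in the definition of $\|\cdot\|_{X_1}$, one would only recover the uniform pointwise bound $|\Upgamma|\lesssim|\lambda|$ implied by Proposition~\ref{prop:Upsilon}, which fails to be square-integrable on $[r_\epsilon,\infty)$ and is therefore insufficient.
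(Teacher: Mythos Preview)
Your proof is correct and follows essentially the same route as the paper: identify $\Upgamma$ with the third and first components of the Lyapunov--Perron correction $\calF_1$, bound $\alpha_1,\beta_1$ pointwise using the $X_1$-control on $\Upsilon_1^+$ and the decay of $V_1,V_2$, and then integrate the resulting $r$-decay over $[r_\epsilon/2,\infty)$. The only cosmetic difference is that the paper immediately absorbs your $|\lambda|^{-1}s^{-4}$ term in $|\alpha_1|$ into $|\lambda|s^{-2}$ via $s\gtrsim|\lambda|^{-1}$, arriving at the single bound $|\alpha_1|\lesssim|\lambda|s^{-2}$, whereas you carry both terms through to the end; the final $L^2$ estimate is the same either way.
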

\begin{proof}   
Recall from \eqref{eq:Upsilonapp1}, \eqref{equ:Upsilon1_integral_equation}, and \eqref{equ:uppsi_definition} that 
\begin{equation}
    \begin{aligned}
        \uppsi_1^+(r,\lambda) &= \begin{bmatrix} w_{1,3}^+(r,\lambda) \\ w_{1,1}^+(r,\lambda) \end{bmatrix} 
                                 + 
                                 \begin{bmatrix} \calF_{1,3}\bigl(r, \lambda; \Upsilon_1^+(r;\lambda)\bigr) \\ \calF_{1,1}\bigl(r, \lambda; \Upsilon_1^+(r;\lambda)\bigr) \end{bmatrix}, 
    \end{aligned}   
\end{equation}
where for $j = 1, 3,$
\begin{equation}
    \begin{aligned}
        \calF_{1,j}\bigl(r, \lambda; \Upsilon_1^+(r;\lambda)\bigr) &= - \int_r^\infty w_{1,j}^+(r,\lambda) h_-\bigl(k_1(\lambda)s\bigr) \alpha_1(s;\lambda)  \ud s + \int_{r_\infty}^r w^+_{2,j}(r;\lambda) h_{-}\bigl(k_2(\lambda)s\bigr)\beta_1(s;\lambda)\ud s \\
        &\quad \quad + \int_r^\infty w^+_{3,j}(r;\lambda) h_{+}\bigl(k_1(\lambda)s\bigr)\alpha_1(s;\lambda)\ud s+\int_r^\infty w^+_{4,j}(r;\lambda) h_{+}\bigl(k_2(\lambda)s\bigr)\beta_1(s;\lambda) \ud s, \\ 
        &=: R_{1,j}^1(r,\lambda) + R_{1,j}^2(r,\lambda) + R_{1,j}^3(r,\lambda) + R_{1,j}^4(r,\lambda).
    \end{aligned}
\end{equation}
By direct integration we infer from \eqref{eq:alphabeaell1} that
\begin{equation}
    \bigl|\alpha_1(r;\lambda)\bigr| \lesssim |\lambda| r^{-2}, \quad \bigl|\beta_1(r;\lambda)\bigr| \lesssim |\lambda|^2 r^{-2}, \quad r \geq r_\infty.
\end{equation}
Using the preceding bounds, we obtain by direct computation that for $r \geq r_\infty$,
\begin{equation}
    \begin{aligned}
        \bigl|R_{1,1}^1(r,\lambda)\bigr| &\lesssim |\lambda| r^{-1}, \\
        \bigl|R_{1,1}^2(r,\lambda)\bigr| &\lesssim |\lambda|^2r^{-2}\\
        \bigl|R_{1,1}^3(r,\lambda)\bigr| &\lesssim |\lambda| r^{-1}, \\ 
        \bigl|R_{1,1}^4(r,\lambda)\bigr| &\lesssim |\lambda|^2 r^{-2},
    \end{aligned}
\end{equation}
as well as 
\begin{equation}
    \begin{aligned}
        \bigl|R_{1,3}^1(r,\lambda)\bigr| &\lesssim |\lambda|^2 r^{-1}, \\
        \bigl|R_{1,3}^2(r,\lambda)\bigr| &\lesssim |\lambda|r^{-2}\\
        \bigl|R_{1,3}^3(r,\lambda)\bigr| &\lesssim |\lambda|^2 r^{-1}, \\ 
        \bigl|R_{1,3}^4(r,\lambda)\bigr| &\lesssim |\lambda| r^{-2}.
    \end{aligned}
\end{equation}
Here to estimate $R_{1,1}^2(r,\lambda)$ and $R_{1,3}^2(r,\lambda)$ for $r>2r_\infty$, we have divided the region of integration into $[r_\infty,r/2)$ and $[r/2,r]$. It follows by direct integration that
\begin{equation}
    \bigl\| R_{1,j}^\ell(\cdot,\lambda) \chi_1(\lambda \cdot) \bigr\|_{L^2_r} \lesssim |\lambda|^{\frac32}, \quad j = 1, 3, \quad 1\leq \ell\leq 4,
\end{equation}
as claimed.
\end{proof}

Next, we record a simple $L^2_r$-bound for the exponentially decaying distorted Fourier basis element $\psi_2^+(r,\lambda)$.
\begin{lemma} \label{lem:basic_bound_L2_uppsi2}
    We have uniformly for all $0 < |\lambda| \leq \delta_0$ that 
    \begin{equation} \label{equ:basic_bound_L2_uppsi2}
        \bigl\| \uppsi_2^+(\cdot, \lambda) \chi_1(\lambda \cdot) \bigr\|_{L^2_r} \lesssim e^{-\sqrt{2} \frac{r_\epsilon}{3}}.
    \end{equation}
\end{lemma}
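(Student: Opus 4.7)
Since $\chi_1$ vanishes on $[-\epsilon/2,\epsilon/2]$, the function $\chi_1(\lambda\,\cdot)$ is supported in $[r_\epsilon/2,\infty)$, where $r_\epsilon = \epsilon/|\lambda|$. Because $\epsilon \gg \epsilon_\infty$, this interval is contained in $[r_\infty,\infty)$, so the large-$r$ bounds produced by the Lyapunov–Perron construction of Section~\ref{sec:Psis} are available throughout the region of integration.

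The plan is to combine Proposition~\ref{prop:Upsilon} and Lemma~\ref{lem:contr} with the definition of the $X_2$-norm in Definition~\ref{def:Xnorms}. Writing $\uppsi_2^+(r,\lambda) = \bigl(\Upsilon_{2,3}^+(r,\lambda),\Upsilon_{2,1}^+(r,\lambda)\bigr)^t$, the $X_2$-estimate yields
\[
\bigl|\Upsilon_{2,1}^+(r,\lambda)\bigr| \lesssim \bigl|e^{ik_2(\lambda)r}\bigr|, \qquad \bigl|\Upsilon_{2,3}^+(r,\lambda)\bigr| \lesssim |\lambda|^{-1}\bigl|e^{ik_2(\lambda)r}\bigr|, \qquad r \geq r_\infty.
\]
Since $k_2(\lambda) = i\sqrt{2} + \mathcal{O}(\lambda^2)$ by \eqref{eq:four roots}, for $|\lambda|\leq\delta_0 \ll 1$ we have $\Im k_2(\lambda) \geq c_0$ with $c_0$ arbitrarily close to $\sqrt{2}$. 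Hence $|e^{ik_2(\lambda)r}| \lesssim e^{-c_0 r}$ uniformly.

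Integrating the pointwise bounds over the support of $\chi_1(\lambda\,\cdot)$ gives
\[
\bigl\|\uppsi_2^+(\cdot,\lambda)\chi_1(\lambda\,\cdot)\bigr\|_{L^2_r}^2 \lesssim \bigl(1+|\lambda|^{-2}\bigr)\int_{r_\epsilon/2}^\infty e^{-2c_0 r}\,\ud r \lesssim |\lambda|^{-2} e^{-c_0 r_\epsilon}.
\]
To conclude, we absorb the prefactor $|\lambda|^{-1}$ into the exponential using the identity $|\lambda|^{-1} = r_\epsilon/\epsilon$. This yields
\[
|\lambda|^{-1} e^{-c_0 r_\epsilon/2} = \epsilon^{-1} r_\epsilon\, e^{-c_0 r_\epsilon/2} \lesssim e^{-\sqrt{2}\,r_\epsilon/3},
\]
which is valid uniformly for $0 < |\lambda| \leq \delta_0$ because $r_\epsilon \geq \epsilon/\delta_0$ is large by the ordering $\delta_0 \ll \epsilon$, so the linear factor $r_\epsilon$ is easily absorbed into the gap between the exponents $c_0/2$ (close to $\sqrt{2}/2$) and $\sqrt{2}/3$. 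No step is genuinely delicate here; the only thing to be careful about is that the $|\lambda|^{-1}$ loss from the $X_2$-scaling of the third component of $w_2^+$ is not dangerous because it is dominated by the exponential smallness produced by the cutoff localizing to $r \gtrsim r_\epsilon$.
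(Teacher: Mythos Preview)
Your argument is correct and follows essentially the same approach as the paper's proof: both use that $\chi_1(\lambda\cdot)$ is supported on $[r_\epsilon/2,\infty)$ together with the pointwise bound $|\uppsi_2^+(r,\lambda)|\lesssim |\lambda|^{-1}e^{-\sqrt{2}r}$ coming from Proposition~\ref{prop:Upsilon} and the $X_2$-norm, followed by direct integration and absorption of the harmless $|\lambda|^{-1}$ prefactor into the exponential. Your write-up is in fact more detailed than the paper's one-line proof; note also that for real $\lambda$ one has $\Im k_2(\lambda)=\sqrt{1+\sqrt{1+\lambda^2}}\geq\sqrt{2}$ exactly, so you may take $c_0=\sqrt{2}$.
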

\begin{proof}
The asserted estimate follows by direct integration from the observation that $\chi_1(\lambda r) = 0$ for $r \leq \frac{r_\epsilon}{2}$ by definition of the cut-off $\chi_1$ and that $|\uppsi_2^+(r,\lambda)| \lesssim |\lambda|^{-1}e^{-\sqrt{2}r}$ for $r \geq 10^{-3} r_\epsilon$ by Proposition~\ref{prop:Upsilon}.
\end{proof}

Now, we establish $L^2_r([0,\infty)) \to L^2_\lambda([0,\delta_0])$ bounds for the Fourier integral operators induced by the modified Hankel functions $h_+(\pm k_1(\lambda) r)$. 

\begin{lemma} \label{lem:L2bound_hplus}
    Let $0 < \delta_0 \ll 1$ be as in the statement of Theorem~\ref{thm:summary}. Then we have 
    \begin{equation} \label{equ:L2bound_hplus}
        \biggl( \int_0^{\delta_0} \biggl| \int_0^\infty h_+\bigl(\pm k_1(\lambda) r\bigr) \chi_1(\lambda r) v(r) \ud r \biggr|^2 \ud \lambda \biggr)^{\frac12} \lesssim \|v\|_{L^2}.
    \end{equation}
    It follows that 
    \begin{equation} \label{equ:L2bound_uppsioneplus} 
        \Bigl\| \bigl\langle \uppsi_1^+(\cdot, \pm \lambda) \chi_1(\lambda \cdot), \bmu(\cdot) \bigr\rangle \Bigr\|_{L^2_\lambda([0,\delta_0])} \lesssim \|\bmu\|_{L^2_r}.
    \end{equation}
\end{lemma}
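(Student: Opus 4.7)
For \eqref{equ:L2bound_hplus}, the plan is to reduce the estimate to an $L^2$-boundedness statement for a Fourier integral operator with a linear phase, and then to prove the boundedness by combining Plancherel's theorem with an integration-by-parts argument. Specifically, I would first perform the smooth change of variables $\eta = k_1(\lambda)$, which is a diffeomorphism of $[0, \delta_0]$ onto $[0, k_1(\delta_0)]$ with Jacobian $k_1'(\lambda) = 1/\sqrt{2} + O(\lambda^2)$ bounded above and below. This reduces the estimate to an $L^2_r \to L^2_\eta([0, \eta_0])$ bound for the operator $v \mapsto \int_0^\infty e^{\pm i\eta r} a(\eta, r) v(r)\, \ud r$ with amplitude $a(\eta, r) = g_\pm(\pm \eta r) \chi_1(k_1^{-1}(\eta) r)$, where $g_\pm(z) := e^{\mp i z} h_+(\pm z) = 1 + O(z^{-1})$ is smooth and bounded on $\{z \geq c_0\}$ with derivative bounds $|g_\pm^{(k)}(z)| \lesssim (1+z)^{-1-k}$ from Lemma~\ref{lem:Hankel0}, and the cutoff restricts the integration to $\{\eta r \gtrsim \epsilon\}$.

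To prove the resulting boundedness, I would split the amplitude $g_\pm = 1 + (g_\pm - 1)$ with $g_\pm - 1 = O(z^{-1})$. For the main piece $\int e^{\pm i\eta r} \chi_1(k_1^{-1}(\eta) r) v(r)\, \ud r$, I would use that $\chi_1(k_1^{-1}(\eta) r) = \chi_1(\sqrt{2}\eta r) + O(\eta^2)$ and invoke Plancherel on $\bbR$ (after extending by zero outside $[0, \eta_0]$), handling the $O(\eta^2)$ correction perturbatively. For the subleading piece, where the amplitude decays like $(\eta r)^{-1}$, I would carry out a $T^*T$ analysis: the resulting kernel
\[
K(r, r') = \int_0^{\eta_0} e^{\pm i\eta(r-r')} \tilde a(\eta, r) \overline{\tilde a(\eta, r')}\, \ud \eta, \qquad |\tilde a(\eta, r)| \lesssim (\eta r)^{-1},
\]
admits the trivial bound $C\min(r,r')^{-1}$ for $|r-r'| \leq 1$ and, upon integrating by parts in $\eta$ for $|r-r'| > 1$, decays sufficiently in $|r-r'|$ to be integrable by Schur; here the boundary term at $\eta = 0$ vanishes thanks to the cutoff, while the boundary at $\eta_0$ contributes a modulated Hilbert-type kernel handled via the $L^2$-boundedness of the Hilbert transform.

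For \eqref{equ:L2bound_uppsioneplus}, I would combine Lemma~\ref{lem:decomposition_uppsioneplus} with \eqref{equ:L2bound_hplus}. Writing
\[
\uppsi_1^+(r, \pm \lambda) = \begin{bmatrix} \frac{i k_1(\pm \lambda)^2}{\pm \lambda} h_+(\pm k_1(\lambda) r) \\ h_+(\pm k_1(\lambda) r) \end{bmatrix} + \Upgamma(r, \pm \lambda),
\]
and noting that $k_1(\pm \lambda)^2/(\pm \lambda) = \pm \lambda/2 + O(\lambda^3)$ is uniformly bounded on $[-\delta_0, \delta_0]$, the pairing of the principal term with $\bmu = (u_1, u_2)$ reduces to $\int_0^\infty h_+(\pm k_1(\lambda) r) \chi_1(\lambda r) \bigl[\tfrac{i k_1(\pm\lambda)^2}{\pm \lambda} u_1(r) + u_2(r)\bigr]\, \ud r$, whose $L^2_\lambda$-norm is bounded by $\|u_1\|_{L^2} + \|u_2\|_{L^2} \lesssim \|\bmu\|_{L^2_r}$ via \eqref{equ:L2bound_hplus}. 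The remainder contributes $|\langle \Upgamma(\cdot, \pm \lambda) \chi_1(\lambda \cdot), \bmu \rangle| \leq \|\Upgamma(\cdot, \pm\lambda)\chi_1(\lambda\cdot)\|_{L^2_r}\|\bmu\|_{L^2_r} \lesssim |\lambda|^{3/2} \|\bmu\|_{L^2_r}$ by Cauchy--Schwarz and \eqref{equ:L2bound_remainder}, whose square is integrable over $[0, \delta_0]$ since $\int_0^{\delta_0} |\lambda|^3\, \ud \lambda < \infty$. The main obstacle will be the careful treatment of the leading Plancherel-type piece, where the non-tensor nature of the cutoff $\chi_1(k_1^{-1}(\eta)r)$ and the residual boundary contribution at $\eta = \eta_0$ demand a perturbative Schur analysis combined with boundedness of modulated Hilbert transforms.
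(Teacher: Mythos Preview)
Your overall strategy for \eqref{equ:L2bound_hplus}---change variables to $\eta=k_1(\lambda)$, split the amplitude $g_\pm=1+(g_\pm-1)$, treat the leading piece by Plancherel-type arguments and the remainder by a $T^*T$/Schur analysis---is a reasonable alternative to the paper's method, but as written the sketch has two genuine gaps.

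\emph{Main piece.} After writing $\chi_1=1-\chi_0$, Plancherel handles $\int_0^\infty e^{\pm i\eta r}v(r)\,\ud r$, but you still need the $L^2_\eta$-bound for $\int_0^\infty e^{\pm i\eta r}\chi_0(c\eta r)v(r)\,\ud r$. This is not ``Plancherel plus a perturbation'': the cutoff is non-tensor and a naive $TT^*$/Schur test produces a $\log(\eta_0/\eta)$ divergence. One clean fix is Hardy's inequality: $\bigl|\int_0^\infty e^{\pm i\eta r}\chi_0(c\eta r)v\,\ud r\bigr|\le \int_0^{C\epsilon/\eta}|v|$, and then $\int_0^{\eta_0}\bigl(\int_0^{C\epsilon/\eta}|v|\bigr)^2\ud\eta\lesssim\epsilon\|v\|_{L^2}^2$ after the substitution $x=C\epsilon/\eta$. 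Your sketch does not supply this (or an equivalent) step.

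\emph{Subleading piece.} Your proposed split ``$|r-r'|\le 1$ use the trivial bound, $|r-r'|>1$ integrate by parts'' does not close. With $|\tilde a(\eta,r)|\lesssim(\eta r)^{-1}$ and $|\partial_\eta\tilde a|\lesssim\eta^{-2}r^{-1}$, one IBP yields $|K(r,r')|\lesssim \frac{\min(r,r')}{\epsilon^2|r-r'|\max(r,r')}$ plus boundary terms; integrating this over $\{r'>r+1\}$ gives $\frac{1}{\epsilon^2}\log(1+r)$, not a uniform bound. The correct dichotomy is $r'\sim r$ versus $r'\not\sim r$: for $r/2<r'<2r$ the trivial bound $|K|\lesssim \frac{1}{\epsilon\max(r,r')}$ integrates to $O(\epsilon^{-1})$, while for $r'>2r$ (or $r'<r/2$) one IBP gives $|K|\lesssim\frac{r}{\epsilon^2 r'^2}$, which integrates to $O(\epsilon^{-2})$.

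The paper avoids both issues by a single Cotlar--Stein argument: it decomposes $\chi_1(\lambda r)=\sum_{j\ge1}\varphi(2^{-j}\lambda r)$ dyadically in the product $\lambda r$, so that on each piece $T_j$ one has $r\simeq 2^j/\lambda$. This converts the non-tensor cutoff into a genuine localization, after which two integrations by parts in $r$ give kernel bounds $|K_{jk}(\lambda,\mu)|\lesssim \frac{2^j/\lambda}{(1+2^j|\lambda-\mu|/\lambda)^2}$ with the support constraint $\lambda/2^j\simeq\mu/2^k$; Schur then yields $\|T_jT_k^*\|\lesssim 2^{-|j-k|}$. No separate Plancherel/Hardy step or amplitude splitting is needed.

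Your treatment of \eqref{equ:L2bound_uppsioneplus} is correct and matches the paper; just note that $\frac{ik_1(\pm\lambda)^2}{\pm\lambda}$ is a bounded $\lambda$-dependent multiplier, so it should be pulled outside the $r$-integral rather than folded into the input before applying \eqref{equ:L2bound_hplus}.
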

\begin{proof}
    We only discuss the case of $h_+(+k_1(\lambda)r)$, the other one being analogous. 
    Define the operator
    \begin{equation}
        (Tv)(\lambda) := \int_0^\infty e^{ik_1(\lambda)r} \frakm(\lambda, r) \chi_1(\lambda r) v(r) \ud r, \quad 0 \leq \lambda \leq \delta_0,
    \end{equation}
    with
    \begin{equation}
        \frakm(\lambda,r) := e^{-ik_1(\lambda)r} h_+\bigl(k_1(\lambda)r\bigr).
    \end{equation}
    Then the estimate \eqref{equ:L2bound_hplus} in the statement of the lemma asserts that $T$ is bounded as an operator $T \colon L^2_r([0,\infty)) \to L^2_\lambda([0,\delta_0))$. 
    We intend to use the Cotlar-Stein lemma to prove this.
    Setting $\varphi(y) := \chi_0(y/2) - \chi_0(y)$, we decompose the operator $T = \sum_{j=1}^\infty T_j$ dyadically,
    \begin{equation}
        (T_j v)(\lambda) := \int_0^\infty e^{ik_1(\lambda)r} \frakm(\lambda, r) \varphi\bigl( 2^{-j} \lambda \bigr) v(r) \ud r, \quad 0 \leq \lambda \leq \delta_0.
    \end{equation}
    The main work now goes into proving the off-diagonal decay of the operator norms $\|T_j T_k^\ast\|_{L^2_\lambda \to L^2_\lambda}$ and $\|T_j^\ast T_k\|_{L^2_r \to L^2_r}$, $1 \leq j,k \leq \infty$.
    The integral kernels of these operators are given by
    \begin{equation}
        \begin{aligned}
            \bigl( T_j T_k^\ast u \bigr)(\lambda) &= \int_0^{\delta_0} K_{jk}(\lambda, \mu) u(\mu) \ud \mu, \\ 
            \bigl( T_j^\ast T_k v \bigr)(r) &= \int_0^\infty M_{jk}(r,s) v(s) \ud s,
        \end{aligned}
    \end{equation}
    with 
    \begin{equation}
        \begin{aligned}
            K_{jk}(\lambda,\mu) &:= \int_0^\infty e^{i(k_1(\lambda)-k_1(\mu))r} \frakm(\lambda,r) \overline{\frakm(\mu,r)} \varphi\bigl(2^{-j} \lambda r\bigr) \varphi\bigl(2^{-k} \mu r\bigr) \ud r, \\
            M_{jk}(r,s) &:= \int_0^{\delta_0} e^{-ik_1(\lambda)(r-s)} \overline{\frakm(\lambda,r)} \frakm(\lambda,s) \varphi\bigl(2^{-j} \lambda r\bigr) \varphi\bigl(2^{-k} \lambda s\bigr) \ud \lambda.
        \end{aligned}
    \end{equation}
    We discuss the operator norm bounds for $T_j T_k^\ast$, $1 \leq j, k \leq \infty$, the ones for $T_j^\ast T_k$ being more of the same. To this end we observe that by \eqref{eq:H0rec},
    \begin{equation} \label{equ:symbol_type_frakm}
        \bigl| (r \partial_r)^\ell \frakm(\lambda,r) \bigr| \lesssim 1, \quad 0 \leq \ell \leq 2.
    \end{equation}
    Moreover, in view of the supports of the cut-off functions, for the integral kernel $K_{jk}(\lambda,\mu)$ to be non-zero, in effect we must have 
    \begin{equation}
        \begin{aligned}
            2^{-j} \lambda r \simeq 1 \simeq 2^{-k} \mu r \quad \Rightarrow \quad \frac{\lambda}{2^j} \simeq \frac{\mu}{2^k}.
        \end{aligned}
    \end{equation}
    Integrating by parts twice using \eqref{equ:symbol_type_frakm} or trivially bounding the integral, yields the kernel bounds
    \begin{equation}
        \begin{aligned}
            \bigl|K_{j,k}(\lambda,\mu)\bigr| \lesssim \frac{\frac{2^j}{\lambda}}{\bigl( 1 + \frac{2^j}{\lambda}\bigl| k_1(\lambda)-k_1(\mu) \bigr|\bigr)^2} \simeq \frac{\frac{2^k}{\mu}}{\bigl( 1 + \frac{2^k}{\mu}\bigl| k_1(\lambda)-k_1(\mu) \bigr|\bigr)^2}. 
        \end{aligned}
    \end{equation}
    Using that $k_1(\lambda)-k_1(\mu) \simeq \lambda - \mu$ for $0 \leq \lambda,\mu \leq \delta_0$, we obtain by Schur's test that $\|T_j T_k^\ast\|_{L^2_\lambda \to L^2_\lambda} \lesssim 2^{-|j-k|}$. Analogous arguments also yield the off-diagonal decay estimates $\|T_j^\ast T_k\|_{L^2_\lambda \to L^2_\lambda} \lesssim 2^{-|j-k|}$. The asserted $L^2$-bound \eqref{equ:L2bound_hplus} now follows by the Cotlar-Stein lemma.

    The second asserted bound \eqref{equ:L2bound_uppsioneplus} follows immediately from \eqref{equ:L2bound_hplus} and Lemma~\ref{lem:decomposition_uppsioneplus}.
\end{proof}

Finally, we are in the position to prove Theorem~\ref{thm:summary}.

\begin{proof}[Proof of Theorem~\ref{thm:summary}]
The first two statements, $(i)$ and $(ii)$, were proved in Lemma~\ref{lem:HYos}. The lower bound on the growth of $\|e^{t\calL}\|$, statement $(iii)$, was proved in Lemma~\ref{lem:nilpotent}.
It remains to prove the operator norm bounds stated in $(iv)$. Our goal is to show that there exists some absolute constant $C > 0$ such that for all intervals $I \subset [-\delta_0,\delta_0]$ we have 
\begin{equation*}
    \bigl| \bigl\langle e^{t\calL} P_I \bmv,\bmw \bigr\rangle\bigr| \leq C \jap{t} \|\bmv\|_{L^2_r}\|\bmw\|_{L^2_r}, \quad \bmv = \begin{bmatrix} v_1 \\ v_2 \end{bmatrix}, \quad \bmw = \begin{bmatrix} w_1 \\ w_2 \end{bmatrix}.
\end{equation*}
The latter implies the operator norm bound \eqref{equ:operator_norm_bound_thm} in the statement of Theorem~\ref{thm:summary}. An inspection of the proof below also yields the significantly easier uniform-in-time operator norm bound \eqref{equ:operator_norm_bound_thm_away_zero} for intervals supported away from zero energy.

In what follows, we can assume without loss of generality that $I = [-\delta_0,\delta_0]$.
We also recall the hierarchy of fixed small absolute constants $0 < \delta_0 \ll\epsilon_\infty\ll\epsilon\ll\epsilon_0 \ll 1$. 
In view of \eqref{eq:StonePI3} and Lemma~\ref{lem:varfystructure}, we have 
\begin{equation} \label{eq:L2boundtemp1}
\bigl\langle e^{t\calL} P_I \bmv, \bmw \bigr\rangle = \frac{1}{2\pi i} \int_I e^{it\lambda}\frac{\kappa(\lambda)}{d_{+}(\lambda) d_{-}(\lambda)} \bigl\langle \upfy(\cdot, \lambda), \sigma_1\bmv(\cdot) \bigr\rangle \bigl\langle \upfy(\cdot, \lambda), \bmw(\cdot) \bigr\rangle \ud \lambda.
\end{equation}
First, we decompose the inner products $\langle \upfy(\cdot, \lambda), \sigma_1\bmv(\cdot) \rangle$ and $\langle \upfy(\cdot, \lambda), \bmw(\cdot) \rangle$ into the integration regions $r \lesssim \epsilon |\lambda|^{-1}$ and $r \gtrsim \epsilon |\lambda|^{-1}$ for $\epsilon_\infty \ll \epsilon \ll \epsilon_0$. 
In the region $r \lesssim \epsilon |\lambda|^{-1}$ we will use the representation \eqref{eq:upfyexpansion1} for $\upfy$, while for $r \gtrsim \epsilon |\lambda|^{-1}$ the representation \eqref{eq:upfyexpansion2} will be amenable to good estimates.
Correspondingly, we write 
\begin{equation}
    \begin{aligned}
        2 \pi i \bigl\langle e^{t\calL} P_I \bmv, \bmw \bigr\rangle &= \sum_{0 \leq j,k \leq 1} \int_I e^{it\lambda}\frac{\kappa(\lambda)}{d_{+}(\lambda) d_{-}(\lambda)} \calI_j(\lambda) \calJ_k(\lambda) \, \ud \lambda 
    \end{aligned}
\end{equation}
with 
\begin{equation}
    \begin{aligned}
        \calI_j(\lambda) := \bigl\langle \upfy(\cdot, \lambda) \chi_j(\lambda \cdot), \sigma_1\bmv(\cdot) \bigr\rangle, \quad \calJ_k(\lambda) := \bigl\langle \upfy(\cdot, \lambda) \chi_k(\lambda \cdot), \bmw(\cdot) \bigr\rangle.
    \end{aligned}
\end{equation}
We now consider the contributions of $\calI_j(\lambda) \calJ_k(\lambda)$, $0 \leq j, k \leq 1$, separately. Going forward we use the notation $\|\cdot\|_{L^2}$ for $\|\cdot\|_{L^2_r}$.

\medskip 

\noindent \underline{Contribution of $\calI_0(\lambda) \calJ_0(\lambda)$.}
Inserting \eqref{eq:upfyexpansion1} we find that
\begin{equation}
    \begin{aligned}
        &\int_I e^{it\lambda} \frac{\kappa(\lambda)}{d_{+}(\lambda) d_{-}(\lambda)} \calI_0(\lambda) \calJ_0(\lambda) \, \ud \lambda \\
        &= \int_I e^{it\lambda} \frac{\kappa(\lambda) \omega_{22}^+(\lambda)^2}{d_{+}(\lambda) d_{-}(\lambda)} \bigl\langle \upfy_1(\cdot, \lambda) \chi_0(\lambda \cdot), \sigma_1\bmv(\cdot) \bigr\rangle \bigl\langle \upfy_1(\cdot, \lambda) \chi_0(\lambda \cdot), \bmw(\cdot) \bigr\rangle \, \ud \lambda \\ 
        &\quad - \int_I e^{it\lambda} \frac{\kappa(\lambda) \omega_{22}^+(\lambda) \omega_{21}^+(\lambda)}{d_{+}(\lambda) d_{-}(\lambda)} \bigl\langle \upfy_1(\cdot, \lambda) \chi_0(\lambda \cdot), \sigma_1\bmv(\cdot) \bigr\rangle \bigl\langle \upfy_2(\cdot, \lambda) \chi_0(\lambda \cdot), \bmw(\cdot) \bigr\rangle \, \ud \lambda \\ 
        &\quad - \int_I e^{it\lambda} \frac{\kappa(\lambda) \omega_{22}^+(\lambda) \omega_{21}^+(\lambda)}{d_{+}(\lambda) d_{-}(\lambda)} \bigl\langle \upfy_2(\cdot, \lambda) \chi_0(\lambda \cdot), \sigma_1\bmv(\cdot) \bigr\rangle \bigl\langle \upfy_1(\cdot, \lambda) \chi_0(\lambda \cdot), \bmw(\cdot) \bigr\rangle \, \ud \lambda \\ 
        &\quad + \int_I e^{it\lambda} \frac{\kappa(\lambda) \omega_{21}(\lambda)^2}{d_{+}(\lambda) d_{-}(\lambda)} \bigl\langle \upfy_2(\cdot, \lambda) \chi_0(\lambda \cdot), \sigma_1\bmv(\cdot) \bigr\rangle \bigl\langle \upfy_2(\cdot, \lambda) \chi_0(\lambda \cdot), \bmw(\cdot) \bigr\rangle \, \ud \lambda \\ 
        &=: I + II + III + IV.
    \end{aligned}
\end{equation}
Next, we record that by Lemma~\ref{lem:bellj1}, Corollary~\ref{cor:omegaij}, and Proposition~\ref{prop:PIStone1}, we have
\begin{equation}
    \biggl|\frac{\kappa(\lambda) \omega_{22}^+(\lambda)^2}{d_{+}(\lambda) d_{-}(\lambda)} \biggr| \simeq 1, \quad \biggl|\frac{\kappa(\lambda) \omega_{22}^+(\lambda) \omega_{21}^+(\lambda)}{d_{+}(\lambda) d_{-}(\lambda)}\biggr| \lesssim |\lambda| e^{-2\sqrt{2}r_\epsilon}, \quad \biggl|\frac{\kappa(\lambda) \omega_{21}^+(\lambda)^2}{d_{+}(\lambda) d_{-}(\lambda)}\biggr| \lesssim |\lambda|^2 e^{-4\sqrt{2}r_\epsilon}.
\end{equation}
The term $I$ is the most delicate to bound and requires exploiting a subtle cancellation, while the terms $II$, $III$, and $IV$ are straightforward.
So to get started, we consider the term $II$. 
Recall from Proposition~\ref{lem:F1_1} that uniformly for all $0 < |\lambda| \leq \delta_0$, 
\begin{equation}
    \begin{aligned}
        \bigl| \upfy_1(r,\lambda) \bigr| \lesssim \jap{r}^{\frac12}, \quad \bigl| \upfy_2(r,\lambda) \bigr| \lesssim e^{\sqrt{2}r}, \quad 0 < r \leq r_\epsilon(\lambda).
    \end{aligned}
\end{equation}
Using the Cauchy-Schwarz inequality repeatedly and dropping some excessive powers of $|\lambda|$, we find
\begin{equation}
    \begin{aligned}
        |II| &\lesssim \int_I  e^{-2\sqrt{2}r_\epsilon} \bigl\| \upfy_1(\cdot, \lambda) \chi_0(\lambda \cdot) \bigr\|_{L^2} \|\bmv\|_{L^2} \bigl\| \upfy_2(\cdot, \lambda) \chi_0(\lambda \cdot) \bigr\|_{L^2} \|\bmw\|_{L^2} \, \ud \lambda \\ 
        &\lesssim \biggl( \int_I |\lambda|^{-1} e^{-\sqrt{2}r_\epsilon} \ud\lambda \biggr) \|\bmv\|_{L^2} \|\bmw\|_{L^2} \\
        &\lesssim \|\bmv\|_{L^2} \|\bmw\|_{L^2}.
    \end{aligned}
\end{equation}
The term $III$ can be estimated analogously, and the term $IV$ is more of the same.

It remains to consider the term $I$.
Upon inserting \eqref{equ:definition_upfy_one_and_two} for $\upfy_1(r,\lambda)$ with $f(r,\lambda)$ defined in \eqref{equ:definition_f_and_g}, we obtain that 
\begin{equation}
    \begin{aligned}
        I &= \int_I e^{it\lambda} \frac{\kappa(\lambda) \omega_{22}^+(\lambda)^2}{d_{+}(\lambda) d_{-}(\lambda)} \langle f(\cdot, \lambda) \chi_0(\lambda \cdot), v_1(\cdot) \rangle \langle f(\cdot, \lambda) \chi_0(\lambda \cdot), w_2(\cdot) \rangle \, \ud \lambda \\ 
        &\quad + \int_I e^{it\lambda} \frac{\kappa(\lambda) \omega_{22}^+(\lambda)^2}{d_{+}(\lambda) d_{-}(\lambda)} \, \lambda \, \langle f(\cdot, \lambda) \chi_0(\lambda \cdot), v_1(\cdot) \rangle \langle (\calG_2 f)(\cdot, \lambda) \chi_0(\lambda \cdot), w_1(\cdot) \rangle \, \ud \lambda \\ 
        &\quad + \int_I e^{it\lambda} \frac{\kappa(\lambda) \omega_{22}^+(\lambda)^2}{d_{+}(\lambda) d_{-}(\lambda)} \, \lambda \, \langle (\calG_2 f)(\cdot, \lambda) \chi_0(\lambda \cdot), v_1(\cdot) \rangle \langle f(\cdot, \lambda) \chi_0(\lambda \cdot), w_2(\cdot) \rangle \, \ud \lambda \\ 
        &\quad + \int_I e^{it\lambda} \frac{\kappa(\lambda) \omega_{22}^+(\lambda)^2}{d_{+}(\lambda) d_{-}(\lambda)} \, \lambda^2 \, \langle (\calG_2 f)(\cdot, \lambda) \chi_0(\lambda \cdot), v_1(\cdot) \rangle \langle (\calG_2 f)(\cdot, \lambda) \chi_0(\lambda \cdot), w_1(\cdot) \rangle \, \ud \lambda \\
        &=: I_1 + I_2 + I_3 + I_4.
    \end{aligned}
\end{equation}
To treat the term $I_1$, we observe that by Lemma~\ref{lem:omegapm}, Corollary~\ref{cor:omegaij}, and Lemma~\ref{lem:pmlambda1},
\begin{equation}
    \frac{\kappa(\lambda) \omega_{22}^+(\lambda)^2}{d_{+}(\lambda) d_{-}(\lambda)} = \tilde{c} \, \mathrm{sgn}(\lambda) + \calO(|\lambda|)
\end{equation}
for some constant $\tilde{c} \neq 0$. This gives
\begin{equation}
    \begin{aligned}
        I_1 &= \int_I e^{it\lambda} \Bigl( \tilde{c} \, \mathrm{sgn}(\lambda) + \calO(|\lambda|) \Bigr) \langle f(\cdot, \lambda) \chi_0(\lambda \cdot), v_1(\cdot) \rangle \langle f(\cdot, \lambda) \chi_0(\lambda \cdot), w_1(\cdot) \rangle \, \ud \lambda.
    \end{aligned}
\end{equation}
Recall that $I = [-\delta_0, \delta_0]$ for some small $0 < \delta_0 \ll 1$.
For the contribution of the leading order term, we make a change of variables $\lambda \mapsto -\lambda$ for the integration over $-\delta_0 \leq \lambda \leq 0$ and we exploit that the maps $\lambda \mapsto f(r,\lambda)$ and $\lambda \mapsto \chi_0(\lambda \cdot)$ are even. This gives
\begin{equation}
    \begin{aligned}
        I_1^{\mathrm{main}}&:=\tilde{c} \int_I e^{it\lambda} \mathrm{sgn}(\lambda)  \langle f(\cdot, \lambda) \chi_0(\lambda \cdot), v_1(\cdot) \rangle \langle f(\cdot, \lambda) \chi_0(\lambda \cdot), w_2(\cdot) \rangle \, \ud \lambda \\
        &= 2i \tilde{c} \int_0^{\delta_0} \sin(t\lambda) \langle f(\cdot, \lambda) \chi_0(\lambda \cdot), v_1(\cdot) \rangle \langle f(\cdot, \lambda) \chi_0(\lambda \cdot), w_2(\cdot) \rangle \, \ud \lambda.
    \end{aligned}
\end{equation}
Using that $|\sin(t\lambda)| \leq \jap{t} \lambda$ uniformly for $0 < \lambda \leq \delta_0$, we obtain an additional power of $\lambda$ at the expense of a linear growth in time $t$. At this point we can proceed as in the preceding estimates to conclude that 
\begin{equation}
    \begin{aligned}
        I_1^{\mathrm{main}}&\lesssim \jap{t} \Bigl\|\bigl\| |\lambda|^{\frac12} f(\cdot, \lambda) \chi_0(\lambda \cdot) \bigr\|_{L^2} \Bigr\|_{L^2_\lambda(I)}^2\|v_1\|_{L^2} \|w_2\|_{L^2}\\
        &\lesssim\jap{t}\Bigl\| \bigl\| |\lambda|^{\frac12} \jap{\cdot}^{\frac12} \chi_0(\lambda \cdot) \bigr\|_{L^2} \Bigr\|_{L^2_\lambda(I)}\|v_1\|_{L^2} \|w_2\|_{L^2},\\
        &\lesssim \jap{t}\|v_1\|_{L^2} \|w_2\|_{L^2}.
    \end{aligned}
\end{equation}
Here to pass to the last line we have used Schur's test to show that
\begin{equation} \label{equ:smallr_schur}
    \Bigl\| \bigl\| |\lambda|^{\frac12} \jap{\cdot}^{\frac12} \chi_0(\lambda \cdot) \bigr\|_{L^2} \Bigr\|_{L^2_\lambda(I)} \lesssim 1.
\end{equation}
The contribution of the remainder term $\calO(|\lambda|)$ to $I_1$ can also be bounded analogously, where due to the extra factor of $|\lambda|$ the final estimate is independent of $t$.  The terms $I_2$, $I_3$ and $I_4$ are more of the same.
This finishes the discussion of the contributions of $\calI_0(\lambda) \calJ_0(\lambda)$.

\medskip 
\noindent \underline{Contribution of $\calI_1(\lambda) \calJ_1(\lambda)$.} 
Inserting the representation \eqref{equ:upfylarger1} of the distorted Fourier basis element, we find that
\begin{equation}
    \begin{aligned}
        &\int_I e^{it\lambda} \frac{\kappa(\lambda)}{d_{+}(\lambda) d_{-}(\lambda)} \calI_1(\lambda) \calJ_1(\lambda) \, \ud \lambda \\
        &= \int_I e^{it\lambda} \frac{\kappa(\lambda) \mu_1(\lambda)^2}{d_{+}(\lambda) d_{-}(\lambda)} \bigl\langle  \big( \omega^+_{11}(\lambda) \sigma_3 \uppsi_1^+(\cdot,-\lambda)+\omega_{11}^+(-\lambda)\uppsi_1^+(\cdot,\lambda) \big) \chi_1(\lambda \cdot), \sigma_1\bmv(\cdot) \bigr\rangle \\ 
        &\qquad \qquad \qquad \qquad \qquad \qquad \times \bigl\langle \big( \omega_{11}^+(\lambda)\sigma_3\uppsi_1^+(\cdot,-\lambda)+\omega_{11}^+(-\lambda)\uppsi_1^+(\cdot,\lambda) \big) \chi_1(\lambda \cdot), \bmw(\cdot) \bigr\rangle \, \ud \lambda \\         
        &\quad + \int_I e^{it\lambda} \frac{\kappa(\lambda)}{d_{+}(\lambda) d_{-}(\lambda)} \bigl\langle \mu_1(\lambda) \big( \omega_{11}^+(\lambda) \sigma_3 \uppsi_1^+(\cdot,-\lambda)+\omega_{11}^+(-\lambda)\uppsi_1^+(\cdot,\lambda) \big) \chi_1(\lambda \cdot), \sigma_1\bmv(\cdot) \bigr\rangle \\ 
        &\qquad \qquad \qquad \qquad \qquad \qquad \times \bigl\langle \bigl( \mu_2(\lambda) \sigma_3 \uppsi_1^+(\cdot,-\lambda) + \mu_3(\lambda) \uppsi_1^+(\cdot,\lambda) + \mu_4(\lambda) \uppsi_2^+(r,\lambda) \bigr) \chi_1(\lambda \cdot), \bmw(\cdot) \bigr\rangle \, \ud \lambda \\ 
        &\quad + \int_I e^{it\lambda} \frac{\kappa(\lambda)}{d_{+}(\lambda) d_{-}(\lambda)} \bigl\langle \bigl( \mu_2(\lambda) \sigma_3 \uppsi_1^+(\cdot,-\lambda) + \mu_3(\lambda) \uppsi_1^+(\cdot,\lambda) + \mu_4(\lambda) \uppsi_2^+(r,\lambda) \bigr) \chi_1(\lambda \cdot), \sigma_1\bmv(\cdot) \bigr\rangle \\
        &\qquad \qquad \qquad \qquad \qquad \qquad \times \bigl\langle \upfy(\cdot,\lambda) \chi_1(\lambda \cdot), \bmw(\cdot) \bigr\rangle \, \ud \lambda \\          
        &=: I + II + III.
    \end{aligned}
\end{equation}
We begin with the most delicate term $I$. By Lemma~\ref{lem:pmlambda1} and \eqref{equ:mu_asymptotics_and_bounds}, for some nonzero constant $\tilde{c}$ we have 
\begin{equation} \label{equ:large_large_interaction1}
     \frac{\kappa(\lambda) \mu_1(\lambda)^2}{d_{+}(\lambda) d_{-}(\lambda)} = \tilde{c} \, \lambda^{-2} \mathrm{sgn}(\lambda) + \calO\bigl(|\lambda|^{-1}\bigr).
\end{equation}
Moreover, by Lemma~\ref{lem:decomposition_uppsioneplus} we have the decomposition
\begin{equation} \label{equ:large_large_interaction2}
    \uppsi_1^+(r,\lambda) = \begin{pmatrix} 0 \\ 1 \end{pmatrix} h_+\bigl(k_1(\lambda)r\bigr) + \calO\bigl(|\lambda|\bigr) \begin{pmatrix} 1 \\ 0 \end{pmatrix} h_+\bigl(k_1(\lambda)r\bigr) + \Upgamma(r,\lambda).
\end{equation}
Here we only discuss the contribution of the first terms on the right-hand sides of \eqref{equ:large_large_interaction1} and \eqref{equ:large_large_interaction2}, which are leading order in terms of the lack of powers of $|\lambda|$.
Making a change of variables $\lambda \mapsto -\lambda$ for the integration over $-\delta_0 \leq \lambda \leq 0$ and exploiting that the map
\begin{equation}
    \lambda \mapsto \omega_{11}^+(\lambda) \sigma_3 \begin{pmatrix} 0 \\ 1 \end{pmatrix} h_+\bigl(-k_1(\lambda)r\bigr) +\omega_{11}^+(-\lambda) \begin{pmatrix} 0 \\ 1 \end{pmatrix} h_+\bigl(k_1(\lambda)r\bigr)
\end{equation}
is odd, we obtain that
\begin{equation}
    \begin{aligned}
        &\int_I e^{it\lambda} \tilde{c} \lambda^{-2} \mathrm{sgn}(\lambda) \Bigl\langle \Bigl( \omega_{11}^+(\lambda) \sigma_3 \begin{pmatrix} 0 \\ 1 \end{pmatrix} h_+\bigl(-k_1(\lambda)\cdot\bigr) + \omega_{11}^+(-\lambda) \begin{pmatrix} 0 \\ 1 \end{pmatrix} h_+\bigl(k_1(\lambda)\cdot\bigr) \Bigr), \sigma_1\bmv(\cdot) \Bigr\rangle \\ 
        &\qquad \qquad \qquad \qquad \times \Bigl\langle \Bigl( \omega_{11}^+(\lambda) \sigma_3 \begin{pmatrix} 0 \\ 1 \end{pmatrix} h_+\bigl(-k_1(\lambda)\cdot\bigr) + \omega_{11}^+(-\lambda) \begin{pmatrix} 0 \\ 1 \end{pmatrix} h_+\bigl(k_1(\lambda)\cdot\bigr) \Bigr) \chi_1(\lambda \cdot), \bmw(\cdot) \Bigr\rangle \, \ud \lambda \\ 
        &= 2i \tilde{c} \int_0^{\delta_0} \sin(t\lambda) \lambda^{-2} \bigl\langle \bigl( -\omega_{11}^+(\lambda) h_+\bigl(-k_1(\lambda)\cdot\bigr) + \omega_{11}^+(-\lambda) h_+\bigl(k_1(\lambda)\cdot\bigr) \bigr) \chi_1(\lambda \cdot), v_1(\cdot) \bigr\rangle \\ 
        &\qquad \qquad \qquad \qquad \times \bigl\langle \bigl( -\omega_{11}^+(\lambda) h_+\bigl(-k_1(\lambda)\cdot\bigr) + \omega_{11}^+(-\lambda) h_+\bigl(k_1(\lambda)\cdot\bigr) \bigr) \chi_1(\lambda \cdot), w_2(\cdot) \bigr\rangle \, \ud \lambda.
    \end{aligned}
\end{equation}
Using that $|\sin(t\lambda)| \leq \jap{t} |\lambda|$ uniformly for all $0 \leq \lambda \leq \delta_0$ and that $|\omega_{11}^+(\pm\lambda)| \simeq |\lambda|^{\frac12}$ by Corollary~\ref{cor:omegaij}, we have uniformly for all $0 < \lambda \leq \delta_0$ that
\begin{equation}
    \bigl| \sin(t\lambda) \lambda^{-2} \omega_{11}^+(\pm\lambda) \omega_{11}^+(\pm\lambda) \bigr| \lesssim \jap{t}.
\end{equation}
Hence, by the Cauchy-Schwarz inequality and by Lemma~\ref{lem:L2bound_hplus}, we obtain
\begin{equation}
    \begin{aligned}
        &\biggl| 2i \tilde{c} \int_0^{\delta_0} \sin(t\lambda) \lambda^{-2} \bigl\langle \bigl( -\omega_{11}^+(\lambda) h_+\bigl(-k_1(\lambda)\cdot\bigr) + \omega_{11}^+(-\lambda) h_+\bigl(k_1(\lambda)\cdot\bigr) \bigr) \chi_1(\lambda \cdot), v_1(\cdot) \bigr\rangle \\ 
        &\qquad \qquad \qquad \qquad \times \bigl\langle \bigl( -\omega_{11}^+(\lambda) h_+\bigl(-k_1(\lambda)\cdot\bigr) + \omega_{11}^+(-\lambda) h_+\bigl(k_1(\lambda)\cdot\bigr) \bigr) \chi_1(\lambda \cdot), w_2(\cdot) \bigr\rangle \, \ud \lambda \biggr| \\ 
        &\lesssim \jap{t} \bigl\| \bigl\langle h_+(\pm k_1(\lambda) \cdot) \chi_1(\lambda \cdot), v_1(\cdot) \bigr\rangle \bigr\|_{L^2_\lambda([0,\delta_0])} 
        \bigl\| \bigl\langle h_+(\pm k_1(\lambda) \cdot) \chi_1(\lambda \cdot), w_2(\cdot) \bigr\rangle \bigr\|_{L^2_\lambda([0,\delta_0])} \\        
        &\lesssim \jap{t} \|v_1\|_{L^2_r} \|w_2\|_{L^2_r},
    \end{aligned}
\end{equation}
as desired. The contributions of all other terms on the right-hand sides of \eqref{equ:large_large_interaction1} and \eqref{equ:large_large_interaction2} come with sufficient powers of $|\lambda|$ factors so that they can be bounded in a straightforward manner by repeated applications of the Cauchy-Schwarz inequality along with Lemma~\ref{lem:L2bound_hplus} and the $L^2$-bound \eqref{equ:L2bound_remainder} for the remainder term $\Upgamma(r,\lambda)$ from Lemma~\ref{lem:decomposition_uppsioneplus}.

The same comments apply to estimating the less delicate terms $II$ and $III$. 
Here we note that whenever the term $\mu_4(\lambda) \uppsi_2^+(r,\lambda)$ appears, we place $\uppsi_2^+(r,\lambda)$ in $L^2_r$ by Cauchy-Schwarz and use Lemma~\ref{lem:basic_bound_L2_uppsi2}. The latter yields the decaying factor $e^{-\sqrt{2}\frac{r_\epsilon}{3}}$, which suffices to compensate the exponentially large factor $|\mu_4(\lambda)| \lesssim e^{\frac{\sqrt{2}}{5}r_\epsilon}$ and additional inverse powers of $|\lambda|$.
We leave the details to the reader.

\medskip 

\noindent \underline{Contribution of $\calI_0(\lambda) \calJ_1(\lambda)$.} 
Inserting the representations \eqref{eq:upfyexpansion1} and \eqref{equ:upfylarger1} of the distorted Fourier basis elements, we find 
\begin{equation}
    \begin{aligned}
        &\int_I e^{it\lambda} \frac{\kappa(\lambda)}{d_{+}(\lambda) d_{-}(\lambda)} \calI_0(\lambda) \calJ_1(\lambda) \, \ud \lambda \\
        &= \int_I e^{it\lambda} \frac{\kappa(\lambda) \omega_{22}^+(\lambda) \mu_1(\lambda)}{d_{+}(\lambda) d_{-}(\lambda)} \bigl\langle \upfy_1(\cdot,\lambda) \chi_0(\lambda \cdot), \sigma_1\bmv(\cdot) \bigr\rangle \\ 
        &\qquad \qquad \qquad \qquad \times \bigl\langle \big( \omega_{11}^+(\lambda)\sigma_3\uppsi_1^+(\cdot,-\lambda)+\omega_{11}^+(-\lambda) \uppsi_1^+(\cdot,\lambda) \big) \chi_1(\lambda \cdot), \bmw(\cdot) \bigr\rangle \, \ud \lambda \\         
        &\quad + \int_I e^{it\lambda} \frac{\kappa(\lambda) \omega_{22}^+(\lambda)}{d_{+}(\lambda) d_{-}(\lambda)} \bigl\langle \upfy_1(\cdot,\lambda) \chi_0(\lambda \cdot), \sigma_1\bmv(\cdot) \bigr\rangle \\
        &\qquad \qquad \qquad \qquad \times \bigl\langle \bigl( \mu_2(\lambda) \sigma_3 \uppsi_1^+(\cdot,-\lambda) + \mu_3(\lambda) \uppsi_1^+(\cdot,\lambda) + \mu_4(\lambda) \uppsi_2^+(r,\lambda) \bigr) \chi_1(\lambda \cdot), \bmw(\cdot) \bigr\rangle \, \ud \lambda \\ 
        &\quad - \int_I e^{it\lambda} \frac{\kappa(\lambda) \omega_{21}^+(\lambda)}{d_{+}(\lambda) d_{-}(\lambda)} \bigl\langle \upfy_2(\cdot,\lambda) \chi_0(\lambda \cdot), \sigma_1\bmv(\cdot) \bigr\rangle \bigl\langle \upfy(\cdot,\lambda) \chi_1(\lambda \cdot), \bmw(\cdot) \bigr\rangle \, \ud \lambda \\          
        &=: I + II + III.
    \end{aligned}
\end{equation}
Again, we only discuss the most delicate term $I$. By Lemma~\ref{lem:omegapm}, Corollary~\ref{cor:omegaij}, Lemma~\ref{lem:pmlambda1}, and \eqref{equ:mu_asymptotics_and_bounds},  we have 
\begin{equation} \label{equ:small_large_interaction0}
    \frac{\kappa(\lambda) \omega_{22}^+(\lambda) \mu_1(\lambda)}{d_{+}(\lambda) d_{-}(\lambda)} = \tilde{c} \lambda^{-1} \mathrm{sgn}(\lambda) + \calO(1)
\end{equation}
for some constant $\tilde{c} \neq 0$. 
Recall that 
\begin{equation} \label{equ:small_large_interaction1}
    \upfy_1(r,\lambda) = \begin{pmatrix} 0 \\ f(r,\lambda) \end{pmatrix} + \lambda \begin{pmatrix} \bigl(\calG_2 f\bigr)(r,\lambda) \\ 0 \end{pmatrix}
\end{equation}
and that by Lemma~\ref{lem:decomposition_uppsioneplus},
\begin{equation} \label{equ:small_large_interaction2}
    \uppsi_1^+(r,\lambda) = \begin{pmatrix} 0 \\ 1 \end{pmatrix} h_+\bigl(k_1(\lambda)r\bigr) + \calO\bigl(|\lambda|\bigr) \begin{pmatrix} 1 \\ 0 \end{pmatrix} h_+\bigl(k_1(\lambda)r\bigr) + \Upgamma(r,\lambda).
\end{equation}
The contributions of the first terms on the right-hand sides of \eqref{equ:small_large_interaction0}, \eqref{equ:small_large_interaction1}, and \eqref{equ:small_large_interaction2} are leading order in terms of the lack of powers of $|\lambda|$. 
Making a change of variables $\lambda \mapsto -\lambda$ for the integration over $-\delta_0 \leq \lambda \leq 0$ and exploiting that the leading order term in \eqref{equ:small_large_interaction0} is even with respect to $\lambda$, that the map $\lambda \mapsto f(r,\lambda)$ is even, while the map
\begin{equation}
    \lambda \mapsto \omega_{11}^+(\lambda) \sigma_3 \begin{pmatrix} 0 \\ 1 \end{pmatrix} h_+\bigl(-k_1(\lambda)r\bigr) +\omega_{11}^+(-\lambda) \begin{pmatrix} 0 \\ 1 \end{pmatrix} h_+\bigl(k_1(\lambda)r\bigr)
\end{equation}
is odd, we find that
\begin{equation}
    \begin{aligned}
        &\int_I e^{it\lambda} \tilde{c} \lambda^{-1} \mathrm{sgn}(\lambda) \Bigl\langle \begin{pmatrix} 0 \\ f(\cdot,\lambda) \end{pmatrix} \chi_0(\lambda \cdot), \sigma_1 \bmv(\cdot) \Bigr\rangle \\ 
        &\qquad \qquad \qquad \times \Bigl\langle \Bigl( \omega_{11}^+(\lambda) \sigma_3 \begin{pmatrix} 0 \\ 1 \end{pmatrix} h_+\bigl(-k_1(\lambda)\cdot\bigr) +\omega_{11}^+(-\lambda) \begin{pmatrix} 0 \\ 1 \end{pmatrix} h_+\bigl(k_1(\lambda)\cdot\bigr) \Bigr) \chi_1(\lambda \cdot), \bmw(\cdot) \Bigr\rangle \ud \lambda \\ 
        &= 2i \tilde{c} \int_0^{\delta_0} \lambda^{-1} \sin(t\lambda) \bigl\langle f(\cdot,\lambda) \chi_0(\lambda\cdot), v_1(\cdot) \bigr\rangle \\ 
        &\qquad \qquad \qquad \times \bigl\langle \bigl( -\omega_{11}^+(\lambda) h_+\bigl(-k_1(\lambda)\cdot\bigr) + \omega_{11}^+(-\lambda) h_+\bigl(k_1(\lambda)\cdot\bigr) \bigr) \chi_1(\lambda \cdot), w_2(\cdot) \bigr\rangle \ud \lambda. 
    \end{aligned}
\end{equation}
Since by Corollary~\ref{cor:omegaij} we have 
\begin{equation}
    \bigl| \lambda^{-1} \sin(t\lambda) \omega_{11}^+(\pm \lambda) \bigr| \lesssim \jap{t} |\lambda|^{\frac12},
\end{equation}
we can use the Cauchy-Schwarz inequality to estimate the preceding integral by
\begin{equation}
    \begin{aligned}
        &\biggl| 2i \tilde{c} \int_0^{\delta_0} \lambda^{-1} \sin(t\lambda) \bigl\langle f(\cdot,\lambda) \chi_0(\lambda\cdot), v_1(\cdot) \bigr\rangle \\ 
        &\qquad \qquad \qquad \times \bigl\langle \bigl( -\omega_{11}^+(\lambda) h_+\bigl(-k_1(\lambda)\cdot\bigr) + \omega_{11}^+(-\lambda) h_+\bigl(k_1(\lambda)\cdot\bigr) \bigr) \chi_1(\lambda \cdot), w_2(\cdot) \bigr\rangle \ud \lambda \biggr| \\ 
        &\lesssim \jap{t} \Bigl\| \bigl\| |\lambda|^{\frac12} f(\cdot,\lambda) \chi_0(\lambda \cdot) \bigr\|_{L^2_r} \Bigr\|_{L^2_\lambda([0,\delta_0])} \|v_1\|_{L^2_r} \bigl\| \bigl\langle h_+\bigl(\pm k_1(\lambda)\cdot\bigr), w_2(\cdot) \bigr\rangle \bigr\|_{L^2_\lambda([0,\delta_0])} \\ 
        &\lesssim \jap{t} \|v_1\|_{L^2_r} \|w_2\|_{L^2_r},
    \end{aligned}
\end{equation}
as desired.
In the last estimate we invoked \eqref{equ:smallr_schur} and Lemma~\ref{lem:L2bound_hplus}.

\medskip 
\noindent \underline{Contribution of $\calI_1(\lambda) \calJ_0(\lambda)$.} 
This case is analogous to the preceding and we omit the details.
\end{proof}

\appendix 

\section{The resolvent kernel for $i\calL_0$}\label{app:calL0}

Here we present the proof of Lemma~\ref{lem:calL0res1}. While the argument is similar to to some of the arguments in Section~\ref{sec:Greens}, it is logically completely independent. We use the notation introduced at the beginning of Section~\ref{sec:HY}.
\begin{proof}[Proof of Lemma~\ref{lem:calL0res1}]
Let $\tilD(z):=\calW[\Psi(\cdot,z),\Theta(\cdot,z)]$. By the argument  leading to~\eqref{eq:G+} and~\eqref{eq:G-}, the resolvent kernel for $i\calL_0-z$, $\Im z>0$, is given by
\begin{align*}
\begin{split}
\calG_0(r,s,z)=\rc \big(\Psi(r,z)\tilD^{-t}(z)\Theta^t(s,z)\mathds{1}_{[0 < s \leq r]}+\Theta(r,z)\tilD^{-1}(z)\Psi^t(s,z)\mathds{1}_{[ s > r]}\big)\sigma_1.
\end{split}
\end{align*}
The Wronskian matrix $\tilD(z)$ and its determinant $\tild(z):=\det\tilD(z)$ are given by
\begin{align*}
\begin{split}
\tilD=\pmat{\tils_{11}&\tils_{14}\\\tils_{23}&\tils_{24}},\qquad \tild=\tils_{11}\tils_{24}-\tils_{23}\tils_{14}.
\end{split}
\end{align*}
Arguing as in the proof of~\eqref{eq:G+2} and~\eqref{eq:G-2},
\begin{align*}
\begin{split}
\calG_0(r,s,z)=\frac{1}{\tild(z)}\big(\tils_{24}(z)\calG_{13}(r,s,z)-\tils_{14}(z)\calG_{23}(r,s,z)-\tils_{23}(z)\calG_{41}(r,s,z)+\tils_{13}(z)\calG_{24}(r,s,z)\big).
\end{split}
\end{align*}
For the $\tils_{ij}$ we have
\begin{align*}
\begin{split}
&\tils_{11}(z)=-\big(1+z^{-2}k_1^4(z)\big)W[\tilp_{+}(k_1(z)\cdot),\tilq_{+}(k_1(z)\cdot)],\\
&\tils_{14}(z)=-\big(1+z^{-2}k_1^2(z)k_2^2(z)\big)W[\tilp_{+}(k_1(z)\cdot),\tilq_{+}(k_2(z)\cdot)],\\
&\tils_{23}(z)=-\big(1+z^{-2}k_2^2(z)k_1^2(z)\big)W[\tilp_{+}(k_2(z)\cdot),\tilq_{+}(k_1(z)\cdot)],\\
&\tils_{24}(z)=-\big(1+z^{-2}k_2^4(z)\big)W[\tilp_{+}(k_2(z)\cdot),\tilq_{+}(k_2(z)\cdot)].
\end{split}
\end{align*}
Since $1+\frac{k_1^2(z)k_2^2(z)}{z^2}=0$, this shows that $\tils_{14}=\tils_{23}=0$ and $\tild=\tils_{13}\tils_{24}$. It follows that
\begin{align*}
\begin{split}
\calG_0(r,s,z)=\frac{1}{\tils_{13}(z)}\calG_{13}(r,s,z)+\frac{1}{\tils_{24}(z)}\calG_{24}(r,s,z).
\end{split}
\end{align*}
To compute $\tils_{13}$ and $\tils_{24}$ we use the small $\zeta$ asymptotics (see \cite[(9.1.10)--(9.1.11)]{AS})
\begin{align}\label{eq:tilhsmallzeta1}
\begin{split}
\tilp_{+}(\zeta)=c_1\zeta^{-\frac{1}{2}}+\calO(\zeta^{\frac{3}{2}}\log \zeta),\qquad \tilq_+(\zeta)=c_2\zeta^{\frac{3}{2}}+\calO(\zeta^{\frac{7}{2}}),
\end{split}
\end{align}
for some nonzero constants $c_1$ and  $c_2$. It then follows that
\begin{align*}
\begin{split}
W[\tilp_{+}(k_ir),\tilq_{+}(k_jr)]=2c_1c_2 k_i^{-\frac{1}{2}} k_j^{\frac{3}{2}}.
\end{split}
\end{align*}
For $(i,j)=(1,3)$ and $(i,j)=(2,4)$  we obtain
\begin{align*}
\begin{split}
|\tils_{13}(z)|\simeq |\tils_{24}(z)|\simeq \sqrt{|z|},
\end{split}
\end{align*}
completing the proof of the lemma.
\end{proof}

\section{Spectral projectors $P_I$}
\label{sec:PI}

Here we return to the question of how to define an operator $P_I$ to achieve the localization in Definition~\ref{def:PIevol}. For selfadjoint operators, these spectral projectors are routine,  but for $i\calL$ we need the construction of the resolvent kernels in order to justify this step.  Clearly, $P_I$ should be obtained by setting $t=0$ in~\eqref{eq:localevol}.  A delicate question is now the following, where $\delta_0$ be as in the statement of Theorem~\ref{thm:summary}: 

\begin{question}
    For intervals $I\subseteq[-\delta_0,\delta_0]$, does the limit
    \begin{align}
         P_I &:= \lim_{b\to0+} \frac{1}{2\pi i} \int_I    [ (i\calL-(\lambda +ib))^{-1}- (i\calL-(\lambda -ib))^{-1} ]\ud \lambda \label{eq:PIstrong}\\
    &= \frac{1}{2\pi i} \int_I    [ (i\calL-(\lambda +i0^{+}))^{-1}- (i\calL-(\lambda -i0^{+}))^{-1} ]\ud \lambda
    \end{align}
    exist in the strong $L^2((0,\infty))$ sense?
\end{question}

In particular, denoting the $b$-dependent operator in~\eqref{eq:PIstrong} by $P_I(b)$, this would then mean that $\sup_{0<b\ll 1} \|P_I(b)\|<\infty$ for the operator norms. While we strongly believe in an affirmative answer to this question, and that it should be accessible by the construction of the resolvents in the complex plane, which we obtain in this work, we do not attempt to answer it here. We note, however, that the $L^2$ bounds of the previous section imply that the integral in the line below~\eqref{eq:PIstrong} defines an $L^2$-bounded operator. But this does not settle the question.  Assuming that the question above has an affirmative answer, we now prove certain structural properties of $P_I$.

\begin{prop}
    \label{prop:PI}
    Let $I, I'\subset [-\delta_0,\delta_0]$ be  compact intervals, where $\delta_0>0$ is a small constant. Assuming the strong limit in~\eqref{eq:PIstrong} is valid, the bounded operators $P_I, P_{I'}$ satisfy $P_I^2=P_I$, and $\bfJ P_I^*=P_I\bfJ$ and $P_{I'}P_I=P_{I\cap I'}$. In particular, $\ker(P_I)$ and $\Ran(P_I)$ are skew-orthogonal relative to the symplectic form $\omega(\phi,\psi)=\langle \bfJ\phi,\psi\rangle$ (with the real inner product).  
\end{prop}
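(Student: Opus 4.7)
Denote $R(z) := (i\calL - z)^{-1}$ for $z \notin \bbR$ and
\[
P_I(b) := \frac{1}{2\pi i} \int_I \bigl[ R(\lambda+ib) - R(\lambda-ib) \bigr] \, d\lambda, \qquad b > 0.
\]
By hypothesis $P_I(b) \to P_I$ strongly as $b \to 0+$ with $\sup_{0 < b \ll 1} \|P_I(b)\| < \infty$. The plan is to verify each of the four assertions at the approximate level $b > 0$ using the resolvent identity and the symmetries of $\calL$, and then to pass to the strong limit $b \to 0+$.

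For the symplectic intertwining identity $\bfJ P_I^* = P_I \bfJ$, I would first exploit the factorization $\calL = \bfJ \calS$ with $\calS$ self-adjoint from~\eqref{eq:JS}. A short computation then yields $\bfJ (i\calL - z)^* \bfJ^{-1} = i\calL - \bar z$, and inverting this relation produces the resolvent intertwining identity $\bfJ R(z)^* = R(\bar z) \bfJ$. This immediately gives $\bfJ P_I(b)^* = P_I(b) \bfJ$ for every $b > 0$, and passing to the limit $b \to 0+$ (strongly on the right, weakly on the left) yields $\bfJ P_I^* = P_I \bfJ$.

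The main work will go into proving $P_{I'} P_I = P_{I \cap I'}$. I would fix $b' > b > 0$ and expand $P_{I'}(b') P_I(b)$ into four operator products; to each product apply the resolvent identity
\[
R(z_1) R(z_2) = \frac{R(z_1) - R(z_2)}{z_1 - z_2}, \qquad z_1 \ne z_2,
\]
generating denominators $(\mu - \lambda) \pm i(b' \pm b)$. Reorganizing by resolvent factor produces four groups: scalar kernels $A_\pm(\mu, \lambda)$ multiplying $R(\lambda \pm ib)$, and scalar kernels $B_\pm(\mu, \lambda)$ multiplying $R(\mu \pm ib')$, each $A_\pm, B_\pm$ being a difference of two Cauchy kernels $((\mu - \lambda) \pm i\epsilon)^{-1}$ with $\epsilon \in \{b'-b,\, b'+b\}$. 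Integrating $A_\pm$ in $\mu$ over $I'$ and sending $b, b' \to 0+$, Sokhotski-Plemelj produces jumps $\pm 2\pi i \, \mathbf{1}_{I'}(\lambda)$ because the two Cauchy kernels in each $A_\pm$ approach the real axis from opposite sides; dually, integrating $B_\pm$ in $\lambda$ over $I$ produces two boundary values approaching from the same side, which cancel. What remains collapses to $(2\pi i)^{-1} \int_{I \cap I'} [R(\lambda + ib) - R(\lambda - ib)] \, d\lambda = P_{I \cap I'}(b)$, and the strong limit $b \to 0+$ then delivers $P_{I'} P_I = P_{I \cap I'}$. The projection property $P_I^2 = P_I$ is the special case $I = I'$.

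Once the product rule is secured, skew-orthogonality follows quickly: combining $\bfJ P_I^* = P_I \bfJ$ with $\bfJ^2 = -\Id$ yields the equivalent form $\bfJ P_I = P_I^* \bfJ$, so for $\psi = P_I \eta \in \Ran(P_I)$ and $\phi \in \ker(P_I)$ one has
\[
\omega(\psi, \phi) = \langle \bfJ P_I \eta, \phi \rangle = \langle P_I^* \bfJ \eta, \phi \rangle = \langle \bfJ \eta, P_I \phi \rangle = 0.
\]
The main obstacle in this program will be justifying the operator-valued Sokhotski-Plemelj step and the Fubini-type interchange of the $\mu$- and $\lambda$-integrations against the two-parameter family $R(\mu \pm ib') R(\lambda \pm ib)$, at the level of a dense class of test vectors; this will require invoking the uniform-in-$b$ resolvent bounds developed in Section~\ref{sec:Greens} together with the standing hypothesis that the strong limit defining $P_I$ exists.
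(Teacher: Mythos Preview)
Your proposal is correct and follows essentially the same strategy as the paper: expand $P_{I'}(b')P_I(b)$ via the resolvent identity into four resolvent terms with scalar Cauchy-type coefficients, then identify that two of these scalars collapse to $\pm 2\pi i$ times an indicator while the other two vanish in the limit. Your Sokhotski--Plemelj framing with the ordering $b'>b>0$ is simply an equivalent way of organizing the paper's iterated-limit Poisson-kernel computation; in both cases the surviving terms are precisely those attached to the inner resolvents $R(\lambda\pm ib)$, and the coefficients of $R(\mu\pm ib')$ drop out because their two Cauchy kernels approach the real axis from the same side. Your derivation of $\bfJ P_I^*=P_I\bfJ$ from $\calL^*=\bfJ\calL\bfJ$ spells out what the paper asserts in one line, and your closing remark correctly identifies the same rigor gap (justifying the limit--integral interchange) that the paper's proof also leaves implicit.
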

\begin{proof} 
Let $z=\lambda'+ib'$ and $\zeta=\lambda+ib$. Then  by the assumption
\[
P_I = \lim_{b\to0+} \frac{1}{2\pi i} \int_I [ (i\calL-\zeta)^{-1}- (i\calL-\bar{\zeta})^{-1}]  \ud \lambda
\]
in the strong $L^2$ sense and therefore 
\[
P_{I'}P_I = \lim_{b'\to0+} \lim_{b\to0+} \frac{-1}{4\pi^2 } \int_{I'} \int_I      [ (i\calL-z)^{-1}- (i\calL-\bar z)^{-1}][ (i\calL-\zeta)^{-1}- (i\calL-\bar \zeta)^{-1}]  \ud \lambda \ud \lambda'
\]
also in the strong sense. 
By the resolvent identity, with $R(z):=(i\calL-z)^{-1} $, 
\begin{align*}
    (R(z)-R(\bar z))(R(\zeta)-R(\bar\zeta)) &= \frac{R(\zeta)-R(z)}{\zeta-z}- \frac{R(\zeta)-R(\bar z)}{\zeta-\bar z} \\
    &\qquad - \frac{R(\bar \zeta)-R(z)}{\bar \zeta-z} + \frac{R(\bar\zeta)-R(\bar z)}{\bar\zeta-\bar z} \\
    &= R(\zeta) \frac{z-\bar z}{(\zeta-z)(\zeta-\bar z)} -R(z) \frac{\bar\zeta-\zeta}{(\zeta-z)(\bar\zeta- z)} \\
    &\qquad + R(\bar\zeta) \frac{\bar z- z}{(\bar\zeta-z)(\bar\zeta-\bar z)} - R(\bar z) \frac{\zeta-\bar \zeta}{(\zeta-\bar z)(\bar \zeta-\bar z)}.
\end{align*}
For  $\lambda'$ in the interior of~$I$ we have 
\begin{align*}
   \lim_{b'\to 0+} \lim_{b\to 0+} \int_I \frac{z-\bar z}{(\zeta-z)(\zeta-\bar z)}\ud \lambda &=  \lim_{b'\to 0+} \lim_{b\to 0+} \int_I \frac{2ib'}{(\lambda'-\lambda+ib)^2+(b')^2}\ud \lambda = 2i\pi  \chi_{I}(\lambda'),
\end{align*}
where $\chi_I$ is the indicator, while 
\begin{align*}
   \lim_{b'\to 0+} \lim_{b\to 0+} \int_I \frac{\zeta-\bar \zeta}{(\zeta-\bar z)(\bar \zeta-\bar z)}\ud \lambda &=   \lim_{b'\to 0+}\lim_{b\to 0+}   \int_I \frac{2ib}{(\lambda'-\lambda+ib')^2+b^2}\ud \lambda = 0.
\end{align*}
It then follows from the preceding that $P_{I'}P_I=P_{I\cap I'}$ which in particular means that $P_I^2=P_I$. The skew-adjoint relation holds due to $\calL^*=\bfJ\calL \bfJ$. 
\end{proof}

By the same argument, one can conclude that $e^{t\calL}P_I$ as a composition of bounded operators agrees with Definition~\ref{def:PIevol}.

\bibliographystyle{amsplain}
\bibliography{References}

\end{document}